\definecolor{LightCyan}{rgb}{0.18, 0.52, 0.235}
\newcolumntype{a}{>{\columncolor{LightCyan}}c}
\newtheorem{theorem}{Theorem}
\newtheorem{lemma}{Lemma}
\newtheorem{definition}{Definition}
\newtheorem{corollary}{Corollary}
\newtheorem{conjecture}{Conjecture}
\theoremstyle{thmstyleone}%
\newtheorem{proposition}[theorem]{Proposition}% 
\theoremstyle{thmstyletwo}%
\theoremstyle{thmstylethree}%
\newcommand{\BZ}{\mathbb{Z}}
\newcommand{\BC}{\mathbb{C}}
\newcommand{\BN}{\mathbb{N}}
\begin{document}

\title[Partition Functions and Kurepa Decomposition I]{Partition Functions and Kurepa Decomposition I: Algebraic computation and some physical Applications}

%%=============================================================%%
%% GivenName	-> \fnm{Joergen W.}
%% Particle	-> \spfx{van der} -> surname prefix
%% FamilyName	-> \sur{Ploeg}
%% Suffix	-> \sfx{IV}
%% \author*[1,2]{\fnm{Joergen W.} \spfx{van der} \sur{Ploeg} 
%%  \sfx{IV}}\email{iauthor@gmail.com}
%%=============================================================%%
%University of Abomey-Calavi,\\
	%International Chair in Mathematical Physics and Applications\\ 
	%(ICMPA--UNESCO Chair),
	%072 B.P. 50 \\ 
	%Cotonou,   Benin Republic}

\author[1,2]{\fnm{Francis Atta} \sur{Howard}}\email{hfrancisatta@ymail.com; hfrancisatta@gmail.com}

%\author[2,3]{\fnm{Second} \sur{Author}}\email{iiauthor@gmail.com}
%\equalcont{These authors contributed equally to this work.}

%\author[1,2]{\fnm{Third} \sur{Author}}\email{iiiauthor@gmail.com}
%\equalcont{These authors contributed equally to this work.}

%\affil[2]{\orgdiv{International Chair in Mathematical Physics and Applications(CIPMA-UNESCO)}, \orgname{University of Abomey-Calavi}, \city{Cotonou}, \postcode{072 B.P. 50}, \country{Benin Republic}}
\affil[1]{\orgdiv{International Chair in Mathematical Physics and Applications(CIPMA-UNESCO)}, \orgname{University of Abomey-Calavi}, \city{Cotonou}, \postcode{072 B.P. 50}, \country{Benin Republic}}

\affil*[2]{\orgdiv{NUCB(NIC)}, \orgname{Nagoya University of Business and Commence}, \orgaddress{\street{Sagamine}, \city{Nisshin-shi}, \postcode{470-0193}, \state{Aichi}, \country{Japan}}}

%\affil[2]{\orgdiv{International Chair in Mathematical Physics and Applications(CIPMA-UNESCO)}, \orgname{University of Abomey-Calavi}, \city{Cotonou}, \postcode{072 B.P. 50}, \country{Benin Republic}}
%\affil[3]{\orgdiv{Department}, \orgname{Organization}, \orgaddress{\street{Street}, \city{City}, \postcode{610101}, \state{State}, \country{Country}}}

%%==================================%%
%% Sample for unstructured abstract %%
%%==================================%%

\abstract{
This paper examines the algebraic features of notable polynomial functions and explores their combinatorial aspects by presenting precise decompositions in terms of Dobinski numbers, Bell numbers, and moments generating functions. Additionally, a new equivalence to the Kurepa factorial is developed to help investigate the Kurepa conjecture.
In conclusion, we examine several physical phenomena related to Kurepa factorials, occupation number, Fermi-Dirac and Bose-Einstein distributions while exploring their algebraic characteristics.\\

%\abstract{
\begin{CJK}{UTF8}{min}
本論文では、注目すべき多項式関数の代数的特徴を考察し、Dobinski数、Bell数、およびモーメント生成関数を用いた正確な分解を通して、その組合せ論的側面を探求する。さらに、Kurepa因数との新たな同値性を導出し、Kurepa予想の調査に役立てる。結論として、Kurepa階乗、占有数、フェルミ・ディラックおよびボース・アインシュタイン分布関数に関連するいくつかの物理現象を、その代数的特性を探求しながら検討する。
\end{CJK}
}

\keywords{Bell numbers, Partitions functions, Kurepa conjecture, Fermi-Dirac, Binary GCD algorithm, Normal ordering, Occupation number, Dobinski numbers}

%%\pacs[JEL Classification]{D8, H51}

%\pacs[MSC Classification]{	05A10, 05A15,	05A17, 05A40, 11A05, 11B50, 11B65, \\ 11B73, 11B83, 33B99 }

\maketitle
\tableofcontents
%\newpage
\section{Introduction}\label{sec1}
A partition $\mathbb{P}$ of a set $X$ is a collection of nonempty, mutually exclusive subsets of $X$, termed blocks, whose union constitutes $X$. The Bell number ($\mathbf{Bell_n}$) represents the total number of partitions of the set $[n]$ or any other set containing $n$ elements. The Stirling number of the second kind, $S(n,k)$, represents the number of partitions of the set $[n]$ into exactly $k$ blocks; that is, $ \mathbf{Bell_n}=\sum_{k\geq 1}^nS(n,k).$ Several problems connect all these well-known numbers, authors such as Andrews George, Stefan De Wannemacker, Anne Gertsch, Don Zagier, R. J. Clarke, M. Klazar, C. Mijajlovic, Z.W. Sun and many more have made significant contributions to these areas \cite{andrews1998theory, sun2011curious, gertsch1996some, clarke1993derangements, guy2004unsolved, klazar2003bell, ivic2003kurepa, mijajlovic2021fifty, mijajlovic1990some, petojevic2023new}, specifically congruences representing Bell numbers and derangement numbers in terms of one another modulo any prime were derived by Don Zagier and Sun \cite{sun2011curious}. Anne further studied these congruences in her thesis \cite{gertsch1999congruences}, that is,  
\begin{equation*}
    K_p\equiv \sum_{0\leq k\leq p-1}\mathbf{Bell_{k}} \equiv \mathbf{Bell_{p-1}}- 1\mod{p}
\end{equation*}
was $\mathbf{Bell_{p-1}}\equiv \mathbf{Der_{p-1}}+ 1(\mod{p})$ and $K_p$ is the Kurepa factorial for prime $p$. In 1971, Duro Kurepa \cite{kurepa1971left} posed a question whether for every natural number $n\geq2$ the $G_n=\gcd(!n,n!)=2$, this problem remains open, and many researchers actively work on it \cite{barsky2004nombres, barsky2011erratum, mevstrovic2013variations, mijajlovic1990some, mijajlovic2021fifty, petojevic1999trees, petojevic2023new, andrejic2016searching, vsami1974m, ivic2003kurepa, abramov2022kurepa}.
Clarke \cite{clarke1993derangements} also worked on derangement numbers and showed that
\begin{align*}
\mathbf{Der_n}&= \frac{n!}{e}= n! \left( 1-1 + \frac{1^2}{2!} - \frac{1^3}{3!} + \cdots +\frac{(-1)^i}{i!} \right),\\
\mathbf{Der_n}&= n!\sum_{i=0}^n\frac{(-1)^i}{i!} = n!e^{-1}.
\end{align*}
Kellner \cite{kellner2004some} developed a relationship between the subfactorial function and Kurepa's left factorial function $K_n$. He outlined fundamental characteristics and congruences of both functions and provided a computed distribution of primes below 10,000 of $K_n,$ where
\begin{equation}
   K_n=\mathbf{Kurepa} = \mathbf{!n}=\sum_{i=0}^{n-1}i!   \quad \mbox{for all} \quad (i<n).
\end{equation}
Fabiano et al. \cite{unknown} in 2022 introduced a new description of  Kurepa’s conjecture and the relation to Bezout’s parameters and the Diophantine equation. They gave a numerical analysis that supports Kurepa’s hypothesis and the conjecture about distribution for Kurepa’s function.
It is quite surprising that Euclid's longstanding patriarchal technique is not the most efficient approach for ascertaining the greatest common divisor. In 1961, Josef Stein \cite{stein1967computational} devised a distinct $\gcd$ method primarily suited for binary arithmetic. This novel approach exclusively employs subtraction, parity checking, and the halving of even integers, eliminating the need for a division instruction.
Vladica Andrejić and Miloš Tatarević in 2016 \cite{andrejic2016searching} sought for a counterexample to the Kurepa hypothesis for any $p < 2^{34}$. They presented novel optimization approaches, executed computations with graphics processing units, and ultimately proposed a generalized version of Kurepa’s left factorial. 
Motivated by the above, I ask the following questions:
\begin{itemize}\label{Solve}
\item[(i)] What is the sum of Bell numbers ($\mathbf{Bell}_n$) ?
\item[(ii)] What is the sum of complementary Bell numbers ($\mathbf{invBell}_n$) ?
\item[(iii)] What is the logarithm of the Kurepa factorial?
\item[(iv)] Is there a trivial equivalence to the Kurepa conjecture?
\item[(v)] What are some possible physical applications of Kurepa?
\end{itemize}

%\subsection{Bell numbers and open problems}
%Also, state all the OIES numbers for the Bell number, r-Bells and all the numbers in the introduction.\\
%Brujin`s Bell growth bound;
%\begin{align}\label{Brujin}
%\dfrac{\ln \mathbf{Bell_n}}{n}&=\ln n-\ln\ln n-1+\dfrac{\ln\ln n}{\ln n}+\frac1{\ln n}\\ \nonumber
%&+\dfrac12\left(\dfrac{\ln\ln n}{\ln n}\right)^2+O\left(\dfrac{\ln\ln n}{(\ln n)^2}\right).
%\end{align}
%\subsection{Summation problem}

%\subsection{Complementary bell}

%\subsection{Partition in Gauge strings}

%\section{Results}\label{sec2}
%%%%%%%%%%%%%%%%%%%%%%%%%%%%%%%%%%%%%%%%%%%%%%%%%%%%%%%%%%%
%\newpage
\section{Preliminaries and Notation}\label{Prelim}
In this section, we shall define some notation and symbols of well-known numbers that will be used in the sequel. Note that notations are already used in this field of study; however, to avoid ambiguity in the subsequent development of concepts, we shall stick to the notation and symbols used in this paper; see \cite{bell1934exponential,bell1938iterated,touchard1933proprietes}

\subsection{Dobinski numbers, Bell numbers and Touchard polynomials}
%Once upon a time there lived a man called Dobinski, who rings Bell in a Temple, the sound of this Bell was loudly deranged so that it made Rev. father, Kurepa left home early to the Temple.
%A man named Dobinski lived once, and he rang a bell in a temple. The sound of the bell was so deafening that it caused the priest, Kurepa, to leave his residence early to go to the temple.

\begin{definition}\label{Dobi}\cite{weisstein2002dobinski}
    Let $k$ be a nonnegative integer; then for all values of $n\geq 0$, we call 
    $$\mathbf{Dob}_n=\sum_{k=0}^{\infty}\dfrac{k^n}{k!}=\sum_{k=0}^{\infty}\dfrac{kk^{n-1}}{k(k-1)!}=\sum_{k=0}^{\infty}\dfrac{k^{n-1}}{(k-1)!},$$
    the Dobinski exponential series.
\end{definition}
Below are a few examples of the Dobinski exponentials for $n\geq0$:

%\[\scalemath{0.8}{
\begin{align*}
    \exp(1)&= 1+\dfrac{2}{2!}+\dfrac{3}{3!}+\dfrac{4}{4!}+\cdots = 1+1+\dfrac{1}{1\cdotp 2}+\dfrac{1}{1\cdotp 2 \cdotp 3}+\cdots=\sum_{k=0}^{\infty}\dfrac{k}{k!}=\sum_{k=0}^{\infty}\dfrac{1}{(k-1)!}=\mathbf{Dob}_1\\ 
    2 \exp(1)&= 1+\dfrac{2^2}{2!}+\dfrac{3^2}{3!}+\dfrac{4^2}{4!}+\cdots = 1+2+\dfrac{3}{1\cdotp 2}+\dfrac{4}{1\cdotp 2 \cdotp 3}+\cdots=\sum_{k=0}^{\infty}\dfrac{k^2}{k!}=\sum_{k=0}^{\infty}\dfrac{k}{(k-1)!}=\mathbf{Dob}_2\\ 
    5 \exp(1)&= 1+\dfrac{2^3}{2!}+\dfrac{3^3}{3!}+\dfrac{4^3}{4!}+\cdots = 1+4+\dfrac{9}{1\cdotp 2}+\dfrac{16}{1\cdotp 2 \cdotp 3}+\cdots=\sum_{k=0}^{\infty}\dfrac{k^3}{k!}=\sum_{k=0}^{\infty}\dfrac{k^2}{(k-1)!}=\mathbf{Dob}_3\\ 
    15 \exp(1)&= 1+\dfrac{2^4}{2!}+\dfrac{3^4}{3!}+\dfrac{4^4}{4!}+\cdots = 1+8+\dfrac{27}{1\cdotp 2}+\dfrac{64}{1\cdotp 2 \cdotp 3}+\cdots=\sum_{k=0}^{\infty}\dfrac{k^4}{k!}=\sum_{k=0}^{\infty}\dfrac{k^3}{(k-1)!}=\mathbf{Dob}_4\\
    52 \exp(1)&= 1+\dfrac{2^5}{2!}+\dfrac{3^5}{3!}+\dfrac{4^5}{4!}+\cdots = 1+16+\dfrac{81}{1\cdotp 2}+\dfrac{256}{1\cdotp 2 \cdotp 3}+\cdots=\sum_{k=0}^{\infty}\dfrac{k^5}{k!}=\sum_{k=0}^{\infty}\dfrac{k^4}{(k-1)!}=\mathbf{Dob}_5\\ 
    203 \exp(1)&= 1+\dfrac{2^6}{2!}+\dfrac{3^6}{3!}+\dfrac{4^6}{4!}+\cdots = 1+32+\dfrac{243}{1\cdotp 2}+\dfrac{1024}{1\cdotp 2 \cdotp 3}+\cdots=\sum_{k=0}^{\infty}\dfrac{k^6}{k!}=\sum_{k=0}^{\infty}\dfrac{k^5}{(k-1)!}=\mathbf{Dob}_6\\
    877 \exp(1)&= 1+\dfrac{2^7}{2!}+\dfrac{3^7}{3!}+\dfrac{4^7}{4!}+\cdots = 1+64+\dfrac{729}{1\cdotp 2}+\dfrac{4098}{1\cdotp 2 \cdotp 3}+\cdots=\sum_{k=0}^{\infty}\dfrac{k^7}{k!}=\sum_{k=0}^{\infty}\dfrac{k^6}{(k-1)!}=\mathbf{Dob}_7 \\
      &\vdots \quad \quad \quad \quad \vdots \quad \quad \quad\vdots \quad \quad \quad \quad \quad \quad \quad\vdots \quad \quad \quad \quad \vdots \quad \quad \quad\vdots \quad \quad \quad \quad \quad \quad \quad\\
    \mathbf{Bell_{n}} \exp(1)&= 1+\dfrac{2^n}{2!}+\dfrac{3^n}{3!}+\dfrac{4^n}{4!}+\cdots = 1+2^n+\dfrac{3^n}{1\cdotp 2}+\dfrac{4^n}{1\cdotp 2 \cdotp 3}+\cdots=\sum_{n=1}^{\infty}\dfrac{k^n}{k!}=\sum_{k=1}^{\infty}\dfrac{k^{n-1}}{(k-1)!}=\mathbf{Dob}_{n}
\end{align*}

\begin{definition}\label{Bell}\cite{bell1934exponential, spivey2008generalized}
From the Dobinski's exponential series, we observe that 
\begin{align*}
     \mathbf{Bell_{n}} \exp(1)&= 1+\dfrac{2^n}{2!}+\dfrac{3^n}{3!}+\dfrac{4^n}{4!}+\cdots =\sum_{k=0}^{\infty}\dfrac{k^n}{k!}=\mathbf{Dob}_{n},
\end{align*}
the coefficients of the Dobinski series are the Bell numbers. The first-order Bell exponential series is given by 
$$\mathbf{Bell_{n}}=\dfrac{1}{\exp(1)}\sum_{k=0}^{\infty}\dfrac{k^n}{k!}=\dfrac{\mathbf{Dob}_n }{\exp(1)} $$
    for all $n\in \BN$.
\end{definition}
Next, we observe from Epstein's expansion \cite{epstein1939function} that
\begin{align*}
     e^{\exp{(ax)}}&=e^{\left ( 1+a_1 x + \dfrac{a_2 x^2}{2!}+\dfrac{a_3 x^3 }{3!}+\cdots \dfrac{a_n x^n}{n!}\right)}
\end{align*}
He then expressed 
$$a_n=\dfrac{1}{\exp}\sum_{k=0}^{\infty}\dfrac{k^n}{k!}=\dfrac{\mathbf{Dob}_{n}}{\exp}=\mathbf{Bell_{n}},$$
He also computed, $a_{-n}$, which lead to the Dirichlet series,
\begin{align*}
a_{-n}&=\dfrac{1}{\exp}\sum_{k=0}^{\infty}\dfrac{k^{-n}}{k!}=\dfrac{\mathbf{Dob}_{-n}}{\exp}\\ \nonumber
&=\dfrac{1}{\exp}\sum_{k=0}^{\infty}\dfrac{1}{k^n k!}=\dfrac{1}{\mathbf{Dob}_{n}\exp}
\\ \nonumber 
&=\mathbf{Bell_{-n}}.
\end{align*}
Using these relations, Epstein \cite{epstein1939function, williams1945numbers} derived for $n=1$
\begin{align*}
    \mathbf{Bell_{-1}}&=\dfrac{1}{\exp}\sum_{k=1}^{\infty}\dfrac{1}{k k!}\\ \nonumber
    &\dfrac{e^x}{x}=\dfrac{1}{x}+ \sum_{k=1}\dfrac{x^{k-1}}{k!}\\ \nonumber
    & \int_{0}^{1}\dfrac{e^x}{x} dx =\ln{x}+ 
    \sum_{k=1}\dfrac{x^{k-1}}{k!}
\end{align*}
if $x=1$ we have 
\begin{align*}
    \int_{0}^{1}\dfrac{e^x}{x}dx =\ln{1}+ 
    \sum_{k=1}\dfrac{1}{k\cdotp k!}
\end{align*}
where 
\begin{align*}
    \mathbf{Bell_{-1}}=\dfrac{1}{e}\int_{0}^{1}\dfrac{e^x}{x}dx 
\end{align*}
The well-known general form of the Dobinski function is given by:
\begin{align*}
    \dfrac{1}{\exp}\sum_{k=x}^{\infty}\dfrac{k^n}{(k-x)!}=\sum_{i=0}^k \left(\begin{array}{c} n \\ k \end{array}\right)\mathbf{Bell}_k \cdotp x^{n-k},
\end{align*}
we set $x=0$ we obtain the definition \ref{Bell}. Also, the Touchard polynomial($\mathbf{Tchd_{n}}(y)$) \cite{touchard1933proprietes} and its exponential generating function are given by;
\begin{equation}
    \mathbf{Tchd_{n}}(y)= \dfrac{1}{\exp{y}}\sum_{k=0}^{\infty}y^k \dfrac{k^n}{k!}
\end{equation}
and
\begin{equation}
   \mathbf{Tchd_{n}}(y)= \sum_{k=0}^{n} S(n,k)y^k
\end{equation}
with the exponential generating function $\sum_{n=0}^{\infty}\mathbf{Tchd_{n}}(y)\dfrac{y^n}{n!} e^{y((\exp x)-1)}$.
We set $y=1$ in the Touchard polynomial and obtain the Bell number.
%Thus
%\begin{align*}
    % e^{\exp{(x)}}= \sum_{k=0}^{\infty}\dfrac{e^{(kx)}}{k!}&=\sum_{k=0}^{\infty}\dfrac{1}{k!}\sum_{k=0}^{\infty}\dfrac{(kx)^n}{n!}=\sum_{k=0}^{\infty}\dfrac{1}{k!}\sum_{k=0}^{\infty}\dfrac{k^n x^n}{n!}\\ \nonumber
    % &=\sum_{k=0}^{\infty}\dfrac{k^n}{k!}\sum_{k=0}^{\infty}\dfrac{x^n}{n!}=\mathbf{Dob_n}\exp{(x)}
%\end{align*}
%One quickly observes that
%\begin{align}
   %  e^{\exp{(x)}-1}&=\dfrac{ e^{\exp{(x)}}}{\exp{(1)}}=\dfrac{1}{\exp}\left(\sum_{k=0}^{\infty}\dfrac{k^n}{k!}\sum_{n=0}^{\infty}\dfrac{x^n}{n!} \right)\\ \nonumber
  %   &= \dfrac{\mathbf{Dob}_n \exp{x}}{\exp}=\mathbf{Dob}_n \exp{(x-1)}\\ \nonumber
  %   &=e\mathbf{Bell_n}\exp{x}=e \mathbf{Bell_n}\cdotp e^x
%\end{align}
%The coefficient of $x^n$ in the power series, $e^x \mathbf{Bell_n}$, is $\dfrac{\mathbf{Bell_n}}{n!}$.

%%%%%%%%%%%%%%%%%%%%%%%%%%%%%%%%%%%%%%%%%%%%%%%%%%%%%%%%%%%

\begin{definition}[Inverse Dobinski formula]\label{id}
    Let $k$ be a nonnegative integer; then for all values of $n\geq 0,$ we call 
    $$\mathbf{invDob}_n=\sum_{k=0}^{\infty}(-1)^k\dfrac{k^n}{k!},$$
    the inverse Dobinski number.
\end{definition}
Below are a few examples of the inverse Dobinski exponentials.
%\[\scalemath{0.8}{
\begin{align*}
    \dfrac{-1}{e}&=\sum_{k=0}^{\infty}(-1)^k\dfrac{k}{k!}=\mathbf{invDob}_1\\ \nonumber
   \dfrac{0}{e}&= \sum_{k=0}^{\infty}(-1)^k\dfrac{k^2}{k!}=\mathbf{invDob}_2\\ \nonumber
    \dfrac{1}{e}&= \sum_{k=0}^{\infty}(-1)^k\dfrac{k^3}{k!}=\mathbf{invDob}_3\\ \nonumber
    \dfrac{1}{e}&= \sum_{k=0}^{\infty}(-1)^k\dfrac{k^4}{k!}=\mathbf{invDob}_4\\ \nonumber
    \dfrac{-2}{e}&=\sum_{k=0}^{\infty}(-1)^k\dfrac{k^5}{k!}=\mathbf{invDob}_5\\ \nonumber
   \dfrac{-9}{e}&=\sum_{k=0}^{\infty}(-1)^k\dfrac{k^6}{k!}=\mathbf{invDob}_6\\ \nonumber
    \dfrac{-9}{e}&= \sum_{k=0}^{\infty}(-1)^k\dfrac{k^7}{k!}=\mathbf{invDob}_7 \\ \nonumber
     &\vdots \quad \quad \quad \quad \vdots \quad \quad \quad\vdots \quad \quad \quad \quad \quad \quad \quad \\
    \dfrac{\mathbf{invBell_{n}}}{e}&= \sum_{k=1}^{\infty}(-1)^k\dfrac{k^n}{k!}=\mathbf{invDob}_{n}
\end{align*}

\begin{definition}\label{invBelo}\cite{amdeberhan2013complementary, rao1969numbers, wilf2005generatingfunctionology}
From definition \ref{id}, the inverse Bell (complementary Bell) numbers are given by:
\begin{align}
      \mathbf{invBell_{n}}=e\sum_{k=0}^{\infty}(-1)^k \dfrac{k^n}{k!}=\mathbf{invDob_{n}\cdotp e}.
\end{align}
\end{definition}

\subsection{Stirling number of the second kind}\label{sec}
Let \( S(n, k) \) denote the number of ways to partition a set of \( n \) elements into exactly \( k \) non-empty, unlabeled subsets. These satisfy the recurrence:
\[
S(n+1, k) = k \cdot S(n, k) + S(n, k-1),
\]
with conditions
$S(0, 0) =S(n, 1) =S(n, n) = 1, \quad S(n, 0) = 0 \text{ for } n > 0, \quad S(0, k) = 0 \text{ for } k > 0$ \cite{riordan2014introduction, comtet2012advanced}.
The exponential generating function for the Bell numbers is given by:
\[
 \sum_{n=0}^{\infty} \mathbf{Bell_n} \frac{x^n}{n!} =\sum_{k=0}^{\infty}\dfrac{(e^x - 1)^k}{k!}= e^{e^x - 1} \quad k\geq 0,
\]

\[
 \sum_{n=0}^{\infty} \mathbf{invBell_n} \frac{x^n}{n!} =\sum_{k=0}^{\infty}(-1)^k\dfrac{(e^x - 1)^k}{k!}= e^{1-e^x} \quad k\geq 0.
\]
The relation between Bell numbers and the complementary Bell numbers is as follows:
$\mathbf{invBell_n}=\sum_{k=1}^{n} (-1)^kS(n,k) $ and $\mathbf{Bell_n}=\sum_{k=1}^{n} S(n,k). $

\section{Kurepa Decompositions}\label{Kall}
According to Kurepa's hypothesis, $\gcd(!n,n!) = 2,\quad n > 1$. This is identical to demonstrating that $\gcd(p,!p) = 1$ for any odd primes $p$. According to Guy \cite{guy2004unsolved}, Mijajlovic has tested up to $p = 10^6$, Gallot also tested up to $2^{26}$ after that Jobling, Paul, have proceeded to $p < 144000000$, which is a little above, $p = 2^{27}$ with no instances of $\gcd(p,!p) > 1$  discovered. %Kellner \cite{kellner2004some} claims that Barsky and Benzaghou have proven the speculation in 2004, however, Barsky and Benzaghou retracted their result \cite{barsky2004nombres, barsky2011erratum}. 
Milos Tatarevic searched till $10^9$, but could not find any counterexample in $2013$ \cite{andrejic2016searching}. In this section, we investigate well-known theorems about Kurepa factorials and Bell numbers, derangement (subfactorial) numbers, Stirling numbers of the second kind, and complementary Bell numbers and their connections with the Kurepa factorial \cite{clarke1993derangements, kurepa1971left, comtet2012advanced, riordan2014introduction, bell1938iterated, bell1934exponential, berend2010improved, amdeberhan2013complementary, kurepa1974some, ivic2003kurepa, fabianogcd}.

\subsection{Kurepa Factorial}

\begin{table}[!ht]
    \centering
    \begin{tabular}{|c|c|c|a|c|c|c|c|c|c|}\hline 
        $\mathbf{n}$& 0 & 1 & 2 & 3 &4  &5  & 6 & 7 &8 \\\hline 
         $\mathbf{n!}$& 1 & 1 & 2 & 6 & 24 & 120  & 720 & 5040 & 40320 \\\hline
        $\mathbf{!n}$ &  & 1 & 2 & 4 & 10 & 34 & 154 & 874 &5914 \\\hline
         $(-1)^n\mathbf{!n}$ &  & 1 & 0 & 2 & -4 & 20 & -100 & 620 &-4420 \\\hline
         $\mathbf{n!- !n}$ & 1 & 0 & 0 &  2& 14 & 86 & 566 &4166  &34406 \\\hline
          $\mathbf{!n+ n!}$ & 1 & 2 & 4 &  10& 34 & 154 & 874 & 5914  &46234 \\\hline
        $\dfrac{\mathbf{(!n- n!)}}{2}$ & 1/2 & 0 & 0 & 1 & 7 & 43 & 283 & 2083 & 17203\\\hline \rowcolor{LightCyan}
        $\mathbf{r}$ &  & 1/2 & 1 & 2 & 5 & 17 &77  &437  &2957 \\\hline  
         $\gcd(\mathbf{!n, n!})$& 1 & 1 & 2 & 2 & 2 &2  & 2 & 2 &2 \\\hline
        $\mathbf{Kurepa=2r}$ &  & 1 & 2 & 4 & 10 & 34 & 154 & 874 & 5914\\\hline
         $\mathbf{Der_n}$ & 1 &0  & 1 & 2 & 9 & 44 & 265 & 1854 &14833 \\\hline
         $\mathbf{Bell_n}$ &1 &1 & 2 & 5 & 15 & 52 & 203 & 877 & 4140 \\\hline
        $\mathbf{Dob_n}$& e  & 1e & 2e & 5e & 15e &52e  &203e  &877e  &4140e \\\hline
        % $\mathbf{Touchd_n(1)}$ & 1 &  1&  2&  5&15  & 52 & 203 & 877 &4140 \\\hline
        $\mathbf{!ne}$ & !0e & !1e & !2e & !3e & !4e & !5e & !6e & !7e & !8e\\\hline
       % $\mathbf{n! Bell_n}$ & e & 0 & 2e & 10e & 135e & 2288e & 53795e & 1625958e & 61408620e \\\hline
         $f(n)=\mathbf{invBell_n}$ & 1 & -1 & 0 & 1  & 1 & -2 & -9 & -9 & 50\\\hline
       %    &  &  &  &  &  &  &  &  & \\
    \end{tabular}
    \caption{\cite{OEIS:A003422, OEIS:A000110, OEIS:A000587, OEIS:A000166}  Kurepa, Bell, Dobinski, Derangement and complementary Bell numbers}
    \label{table 1}
\end{table}

\begin{conjecture}\label{main}\cite{kurepa1971left, guy2004unsolved, mijajlovic2021fifty}
    For all, $n\geq2$, the common divisor between the left factorial $\mathbf{!n}$ and the right factorial $n!$ is $2$, that is, $$\gcd(\mathbf{!n}, n!)=2.$$

\end{conjecture}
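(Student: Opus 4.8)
The plan is to split the statement into an easy $2$-part and a hard odd-part, and to reduce the odd-part to a purely arithmetic assertion about odd primes. First I would dispose of the power of $2$. For $n \geq 2$ the sum $\mathbf{!n} = 0! + 1! + \sum_{i \geq 2} i!$ begins with $1 + 1 = 2$ and all further terms are even, so $2 \mid \mathbf{!n}$; since $2 \mid n!$ as well, $2$ divides the gcd. Moreover, for $n \geq 4$ every term $i!$ with $i \geq 4$ is divisible by $8$, so $\mathbf{!n} \equiv 0!+1!+2!+3! = 10 \equiv 2 \pmod 4$, whence $4 \nmid \mathbf{!n}$ (the cases $n = 2, 3$ are checked by hand against $2! = 2$ and $3! = 6$). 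Thus the $2$-part of $\gcd(\mathbf{!n}, n!)$ is exactly $2$, and the entire content of the conjecture lies in excluding odd prime divisors.

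For that, I would argue that it suffices to rule out odd primes at prime indices. Suppose an odd prime $p$ divides $\gcd(\mathbf{!n}, n!)$ for some $n \geq 2$. Since $p \mid n!$ forces $p \leq n$, I would invoke $i! \equiv 0 \pmod p$ for $i \geq p$, which gives, for $n \geq p$,
\[
\mathbf{!n} = \sum_{i=0}^{n-1} i! \equiv \sum_{i=0}^{p-1} i! = \mathbf{!p} \pmod{p}.
\]
Hence $p \mid \mathbf{!n}$ implies $p \mid \mathbf{!p}$, and conversely $p \mid \mathbf{!p}$ would make $\gcd(\mathbf{!p}, p!) \geq 2p > 2$. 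This collapses the whole conjecture to the single assertion that \emph{for every odd prime $p$ one has $p \nmid \mathbf{!p}$}, i.e.\ $\gcd(p, \mathbf{!p}) = 1$, exactly the reformulation quoted in the introduction.

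The next step would be to translate $\mathbf{!p} \bmod p$ into Bell numbers via the congruence recorded in the introduction, $K_p \equiv \mathbf{Bell}_{p-1} - 1 \pmod p$, so that the target $p \nmid \mathbf{!p}$ becomes $\mathbf{Bell}_{p-1} \not\equiv 1 \pmod p$. To attack this I would exploit the finite-field structure of the Bell sequence: by Touchard's congruence $\mathbf{Bell}_{n+p} \equiv \mathbf{Bell}_{n+1} + \mathbf{Bell}_n \pmod p$, the residues satisfy a linear recurrence whose characteristic polynomial is the Artin–Schreier polynomial $x^p - x - 1$, irreducible over $\mathbb{F}_p$. Fixing a root $\theta \in \mathbb{F}_{p^p}$, one can express $\mathbf{Bell}_{p-1}$ as a trace-type combination of powers of $\theta$, reducing $\mathbf{Bell}_{p-1} \not\equiv 1 \pmod p$ to the non-vanishing of a single explicit element of $\mathbb{F}_{p^p}$.

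The hard part will be precisely this last step, and I expect it to be the genuine obstruction rather than a routine verification: since $\mathbf{Bell}_{p-1} \not\equiv 1 \pmod p$ for all odd $p$ is equivalent to the full conjecture, no reduction can dissolve the difficulty. The Artin–Schreier description (in the spirit of Barsky and Benzaghou) converts the problem into the non-vanishing of a specific algebraic quantity in $\mathbb{F}_{p^p}$, but controlling that quantity uniformly in $p$ is exactly what remains open. The plan above therefore isolates the obstruction and makes everything except the final non-vanishing elementary; a complete proof would have to supply a new uniform argument at that stage.
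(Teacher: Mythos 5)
Your proposal is not, and does not claim to be, a complete proof of Conjecture \ref{main} --- correctly so, since the statement is an open problem --- and every reduction you do carry out is sound. The $2$-part argument is right: $2\mid \mathbf{!n}$ for $n\geq 2$, and since $i!\equiv 0\pmod 4$ for $i\geq 4$ one gets $\mathbf{!n}\equiv 0!+1!+2!+3!=10\equiv 2\pmod 4$, so the power of $2$ in $\gcd(\mathbf{!n},n!)$ is exactly $2$ and only odd primes are at issue. The reduction of the odd part to the single assertion $p\nmid \mathbf{!p}$ for every odd prime $p$ (via $p\mid n!\Rightarrow p\leq n$ and $\mathbf{!n}\equiv \mathbf{!p}\pmod p$) is the standard equivalence, and it is the same one this paper quotes at the start of Section \ref{Kall}. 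The further translation to $\mathbf{Bell}_{p-1}\not\equiv 1\pmod p$ through the congruence recorded in the introduction, and the Touchard/Artin--Schreier framework built on $x^p-x-1$, are accurately described; you are also right that the final non-vanishing statement in $\mathbb{F}_{p^p}$ is precisely what remains open (the attempted proof along these lines was withdrawn, compare the cited erratum of Barsky--Benzaghou).

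Where you and the paper genuinely part ways is that the paper does not treat the statement as open: Theorems \ref{Equ} and \ref{final} claim to establish $\gcd(\mathbb{F}_{n},(n+1)!)=2$ (with $\mathbb{F}_n=K_{n+1}$) and hence the conjecture. That chain rests entirely on Lemma \ref{cp}, the assertion $\gcd\left(r_n,\frac{(n+1)!}{2}\right)=1$, which is not an auxiliary lemma but a restatement of the conjecture itself; its proof in the paper consists of checking $\gcd(5,12)=\gcd(17,60)=\gcd(77,120)=1$ and then a Bezout argument that begins by assuming integers $x,y$ with $r_nx+Ty=1$ exist, i.e.\ by assuming the coprimality it is meant to prove. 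The only part of that machinery that is actually established --- the oddness of $r_n$ --- rules out the prime $2$ and nothing more, which is exactly the content of your first paragraph. So your proposal is the more defensible document: it isolates the same irreducible difficulty rather than burying it in a circular step, and it makes clear that any genuine progress must come at the stage you flagged, the uniform non-vanishing of the trace-type element attached to $x^p-x-1$, for which no technique is currently known.
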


\begin{lemma}\label{induct}
For integers $r\geq 2,$ the following consequences hold:
  \begin{enumerate}
        \item if $2$ divides $\mathbf{!n}+ n!$, then, there exists an $r$ such that $2\cdotp r= (\mathbf{!n}+n!);$
        \item  if $2| (\mathbf{!n}+n!)$ it immediately follows that $2| !(\mathbf{n}+1)$.
    \end{enumerate}
\end{lemma}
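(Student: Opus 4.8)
The plan is to reduce the entire statement to the defining recurrence of the left factorial, after which both parts follow from nothing more than the definition of divisibility. Recall that $\mathbf{!n}=\sum_{i=0}^{n-1}i!$; the single identity that drives the lemma is therefore
\[
!(\mathbf{n}+1)=\sum_{i=0}^{n}i!=\left(\sum_{i=0}^{n-1}i!\right)+n!=\mathbf{!n}+n!,
\]
obtained by splitting off the top term $i=n$ from the sum defining $!(\mathbf{n}+1)$. I would establish this telescoping relation first, since it is the only nontrivial ingredient; everything else is bookkeeping.

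For part (1) I would simply appeal to the definition of divisibility: the assertion that $2$ divides $\mathbf{!n}+n!$ is, by definition, the existence of an integer $r$ with $\mathbf{!n}+n!=2r$, namely $r=(\mathbf{!n}+n!)/2$. It is worth recording that this hypothesis is never vacuous in the relevant range $n\geq 2$: the term $n!$ is even, and $\mathbf{!n}=0!+1!+\sum_{i=2}^{n-1}i!$ is even as well, because $0!+1!=2$ and every $i!$ with $i\geq 2$ is even; hence $\mathbf{!n}+n!$ is a sum of two even numbers, and the integer $r$ always exists.

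For part (2) I would substitute the recurrence into the hypothesis. Since $!(\mathbf{n}+1)=\mathbf{!n}+n!$, the assumption $2\mid(\mathbf{!n}+n!)$ is word-for-word the conclusion $2\mid\,!(\mathbf{n}+1)$, so the implication is immediate. The only place where any thought is required is in spotting the recurrence; once it is in hand, both claims are purely definitional, and the elementary parity computation above shows that the running hypothesis in fact holds automatically for every $n\geq 2$. I expect the genuine role of this lemma is to serve as the inductive step feeding the later study of $\gcd(\mathbf{!n},n!)$, so I would state the recurrence in a form that is ready to be iterated, which is the main thing to get right rather than any analytic difficulty.
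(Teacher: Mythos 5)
Your proof is correct, and for part (2) it coincides exactly with the paper's argument: both hinge on the recurrence $!(\mathbf{n}+1)=\mathbf{!n}+n!$, after which the implication is immediate. Where you differ is part (1). The paper derives the existence of $r$ from an identity read off its Table 1, namely $\tfrac{(\mathbf{!n}-n!)}{2}+\mathbf{!n}=r$, which it rearranges to $n!+\mathbf{!n}=2r$; this ties the value of $r$ to a specific tabulated quantity. You instead observe, correctly, that part (1) is nothing more than the definition of divisibility, and you then add something the paper's proof of this lemma does not contain: a parity argument showing the hypothesis is never vacuous for $n\geq 2$, since $0!+1!=2$, every $i!$ with $i\geq 2$ is even, and hence $\mathbf{!n}+n!$ is always even. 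That supplement is genuinely useful --- the paper only establishes the evenness of $n!$ later (in its corollary on factorials being even for $n>2$), whereas your version makes the lemma self-contained and ready to iterate, which is indeed how it is used in the paper's Theorem on the inductive step for $\gcd(!(\mathbf{n}+1),(n+1)!)$. Both routes are valid; yours is cleaner and slightly stronger, the paper's is anchored to its tabulated sequence $r$.
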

\begin{proof}
The basis of this proof is straightforward. For the proof of $(1)$, we observe from the table \ref{table 1} that
\begin{align*}
 \dfrac{(\mathbf{!n}- n!)}{2}+ \mathbf{!n}=r\\
n!- \mathbf{!n} + 2(\mathbf{!n}) =2r\\
n! + \mathbf{!n} =2r. 
\end{align*}
The second proof follows easily since the Kerupa factorial obeys the following recurrence $$ !(\mathbf{n}+1)=\mathbf{!n}+ n!$$ then $2| (\mathbf{!n}+n!)$ implies $2| !(\mathbf{n}+1)$, this finishes the proof.
\end{proof}

\begin{corollary}
    For all integers $r$ and $n$ the greatest common divisor
    \begin{align*}
      \gcd(\mathbf{!n},r)=r,\quad\mbox{and} \quad  
      \gcd\left(r,\frac{n!}{2}\right)=1
      %\gcd(\mathbf{r, r})=1.\\
     % \gcd(\mathbf{!n, n!, r})=1
    \end{align*}
for all $n>2$.
\end{corollary}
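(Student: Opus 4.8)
The plan is to first pin down the meaning of $r$, since the statement only makes sense once $r$ is fixed numerically. Reading off Table \ref{table 1}, the row $\mathbf{Kurepa}=2r$ coincides entry-for-entry with the row $\mathbf{!n}$, so the operative relation is $2r=\mathbf{!n}$, i.e. $r=\mathbf{!n}/2$. Before anything else I would check this is well defined for $n>2$: since $0!+1!=2$ and $i!$ is even for every $i\geq 2$, the sum $\mathbf{!n}=\sum_{i=0}^{n-1}i!$ is even, whence $r=\mathbf{!n}/2\in\BN$; likewise $2\mid n!$ for $n\geq 2$, so $n!/2\in\BN$. This parity bookkeeping guarantees that both gcd's in the statement are gcd's of genuine integers.

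With $r$ fixed, the first identity is essentially immediate. The key observation is that $r$ divides $\mathbf{!n}$, because $\mathbf{!n}=2r$. Hence $\gcd(\mathbf{!n},r)=\gcd(2r,r)=r$, and the first claim is settled using nothing beyond the definition of $r$.

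The second identity carries the real content, and the approach I would take is to factor a common $2$ out of the Kurepa gcd using the homogeneity of the greatest common divisor. Writing $\mathbf{!n}=2r$ and $n!=2\cdot(n!/2)$ and applying the scaling rule $\gcd(2a,2b)=2\gcd(a,b)$ gives
\[
\gcd(\mathbf{!n},n!)=2\,\gcd\!\left(r,\tfrac{n!}{2}\right).
\]
Invoking Kurepa's hypothesis $\gcd(\mathbf{!n},n!)=2$ from Conjecture \ref{main} then forces $2\,\gcd(r,n!/2)=2$, so that $\gcd(r,n!/2)=1$ for all $n>2$, as asserted.

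The hard part, and the honest obstacle, is that this last equality is not a free consequence of Lemma \ref{induct}: the displayed chain shows that $\gcd(r,n!/2)=1$ is logically \emph{equivalent} to $\gcd(\mathbf{!n},n!)=2$, so it cannot be proved unconditionally without resolving the Kurepa conjecture itself. I would therefore present the gcd-scaling computation as the substantive step and frame the conclusion as an equivalence — $\gcd(r,n!/2)=1$ holds precisely when Kurepa's hypothesis holds — rather than advertise an unconditional proof. The only genuine calculation required is the parity argument ensuring $r$ and $n!/2$ are integers, so that the scaling identity legitimately applies.
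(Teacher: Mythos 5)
Your proof of the first identity and your reduction of the second are both correct, but your route differs genuinely from the paper's. The paper gives no argument at this point at all --- its proof of the corollary reads only ``Details of this proof shall be discussed in subsequent sections'' --- and the intended argument is the one developed later in Theorem \ref{cpr}, Lemma \ref{cp} and Theorem \ref{Equ}: there the author sets $r_n=\mathbb{F}_n/2$ (your $r$, up to an index shift), proves that $r_n$ is odd for $n\geq 3$, and then asserts \emph{unconditionally} that $\gcd\left(r_n,\frac{(n+1)!}{2}\right)=1$, using parity remarks, a few worked examples such as $\gcd(5,12)=\gcd(17,60)=\gcd(77,120)=1$, and Stein's binary GCD algorithm, finally deducing $\gcd(\mathbb{F}_n,(n+1)!)=2$ from it. You run the implication in the opposite direction: from the homogeneity identity $\gcd(\mathbf{!n},n!)=\gcd\left(2r,2\cdot\frac{n!}{2}\right)=2\gcd\left(r,\frac{n!}{2}\right)$ you import Conjecture \ref{main} to conclude $\gcd\left(r,\frac{n!}{2}\right)=1$. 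What your direction buys is precisely the honesty you flag at the end: since the two statements are equivalent, no unconditional proof of the second identity can exist short of a proof of the Kurepa conjecture itself, and this observation pinpoints the gap in the paper's later argument --- oddness of $r_n$ and evenness of $T=\frac{(n+1)!}{2}$ only force $\gcd(r_n,T)$ to be odd, not to equal $1$ (compare $\gcd(15,6)=3$), and the Bezout-style manipulation in Lemma \ref{cp} presupposes a relation $r_nx+Ty=1$, i.e.\ presupposes the conclusion. The one adjustment I would ask of you: the corollary is stated unconditionally, so state up front that you prove it \emph{conditionally} on Conjecture \ref{main} (equivalently, that you prove the two statements are equivalent), rather than leaving that qualification to your closing paragraph.
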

\begin{proof}
    Details of this proof shall be discussed in subsequent sections.
\end{proof}

\begin{theorem}\label{Add}\cite{kurepa1971left}
 Consider the sequence $0!, 1!, 2!, 3!, 4!\cdots$, and the sum of any consecutive $n$ terms $$S_k(n)= k!+(k+1)!+ \cdots+(k+n-1)!,$$
    setting $k=0$ yields the famous Kurepa factorial
    $$S_0(n)= 0!+1!+ 2! +3!+ 4! +\cdots+(n-1)!=\mathbf{!n},$$ \\where $\mathbf{!n} = 0! +1! + 2! +\cdots +(n-1)! = \sum_{m=0}^{n-1} m!$.
    The product of the exponential series ($\exp{(x)}$) with the function $S_k(n)$ is given by:
\begin{align*}
S_k(n)e^x &= S_k(n)\sum_{n=0}\frac{x^n}{n!},
\end{align*}
if $k=0$ this function becomes 
\begin{align*}
S_0(n)e^x &= S_0(n)\sum_{n=0}\frac{x^n}{n!}=\mathbf{!n}e^x.
\end{align*}  
\end{theorem}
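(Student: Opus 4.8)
The plan is to treat the statement as two linked assertions and to dispatch each by unwinding the relevant definitions. The first assertion is the identification $S_0(n) = \mathbf{!n}$; the second is the elementary product rule $S_k(n)e^x = S_k(n)\sum_{m\geq 0} x^m/m!$ together with its $k=0$ specialization $S_0(n)e^x = \mathbf{!n}\,e^x$. Since $S_k(n)$ is, for fixed $k$ and $n$, a finite sum of factorials and hence a constant independent of $x$, the bulk of the work is purely symbolic, and no convergence or growth estimates are required beyond the convergence of the exponential series itself.

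First I would establish $S_0(n) = \mathbf{!n}$. Writing the defining sum compactly as $S_k(n) = \sum_{j=0}^{n-1}(k+j)!$ and setting $k=0$ collapses each term $(k+j)!$ to $j!$, so that $S_0(n) = \sum_{j=0}^{n-1} j! = 0! + 1! + \cdots + (n-1)!$. Comparing this with the definition $\mathbf{!n} = \sum_{m=0}^{n-1} m!$ of the Kurepa left factorial recorded in the statement yields the equality immediately. Next I would handle the product with the exponential: starting from the convergent power series $e^x = \sum_{m=0}^{\infty} x^m/m!$, I would multiply both sides by the constant $S_k(n)$ and pull it through the absolutely convergent series term by term, giving $S_k(n)e^x = S_k(n)\sum_{m=0}^{\infty} x^m/m!$. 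Substituting the already-proved identity $S_0(n) = \mathbf{!n}$ into the $k=0$ instance then produces $S_0(n)e^x = \mathbf{!n}\,e^x$, which completes the argument.

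The statement is essentially definitional, so there is no genuine analytic obstacle; the one place to be vigilant is the notational clash between the summation index of the exponential series and the argument $n$ of $S_k(n)$. To keep the scalar $S_k(n)$ manifestly inert under the summation, I would rename the series index (to $m$, say) throughout, so that factoring the constant out of the sum is unambiguous and the two reindexings above (of the factorial sum and of the exponential series) cannot be conflated. With that bookkeeping fixed, each step is a one-line verification.
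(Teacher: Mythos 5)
Your proposal is correct, and its core is the same as the paper's: treat $S_k(n)$ as a scalar, distribute it through the exponential series, and specialize $k=0$ using $S_0(n)=\mathbf{!n}$. The one genuine difference is how the exponential series enters. The paper's proof reaches it through a purported ``geometric series'' identity
\begin{equation*}
\frac{1}{n!}\left(\frac{1}{1-x}\right) = 1 + x + \frac{x^2}{2!} + \frac{x^3}{3!} + \cdots = e^x,
\end{equation*}
which is false as written (the geometric series $1/(1-x)=1+x+x^2+\cdots$ is not the exponential series, and no single prefactor $1/n!$ converts one into the other); the paper also pads the argument with listings of $S_k(n)$ and their differences that play no role in the conclusion. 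You sidestep both issues by starting directly from the convergent series $e^x=\sum_{m\geq 0}x^m/m!$ and factoring the constant through it, and your renaming of the dummy index repairs the paper's notational collision, where $n$ serves simultaneously as the summation index of the series and as the argument of $S_k(n)$. So your route proves exactly the same (essentially definitional) statement, but does so without the paper's erroneous intermediate identity; nothing is lost, since that identity was never actually needed.
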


\begin{proof}
Let consider 
\begin{align*}
S_k(n) &= k! + (k+1)! + \cdots + (k+n-1)!\\\nonumber
S_0(n) &= 0!+1!+2!+3! + \cdots + (n-1)!\\ \nonumber
%&=S_0(1) + S_0(2)+ S_0(3) + S_0(4) +S_0(5)+\cdots\\ \nonumber
S_1(n) &= 1!+2!+3! + \cdots \\ \nonumber
S_2(n) &= 2!+3! + \cdots  \\ \nonumber
S_3(n) &= 3! + \cdots \\\nonumber
\end{align*}
and 
\begin{align*}
S_0(n) - S_1(n) &= 0!=1\\
S_2(n) - S_1(n) &= 1!=1+1 = 2\\
S_3(n) - S_2(n) &= 2!=1+1+2 = 4\\
S_4(n) - S_3(n) &= 3!=1+ 1+ 2 +6 = 10\\
&\vdots \quad \quad \quad \quad \vdots \quad \quad \quad\vdots \quad \quad \quad
\end{align*}
from Kurepa factorial we know;
\begin{align*}
!n &= 0! +1! + 2! +3! +4! +\cdots+(n-1)!= S_0(n), \quad \mbox{now}
%S_k(n) &= k!+(k+1)!+(k+2)! +\cdots +(k+n-1)!\\
\end{align*}
we know that the geometric series \cite{euler1862opera}
\begin{align*}
%\frac{1}{1-x}& = 1 + x+x^2 +x^3 + \cdots + x^n\\
\frac{1}{n!}\left(\frac{1}{1-x}\right) &= 1 +x +\frac{x^2}{2!}+ \frac{x^3}{3!} + \cdots +\frac{x^n}{n!} = e^x\\
%\frac{1}{1+x}& = 1 - x +x^2 - x^3 + \cdots + (-1)^n x^n\\
%\frac{1}{n!}\left(\frac{1}{1+x}\right) &= 1 - x +\frac{x^2}{2!}-\frac{x^3}{3!} + \cdots +(-1)^n\frac{x^n}{n!} = e^{-x}\\
%\frac{1}{!n}\left(\frac{1}{1-x}\right) &= \frac{1}{k(p)} \left(\frac{1}{1-x}\right) = 1 +\frac{x}{2}+\frac{x^2}{4}+ \frac{x^3}{10} +\frac{x^4}{34} + \cdots +\frac{x^n}{(n-1)!} = e^x_{!n}\\
%\frac{1}{!n}\left(\frac{1}{1+x}\right) &=  1 -\frac{x}{2}+\frac{x^2}{4}- \frac{x^3}{10} +\frac{x^4}{34} + \cdots +(-1)^\frac{x^n}{!n} = e^x_{!n}\\
\end{align*}
when we multiply both sides by the $S_k(n)$ we obtain
\begin{align*}
%S_0(n) &= 0! +1! + 2! +\cdots +(n-1)! = \sum m!\\
S_k(n)e^x &=S_k(n) \sum_{n=0}^{\infty} \frac{x^n}{n!}\\
S_k(n)e^x &= S_k(n)\left(1 + x +\frac{x^2}{2!}+ \frac{x^3}{3!}+\cdots +\frac{x^n}{n!}\right)\\
& =  S_k(n) + S_k(n)x +\frac{S_k(n)x^2}{2!}+ \frac{S_k(n)x^3}{3!}+\cdots + \frac{S_k(n)x^n}{n!},\\
%& = k! + (k+1)!x + (k+2)!\frac{x^2}{2!}+\ldots +(k+n-1)!\frac{x^n}{n!}\\
%S_k(n)e^x &= k!e^x +(k+1)!e^x + (k+2)!e^x + \cdots + (k+n-1)!e^x.
%&=S_k(1)e^x + S_k(2)e^x + S_k(3)e^x +\cdots + S_k(n)e^x.
\end{align*}
Finally, if $k =0$, $S_0(n)e^x = \mathbf{!n}e^x = \mathbf{Kurepa} \cdotp e^x$
\begin{align*}
\sum_{n=0} S_0(n)\frac{x^n}{n!}&=S_0(n)e^x = S_0(n)\left(1 + x +\frac{x^2}{2!}+ \frac{x^3}{3!}+\cdots  +\frac{x^n}{n!}\right)\\
& =  S_0(n) + S_0(n)x +\frac{S_0(n)x^2}{2!}+ \frac{S_0(n)x^3}{3!}+\cdots + \frac{S_0(n)x^n}{n!}.
%!ne^x &= !n\left(1 + x +\frac{x^2}{2!}+ \frac{x^3}{3!}+\cdots \right)\\
%& =  !n + !nx +\frac{!nx^2}{2!}+ \frac{!nx^3}{3!}+\cdots + \frac{!nx^n}{n!}.
%!ne^x &= [0! + 1! + 2! + 3! + \cdots + (n-1)1]e\\ 
%!ne^x &= 0!e^x +  1!e^x + 2!e^x + 3!e^x +\cdots + (n-1)!e^x\\
%S_0(n)e^x &= !ne^x = k(p)e^x\\
%S_k(n)e^x &= k!e^x +(k+1)!e^x + (k+2)!e^x + \cdots + (k+n-1)!e^x
%&=S_0(1)e^x + S_0(2)e^x + S_0(3)e^x +\cdots + S_0(n)e^x
\end{align*}
%this completes the proof.
    
\end{proof}

\begin{theorem}\label{recip}
    Consider the sequence $\dfrac{1}{0!},\dfrac{1}{1!} ,\dfrac{1}{2!} , \dfrac{1}{3!}, \dfrac{1}{4!}\cdots$, and the sum of any consecutive $n$ terms $$S_k(n)^{-1}= \frac{1}{k!}+\frac{1}{k+1!}+ \cdots+\frac{1}{(k+n-1)!},$$\\
    setting $k=0$ yields \cite{euler1862opera} $ S_0(n)^{-1}$. 
    %which approximates to $\exp(1)$ as $n$ approches $\infty.$
    \begin{align*}
    S_0(n)^{-1}= \frac{1}{0!}+\frac{1}{1!}+\frac{1}{2!} +\cdots+\frac{1}{(n-1)!}
     \end{align*}
    \\where $\frac{1}{S_k(n)}$ is the reciprocal of $S_k(n)$.
 The product of the inverse exponential series $(\exp{(-x)})$ with the function $S_k(n)^{-1}$ is given by:
\begin{align}
S_k(n)^{-1}e^{-x} &= S_k(n)^{-1}\sum_{n=0}^{\infty}(-1)^n\frac{x^n}{n!}\\
& = S_k(n)^{-1} -S_k(n)^{-1}x + S_k(n)^{-1}\frac{x^2}{2!}- S_k(n)^{-1}\frac{x^3}{3!}+\\ \nonumber
&\cdots + (-1)^nS_k(n)^{-1}\frac{x^n}{n!}\\
& = S_k(1)^{-1}e^{-x} - S_k(2)^{-1}e^{-x} + S_k(3)^{-1}e^{-x}- S_k(4)^{-1}e^{-x}+\\
&\cdots +(-1)^nS_k(n)^{-1}e^{-x}.
\end{align}
If $k=0$ this function becomes 
\begin{align*}
&S_0(n)^{-1}e^{-x} = S_0(n)^{-1}\sum_{n=0}(-1)^n\frac{x^n}{n!}
%&=S_0(n)^{-1} - S_0(n)^{-1}x + S_0(n)^{-1}\frac{x^2}{2!}-\cdots + (-1)^nS_0(n)\frac{x^n}{n!}= \mathbf{!n}^{-1}\exp{(-x)}\\
%&= S_0(1)^{-1}e^{-x} + S_0(2)^{-1}e^{-x} + S_0(3)^{-1}e^{-x}+ S_0(4)^{-1}e^{-x}+ \cdots +S_0(n)^{-1}e^{-x}\\ \nonumber
=\frac{1}{\mathbf{!n}e^x}.
\end{align*}
\end{theorem}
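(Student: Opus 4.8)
The plan is to reuse the purely formal template established in Theorem~\ref{Add}, where a fixed $x$-independent scalar was distributed across a Taylor series. Here the scalar is the finite sum of reciprocal factorials $S_k(n)^{-1}=\sum_{m=k}^{k+n-1}\frac{1}{m!}$, and the series is $e^{-x}=\sum_{j=0}^{\infty}(-1)^{j}\frac{x^{j}}{j!}$. First I would record the two explicit presentations of $S_k(n)^{-1}$, namely the defining sum and its specialization $S_0(n)^{-1}=\sum_{m=0}^{n-1}\frac{1}{m!}$, emphasizing that each is a constant in $x$.

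Next I would multiply $S_k(n)^{-1}$ against the power series for $e^{-x}$ and distribute. Since $S_k(n)^{-1}$ carries no dependence on $x$, it factors out of every coefficient and the alternating expansion
\[
S_k(n)^{-1}e^{-x}=S_k(n)^{-1}-S_k(n)^{-1}x+S_k(n)^{-1}\frac{x^{2}}{2!}-\cdots+(-1)^{n}S_k(n)^{-1}\frac{x^{n}}{n!}
\]
is immediate. The subsequent regrouping into the alternating combination $\sum_{j=1}^{n}(-1)^{j-1}S_k(j)^{-1}e^{-x}$ is the one step that genuinely needs a word of justification: I would supply it through the partial-sum difference $S_k(j)^{-1}-S_k(j-1)^{-1}=\frac{1}{(k+j-1)!}$, exactly parallel to the factorial differences $S_{j}(n)-S_{j-1}(n)=(j-1)!$ tabulated in the proof of Theorem~\ref{Add}.

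The step I expect to be the real obstacle is the closing specialization $S_0(n)^{-1}e^{-x}=\frac{1}{\mathbf{!n}\,e^{x}}$. Cancelling the common factor $e^{-x}=1/e^{x}$ collapses this to the scalar assertion $S_0(n)^{-1}=\frac{1}{\mathbf{!n}}$, and everything then turns on which reading of the symbol $S_0(n)^{-1}$ is in force. Under the literal defining formula it is a \emph{sum of reciprocals}, whereas the sentence ``$\frac{1}{S_k(n)}$ is the reciprocal of $S_k(n)$'' and the target $\frac{1}{\mathbf{!n}}$ demand the \emph{reciprocal of the sum}; these two quantities disagree in general (at $n=3$ one is $\tfrac{5}{2}$ and the other is $\tfrac14$). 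My approach would therefore be to fix a single interpretation before any computation: reading $S_0(n)^{-1}$ as $1/\mathbf{!n}$ makes the final identity a one-line triviality matching Theorem~\ref{Add}, while the sum-of-reciprocals reading forces the claim to be stated only in a limiting or formal generating-function sense. Reconciling this notational tension—rather than any analytic difficulty—is the crux of the proof.
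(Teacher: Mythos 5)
Your proposal follows essentially the same route as the paper's proof: both treat $S_k(n)^{-1}$ as an $x$-independent constant, distribute it across the alternating series for $e^{-x}$, and then specialize to $k=0$. The one place you go beyond the paper is the final identification, and it is the right place to press: the paper's proof simply writes $S_0(n)^{-1}e^{-x} = \mathbf{!n}^{-1}\exp(-x) = \frac{1}{\mathbf{!n}e^x}$ with no comment, silently switching from the defining reading of $S_0(n)^{-1}$ as the sum of reciprocals $\sum_{m=0}^{n-1}\frac{1}{m!}$ to the reading as the reciprocal $1/\mathbf{!n}$ of the sum. Your observation that these two readings disagree (already at $n=3$: $\tfrac{5}{2}$ versus $\tfrac{1}{4}$) pinpoints exactly the gap that the paper's own argument papers over, and your proposed repair --- fix one interpretation before computing, under which the final display becomes a one-line triviality in the reciprocal-of-the-sum reading, or must be downgraded to a purely formal statement in the sum-of-reciprocals reading --- is more careful than what the paper provides. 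Your partial-sum-difference justification $S_k(j)^{-1}-S_k(j-1)^{-1}=\frac{1}{(k+j-1)!}$ for the alternating regrouping is likewise supplied where the paper merely asserts the regrouped line.
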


\begin{proof}
Consider the sequence, $\dfrac{1}{0!},\dfrac{1}{1!} ,\dfrac{1}{2!} , \dfrac{1}{3!}, \dfrac{1}{4!}\cdots$, and the sum
\begin{align}\label{Sum}
\frac{1}{s_k(n)} = \frac{1}{k!} +  \frac{1}{(k + 1)!} +  \frac{1}{(k + 2)!} + \cdots +  \frac{1}{(k + n - 1)!}
\end{align}
%if  $k =0$ we obtain the reciprocal of the Kurepa left factorial $!n$. 
%From \cite{euler1862opera} it is well known the that geometric series
%\begin{align*}
%\frac{1}{1-x}& = 1 + x+x^2 +x^3 + \cdots + x^n\\
%\frac{1}{n!}\left(\frac{1}{1-x}\right) &= 1 +x +\frac{x^2}{2!}+ \frac{x^3}{3!} + \cdots +\frac{x^n}{n!} = e^x\\
%\frac{1}{1+x}& = 1 - x +x^2 - x^3 + \cdots + (-1)^n x^n\\
%\frac{1}{n!}\left(\frac{1}{1+x}\right) &= 1 - x +\frac{x^2}{2!}-\frac{x^3}{3!} + \cdots +(-1)^n\frac{x^n}{n!} = e^{-x}\\
%\frac{1}{!n}\left(\frac{1}{1-x}\right) &= \frac{1}{k(p)} \left(\frac{1}{1-x}\right) = 1 +\frac{x}{2}+\frac{x^2}{4}+ \frac{x^3}{10} +\frac{x^4}{34} + \cdots +\frac{x^n}{(n-1)!} = e^x_{!n}\\
%\frac{1}{!n}\left(\frac{1}{1+x}\right) &=  1 -\frac{x}{2}+\frac{x^2}{4}- \frac{x^3}{10} +\frac{x^4}{34} + \cdots +(-1)^\frac{x^n}{!n} = e^x_{!n}\\
%\end{align*}
the product of the exponential series with $S_k(n)^{-1}$ yields;
\begin{align*}
%\frac{1}{1+x} &= 1-x+x^2-x^3 +x^4 +\cdots + (-1)^nx^n\\
%\frac{1}{!n(1+x)} & = \frac{1}{0!} -  \frac{x}{1!} + \frac{x^2}{2!} - \frac{x^3}{3!} +\frac{x^4}{4!} + \cdots +(-1)^n\frac{x^n}{(n-1)!}\\
\frac{1}{S_k(n)}e^{-x} &=  \frac{1}{s_k(n)} \left(\frac{1}{0!} -  \frac{x}{1!} + \frac{x^2}{2!} - \frac{x^3}{3!} +\frac{x^4}{4!} + \cdots +(-1)^n\frac{x^n}{n!}\right)\\
&=S_k(n)^{-1} -S_k(n)^{-1}x + S_k(n)^{-1}\frac{x^2}{2!}- S_k(n)^{-1}\frac{x^3}{3!}+ \ldots + (-1)^nS_k(n)^{-1}\frac{x^n}{n!}.
%\frac{1}{S_0(n)(1+x)} &= \frac{1}{!n(1+x)} = e^{-x}_{!n}=\frac{e^{-x}}{k!} +  \frac{e^{-x}}{(k + 1)!} +  \frac{e^{-x}}{(k + 2)!} + \cdots +  \frac{e^{-x}}{(k + n - 1)!}.\\
%& = S_k(1)^{-1}e^{-x} + S_k(2)^{-1}e^{-x} + S_k(3)^{-1}e^{-x}+ S_k(4)^{-1}e^{-x}+ \cdots +S_k(n)^{-1}e^{-x}.
\end{align*}
If we set $k=0$ yields the sum \ref{Sum} becomes,
 $$S_0(n)^{-1}= \frac{1}{0!}+\frac{1}{1!}+\frac{1}{2!} +\cdots+\frac{1}{(n-1)!}$$
and it is easy to see that,
\begin{align*}
S_0(n)^{-1}e^{-x} &= S_0(n)^{-1}\sum_{n=0}(-1)^n\frac{x^n}{n!}\\
&= S_0(n)^{-1} - S_0(n)^{-1}x + S_0(n)^{-1}\frac{x^2}{2!}-\ldots + (-1)^nS_0(n)\frac{x^n}{n!}= \mathbf{!n}^{-1}\exp{(-x)}\\
&= S_0(1)^{-1}e^{-x} + S_0(2)^{-1}e^{-x} + S_0(3)^{-1}e^{-x}+ S_0(4)^{-1}e^{-x}+ \cdots +S_0(n)^{-1}e^{-x}\\
&=\frac{1}{\mathbf{!n }e^x}.
\end{align*}
which completes the proof.    
\end{proof}

\begin{theorem}\label{Sum}\cite{kurepa1971left}
 Consider the sequence $0!, 1!, 2!, 3!, 4!\cdots$, and the sum of any consecutive $n$ terms $$S_k(n)= k!+(k+1)!+ \cdots+(k+n-1)!,$$ naturally the sum
\begin{align*}
        \sum_{r=0}^n S_k(r)x^r= k!+ (k+1)!x +(k+2)!x^2+ (k+3)!x^3+ \cdots+(k+n-1)!x^n
    \end{align*}
satisfies the $S_k(n)$ sum if $x=1$. Also, the series
    \begin{align*}
        \sum_{r=0}^{n} (-1)^{r}S_k(r)x^r= k!-(k+1)!x+ (k+2)!x^2 -(k+3)!x^3+\cdots+(-1)^n(k+n-1)!x^n,
    \end{align*}
Putting $x=1$ naturally yields
 \begin{align*}
        \sum_{r=0}^n (-1)^{r}S_k(r)= k!-(k+1)!+ (k+2)! -(k+3)!+\cdots+(-1)^n(n-1)!
    \end{align*}

If the value of $k=0$ the 
    Kurepa factorial sum, $$\sum_{m=0}^{n-1} m!= 0!+1!+ 2! +3!+ 4!+ \cdots+(n-1)!= \mathbf{!n},$$ naturally satisfies the series(see section \ref{NewConj});
    \begin{align*}
        \sum_{m=0}^n m!x^m= 0!+1!x+ 2!x^2 +3!x^3+ 4!x^4 +\cdots+n!x^n
    \end{align*}
    if $x=1$ we easily obtain just the Kurepa factorials, also, 
    \begin{align}\label{six}
        \sum_{m=0}^{n} (-1)^{m}m!x^m&= 0!-1!x+ 2!x^2 -3!x^3+ 4!x^4 +\cdots+(-1)^n n!x^n.
    \end{align}
  % multiplying both sides of this series by $\dfrac{k!}{n!}$ yields
  % \begin{align*}
    %    k!\sum_{n=0}^{k\cdots \infty} (-1)^{n}(n-1)!\frac{x^n}{n!}= (-1)^n\mathbf{!n} \cdotp k!\cdotp e^{-x},
  %  \end{align*}
%If we set $x=1$, then
%$$ \mathbf{!n} \cdotp k!\sum_{n=0}^{k} \frac{(-1)^n}{n!}= \mathbf{!n \cdotp Der_n},$$
%where  $\mathbf{Der_n}= k!\sum_{n=0}^{k} \frac{(-1)^n}{n!}$is the derangement number.
\end{theorem}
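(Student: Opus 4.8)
The plan is to read every displayed equality as an assertion about one formal polynomial in $x$ together with its specialization at $x=1$, exactly in the spirit of Theorem~\ref{Add} and Theorem~\ref{recip}, where a factorial-weighted sequence was paired term by term against the (signed) exponential series. Concretely, I would first fix an unambiguous coefficient polynomial, say $F_k(x)=\sum_{r=0}^{n-1}(k+r)!\,x^r$, so that the coefficient of $x^r$ is precisely the $(r+1)$-th summand of $S_k(n)$. Under this reading the first display is nothing more than the definition of $F_k(x)$: its coefficients are the consecutive factorials $k!,(k+1)!,\dots,(k+n-1)!$ that constitute $S_k(n)$.

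The second step is the evaluation at $x=1$. For any polynomial $p(x)=\sum_r a_r x^r$ one has the elementary identity $p(1)=\sum_r a_r$, so substituting $x=1$ into $F_k$ collapses it to $\sum_{r=0}^{n-1}(k+r)! = k!+(k+1)!+\cdots+(k+n-1)! = S_k(n)$, which is exactly the claim that the polynomial ``satisfies the $S_k(n)$ sum'' at $x=1$. The alternating series is handled identically: setting $G_k(x)=\sum_{r=0}^{n-1}(-1)^r(k+r)!\,x^r$, the same evaluation gives $G_k(1)=\sum_{r=0}^{n-1}(-1)^r(k+r)! = k!-(k+1)!+\cdots$, which is the signed display.

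The third step specializes $k=0$. Since $(0+r)!=r!$, the polynomial $F_0(x)$ has coefficients $0!,1!,\dots,(n-1)!$, and $F_0(1)=\sum_{r=0}^{n-1}r!=\mathbf{!n}$ by the definition of the Kurepa factorial recalled in Theorem~\ref{Add}. The signed specialization $G_0(1)=\sum_{r=0}^{n-1}(-1)^r r!$ yields the final display (after the cosmetic relabelling of the summation index from $r$ to $m$). Thus all four displays reduce to either the definition of a factorial-coefficient polynomial or its value at $x=1$.

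The main obstacle here is not analytic but notational. The symbol $S_k$ is overloaded: it denotes both the scalar partial sum $S_k(n)$ and, inside $\sum_r S_k(r)x^r$, the underlying sequence of summands, and there is an off-by-one mismatch between the stated upper limit $r=n$ and the last written monomial $(k+n-1)!\,x^n$. The substantive part of the proof is therefore to commit to a single consistent convention (coefficient of $x^r$ equal to $(k+r)!$, with $r$ running up to $n-1$) and to check that, under it, each identity is internally coherent; once the indexing is pinned down, every equality is immediate from the sum-of-coefficients fact and no further computation is required.
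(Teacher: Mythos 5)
Your proposal is correct and follows essentially the same route as the paper's own proof: both read each display as a factorial-coefficient polynomial identity and observe that evaluation at $x=1$ collapses it to the stated sum, with the $k=0$ case recovering the Kurepa factorial $\mathbf{!n}$. Your explicit fix of the indexing convention (coefficient of $x^r$ equal to $(k+r)!$ with $r$ running to $n-1$) actually tightens an off-by-one ambiguity that the paper's proof leaves unaddressed, but this is a clarification of the same argument rather than a different one.
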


\begin{proof}
Let $0!, 1!, 2!, 3!, 4!\cdots$, be kurepa sequence and the sum of any consecutive $n$ terms given by $$S_k(n)= k!+(k+1)!+ \cdots+(k+n-1)!,$$ 
%using the well known geometric sequence, 
%\begin{align*}
%\frac{1}{1-x} &= 1+x+x^2+x^3 +\cdots +x^n = \sum^\infty_{n=0}x^n
%\end{align*}
one can write
  \begin{align*}
        \sum_{n=0} S_k(n)x^n= k!+ (k+1)!x +(k+2)!x^2+ (k+3)!x^3 +\cdots+(k+n-1)!x^n,
    \end{align*}  
it is trivial to obtain $ S_k(n)$ when setting $x=1.$ 
%Similarly, the geometric series
%$$\dfrac{1}{1+x} = 1 - x +x^2 - x^3 + \cdots + (-1)^n x^n $$
%together with the $S_k(n)$ can be expressed as:
 \begin{align*}\label{Kup}
        \sum (-1)^{n}S_k(n)x^n= k!-(k+1)!x+ (k+2)!x^2 -(k+3)!x^3+\cdots+(-1)^n(k+n-1)!x^n,
    \end{align*}
setting $x=1$ yields the $ \sum (-1)^{n}S_k(n)$  which we shall discuss in subsequent theorems. Also, when $k=0$ we observe that
    \begin{align*}
        \sum_{m=0}^{n} m!x^m= 0!+1!x+ 2!x^2 +3!x^3+ 4!x^4 +\cdots+n!x^n,
    \end{align*}
    where $\sum_{m=0}^{n-1} m!= 0! +1! + 2! +3!+ 4! +\cdots+(n-1)!= \mathbf{!n}$ is the Kurepa sum. Also, in equation \ref{six}, if $k=1$ we easily notice that
    \begin{align*}
        \sum_{m=0}^{n} (-1)^{m}m!x^m= 0!-1!x+ 2!x^2 -3!x^3+ 4!x^4 +\cdots+(-1)^n n!x^n.
    \end{align*}
    proof completed.
% multiplying both sides of this series by $\dfrac{k!}{n!}$ yields
  % \begin{align*}
    %    k!\sum_{n=0} (-1)^{n}m!\frac{x^n}{n!}= k!\left(\frac{0!}{0!}-1!\frac{x}{1!}+ 2!\dfrac{x^2}{2!} -3!\frac{x^3}{3!}+ \cdots+(-1)^n(n-1)!\frac{x^n}{n!}\right)= \mathbf{!n} \cdotp k!\cdotp e^{-x},
   % \end{align*}
    
%\begin{align*}
%k!e^{-x} &= k!\left(1 -x +\frac{x^2}{2!} - \frac{x^3}{4!} +\cdots + \frac{(-1)^x}{n!} \right)\\
%&k!\sum_{n=0}^{k}(-1)^n\frac{x^n}{n!} = k!\sum_{n=0}^{k}(-1)^n\frac{x^n}{n!} = \mathbf{Der_n}(x)
%\end{align*}
%Finally, if $x=1$ in $ k!\sum_{n=0}^{k} (-1)^{n}m!\frac{x^n}{n!}$, then
%$$ \mathbf{!n} \cdotp k!\sum_{n=0}^{k} \frac{(-1)^n}{n!}= \mathbf{!n \cdotp Der_n},$$
%where  $\mathbf{Der_n}= n!\sum_{n=0}^{\infty} \frac{(-1)^n}{n!}$is the derangement number.

\end{proof}

%\begin{theorem}\label{Kp}
%For the given sum in theorem \ref{Sum}
  %  \begin{align*}
     %   \sum_{n=0} S_k(n)x^n= k!+ (k+1)!x +(k+2)!x^2+ (k+3)!x^3 \cdots+(k+n-1)!x^n
  %  \end{align*}
 %   multiplying through by $\dfrac{1}{n!}$ from theorem \ref{Add} gives
%\begin{align*}
%S_k(n)e^x = \sum_{n=0}S_k(n)\frac{x^n}{n!},
%& = S_k(n) + S_k(n)x + S_k(n)\frac{x^2}{2!}+\ldots + S_k(n)\frac{x^n}{n!}, 
%\end{align*}
   % if $k =0$
%\begin{align*}
%S_0(n)e^x = \sum_{n=0}S_0(n)\frac{x^n}{n!},\\
%& = S_0(n) + S_0(n)x + S_0(n)\frac{x^2}{2!}+\ldots + S_0(n)\frac{x^n}{n!}\\
%\mathbf{!n} e^x &= 0! +1!x +2!\frac{x^2}{2!} + \cdots +(n-1)!\frac{x^n}{n!},\\
%& = \mathbf{!n}\left(1+ x + \frac{x^2}{2!} + \frac{x^3}{3!} + \ldots +\frac{x^n}{n!} \right),
%\end{align*}
%is a polynomial function.
%\end{theorem}

%\begin{proof}
%From theorem \ref{Sum} and theorem \ref{Add}, if we set $k$ to $1$ we have 
%\begin{align*}
%\sum_{n=0} S_0(n)\frac{x^n}{n!}&=S_0(n)e^x = S_0(n)\left(1 + x +\frac{x^2}{2!}+ \frac{x^3}%{3!}+\cdots \right)\\
%& =  S_0(n) + S_0(n)x +\frac{S_0(n)x^2}{2!}+ \frac{S_0(n)x^3}{3!}+\cdots + \frac{S_0(n)x^n}%{n!}\\
%\mathbf{!n}e^x &= \mathbf{!n}\left(1 + x +\frac{x^2}{2!}+ \frac{x^3}{3!}+\cdots \right)\\
%& =  \mathbf{!n}+ \mathbf{!n}x +\frac{\mathbf{!n}x^2}{2!}+ \frac{\mathbf{!n}x^3}{3!}+\cdots + %\frac{\mathbf{!n}x^n}{n!},
%!ne^x &= [0! + 1! + 2! + 3! + \cdots + (n-1)1]e\\ 
%!ne^x &= 0!e^x +  1!e^x + 2!e^x + 3!e^x +\cdots + (n-1)!e^x\\
%S_0(n)e^x &= !ne^x = k(p)e^x\\
%S_k(n)e^x &= k!e^x +(k+1)!e^x + (k+2)!e^x + \cdots + (k+n-1)!e^x
%\end{align*}
 % the proof has ended.   
%\end{proof}

\subsection{Kurepa Sequence}
The Kurepa factorial has become a very interesting concept that has drawn much attention over the past 5 decades, authors like Don Zagier and Sun, Anne Gertsch and many more \cite{sun2011curious, gertsch1999congruences, gertsch1996some} has shown the connections btween the Kurepa factorial for primes to the Bell number, Derangement number and many more. In this subsection, I seek to investigate more the Kurepa factorials and to answer to some extent the questions posed in the introduction \ref{Solve}.

\begin{definition}\label{Seq}
    For all $n\in \mathbb{N}$, let  
    \begin{align*}
        \lbrace K_n\rbrace_{n\geq 1}&=\sum_{i=1}^{n}K_i=K_1 + K_2 + K_3 +K_4 +\cdots + K_n
%\lbrace K_n\rbrace_{n\geq 1}\cdotp e^{x}&= (K_1 + K_2 + K_3 +K_4 +\cdots + K_n)\cdotp e^{x}\\
%&= K_1\cdotp e^{x} + K_2 \cdotp e^{x}+ K_3 \cdotp e^{x}+K_4 \cdotp e^{x}+\cdots + K_n \cdotp e^{x}\\ 
    \end{align*}
 be the Kurepa sequence, 
where  $$K_n=\mathbf{!n}=\sum_{m=0}^{n-1}m!= 0! + 1!+ 2! + 3! + 4!+ 5!  + \cdots +(n-1)!=S_0(n).$$ with $m<n$.
\end{definition}
%\begin{proof}
 %   The proof of this is straight forward from theorem \ref{Add}.
%\end{proof}
\begin{theorem}\label{dobi}
For the series $\lbrace K_n\rbrace_{n\geq 1}\cdotp e^{x}$, the product,
 \begin{align*}
     \lbrace K_n\rbrace_{n\geq 1}\cdotp e^{x}&= (K_1 + K_2 + K_3 +K_4 +\cdots + K_n)\cdotp e^{x}
 \end{align*}
if we set $x=1$, then
 \begin{align*}
     \lbrace K_n\rbrace_{n\geq 1}\cdotp e&= (K_1e + K_2e + K_3e +K_4e +\cdots + K_ne)\\
     & =!1e + !2e + !3e + \cdots + !ne
 \end{align*}
 where 
$\mathbf{!n} \cdotp e = \left(0! +1! + 2! + 3! + \cdots + (n-1)!\right) e $.

\end{theorem}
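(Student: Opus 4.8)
The plan is to reduce the claimed identity directly to Definition \ref{Seq} together with the single-index identity already established in Theorem \ref{Add}, so that no new machinery is required. First I would recall from Definition \ref{Seq} that the Kurepa sequence is the finite partial sum $\lbrace K_n\rbrace_{n\geq 1}=\sum_{i=1}^{n}K_i=K_1+K_2+\cdots+K_n$, where each term satisfies $K_i=\mathbf{!i}=S_0(i)=\sum_{m=0}^{i-1}m!$. This fixes the left-hand side of the asserted product as an explicit finite sum of Kurepa factorials.

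Next I would multiply this finite sum by $e^{x}$ and invoke distributivity of scalar multiplication over a finite sum, writing $\lbrace K_n\rbrace_{n\geq 1}\cdot e^{x}=\sum_{i=1}^{n}K_i\,e^{x}$. For each individual index $i$, the product $K_i\,e^{x}=\mathbf{!i}\,e^{x}=S_0(i)\,e^{x}$ is precisely the object computed in Theorem \ref{Add}, whose expansion $S_0(i)\sum_{n\geq 0}x^{n}/n!$ is already in hand. Summing these per-term expansions over $i$ from $1$ to $n$ recovers the stated product expansion term by term.

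Finally I would specialise to $x=1$, where $e^{x}=e$, so that each summand becomes $K_i\,e=\mathbf{!i}\,e$ and the full sum reads $K_1e+K_2e+\cdots+K_ne=\mathbf{!1}\,e+\mathbf{!2}\,e+\cdots+\mathbf{!n}\,e$. This is exactly the claimed equality once one unfolds $\mathbf{!i}\,e=\bigl(0!+1!+2!+\cdots+(i-1)!\bigr)e$ from the definition of the Kurepa factorial.

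There is no substantive analytic obstacle here; the statement is essentially the linear extension of Theorem \ref{Add} from a single Kurepa factorial to a finite sum of them. The only point that genuinely needs care is bookkeeping: one must keep the running index $i$ of the outer summation distinct from the series index $n$ used inside each exponential expansion, since the paper overloads $n$ for both roles. Making that separation explicit is what turns the formal manipulation into a clean proof.
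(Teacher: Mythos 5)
Your proposal is correct and follows exactly the route the paper takes: the paper's own proof consists of the single remark that the result is straightforward from Definition \ref{Seq} and Theorem \ref{Add}, and your argument is precisely that reduction (distribute $e^{x}$ over the finite sum, apply Theorem \ref{Add} termwise, then set $x=1$), made explicit. Your added care about separating the outer index $i$ from the overloaded series index $n$ is a sensible clarification but not a departure from the paper's approach.
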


\begin{proof}
From definition \ref{Seq} and theorem \ref{Add} the proof of this is straightforward.
%\begin{align*}
%\mathbf{!n} \sum e^x &= \sum \frac{m!}{n!} x^n\\
%\sum \frac{m!}{n!} x^n & = \sum m!\left(1+ x + \frac{x^2}{2!} + \frac{x^3}{3!} + \ldots +\frac{x^n}{n!} \right)\\
%\sum \frac{m!}{n!} x^n & = \sum m!\left(1+ x + \frac{x^2}{1 \cdotp 2} + \frac{x^3}{1 \cdotp 2 \cdotp 3} + \ldots  \right) \quad \mbox{ if } x = 1\\
%&=\sum m!\cdotp \exp(1)= \sum m!\cdotp e 
%\end{align*}
%where
%\begin{align*}
%\sum m!\left(1 +1 + \frac{1}{1\cdotp 2}+ \frac{1}{1\cdotp 2 \cdotp 3}+ \frac{1}{1\cdotp 2 \cdotp 3 \cdotp 4}+ \right) = \mathbf{!n} \cdotp e\\
%\mathbf{!n} e = \left(0! +1! + 2! + 3! + \ldots + (n-1)!\right)e,
%\end{align*}
%this completes the proof.
    
\end{proof}

\begin{theorem}\label{Dk}
The series $ \lbrace K_n\rbrace_{n\geq 1}\cdotp e$ is the sum of the Dobinski numbers $(\mathbf{Dob_n})$, that is,
$$\lbrace K_n\rbrace_{n\geq 1}\cdotp e= \sum_{r=0}^{n} \Phi_r \mathbf{Dob_r}$$ where $\Phi_r$ is coefficients(constant). We remark that $\mathbf{Dob_0}=\mathbf{Dob_1}=e$, so starting $r$ at $0$ or $1$ does not change the equation. The table \ref{table 2} below gives some few partitions:
\begin{table}[!ht]
    \centering
    \begin{tabular}{|c|a|c|c|c|c|c|c|c|c|}\hline 
        $\mathbf{!n\cdotp \exp(1)}/ \mathbf{Dob_n}$& $\mathbf{Dob_0}$ & $\mathbf{Dob_1}$ & $\mathbf{Dob_2}$ & $\mathbf{Dob_3}$ &$\mathbf{Dob_4}$  &$\mathbf{Dob_5}$ & $\mathbf{Dob_6}$ \\\hline  \rowcolor{LightCyan}
         %$\mathbf{n!}$& 1 & 1 & 2 & 6 & 24 &120  & 720 & 5040 &40320 \\\hline
        $\mathbf{!1e}=1e$ &  & $\mathbf{Dob_1}$ &  &  &  &  &   \\\hline
         $\mathbf{!2e}=2e$ &  &  & $\mathbf{Dob_2}$ &  &  &  &   \\\hline
         $\mathbf{!3e}=4e$ &  &  & $2\mathbf{Dob_2}$ &  &  &  &  \\\hline
        % \rowcolor{LightCyan}
          $\mathbf{!4e}=10e$ &  &  &  &  $2\mathbf{Dob_3}$&  &  &  \\\hline
        $\mathbf{!5e}=34e$ &  &  & $2\mathbf{Dob_2}$ &  & $2\mathbf{Dob_4}$ &  &  \\\hline
        $\mathbf{!6e}=154e$ &  &  &$2\mathbf{Dob_2}$  & & $10\mathbf{Dob_4}$ &  &  \\\hline
         $\mathbf{!7e}=874e$&  &  & $\mathbf{Dob_2}$ &  &$4\mathbf{Dob_4}$  &  & $4\mathbf{Dob_6}$  \\\hline
        $\mathbf{!8e}=5914e$ &  &  &  & &$40\mathbf{Dob_4}$ & $\mathbf{Dob_5}$ &   \\\hline
         %$\mathbf{!9}$ & 1 &0  & 1 & 2 & 9 & 44 & 265 & 1854 &14833 \\\hline
         %$\mathbf{!10}$ &1 &1 & 2 & 5 & 15 & 52 & 203 & 877 & 4140 \\\hline
        %$\mathbf{!11}$& e  & 1e & 2e & 5e & 15e &52e  &203e  &877e  &4140e \\\hline
         %$\mathbf{!12}$ & 1 &  1&  2&  5&15  & 52 & 203e & 877e &4140e \\\hline
       % $\mathbf{!ne}$ & !0e & !1e & !2e & !3e & !4e & !5e & !6e & !7e & !8e\\\hline
       % $\mathbf{n! Bell_n}$ & e & 0 & 2e & 10e & 135e & 2288e & 53795e & 1625958e & 61408620e %\\\hline
        % $f(n)=\mathbf{invBell_n}$ & 1 & -1 & 0 & 1  & 1 & -2 & -9 & -9 & 50\\\hline
        %   &  &  &  &  &  &  &  &  & \\
    \end{tabular}
    \caption{Kurepa and Dobinski number}
    \label{table 2}
\end{table}
\end{theorem}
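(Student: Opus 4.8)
The plan is to reduce the claim to a statement about expressing integers as combinations of Bell numbers, and then to establish the existence of the coefficients $\Phi_r$ by an elementary arithmetic argument. First I would invoke Theorem \ref{dobi}, which already expands the Kurepa sequence as $\lbrace K_n\rbrace_{n\geq 1}\cdotp e = \mathbf{!1}e + \mathbf{!2}e + \cdots + \mathbf{!n}e$. Thus it suffices to exhibit, for each individual term $\mathbf{!i}\cdotp e$, a finite decomposition $\mathbf{!i}\cdotp e = \sum_{r} \phi^{(i)}_r \mathbf{Dob}_r$ with constant coefficients; summing these over $i$ and setting $\Phi_r = \sum_{i=1}^{n} \phi^{(i)}_r$ then yields the asserted form $\sum_{r=0}^{n} \Phi_r \mathbf{Dob}_r$.

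Next I would translate each term into the Bell basis. By Definition \ref{Bell} we have $\mathbf{Dob}_r = \mathbf{Bell}_r\cdotp e$, so after cancelling the common factor $e$ the problem becomes purely arithmetic: write the positive integer $\mathbf{!i}$ as a finite combination $\sum_r \phi^{(i)}_r \mathbf{Bell}_r$ of Bell numbers. Here I would use that $(\mathbf{Bell}_r)_{r\geq 0}$ is a strictly increasing sequence of positive integers with $\mathbf{Bell}_0 = \mathbf{Bell}_1 = 1$. Existence of a representation is then immediate, since already $\mathbf{!i} = \mathbf{!i}\cdotp\mathbf{Bell}_0$; more economical multi-term decompositions, such as those tabulated, arise by greedily subtracting the largest Bell number not exceeding the running remainder, a step that strictly decreases the remainder and terminates because $\mathbf{Bell}_0 = 1$ absorbs any residue. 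Either way the coefficients $\phi^{(i)}_r$, and hence the $\Phi_r$, exist. The remark that $\mathbf{Dob}_0 = \mathbf{Dob}_1 = e$ is exactly the statement $\mathbf{Bell}_0 = \mathbf{Bell}_1 = 1$, so whether the index $r$ begins at $0$ or $1$ is immaterial.

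The main obstacle, and the point requiring care, is that the theorem as stated is an \emph{existence} claim rather than a uniqueness one: the representation of an integer in terms of Bell numbers is far from unique, and the single-term choice $\Phi_0 = \mathbf{!i}$ already works. What the accompanying Table \ref{table 2} records is therefore one particular admissible decomposition among many, and I would make explicit that any such choice verifies the identity rather than asserting a canonical one. I would consequently state the existence cleanly, note the non-uniqueness, and remark that the tabulated $\Phi_r$ realise one concrete instance; verifying a given row then reduces to the routine check $\sum_r \Phi_r \mathbf{Bell}_r = \mathbf{!i}$ using the tabulated values of $\mathbf{Bell}_r$ and $\mathbf{!i}$ in Table \ref{table 1}.
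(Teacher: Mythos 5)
Your proposal is correct, and its skeleton matches the paper's: both begin by invoking Theorem \ref{dobi} to expand $\lbrace K_n\rbrace_{n\geq 1}\cdotp e$ into the individual terms $\mathbf{!1}e + \mathbf{!2}e + \cdots + \mathbf{!n}e$, decompose each term in Dobinski numbers, and collect coefficients. Where you genuinely differ is in how the per-term decomposition is justified. The paper simply exhibits ad hoc numerical decompositions of $\mathbf{!1}e$ through $\mathbf{!8}e$ (reproducing Table \ref{table 2}), sums them for $n=8$, and concludes that ``the coefficients depend on $n$'' --- that is, verification by example, with no argument that such decompositions exist for arbitrary $n$. You instead cancel the common factor $e$ via $\mathbf{Dob}_r=\mathbf{Bell}_r\cdotp e$ and give a general existence argument: since $\mathbf{Bell_0}=1$, the one-term representation $\mathbf{!i}=\mathbf{!i}\cdotp\mathbf{Bell_0}$ always works, and greedy subtraction of the largest Bell number below the remainder yields multi-term representations, terminating because $\mathbf{Bell_0}=1$ absorbs any residue. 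This buys a proof valid for all $n$, and it makes explicit what the paper leaves implicit: as stated, the theorem is a highly non-unique existence claim whose content is essentially trivial unless further constraints are placed on the $\Phi_r$, with Table \ref{table 2} recording only one admissible choice. Your ``routine check'' framework also catches a slip the paper's own proof does not: the row $\mathbf{!8}e=40\mathbf{Dob_4}+\mathbf{Dob_5}$ fails the check, since $40\cdotp 15e+52e=652e\neq 5914e$, and this error propagates into the paper's claimed total for $\lbrace K_8\rbrace_{n\geq 1}\cdotp e$.
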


\begin{proof}
From theorem \ref{dobi} and definition \ref{Seq},
\begin{align*}
 \lbrace K_n\rbrace_{n\geq 1}\cdotp e^{x}&= (K_1 + K_2 + K_3 +K_4 +\cdots + K_n)\cdotp e^{x}\\
 &=K_1\cdotp e^{x} + K_2 \cdotp e^{x}+ K_3 \cdotp e^{x}+K_4 \cdotp e^{x}+\cdots + K_n \cdotp e^{x}\\ 
 \lbrace K_n\rbrace_{n\geq 1}\cdotp e &=K_1\cdotp e + K_2 \cdotp e + K_3 \cdotp e +K_4 \cdotp e +\cdots + K_n \cdotp e\\ 
%\mathbf{!n}\cdotp e & = \mathbf{!n}\left(1 + 1 + \frac{1}{1\cdotp 2}+ \frac{1}{1\cdotp 2 \cdotp 3} +\cdots\right)\\
%&= \left(0! +1! + 2! + 3! + \ldots + (n-1)!\right)e\\
&= !1\cdotp e +!2\cdotp e + !3\cdotp e + !4\cdotp e + \cdots + \mathbf{!n}\cdotp e\\ \mbox{where} \quad
& K_1e=!1e = 1\cdotp e = \mathbf{Dob_1}= \mathbf{Dob_0}=e\\
& K_2e=!2e = 2\cdotp e = \mathbf{Dob_2}\\
&K_3e=!3e = 4\cdotp e = 2(2e) =  2\mathbf{Dob_2}\\
&K_4e=!4e = 10 \cdotp e = 2(5e) =  2\mathbf{Dob_3}\\
& K_5e=!5 e = 34 \cdotp e = 2(15e+2e) =  2\mathbf{Dob_4}+ 2\mathbf{Dob_2}\\
&K_6e=!6 e = 154e = 10(15e) + 4e = \mathbf{10 Dob_4} + 2 \mathbf{Dob_2}\\
&K_7e=!7e = 874e = 4(203e) + 4(15e) + 2e = 4 \mathbf{Dob_6} + 4 \mathbf{Dob_4} + \mathbf{Dob_2}\\
& K_8e=!8e = 5914e = 40(15e) + 52e=40 \mathbf{Dob_4} + \mathbf{Dob_5}\\
&\vdots \quad \quad \quad \quad \vdots \quad \quad \quad\vdots \quad \quad \quad \quad \quad \quad \quad\vdots \quad \quad
\end{align*}
we take $n=8$ this leads to
\begin{align*}
 \lbrace K_8\rbrace_{n\geq 1}\cdotp \exp(1) &= 0!\cdotp e +1!\cdotp e + 2!\cdotp e + 3!\cdotp e + \ldots + (n-1)!\cdotp e\\
 \lbrace K_8\rbrace_{n\geq 1}\cdotp e &= 1 e + 2e + 4e + 10e+34e+ 154e+ 874e+ 5914e \cdots\\
 &= 1 e + 2e + 2(2e) + 2(5e) +2(15e+2e)+ 10(15e) + 4e +  4(203e)\\
 &+ 4(15e) + 2e +  40(15e) + 52e\\
&=e +8(2e) + 2(5e)+56(15e)+ 52e + 4(203e) \\ \mbox{which yields} \quad\\
 \lbrace K_8\rbrace_{n\geq 1}\cdotp e&= \mathbf{Dob_1} + 8\mathbf{Dob_2} + 2\mathbf{Dob_3} +56\mathbf{Dob_4} + \mathbf{Dob_5}+ 4\mathbf{Dob_6}
%!n&= \frac{\mathbf{Dob_1} + 5\mathbf{Dob_2} + 2\mathbf{Dob_3} +2\mathbf{Dob_4}}{\exp(1)}\\
%!n  &= \mathbf{Bell_1}+ 5 \mathbf{Bell_2}  + 2 \mathbf{Bell_3} + 2\mathbf{Bell_4}+\cdots\\
%!n& =  Bell_0 + 5 Bell_2  + 2 Bell_3 + 2Bell_4 + \cdots\\
%\textbf{thus}\\
%k(p) &= ! n = \mathbf{Bell_1}+ 5 \mathbf{Bell_2}  + 2 \mathbf{Bell_3} + 2\mathbf{Bell_4}+\cdots
\end{align*}
we notice this sequence depends on the value of $n$ to determine the coefficients $\Phi$, thus this completes the proof.  
\end{proof}

\begin{theorem}\label{Belo}
The Kurepa sequence  $\lbrace K_n\rbrace_{n\geq 1}$ is the sum of the Bell numbers $ \mathbf{Bell_n}$. 
\end{theorem}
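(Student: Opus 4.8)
The plan is to derive this statement directly from Theorem \ref{Dk} together with the Dobinski--Bell identity of Definition \ref{Bell}, so that essentially all of the combinatorial work has already been carried out. Theorem \ref{Dk} expresses the scaled Kurepa sequence as an integer combination of Dobinski numbers,
\begin{align*}
\lbrace K_n\rbrace_{n\geq 1}\cdot e = \sum_{r=0}^{n}\Phi_r\,\mathbf{Dob_r},
\end{align*}
with explicit coefficients $\Phi_r$ (for instance $\Phi=(0,1,8,2,56,1,4)$ when $n=8$). My intention is simply to transport this relation across the factor $e$.

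First I would recall from Definition \ref{Bell} that each Dobinski number factors as $\mathbf{Dob_r}=\mathbf{Bell_r}\cdot e$. Substituting this into the display above gives
\begin{align*}
\lbrace K_n\rbrace_{n\geq 1}\cdot e = \sum_{r=0}^{n}\Phi_r\,\mathbf{Bell_r}\cdot e = e\sum_{r=0}^{n}\Phi_r\,\mathbf{Bell_r}.
\end{align*}
Since $e\neq 0$, cancelling the common factor $e$ from both sides yields
\begin{align*}
\lbrace K_n\rbrace_{n\geq 1} = \sum_{r=0}^{n}\Phi_r\,\mathbf{Bell_r},
\end{align*}
which exhibits the Kurepa sequence as a weighted sum of Bell numbers carrying exactly the same coefficients $\Phi_r$ that appeared in the Dobinski decomposition. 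This is precisely the content of the theorem.

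The genuinely substantive step is therefore not the cancellation but the determination of the coefficients $\Phi_r$, which is what Theorem \ref{Dk} supplies term by term; the present theorem is then a formal consequence of that result and of the scaling $\mathbf{Dob_r}=e\,\mathbf{Bell_r}$. The point I would be most careful about is the bookkeeping of those coefficients: one must check that each left factorial $\mathbf{!m}\cdot e$ is correctly resolved into Dobinski numbers and that the contributions are aggregated consistently across $m=1,\dots,n$, since an arithmetic slip in any single decomposition propagates into the final Bell-number coefficients. As an independent consistency check I would verify the identity against the closed double-sum form
\begin{align*}
\lbrace K_n\rbrace_{n\geq 1}=\sum_{i=1}^{n}\mathbf{!i}=\sum_{m=0}^{n-1}(n-m)\,m!,
\end{align*}
obtained by interchanging the order of summation, and confirm numerically that $\sum_{r}\Phi_r\,\mathbf{Bell_r}$ reproduces this value for each $n$.
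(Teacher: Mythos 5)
Your proposal is correct and follows essentially the same route as the paper: the paper's own proof also takes the Dobinski decomposition of Theorem \ref{Dk} (worked out explicitly for $n=8$) and divides through by $e$, using $\mathbf{Bell_r}=\mathbf{Dob_r}/e$ from Definition \ref{Bell} to convert each term. Your version merely states the cancellation in general form and adds a numerical consistency check, which matches the paper's argument in substance.
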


\begin{proof}
From Theorem \ref{Dk} and using $n=8$ we have
    \begin{align}
        \lbrace K_8\rbrace_{n\geq 1}\cdotp e &= \mathbf{Dob_1} + 8\mathbf{Dob_2} + 2\mathbf{Dob_3} +56\mathbf{Dob_4} + \mathbf{Dob_5}+ 4\mathbf{Dob_6}\\ \nonumber
       \lbrace K_8\rbrace_{n\geq 1} &= \dfrac{ \mathbf{Dob_1} + 8\mathbf{Dob_2} + 2\mathbf{Dob_3} +56\mathbf{Dob_4} + \mathbf{Dob_5}+ 4\mathbf{Dob_6}}{\exp(1)}\\ \nonumber
         \lbrace K_8\rbrace_{n\geq 1} &= \dfrac{\mathbf{Dob_1} }{e} + 8\dfrac{\mathbf{Dob_2}}{e} +2\dfrac{\mathbf{Dob_3}}{e}+ 56\dfrac{\mathbf{Dob_4}}{e}+ \dfrac{\mathbf{Dob_5}}{e}+ 4\dfrac{\mathbf{Dob_6}}{e}\\
    &= \mathbf{Bell_1}+ 5 \mathbf{Bell_2}  + 2 \mathbf{Bell_3} + 56\mathbf{Bell_4} + \mathbf{Bell_5} + 4\mathbf{Bell_6} \quad
\mbox{thus}\\ \nonumber
 \lbrace K_8\rbrace_{n\geq 1} &= \mathbf{Bell_1}+ 8 \mathbf{Bell_2}  + 2 \mathbf{Bell_3} + 56\mathbf{Bell_4}+ \mathbf{Bell_5} + 4\mathbf{Bell_6} \\
 &\mbox{there are coefficients constant that depends on the vlaue of $n$}.
    \end{align}
    Also, it is easy to see that the Bell numbers can be expressed in Stirling numbers of the second kind, thus
    $$\lbrace K_8\rbrace_{n\geq 1}= \sum S(1,k) + 8  \sum S(2,k)  + 2  \sum S(3,k)+ 56  \sum S(4,k)+ \sum S(5,k) + 4  \sum S(6,k) $$
\end{proof}

\begin{theorem}\label{ok}
    The product of the Kurepa sequence $ \lbrace K_n\rbrace_{n\geq 1}$ and the ordinary factorial numbers $n!$ is the sum of the product of the derangement numbers with the Dobinski numbers, that is, 
        $$\lbrace K_n\rbrace_{n\geq 1} \cdotp n!= n!\sum_{r=0}^{n} \dfrac{\Phi_r \mathbf{Dob_r}}{e} = \dfrac{n!}{e}  \sum_{r=0}^{n} \Phi_r \mathbf{Dob_r}=\sum_{r=0}^{n} \Phi_r (\mathbf{Der_r \cdotp Dob_r})$$
        where $\Phi_r$ is coefficient(constant) of the $\mathbf{Der_n \cdotp Dob_n}$ .
\end{theorem}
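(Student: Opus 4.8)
The plan is to build directly on Theorem~\ref{Dk}, which already expresses the scaled Kurepa sequence $\lbrace K_n\rbrace_{n\geq 1}\cdotp e$ as a $\Phi$-weighted sum of Dobinski numbers, and then to convert the factor $n!/e$ produced by the multiplication into a derangement number via Clarke's identity $\mathbf{Der_n}=n!/e$ recorded in the introduction. The whole argument is a short chain of algebraic rewritings anchored at two facts already available in the paper, so there is no genuinely new analytic content to develop; the work is entirely bookkeeping.

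First I would take the identity of Theorem~\ref{Dk},
\begin{align*}
\lbrace K_n\rbrace_{n\geq 1}\cdotp e &= \sum_{r=0}^{n}\Phi_r\,\mathbf{Dob_r},
\end{align*}
and divide through by $e$ to isolate the bare sequence, $\lbrace K_n\rbrace_{n\geq 1}=\frac{1}{e}\sum_{r=0}^{n}\Phi_r\,\mathbf{Dob_r}$. Multiplying both sides by the ordinary factorial $n!$, which is constant with respect to the summation index $r$ and may therefore be passed under the summation sign, yields
\begin{align*}
\lbrace K_n\rbrace_{n\geq 1}\cdotp n! &= \frac{n!}{e}\sum_{r=0}^{n}\Phi_r\,\mathbf{Dob_r},
\end{align*}
which already establishes the first two equalities of the statement.

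Next I would invoke Clarke's derangement identity $\mathbf{Der_n}=n!\sum_{i=0}^{n}\frac{(-1)^i}{i!}=n!/e$ from the introduction to replace $n!/e$, obtaining $\lbrace K_n\rbrace_{n\geq 1}\cdotp n! = \mathbf{Der_n}\sum_{r=0}^{n}\Phi_r\,\mathbf{Dob_r}=\sum_{r=0}^{n}\Phi_r\,\mathbf{Der_n}\cdotp\mathbf{Dob_r}$. As an internal consistency check I would use $\mathbf{Dob_r}=\mathbf{Bell_r}\,e$ from Definition~\ref{Bell} to rewrite each summand as $\Phi_r\,n!\,\mathbf{Bell_r}$, confirming the two presentations agree, and I would verify the concrete case $n=8$ against the coefficients $1,8,2,56,1,4$ on $\mathbf{Dob_1},\dots,\mathbf{Dob_6}$ already computed in Theorem~\ref{Dk}.

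The step needing the most care, and the main obstacle, is the index carried by the derangement factor. Because $n!$ is independent of $r$, the factor that emerges from the sum is $\mathbf{Der_n}$ with the fixed index $n$ (the length of the Kurepa partial sum), and not a term-dependent $\mathbf{Der_r}$: reading the right-hand side literally as $\sum_{r}\Phi_r\,\mathbf{Der_r}\,\mathbf{Dob_r}$ would force $n!/e=\mathbf{Der_r}$ simultaneously for every $r$ with $\Phi_r\neq 0$, i.e.\ $n!=r!$, which fails whenever $r\neq n$. I would therefore state the conclusion with the derangement index pinned to $n$, pulling the single factor $\mathbf{Der_n}$ out of the sum, and flag the matched-index form as the correct reading of the final equality.
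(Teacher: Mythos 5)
Your proof is correct and follows essentially the same route as the paper's: decompose $\lbrace K_n\rbrace_{n\geq 1}\cdot e$ into Dobinski numbers via Theorem~\ref{Dk}, pass $n!$ under the sum, and convert $n!/e$ into a derangement number by Clarke's identity. The one place you depart from the paper is exactly the point you flag at the end, and your instinct there is right: the paper's own proof performs the index-mismatched substitution you warn against, rewriting
\begin{align*}
n!e^{-1}\mathbf{Dob_1} + 8(n!e^{-1})\mathbf{Dob_2} + \cdots \quad\text{as}\quad \mathbf{Der_1}\,\mathbf{Dob_1} + 8\,\mathbf{Der_2}\,\mathbf{Dob_2} + \cdots,
\end{align*}
which under the paper's convention $\mathbf{Der_r}=r!/e$ would force $n!=r!$ for every $r$ appearing with $\Phi_r\neq 0$ --- false except at $r=n$. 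So the theorem's final equality as printed, and the paper's derivation of it, are only valid under your corrected reading, with the single factor $\mathbf{Der_n}$ pulled out of the sum: $\lbrace K_n\rbrace_{n\geq 1}\cdot n! = \mathbf{Der_n}\sum_{r}\Phi_r\,\mathbf{Dob_r}$. The only alternative repair would be to let the coefficients in the last sum absorb the ratios $n!/r!$, but then they are no longer the $\Phi_r$ supplied by Theorem~\ref{Dk}, contrary to the statement. Your proposal therefore does not merely reproduce the paper's argument; it identifies and fixes a genuine index error that the paper's proof glosses over.
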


\begin{proof}
Let
\begin{align*}
n!e^{-x}& = n!\left( 1-x + \frac{x^{2}}{2!} - \frac{x^3}{3!} + \frac{x^4}{4!} + \cdots + (-1)^i\frac{x^i}{i!}\right)\mbox{if $x=1$ then}\\
n!e^{-1}& = n!\left( 1- 1 + \frac{1^{2}}{2!} - \frac{1^3}{3!} + \frac{1^4}{4!} + \cdots + \frac{(-1)^i}{i!}\right)\\
\frac{n!}{e} &= n!\left( 1-1 + \frac{1^2}{2!} - \frac{1^3}{3!} + \frac{1^4}{4!}+ \cdots +\frac{(-1)^i}{i!} \right)\\
&\mbox{it is well known that, the derangement \cite{clarke1993derangements}}\\
\mathbf{Der_n}&= \frac{n!}{e} = n! \left( 1-1 + \frac{1^2}{2!} - \frac{1^3}{3!} + \frac{1^4}{4!}+ \cdots +\frac{(-1)^i}{i!} \right)\\
\mathbf{Der_n}&= \frac{n!}{e} = n!\sum_{i=0}^n\frac{(-1)^i}{i!} = n!e^{-1}
\end{align*}
from Theorem \ref{Belo} and for $n=8$
  \begin{align*}
      %\mathbf{!n} \cdotp e &= \mathbf{Dob_1} + 8\mathbf{Dob_2} + 2\mathbf{Dob_3} +56\mathbf{Dob_4} + \mathbf{Dob_5}+ 4\mathbf{Dob_6}+ \cdots\\ \nonumber
      % \mathbf{!n} &= \dfrac{ \mathbf{Dob_1} + 8\mathbf{Dob_2} + 2\mathbf{Dob_3} +56\mathbf{Dob_4} + \mathbf{Dob_5}+ 4\mathbf{Dob_6}+ \cdots}{\exp(1)}\\ \nonumber
      \textbf{Kurepa sequence} \cdotp n!&=  \lbrace K_8\rbrace_{n\geq 1} \cdotp n! \\
      &=n! \left(  \dfrac{\mathbf{Dob_1} }{e} + 8\dfrac{\mathbf{Dob_2}}{e} +2\dfrac{\mathbf{Dob_3}}{e}+ 56\dfrac{\mathbf{Dob_4}}{e}+ \dfrac{\mathbf{Dob_5}}{e}+ 4\dfrac{\mathbf{Dob_6}}{e}\right) \\
   %\lbrace K_n\rbrace_{n\geq 1} \cdotp r! 
   &=  n!\dfrac{\mathbf{Dob_1} }{e} + 8\cdotp n!\dfrac{\mathbf{Dob_2}}{e} +2\cdotp n!\dfrac{\mathbf{Dob_3}}{e}+ 56\cdotp n!\dfrac{\mathbf{Dob_4}}{e}\\
  & + n!\dfrac{\mathbf{Dob_5}}{e}+ 4\cdotp n!\dfrac{\mathbf{Dob_6}}{e}\\
   %\lbrace K_n\rbrace_{n\geq 1}\cdotp r! 
   &= n!e^{-1}\mathbf{Dob_1} + 8(n!e^{-1})\mathbf{Dob_2} + 2(n!e^{-1})\mathbf{Dob_3} + 56(n!e^{-1})\mathbf{Dob_4} \\
  & \quad + (n!e^{-1})\mathbf{Dob_5}+ 4(n!e^{-1})\mathbf{Dob_6}\\
   %\lbrace K_n\rbrace_{n\geq 1} \cdotp r! 
   &= \mathbf{Der_1} \mathbf{Dob_1} + 8\mathbf{Der_2} \mathbf{Dob_2} + 2 \mathbf{Der_3} \mathbf{Dob_3} +56 \mathbf{Der_4} \mathbf{Dob_4} \\ 
  &\quad+ \mathbf{Der_5} \mathbf{Dob_5}+ 4 \mathbf{Der_6} \mathbf{Dob_6}
   % \mathbf{!n} &= \mathbf{Bell_1}+ 5 \mathbf{Bell_2}  + 2 \mathbf{Bell_3} + 2\mathbf{Bell_4}+\cdots \quad
%\mbox{thus}\\ \nonumber
%\mathbf{Kurepa} &= \mathbf{!n} = \mathbf{Bell_1}+ 8 \mathbf{Bell_2}  + 2 \mathbf{Bell_3} + 56\mathbf{Bell_4}+ \mathbf{Bell_5} + 4\mathbf{Bell_6} + \cdots
    \end{align*}
the proof is immediate.
\end{proof}

\begin{theorem}\label{nine}
    The product of the ordinary factorial numbers $k!$ and the sum of Bell numbers $\mathbf{Bell_n}$ is the sum of the product of the derangement numbers with the Dobinski numbers $\mathbf{Der_n \cdotp Dob_n}$, that is,
    $$ k! \sum_{k=1}^{n}  \mathbf{Bell_k}=\sum_{r=1}^{n} \Phi_r(\mathbf{Der_r \cdotp Dob_r}).$$
\end{theorem}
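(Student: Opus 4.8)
The plan is to prove the stated identity term-by-term: I will establish the pointwise relation $r!\,\mathbf{Bell_r}=\mathbf{Der_r}\cdot\mathbf{Dob_r}$ at each index $r$, and then sum it against the same Kurepa coefficients $\Phi_r$ that already appear in the decomposition of Theorem \ref{Belo}. Since both sides of the claimed equation are indexed sums over $r$ carrying the identical coefficients $\Phi_r$, it suffices to match the summands one index at a time.

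First I would recall the two ingredients that are already available in the excerpt. From Definition \ref{Bell} we have $\mathbf{Bell_r}=\mathbf{Dob_r}/e$, equivalently $\mathbf{Dob_r}=e\,\mathbf{Bell_r}$. From the derangement computation carried out inside the proof of Theorem \ref{ok} we have $\mathbf{Der_r}=r!/e$, equivalently $r!=e\,\mathbf{Der_r}$. Multiplying the factorial by the Bell number at the common index $r$ and substituting both relations yields
\begin{align*}
r!\,\mathbf{Bell_r}&=(e\,\mathbf{Der_r})\left(\frac{\mathbf{Dob_r}}{e}\right)=\mathbf{Der_r}\cdot\mathbf{Dob_r},
\end{align*}
so the two factors of $e$ cancel and the required pointwise identity holds for every $r$.

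It then remains to weight this identity by $\Phi_r$ and sum over $r$ from $1$ to $n$, exactly in parallel with Theorem \ref{ok} and Theorem \ref{Belo}:
\begin{align*}
\sum_{r=1}^{n}\Phi_r\,r!\,\mathbf{Bell_r}&=\sum_{r=1}^{n}\Phi_r\,\mathbf{Der_r}\cdot\mathbf{Dob_r}.
\end{align*}
The left-hand side is precisely the factorial-weighted sum of Bell numbers written compactly as $k!\sum_{k=1}^{n}\mathbf{Bell_k}$ in the statement, and the right-hand side is $\sum_{r=1}^{n}\Phi_r(\mathbf{Der_r}\cdot\mathbf{Dob_r})$, which is the desired conclusion.

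The only genuine subtlety — and the step I would be most careful about — is the index bookkeeping concealed in the notation $k!$. In Theorem \ref{ok} the symbol $n!$ is used uniformly across the terms, yet the cancellation $r!/e=\mathbf{Der_r}$ really requires the factorial to carry the same index $r$ as the Dobinski (or Bell) factor it multiplies. Making this index-matching explicit, so that the factorial attached to $\mathbf{Bell_r}$ is $r!$ rather than a fixed $n!$, is exactly what forces the $e$'s to cancel cleanly; once that convention is stated, the identity is immediate and nothing further is needed.
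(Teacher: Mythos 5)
Your proposal is correct and follows essentially the same route as the paper's own proof: both substitute $\mathbf{Bell_r}=\mathbf{Dob_r}/e$ and $\mathbf{Der_r}=r!/e$ into the $\Phi_r$-weighted Kurepa decomposition of Theorem \ref{Belo} and cancel the factors of $e$ termwise. Your explicit insistence that the factorial carry the same index $r$ as the Bell/Dobinski factor it multiplies is in fact cleaner than the paper's version, which keeps a fixed $k!$ (and even slips into $n!$ mid-computation) while still identifying $k!e^{-1}$ with $\mathbf{Der_r}$.
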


\begin{proof}
From Theorem \ref{Sum} and Theorem \ref{ok}, we can see that,
\begin{align*}
k! \mathbf{Bell_n}&= k!(e^{-1})\mathbf{Dob_n}\\
k! \mathbf{Bell_n}&= k! e^{-1}\mathbf{Dob_n} = \mathbf{Der_n}\cdotp \mathbf{Dob_n}\quad (\mbox{for the sum of $\mathbf{Bell_n}$})\\
%n!Bell_n &= \mathbf{Der_n} \cdotp \mathbf{Dob_n} \textbf{ Identity connecting }
%\end{align*}
%\begin{align}
     %  \mathbf{!n} \cdotp e &= \mathbf{Dob_1} + 8\mathbf{Dob_2} + 2\mathbf{Dob_3} +56\mathbf{Dob_4} + \mathbf{Dob_5}+ 4\mathbf{Dob_6}+ \cdots\\ \nonumber
     %  \mathbf{!n} &= \dfrac{ \mathbf{Dob_1} + 8\mathbf{Dob_2} + 2\mathbf{Dob_3} +56\mathbf{Dob_4} + \mathbf{Dob_5}+ 4\mathbf{Dob_6}+ \cdots}{\exp(1)}\\ \nonumber
     %   \mathbf{!n} &= \dfrac{\mathbf{Dob_1} }{e} + 8\dfrac{\mathbf{Dob_2}}{e} +2\dfrac{\mathbf{Dob_3}}{e}+ 56\dfrac{\mathbf{Dob_4}}{e}+ \dfrac{\mathbf{Dob_5}}{e}+ 4\dfrac{\mathbf{Dob_6}}{e}+ \cdots\\
   k! \sum_{r=1}^{n} \mathbf{Bell_r}&=k!\left(\mathbf{Bell_1}+ 8 \mathbf{Bell_2}  + 2 \mathbf{Bell_3} + 56\mathbf{Bell_4} + \mathbf{Bell_5} + 4\mathbf{Bell_6} \right)\\
    % &=\dfrac{\mathbf{Dob_1} }{e} + 8\dfrac{\mathbf{Dob_2}}{e} +2\dfrac{\mathbf{Dob_3}}{e}+ 56\dfrac{\mathbf{Dob_4}}{e}+ \dfrac{\mathbf{Dob_5}}{e}+\cdots\\ \nonumber
&= k!e^{-1}\mathbf{Dob_1} + 8(k!e^{-1})\mathbf{Dob_2} + 2(k!e^{-1})\mathbf{Dob_3} + 56(k!e^{-1})\mathbf{Dob_4}\\ \nonumber
 & \quad + (n!e^{-1})\mathbf{Dob_5}+ 4(n!e^{-1})\mathbf{Dob_6}\\
&= \mathbf{Der_1} \mathbf{Dob_1} + 8\mathbf{Der_2} \mathbf{Dob_2} + 2 \mathbf{Der_3} \mathbf{Dob_3} +56 \mathbf{Der_4} \mathbf{Dob_4}\\
 &\quad+ \mathbf{Der_5} \mathbf{Dob_5}+ 4 \mathbf{Der_6} \mathbf{Dob_6}
    \end{align*}
this finishes the proof.    
\end{proof}
The following consequence is immediate as a corollary;
\begin{corollary}
    For all nonnegative integers $n\quad\mbox{and}\quad k$,
    $$\lbrace K_n\rbrace_{n\geq 1} \cdotp n!=k! \sum_{r=1}^{n}\mathbf{Bell_r}.$$
\end{corollary}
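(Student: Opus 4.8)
The plan is to obtain the identity by transitivity, chaining the two immediately preceding theorems through the common intermediate expression they share. Both Theorem \ref{ok} and Theorem \ref{nine} terminate in the same weighted sum of products of derangement and Dobinski numbers, namely $\sum_{r=1}^{n}\Phi_r(\mathbf{Der_r}\cdot\mathbf{Dob_r})$, so if the coefficient families $\Phi_r$ appearing in the two theorems genuinely coincide, then the two left-hand sides must be equal and the corollary is forced. No new computation is needed beyond invoking these results.

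First I would recall from Theorem \ref{ok} that
$$\lbrace K_n\rbrace_{n\geq 1}\cdot n! = \sum_{r=1}^{n}\Phi_r(\mathbf{Der_r}\cdot\mathbf{Dob_r}),$$
where the $\Phi_r$ are exactly the coefficients produced by the Dobinski and Bell decompositions of the Kurepa sequence established in Theorems \ref{Dk} and \ref{Belo}. Next I would recall from Theorem \ref{nine} that
$$k!\sum_{r=1}^{n}\mathbf{Bell_r} = \sum_{r=1}^{n}\Phi_r(\mathbf{Der_r}\cdot\mathbf{Dob_r}),$$
using the same set of coefficients $\Phi_r$. Equating the two right-hand sides then yields directly
$$\lbrace K_n\rbrace_{n\geq 1}\cdot n! = k!\sum_{r=1}^{n}\mathbf{Bell_r},$$
which is the asserted identity.

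The main obstacle is not the transitivity step itself, which is purely formal, but rather verifying that the coefficient families $\Phi_r$ in the two theorems are genuinely identical and that the factorial indices are compatible. The key algebraic fact underpinning both theorems is the relation $\mathbf{Der_r}\cdot\mathbf{Dob_r} = (r!/e)\cdot\mathbf{Dob_r} = r!\,\mathbf{Bell_r}$, which converts the Bell-number form of the decomposition into the derangement-Dobinski form. I would therefore take care to fix the range of summation consistently and to align the factorial $n!$ multiplying the Kurepa sequence with the factorial $k!$ multiplying the Bell sum, since the term-by-term identification $k!\,e^{-1}\mathbf{Dob_r} = \mathbf{Der_r}\,\mathbf{Dob_r}$ rests on matching these factorial weights. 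Once this bookkeeping is pinned down, the corollary follows as an immediate consequence of Theorems \ref{ok} and \ref{nine}.
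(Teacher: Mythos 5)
Your proposal matches the paper's own proof, which likewise derives the corollary by combining Theorem \ref{ok} and Theorem \ref{nine} and equating their common right-hand side $\sum_{r}\Phi_r(\mathbf{Der_r}\cdot\mathbf{Dob_r})$. Your additional care about verifying that the coefficient families $\Phi_r$ and the factorial weights coincide is a sensible elaboration of what the paper simply declares trivial, but the route is the same.
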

\begin{proof}
    From Theorem \ref{ok} and Theorem \ref{nine} the proof of this is trivial.
\end{proof}
\begin{theorem}\label{Kprove}
    The Kurepa sequence $$ \lbrace K_n\rbrace_{n\geq 1}=\sum_{r=1}^{n} \Phi_r \mathbf{Bell_r}=\sum_{r=1}^{n} \Phi_r \sum_{k\geq 1}^{r} S(r,k),$$ where $S(r,k)$ is  the Stirling numbers of the second kind and the table \ref{table 3} below shows some few partition sequence.
We remark that $\mathbf{Bell_0}=\mathbf{Bell_1}=1$, so starting $r=1$ does not change the equation.
\begin{table}[!ht]
    \centering
    \begin{tabular}{|c|a|c|c|c|c|c|c|c|c|}\hline 
        $\mathbf{!n/ \mathbf{Bell_n}}$ & $\mathbf{Bell_0}$ & $\mathbf{Bell_1}$ & $\mathbf{Bell_2}$ & $\mathbf{Bell_3}$ &$\mathbf{Bell_4}$  &$\mathbf{Bell_5}$ & $\mathbf{Bell_6}$ \\\hline  \rowcolor{LightCyan}
         %$\mathbf{n!}$& 1 & 1 & 2 & 6 & 24 &120  & 720 & 5040 &40320 \\\hline
        $\mathbf{!1}=1$ &  & $\mathbf{Bell_1}$ &  &  &  &  &   \\\hline
         $\mathbf{!2}=2$ &  &  & $\mathbf{Bell_2}$ &  &  &  &   \\\hline
         $\mathbf{!3}=4$ &  &  & $2\mathbf{Bell_2}$ &  &  &  &  \\\hline
        % \rowcolor{LightCyan}
          $\mathbf{!4}=10$ &  &  &  &  $2\mathbf{Bell_3}$&  &  &  \\\hline
        $\mathbf{!5}=34$ &  &  & $2\mathbf{Bell_2}$ &  & $2\mathbf{Bell_4}$ &  &  \\\hline
        $\mathbf{!6}=154$ &  &  &$2\mathbf{Bell_2}$  & & $10\mathbf{Bell_4}$ &  &  \\\hline
         $\mathbf{!7}=874$&  &  & $\mathbf{Bell_2}$ &  &$4\mathbf{Bell_4}$  &  & $4\mathbf{Bell_6}$  \\\hline
        $\mathbf{!8}=5914$ &  &  &  & &$40\mathbf{Bell_4}$ & $\mathbf{Bell_5}$ &   \\\hline
         %$\mathbf{!9}$ & 1 &0  & 1 & 2 & 9 & 44 & 265 & 1854 &14833 \\\hline
         %$\mathbf{!10}$ &1 &1 & 2 & 5 & 15 & 52 & 203 & 877 & 4140 \\\hline
        %$\mathbf{!11}$& e  & 1e & 2e & 5e & 15e &52e  &203e  &877e  &4140e \\\hline
         %$\mathbf{!12}$ & 1 &  1&  2&  5&15  & 52 & 203e & 877e &4140e \\\hline
       % $\mathbf{!ne}$ & !0e & !1e & !2e & !3e & !4e & !5e & !6e & !7e & !8e\\\hline
       % $\mathbf{n! Bell_n}$ & e & 0 & 2e & 10e & 135e & 2288e & 53795e & 1625958e & 61408620e %\\\hline
        % $f(n)=\mathbf{invBell_n}$ & 1 & -1 & 0 & 1  & 1 & -2 & -9 & -9 & 50\\\hline
        %   &  &  &  &  &  &  &  &  & \\
    \end{tabular}
    \caption{Kurepa and Bell number}
    \label{table 3}
\end{table}

\end{theorem}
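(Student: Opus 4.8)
The plan is to obtain this statement as an immediate consequence of the Dobinski decomposition already established in Theorem \ref{Dk}, combined with two identities recorded in the preliminaries: the defining relation $\mathbf{Bell_r}=\mathbf{Dob_r}/\exp(1)$ from Definition \ref{Bell}, and the Bell--Stirling identity $\mathbf{Bell_n}=\sum_{k=1}^{n}S(n,k)$ from Section \ref{sec}. First I would invoke Theorem \ref{Dk}, which supplies integer coefficients $\Phi_r$ (depending on $n$) with
\begin{align*}
\lbrace K_n\rbrace_{n\geq 1}\cdotp \exp(1)=\sum_{r=1}^{n}\Phi_r\mathbf{Dob_r}.
\end{align*}
This is the structural core of the argument: each term $\mathbf{!m}\cdotp e$ admits a Dobinski expansion, and summing over $m=1,\dots,n$ while collecting like $\mathbf{Dob_r}$ terms produces exactly the coefficients $\Phi_r$ displayed in Table \ref{table 2}.

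Next I would divide both sides by $\exp(1)$ and apply $\mathbf{Dob_r}/\exp(1)=\mathbf{Bell_r}$ termwise, which yields
\begin{align*}
\lbrace K_n\rbrace_{n\geq 1}=\sum_{r=1}^{n}\Phi_r\mathbf{Bell_r},
\end{align*}
recovering precisely the conclusion of Theorem \ref{Belo}; the remark that $\mathbf{Bell_0}=\mathbf{Bell_1}=1$ justifies beginning the index at $r=1$ without altering the sum. Finally, substituting the Bell--Stirling identity into each $\mathbf{Bell_r}$ gives the second equality $\sum_{r=1}^{n}\Phi_r\sum_{k\geq 1}^{r}S(r,k)$, and Table \ref{table 3} is obtained from Table \ref{table 2} simply by the division by $e$, so no further computation is needed.

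The genuine obstacle is not the algebra but the status of the coefficients $\Phi_r$. Because $\mathbf{Bell_0}=\mathbf{Bell_1}$ and the Bell numbers are not linearly independent in any canonical sense over the relevant range, the representation is not unique, so the strongest honest assertion is the \emph{existence} of such $\Phi_r$, which the construction above supplies by propagating those from the Dobinski layer. Producing a closed-form or canonical rule for $\Phi_r$ as a function of $n$ --- rather than reading them off the table row by row --- is the part I expect to be hard, and the theorem deliberately sidesteps it by leaving $\Phi_r$ as $n$-dependent constants.
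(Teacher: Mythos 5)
Your proposal is correct and follows essentially the same route as the paper: the paper leaves Theorem \ref{Kprove} without its own proof environment, its content being established by the proof of Theorem \ref{Belo}, which does exactly what you describe --- take the Dobinski decomposition of Theorem \ref{Dk}, divide by $\exp(1)$ to convert each $\mathbf{Dob_r}$ into $\mathbf{Bell_r}$, and then expand each Bell number via $\mathbf{Bell_r}=\sum_{k\geq 1}^{r}S(r,k)$. Your version is in fact slightly tidier, since the paper argues only through the worked example $n=8$ while you phrase the same argument for general $n$ and correctly flag that the $\Phi_r$ are merely asserted to exist (they are $n$-dependent and non-unique), a caveat the paper glosses over.
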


\subsection{Shifted alternating Kurepa sequence}
Miodrag zivkovič \cite{zivkovic1999number} the number of primes of the type $A_n$ is finite, since for $n\geq p_1$,  $A_n$ is divisible by $p_1$. The heuristic argument posits the existence of a prime $p$ such that $p$ divides $!n$ for any large $n$, nevertheless, computational verification indicates that this prime must exceed $2^{23}$. Due to the connection this has with the Kurepa factorial, authors such as, Kevin Buzzard, Alexandar Petojevic, Z. Mijajlovic and many more \cite{mijajlovic1990some, mijajlovic2021fifty, petojevic2023new, andrejic2016searching, sun2011curious} have done extensive works in this field.
In Guy`s book of unsolved problems \cite{guy2004unsolved}, the alternating sums of factorials is given as follows
$$A_{n+1}=\sum_{m=1}^{n}(-1)^{n-m} m!,$$
there are questions if $0!$ is included. The numbers are now even, and only $2! - 1! + 0! = 2$ is prime; this makes it more interesting in the subsequent results that we have as this reveals much information about the shifted alternating Kurepa introduced in the subsequent section.
In \cite{zivkovic1999number}, Miodrag used Wagstaff definition of the Kurepa factorial, that is, $!n -1$ which yields the values in table \ref{mykure}.
Note that Wagstaff \cite{zivkovic1999number, mudge1996introducing, ashbacher1997some, cira2016various} verified the Kurepa conjecture for $n<50000.$

\begin{table}[!ht]\label{M Kure}
\centering
\begin{tabular}{|c|c|c|c|c|}\hline
$n$ & $A_n^s = \sum_{m=0}^{n-1} (-1)^m m!$ & $K_n =!n= \sum_{m=0}^{n-1} m!$ & $A_{n+1}$& $WK_n=!n-1$\\
\hline
0 & 0 & 0 & 0& 0\\
1 & 1 & 1 & 1 & 0\\ \rowcolor{LightCyan}
2 & 0 & 2 & 1 & 1\\ 
3 & 2 & 4 & 5 & 3\\
4 & -4 & 10 & 19& 9\\
5 & 20 & 34 & 101& 33\\
6 & -100 & 154& 619& 153 \\
7 & 620 & 874& 4421&873 \\
8 & -4420 & 5914 &35899& 5913 \\
9 & 35900 & 46234& 326981& 46233 \\
10 & -326980 & 409114& 3301819& 409113 \\
\hline
\end{tabular}
  \caption{Kurepa and  alternating sum of factorials}
    \label{mykure}
\end{table}

\begin{definition}\label{Alt}
    For all $n\in \mathbb{N}$, let  
    \begin{align*}
      \lbrace A_n^s\rbrace_{n\geq 1}=\sum_{i=1}^{n}A_i^s=A_1^s + A_2^s + A_3^s +A_4^s +\cdots + A_n^s
    \end{align*}
 be the shifted alternating Kurepa sequence (see table \ref{mykure}), 
where  $$A_n^s=\mathbf{(-1)^n\cdotp !n}=\sum_{m=0}^{n-1}(-1)^m m!= 0! - 1!+ 2! -3! + 4!- 5!  + \cdots +(-1)^{n-1}(n-1)!$$ with $m<n$ \cite{guy2004unsolved, weisstein2005alternating, zivkovic1999number, mijajlovic1990some}.
\end{definition}

\begin{theorem}\label{Aliter}
The shifted alternating Kurepa sequence, $\lbrace A_n^s\rbrace_{n\geq 1}$, is the sum of complementary Bell numbers,
$$  \lbrace A_n^s\rbrace_{n\geq 1}=\sum_{r=0}^{n}\Phi_r(\mathbf{invBell_r})= \sum_{r=1}^{n}\Phi_r \sum_{k\geq1}^{r} (-1)^kS(n,k). $$
 %where $\lbrace A_n^s\rbrace_{n\geq 1}=\sum_{i=1}^{n}A_i^s=A_1^s + A_2^s + A_3^s +A_4^s +\cdots^s + A_n^s$.
\end{theorem}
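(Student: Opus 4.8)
The plan is to reproduce, in the complementary/alternating setting, the chain of reasoning used for the ordinary Kurepa sequence in Theorems \ref{Dk}, \ref{Belo} and \ref{Kprove}, with the inverse Dobinski numbers $\mathbf{invDob_r}$ and the complementary Bell numbers $\mathbf{invBell_r}$ playing the roles of $\mathbf{Dob_r}$ and $\mathbf{Bell_r}$. First I would record the two building blocks: from Definition \ref{invBelo}, $\mathbf{invBell_r}=\mathbf{invDob_r}\cdot e$ with $\mathbf{invDob_r}=\sum_{k\geq 0}(-1)^k k^r/k!$; and from Section \ref{sec}, $\mathbf{invBell_r}=\sum_{k\geq 1}^{r}(-1)^k S(r,k)$. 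Then, using Definition \ref{Alt}, I would write the target as the partial sum $\lbrace A_n^s\rbrace_{n\geq 1}=\sum_{i=1}^{n}A_i^s$ with $A_i^s=\sum_{m=0}^{i-1}(-1)^m m!$, noting that each $A_i^s$ is an integer.

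The core step is the analog of Theorem \ref{Dk}: I would decompose each signed partial-factorial $A_i^s$ as an integer combination of complementary Bell numbers. Since $A_i^s$ and every $\mathbf{invBell_r}$ are integers, this is a purely integer decomposition, and passing to inverse Dobinski numbers (if one wishes to mirror Theorem \ref{Dk} literally) only amounts to dividing by $e$, because $\mathbf{invBell_r}=\mathbf{invDob_r}\cdot e$. Matching $A_i^s$ against the complementary Bell values $1,-1,0,1,1,-2,-9,\dots$ read off Table \ref{table 1} produces the coefficients $\Phi_r$, exactly as in the construction of Tables \ref{table 2} and \ref{table 3}; summing over $i$ and collecting like terms then gives $\lbrace A_n^s\rbrace_{n\geq 1}=\sum_{r}\Phi_r\,\mathbf{invBell_r}$. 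Substituting $\mathbf{invBell_r}=\sum_{k\geq 1}^{r}(-1)^k S(r,k)$ into this identity yields the Stirling form asserted in the statement, completing the argument.

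The main obstacle is precisely this core step. Unlike the strictly positive, increasing sequence $\mathbf{Bell_r}$, the complementary Bell numbers change sign and can vanish (for instance $\mathbf{invBell_2}=0$), so the greedy matching that works for the ordinary Kurepa sequence is no longer automatic and the coefficients $\Phi_r$ are highly non-unique. I would therefore need to fix a definite selection rule — the one implicit in the construction of Tables \ref{table 2} and \ref{table 3} — and prove that it reproduces every $A_i^s$ exactly. Establishing that this rule is well defined and consistent for all $n$, rather than merely verified on the tabulated cases, is the delicate part of the proof; a cleaner alternative would be to prove the identity through the exponential generating function $e^{1-e^x}$ of $\mathbf{invBell_n}$ together with the formal factorial series, thereby bypassing the explicit coefficient bookkeeping.
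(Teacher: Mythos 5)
Your proposal follows essentially the same route as the paper: the paper's proof likewise converts between the values $A_i^s$ and inverse Dobinski numbers via $\mathbf{invBell_r}=\mathbf{invDob_r}\cdot e$, and obtains the coefficients $\Phi_r$ by exactly the term-by-term matching against tabulated values that you describe, verified concretely only for the case $n=5$ before concluding. The selection-rule obstacle you flag is genuine but is not resolved in the paper either; the paper simply exhibits one ad hoc choice of coefficients, which suffices because the statement leaves the $\Phi_r$ unconstrained (indeed, since $\mathbf{invBell_0}=1$, a decomposition of this form always exists), so your plan is at least as complete as the published argument.
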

\begin{proof}
From definition \ref{Alt}, and table \ref{M Kure};
\begin{align*}
A_n^s=&(-1)^n\mathbf{!n}=\sum_{m=0}^{n-1}(-1)^{m}m!= !0 - 1!+ !2 - 3!  + \cdots + (-1)^{n-1}(n-1)!\\ \mbox{where}\\
 A_1^s=&(-1)^1\mathbf{!1}=\sum_{1}(-1)^{1-1}0!= 1\\
A_2^s=&(-1)^2\mathbf{!2}=\sum_{1} (-1)^{1-1}0!+ \sum_{2}(-1)^{2-1}1!= 1-1=0\\
A_3=&(-1)^3\mathbf{!3}= \sum_{3}(-1)^{3-1}2!+ \sum_{2}(-1)^{3-2}1!+\sum_{3} (-1)^{3-3}0!= 2-1+1=2\\
A_4^s=&(-1)^n\mathbf{!4}=-6+2-1+1=-4\\
A_5^s=&(-1)^n\mathbf{!5}=24-6+2-1+1=20\\
A_6^s=&(-1)^n\mathbf{!6}=-120+24-6+2-1+1=-100\\
A_7^s=&(-1)^n\mathbf{!7}=720-120+24-6+2-1+1=620\\
A_8^s=&(-1)^n\mathbf{!8}=-5040+720-120+24-6+2-1+1=-4420\\
A_9^s=&(-1)^n\mathbf{!9}=40320-5040+720-120+24-6+2-1+1=35900\\
%A_{10}=&(-1)^n\mathbf{!10}=-326980+40320-5040+720-120+24-6+2-1+1=\cdots\\
&\quad \quad \vdots\quad \quad \quad \quad \quad \vdots \quad \quad \quad\vdots \quad \quad \quad \quad \quad \quad \quad \quad  \quad\vdots \quad \quad\\
\mbox{Now the sequence}\\
\lbrace A_n^s\rbrace_{n\geq 1}&=\sum_{i=1}^{n}A_i^s=A_1^s + A_2^s + A_3^s +A_4^s +\cdots + A_n^s\\
\lbrace A_n^s\rbrace_{n\geq 1}\cdotp e^{-x}&= (A_1^s + A_2^s + A_3^s +A_4^s +\cdots + A_n^s)\cdotp e^{-x}\\
&= A_1^s\cdotp e^{-x} + A_2^s \cdotp e^{-x}+ A_3^s \cdotp e^{-x}+A_4^s \cdotp e^{-x}+\cdots + A_n^s \cdotp e^{-x}\\ 
\mbox{if $x=1$ we obtain}\\
%&= !1\cdotp e^{-1} +  !2\cdotp e^{-1} + !3\cdotp e^{-1} + !4\cdotp e^{-1} +\cdots + A_n!e^{-1}\\
\lbrace A_n^s\rbrace_{n\geq 1}\cdotp e^{-1}&= (-1)^0 !1\cdotp e^{-1} + (-1)^1 !2\cdotp e^{-1} + (-1)^2 !3\cdotp e^{-1}\\ 
&+ (-1)^3 !4\cdotp e^{-1} +\cdots + (-1)^n A_n^s e^{-1}\\
&= 1\cdotp e^{-1} -  0\cdotp e^{-1} + 2\cdotp e^{-1} - 4\cdotp e^{-1} + 20\cdotp e^{-1} -100\cdotp e^{-1}+ 620\cdotp e^{-1}\\
&\quad -\cdots+(-1)^{n-1}(n-1)!e^{-1}\\
\end{align*}
If $n=5$ by simple computations we arrive at:
\begin{align*}
\lbrace A_5^s\rbrace_{n\geq 1}\cdotp e^{-1}&=\mathbf{invDob_0} + \mathbf{invDob_2} + 2\cdotp \mathbf{invDob_3} +2\cdotp \mathbf{invDob_5} + 20\cdotp \mathbf{invDob_4} \\
&\quad + 50\cdotp \mathbf{invDob_{5}}.\\
\mbox{We observe that}\\
 &\lbrace A_5^s\rbrace_{n\geq 1}\cdotp e^{-1}=\sum_{r=0}^{5}\Phi_5 \mathbf{invDob_5}\\
 &\mbox{now the shifted alternating Kurepa sequence becomes}\\
\lbrace A_5^s\rbrace_{n\geq 1}&=\mathbf{invDob_0}e + \mathbf{invDob_2}e + 2\cdotp \mathbf{invDob_3}e +2\cdotp \mathbf{invDob_5}e + 20\cdotp \mathbf{invDob_4}e\\
&\quad   + 50\cdotp \mathbf{invDob_{5}}e\\
\lbrace A_5^s\rbrace_{n\geq 1}&=\mathbf{invBell_0} + \mathbf{invBell_2} + 2\cdotp \mathbf{invBell_3} +2\cdotp \mathbf{invBell_5} + 20\cdotp \mathbf{invBell_4} \\
&\quad   + 50\cdotp \mathbf{invBell_{5}}
\end{align*}
the proof immediately follows.
\end{proof}

\begin{theorem}\label{ok}
    The product of shifted alternating Kurepa sequence with ordinary factorial numbers $n!$ is the sum of the product of derangement numbers and the complementary Bell numbers, that is, $\mathbf{Der_n}\cdotp(a_n)$ where $a_n= \mathbf{invBell_n}$ for all nonnegative integers $n$, that is, 
    $$ \mathbf{Der_n}\lbrace A_5^s\rbrace_{n\geq 1}=\sum_{k=0}^{n} \Phi_k \cdotp\mathbf{Der}_{k}\cdotp(a_k).$$
\end{theorem}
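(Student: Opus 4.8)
The plan is to run, \emph{mutatis mutandis}, the argument used for the Dobinski/Bell case in Theorem~\ref{nine}, now feeding in the complementary-Bell decomposition of the shifted alternating Kurepa sequence in place of the Bell decomposition of the ordinary one. The two facts doing the real work are the derangement identity $\mathbf{Der_k}=\tfrac{k!}{e}=k!\,e^{-1}$ recalled earlier, and the relation $\mathbf{invBell_k}=\mathbf{invDob_k}\cdot e$ from Definition~\ref{invBelo}. These combine to isolate exactly the product that will appear on the right-hand side, since
\begin{align*}
\mathbf{Der_k}\,\mathbf{invBell_k}=\bigl(k!\,e^{-1}\bigr)\bigl(\mathbf{invDob_k}\cdot e\bigr)=k!\,\mathbf{invDob_k}.
\end{align*}

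First I would invoke Theorem~\ref{Aliter}, specialised (as in the statement) to $n=5$, to write the sequence as a finite $\Phi$-weighted sum of complementary Bell numbers,
\begin{align*}
\lbrace A_5^s\rbrace_{n\geq 1}=\sum_{r=0}^{5}\Phi_r\,\mathbf{invBell_r}.
\end{align*}
I would then multiply this identity through by $\mathbf{Der_n}=n!\,e^{-1}$ --- this is the sense in which the statement speaks of multiplying by the ordinary factorial $n!$, the extra $e^{-1}$ being precisely the factor that converts $\mathbf{invBell}$ back to $\mathbf{invDob}$ --- and distribute over the finite sum. Each summand $\mathbf{Der_n}\,\mathbf{invBell_r}$ then already has the shape of a derangement times a complementary Bell number, and reading it through the collapse displayed above turns the finite linear combination into
\begin{align*}
\mathbf{Der_n}\,\lbrace A_5^s\rbrace_{n\geq 1}=\sum_{k=0}^{5}\Phi_k\,\mathbf{Der_k}\cdot(a_k),\qquad a_k=\mathbf{invBell_k},
\end{align*}
which is the asserted identity.

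The step I expect to require the most care is the index-matching bookkeeping, precisely as in the derangement--Dobinski decompositions above: the derangement factor produced by $\mathbf{Der_n}=n!\,e^{-1}$ is \emph{fixed}, yet on the right it is recorded as $\mathbf{Der_k}$ with the subscript tracking the block-size index $k$ of each complementary-Bell term. Reconciling this amounts to rewriting each product term by term through the conversion $k!\,\mathbf{invDob_k}=\mathbf{Der_k}\,\mathbf{invBell_k}$, aligning the factorial subscript with the partition index $k$, rather than factoring a single common $\mathbf{Der_n}$ outside the sum. Once this convention is applied uniformly to every summand, what remains is routine linearity together with the two $e$-conversions above, and the proof is complete.
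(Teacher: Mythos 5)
Your proposal reproduces the paper's own proof in all essentials: the paper likewise takes the decomposition $\lbrace A_5^s\rbrace_{n\geq 1}\cdot e^{-1}=\sum_r \Phi_r\,\mathbf{invDob}_r$ from Theorem~\ref{Aliter}, multiplies through by $n!$, and converts each term via the identity $n!\,\mathbf{invDob}_n=\tfrac{n!}{e}\sum_{k}(-1)^k S(n,k)=\mathbf{Der}_n\cdot\mathbf{invBell}_n$ to land on $\sum_k \Phi_k\,\mathbf{Der}_k\cdot(a_k)$. Even the delicate point you flag --- re-indexing the fixed derangement factor $\mathbf{Der}_n$ as a varying $\mathbf{Der}_k$ aligned with each complementary-Bell subscript --- is handled by the paper in exactly the same term-by-term fashion, so the two arguments coincide.
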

\begin{proof}
it is well known from definition \ref{id} that,

 $$\mathbf{invDob}_n=\sum_{k=0}^{\infty}(-1)^k\dfrac{k^n}{k!}. $$
Now multiplying through by ordinary $n!$ yields 
\begin{align}
    n!(\mathbf{invDob}_n)&= n!\left(\sum_{k=0}^{\infty}(-1)^k\dfrac{k^n}{k!}\right)\\
     &= n!\sum_{k=0}^{n}\frac{(-1)^k}{k!}k^n=\dfrac{n!}{e}\sum_{k=0}^{n}(-1)^k S(n,k)\\
     &= \mathbf{Der}_n\left(\mathbf{invBell_n}\right)=\mathbf{Der}_n\cdotp(a_n)
\end{align}
with $a_n=\mathbf{invBell_n}$ the few Derangement polynomials with respect to $k$ are given below;
\begin{align}
    \dfrac{-1}{e}n!&= n!\sum_{n=1}(-1)^k\dfrac{k}{k!}=\mathbf{Der}_1 \cdotp(a_1)\\ \nonumber
   \dfrac{0}{e}n!&=n!\sum_{n=2}(-1)^k\dfrac{k^2}{k!}=\mathbf{Der}_2 \cdotp(a_2)\\ \nonumber
    \dfrac{1}{e}n!&=n!\sum_{n=3}(-1)^k\dfrac{k^3}{k!}=\mathbf{Der}_3 \cdotp(a_3)\\ \nonumber
    \dfrac{1}{e}n!&= n!\sum_{n=4}(-1)^k\dfrac{k^4}{k!}=\mathbf{Der}_4 \cdotp(a_4)\\ \nonumber
    \dfrac{-2}{e}n!&=n!\sum_{n=5}(-1)^k\dfrac{k^5}{k!}=\mathbf{Der}_5 \cdotp(a_5)\\ \nonumber
   \dfrac{-9}{e}n!&= n!\sum_{n=6}(-1)^k\dfrac{k^6}{k!}=\mathbf{Der}_6 \cdotp(a_6)\\ \nonumber
    \dfrac{-9}{e}n!&=n!\sum_{n=7}(-1)^k\dfrac{k^7}{k!}=\mathbf{Der}_7 \cdotp(a_7) \\ \nonumber
    &\vdots \quad \quad \quad \quad \vdots \quad \quad \quad\vdots \quad \quad \quad \quad \quad \quad \quad\\
    \dfrac{\mathbf{invBell_{n}}}{e}n!&=n!\sum_{n=1}^{\infty}(-1)^k\dfrac{k^n}{k!}=\mathbf{Der}_{n}\cdotp(a_n)
\end{align}
from Theorem \ref{Aliter} we have $n=5$
\begin{align*}
   n! \dfrac{\lbrace A_5^s\rbrace_{n\geq 1}}{e} &= n!\left(\mathbf{invDob_0} + \mathbf{invDob_2} + 2\cdotp \mathbf{invDob_3} +2\cdotp \mathbf{invDob_5} \right. \\
&\quad  \left.  + 20\cdotp \mathbf{invDob_4} + 50\cdotp \mathbf{invDob_{5}}\right) \\
  \mathbf{Der_n}\lbrace A_5^s\rbrace_{n\geq 1}&= \mathbf{Der_0}\cdotp(a_0) + \mathbf{Der_2}\cdotp(a_2) + 2\cdotp \mathbf{Der_4}\cdotp(a_4) +2\cdotp \mathbf{Der_5}\cdotp(a_5) \\
  & \quad + 20\cdotp \mathbf{Der_4}\cdotp(a_4) + 50\cdotp \mathbf{Der_{5}}\cdotp(a_5)\\
\end{align*}
hence proof easily follows immediately and thus completed.
\end{proof}
\begin{theorem}\label{div}
    The product of the ordinary factorial numbers $n!$ and the sum of complementary Bell numbers $\mathbf{Bell_n}$ is the sum of product of the derangement numbers, Dobinski numbers, and complementary Bell numbers; $\mathbf{Der_n \cdotp Dob_n \cdotp \mathbf{invBell_n}}$, that is,
    $$k! \lbrace A_n^s\rbrace_{n\geq 1}=\sum_{r=0}^{n} \Phi \mathbf{Der_r \cdotp Dob_r}\cdotp \mathbf{invBell_n}$$
\end{theorem}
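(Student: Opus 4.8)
The plan is to follow exactly the template that produced Theorem~\ref{nine} and its shifted analogue, only now carrying a complementary Bell factor through the computation. First I would take the decomposition of the shifted alternating Kurepa sequence supplied by Theorem~\ref{Aliter}, namely $\lbrace A_n^s\rbrace_{n\geq 1}=\sum_{r=0}^{n}\Phi_r\,\mathbf{invBell_r}$, together with its divided-by-$e$ form $\tfrac{1}{e}\lbrace A_n^s\rbrace_{n\geq 1}=\sum_{r=0}^{n}\Phi_r\,\mathbf{invDob_r}$ that comes straight from Definition~\ref{invBelo}. This is the natural starting point because it already exhibits the complementary Bell numbers that must appear on the right-hand side of the target identity.

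Next I would record the three bridging identities that do all the work in this family of results: $\mathbf{Bell_r}=\mathbf{Dob_r}/e$ (Definition~\ref{Bell}), $\mathbf{invBell_r}=e\,\mathbf{invDob_r}$ (Definition~\ref{invBelo}), and $\mathbf{Der_r}=r!/e$ (as derived inside Theorem~\ref{ok}). Multiplying the Theorem~\ref{Aliter} decomposition through by the ordinary factorial and distributing it term-by-term across the sum, each summand acquires a factor $r!$ which, by the conversion $r!\,\mathbf{Bell_r}=\mathbf{Der_r}\,\mathbf{Dob_r}$ established in Theorem~\ref{nine}, produces the derangement--Dobinski pair $\mathbf{Der_r}\,\mathbf{Dob_r}$, while the complementary Bell number $\mathbf{invBell_r}$ that was already present in the decomposition is simply transported along. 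Collecting the terms then yields the asserted triple-product form $k!\,\lbrace A_n^s\rbrace_{n\geq 1}=\sum_{r=0}^{n}\Phi_r\,\mathbf{Der_r}\,\mathbf{Dob_r}\,\mathbf{invBell_r}$.

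Finally, as in the proofs of Theorems~\ref{Aliter} and~\ref{ok}, I would pin down the coefficients $\Phi_r$ and confirm the identity on a concrete small case, say $n=5$, reusing the partition data already tabulated for the shifted sequence; this both fixes the constants and guards against index errors in the collected sum.

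The hard part is not any single algebraic step but the bookkeeping. The single factorial on the left must be reconciled with the per-term index $r$ that appears inside the Dobinski and derangement factors, so the delicate point is to make the ``product with the factorial'' operation precise enough that each summand picks up the matching $r!$ (and hence the correct $\mathbf{Der_r}\,\mathbf{Dob_r}$) while the complementary Bell factor $\mathbf{invBell_r}$ and the coefficients $\Phi_r$ are carried through unchanged. I expect that reconciling the fixed outer factorial with the indexed inner factorials, and verifying that no stray power of $e$ survives the cancellations, will be where the care is needed.
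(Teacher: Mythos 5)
Your overall route --- start from the Theorem \ref{Aliter} decomposition $\lbrace A_n^s\rbrace_{n\geq 1}=\sum_{r}\Phi_r\,\mathbf{invBell_r}$, multiply through by the factorial, convert term by term, and verify on $n=5$ --- is the same route the paper takes, but your central conversion step is algebraically wrong. You propose to turn $k!\,\mathbf{invBell_r}$ into $\mathbf{Der_r}\,\mathbf{Dob_r}\,\mathbf{invBell_r}$ ``by the conversion $r!\,\mathbf{Bell_r}=\mathbf{Der_r}\,\mathbf{Dob_r}$ established in Theorem \ref{nine}.'' That identity can only be applied to a term that actually contains a Bell-number factor, and the summands $\Phi_r\,\mathbf{invBell_r}$ contain none. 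Since $\mathbf{Der_r}\,\mathbf{Dob_r}=(r!/e)(e\,\mathbf{Bell_r})=r!\,\mathbf{Bell_r}$, writing $k!\,\mathbf{invBell_r}=\mathbf{Der_r}\,\mathbf{Dob_r}\,\mathbf{invBell_r}$ amounts to asserting $k!=r!\,\mathbf{Bell_r}$, which fails for every $r\geq 2$ (for $r=2$, under the paper's convention $\mathbf{Der_2}\,\mathbf{Dob_2}=2!\cdot\mathbf{Bell_2}=4\neq 2!$). In other words, your step silently multiplies each summand by a spurious factor $\mathbf{Bell_r}$; the $n=5$ check you plan at the end would expose exactly this discrepancy, not merely ``fix the constants.''

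The paper's actual mechanism for producing the third (Dobinski) factor is different, and it is the piece your proposal is missing. The paper uses only $k!=e\,\mathbf{Der_k}$ (from $\mathbf{Der_k}=k!/e$), so each term becomes $k!\,\mathbf{invBell_r}=e\,\mathbf{Der_k}\,\mathbf{invBell_r}$, equivalently $k!\,\mathbf{invDob_r}=\mathbf{Der_k}\,\mathbf{invBell_r}$; this leaves a surplus power of $e$ in every summand. The Dobinski factors then arise by absorbing that surplus $e$ together with the numerical coefficients into Dobinski numbers: $e=\mathbf{Dob_0}=\mathbf{Dob_1}$, $2e=\mathbf{Dob_2}$, $20e=15e+5e=\mathbf{Dob_4}+\mathbf{Dob_3}$, and $50e=3(15e)+5e=3\,\mathbf{Dob_4}+\mathbf{Dob_3}$. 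That is precisely why, in the paper's final display, the Dobinski indices do not match the derangement and complementary-Bell indices: the Dobinski factors are bookkeeping for powers of $e$ and constants, not a per-term conversion indexed by $r$. So your closing worry about ``reconciling the fixed outer factorial with the indexed inner factorials'' is well placed, but it cannot be resolved by more careful indexing --- the matching-index triple product $\Phi_r\,\mathbf{Der_r}\,\mathbf{Dob_r}\,\mathbf{invBell_r}$ is simply not equal to $k!\,\Phi_r\,\mathbf{invBell_r}$; the correct per-term identity is $k!\,\Phi_r\,\mathbf{invBell_r}=\Phi_r\,\mathbf{Dob_0}\,\mathbf{Der_k}\,\mathbf{invBell_r}$, with the $e$'s and coefficients then reshuffled into Dobinski numbers as the paper does.
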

\begin{proof}

\begin{align*}
e^{(1-e^x)} & = e\cdotp e^{-e^x} = e( \mathbf{invDob_n})\\
&= e\cdotp n!\sum^{\infty}_{k=0}\frac{(-1)^k}{k!}k^n
 \exp(x)=n!\cdotp \mathbf{C_k} \cdotp \exp{(x)}\\
 & = e\mathbf{Der_k}\cdotp(a_n) = n!\mathbf{C_k} \cdotp \exp{(x)}= n!(\mathbf{inv Bell_n})
 \end{align*}
from Theorem \ref{Aliter} and Theorem \ref{div} for $n=5$
\begin{align*}
   k! \lbrace A_5^s\rbrace_{n\geq 1}&= k! \left(\mathbf{invBell_0} + \mathbf{invBell_2} + 2\cdotp \mathbf{invBell_3} +2\cdotp \mathbf{invBell_5} + 20\cdotp \mathbf{invBell_4}\right.\\ &\quad\left.+ 50\cdotp \mathbf{invBell_{5}}\right)\\
   &=e\cdotp \mathbf{Der_0}\cdotp(a_0) + e\cdotp\mathbf{Der_2}\cdotp(a_2) + 2e\cdotp \mathbf{Der_4}\cdotp(a_4) +2e\cdotp \mathbf{Der_5}(a_5) + 20e\cdotp \mathbf{Der_4}\cdotp(a_4) \\
   & \quad + 50e\cdotp \mathbf{Der_{5}}\cdotp(a_5)\\
    &=e\cdotp \mathbf{Der_0}\cdotp(a_0) + e\cdotp\mathbf{Der_2}\cdotp(a_2) + 2e\cdotp \mathbf{Der_4}(a_4) +2e\cdotp \mathbf{Der_5}(a_5) + (15e+ 5e)\cdotp \mathbf{Der_4}(a_4) \\\nonumber
    &+ (3(15e)+5e)\cdotp \mathbf{Der_{5}}\cdotp(a_5)\\\nonumber
  &=\mathbf{Dob_0}\cdotp \mathbf{Der_0}\cdotp(a_0) + \mathbf{Dob_1}\cdotp\mathbf{Der_2}(a_2) + \mathbf{Dob_2}\cdotp \mathbf{Der_4}\cdotp(a_4) + \mathbf{Dob_2}\cdotp \mathbf{Der_5}\cdotp(a_5) \\\nonumber
 & + (\mathbf{Dob_4}+ \mathbf{Dob_3})\cdotp \mathbf{Der_4}\cdotp(a_4)+ (3\mathbf{Dob_4}+ \mathbf{Dob_3})\cdotp \mathbf{Der_{5}}\cdotp(a_5)
\end{align*}
\end{proof}

\begin{lemma}
    The polynomial function $\mathbf{!n}e^x$ has a reciprocal function of $(\mathbf{!n} e^x)^{-1}$.
\end{lemma}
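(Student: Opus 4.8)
The plan is to exhibit an explicit function whose product with $\mathbf{!n}e^x$ is identically $1$, and then to certify that this inverse is well defined on the whole domain. First I would record that for every integer $n \geq 1$ the Kurepa factorial $\mathbf{!n} = \sum_{m=0}^{n-1} m!$ is a strictly positive constant, while $e^x > 0$ for all real $x$; hence the product $\mathbf{!n}e^x$ is a nowhere-vanishing function and therefore admits a genuine multiplicative inverse at every point of its domain. This nonvanishing is what legitimizes speaking of ``the'' reciprocal at all.

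Next I would read off the candidate reciprocal directly from Theorem \ref{recip}. There the product of the reciprocal Kurepa sum $S_0(n)^{-1}$ with the inverse exponential series $e^{-x}$ was already computed to be
\begin{align*}
S_0(n)^{-1} e^{-x} = \frac{1}{\mathbf{!n}\, e^x}.
\end{align*}
Writing $\mathbf{!n}^{-1} = S_0(n)^{-1}$ for the reciprocal of the Kurepa constant, this identifies the candidate inverse as $(\mathbf{!n}e^x)^{-1} = \mathbf{!n}^{-1} e^{-x}$, so the statement amounts to confirming that the function built in Theorem \ref{Add} and the one built in Theorem \ref{recip} are genuine multiplicative inverses of one another.

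Finally I would verify the defining relation of a reciprocal by multiplying the two functions and simplifying with the exponent law $e^x \cdot e^{-x} = e^0 = 1$ together with $\mathbf{!n} \cdot \mathbf{!n}^{-1} = 1$, obtaining
\begin{align*}
\left(\mathbf{!n}\, e^x\right)\left(\mathbf{!n}^{-1} e^{-x}\right) = \left(\mathbf{!n}\,\mathbf{!n}^{-1}\right)\left(e^x e^{-x}\right) = 1.
\end{align*}
Since this is an identity of elementary functions rather than a deep combinatorial claim, no substantial obstacle arises: the only point requiring genuine care is the nonvanishing of $\mathbf{!n}e^x$ established in the first step, which is precisely what guarantees that the inverse exists everywhere and is unique. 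The remaining steps are the routine bookkeeping of pairing the series expansion of Theorem \ref{Add} against that of Theorem \ref{recip}.
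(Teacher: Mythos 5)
Your proof is correct, and it takes a genuinely cleaner and more general route than the paper's. The paper proves this lemma by enumeration: it lists the values $\mathbf{!n}\cdot e$ for small $n$, rewrites each as a combination of Dobinski numbers ($\mathbf{!3}\,e = 2\mathbf{Dob_2}$, $\mathbf{!5}\,e = 2(\mathbf{Dob_4}+\mathbf{Dob_2})$, and so on), lists the corresponding reciprocals $\frac{1}{2\mathbf{Dob_2}}$, $\frac{1}{2\mathbf{Dob_3}}$, etc., and then simply asserts that ``clearly $\mathbf{!n}e^x \cdotp (\mathbf{!n} e^x)^{-1}=1$.'' You instead argue once and for all: $\mathbf{!n}>0$ and $e^x>0$, so the function is nowhere vanishing and an inverse exists pointwise; the explicit candidate $\mathbf{!n}^{-1}e^{-x}$ then satisfies $(\mathbf{!n}\,e^x)(\mathbf{!n}^{-1}e^{-x})=(\mathbf{!n}\,\mathbf{!n}^{-1})(e^x e^{-x})=1$. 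Your version buys generality (all $n$, all $x$, no case list) and isolates the one load-bearing fact --- nonvanishing --- which the paper never states; the paper's version buys a tie-in to the Dobinski decompositions of its earlier theorems, in keeping with its theme, but remains illustrative rather than a complete argument. One caution: when you cite Theorem \ref{recip} to ``read off'' the candidate inverse, note that in that theorem $S_0(n)^{-1}$ is \emph{defined} as the sum of reciprocal factorials $\sum_{m=0}^{n-1}\frac{1}{m!}$, which is not the same number as $\frac{1}{\mathbf{!n}}=\left(\sum_{m=0}^{n-1}m!\right)^{-1}$; the closing identity of that theorem conflates the two. Your argument survives this because your final verification is self-contained and uses only $\mathbf{!n}^{-1}:=1/\mathbf{!n}$, but the identification step should not lean on Theorem \ref{recip} as stated.
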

\begin{proof} From Theorem \ref{recip}, one obtains;
   \begin{align*}
!ne^x = !ne^x &= e^x0!\quad \quad \mbox{ if } x=1 \quad\quad !n e^1 = e\cdotp0! =\mathbf{Dob_1}\\
&=e^x1!\quad \quad \mbox{ if } x=1, \quad\quad !n e^1\cdotp1! = e\cdotp1! =2e\\
&=e^x2!\quad \quad \mbox{ if } x=1, \quad\quad !n e^1\cdotp 2! = e\cdotp 2! =4e\\
&=e^x3! \quad \quad \mbox{ if } x=1, \quad\quad !n e^1\cdotp 3! = e\cdotp 3! =10e\\
&=e^x4!\quad \quad \mbox{ if } x=1, \quad\quad !n e^1\cdotp 4! = e\cdotp 4! =34e,\\
&\quad \quad \vdots\quad \quad \quad \quad \quad \vdots \quad \quad \quad\vdots \quad \quad \quad \quad \quad \quad \quad  \quad\vdots \quad \quad
\end{align*}
similarly,
\begin{align*}
(!n e^x)^{-1} &= \frac{1}{!ne^x} \\
& = \frac{1}{!ne^1}= \frac{1}{e\cdotp 0!} = \frac{1}{\mathbf{Dob_1}} = \frac{1}{e} \\
&=\frac{1}{e\cdotp 1!} =\frac{1}{2e} =\frac{1}{2e}=\frac{1}{\mathbf{Dob_2}}\\
&= \frac{1}{e\cdotp 2!} =\frac{1}{4e}=\frac{1}{2(2e)}=\frac{1}{2\mathbf{Dob_2}}\\
& = \frac{1}{e\cdotp 3!} =\frac{1}{10e}=\frac{1}{2(5e)}=\frac{1}{2\mathbf{Dob_3}}\\
& = \frac{1}{e\cdotp 4!} =\frac{1}{34e}=\frac{1}{2(15e+2e)}=\frac{1}{2(\mathbf{Dob_4}+ \mathbf{Dob_2})}\\
& = \frac{1}{e\cdotp 5!} =\frac{1}{154e}=\frac{1}{10(15e) + 4e}=\frac{1}{10\mathbf{Dob_4} + 2\mathbf{Dob_2}}\\
& = \frac{1}{e\cdotp 6!} =\frac{1}{874e}=\frac{1}{4(203e) + 4(15e) + 2e }=\frac{1}{4\mathbf{Dob_5} + 4\mathbf{Dob_4} + \mathbf{Dob_2} }\\
& = \frac{1}{e\cdotp 7!} =\frac{1}{5914e}=\frac{1}{40(15e) + 52e}=\frac{1}{5914e}=\frac{1}{40\mathbf{Dob_4} + \mathbf{Dob_3}}\\
&\quad \quad \vdots\quad \quad \quad \quad \quad \vdots \quad \quad \quad\vdots \quad \quad \quad \quad \quad \quad \quad \quad  \quad\vdots \quad \quad
\end{align*} 
clearly  $\mathbf{!n}e^x \cdotp (\mathbf{!n} e^x)^{-1}=1.$ 
%where
%$$\frac{1}{!n} =\frac{1}{0!} + \frac{1}{1!} + \frac{1}{2!} + \cdots + \frac{1}{(n-1)!}
% \approx 1.887\cdots$$
%which completes the proof.
\end{proof}

\section{New equivalence to Kurepa Conjecture }\label{NewConj}
In Richard Guy's unsolved problems, number theory, section B44, the Kurepa conjecture has been listed as one of the unsolved problems. In this section we provide an equivalence to this conjecture and investigates this new equivalence. We shall make use of tools such as the greatest common divisor, the Euclidean algorithms, and many relevant approaches, full details can be found in \cite{stein1967computational, knuth2014art, sorenson1990k, sorenson1994two, williams1945numbers, fabianogcd}

\subsection{$\mathbb{F}_{n}(x)$ polynomials and $\mathbb{F}_{n}$ numbers}
The Fubini polynomial \cite{https://doi.org/10.1155/IJMMS.2005.3849} is defined as 
$$F_n(x)= \sum_{k=0}^{n}k! S(n,k)x^k$$
when $k=1$ in the Stirling numbers of the second kind $S(n,k)$ we have 
\begin{equation}\label{Fub}
    \mathbb{F}_n(x) = \sum_{k=0}^n k! S(n,1)x^k=\sum_{k=0}^n k! S(n,n)x^k.
\end{equation}

\begin{theorem}\label{FK}
Let $\sum_{k=0}^n k! S(n,1)x^k$ be as in equation \ref{Fub}, this yields the polynomial
$$\sum_{k=0}^n k! S(n,1)x^k=1+x+2x^2+\cdots+n!x^n=\mathbb{F}_{n}(x).$$   
\end{theorem}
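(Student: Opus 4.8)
The plan is to reduce the statement to a single substitution using the boundary values of the Stirling numbers recorded in Section~\ref{sec}. First I would recall that $S(n,1)=1$ for every $n\geq 1$, since there is exactly one way to partition an $n$-element set into a single block; likewise $S(n,n)=1$, the unique partition into $n$ blocks being the partition into singletons. These are precisely the conditions $S(n,1)=S(n,n)=1$ listed among the boundary data for $S(n,k)$, so no independent verification is needed beyond citing them.

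The essential observation is that in the specialized sum of equation~\eqref{Fub} the second argument of the Stirling number is held fixed at $1$ (respectively at $n$), so $S(n,1)$ is a \emph{constant} with respect to the summation index $k$ rather than a quantity that varies across the sum as in the genuine Fubini polynomial $F_n(x)=\sum_{k=0}^n k!\,S(n,k)x^k$. Substituting $S(n,1)=1$ therefore collapses each summand to
\[
k!\,S(n,1)\,x^{k}=k!\,x^{k},
\]
and reading off the terms for $k=0,1,2,\dots,n$ gives $0!\,x^{0}=1$, $1!\,x=x$, $2!\,x^{2}=2x^{2}$, and in general $k!\,x^{k}$, so that
\[
\sum_{k=0}^{n}k!\,S(n,1)\,x^{k}=1+x+2x^{2}+\cdots+n!\,x^{n}=\mathbb{F}_{n}(x).
\]
The identical computation with $S(n,n)=1$ produces the same polynomial, which simultaneously establishes both equalities asserted in equation~\eqref{Fub}.

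There is no substantive analytic difficulty here; the one point requiring care is purely notational, namely recognizing that writing $S(n,1)$ pins the block count at $1$ and hence contributes the multiplicative identity in every term, instead of inheriting the running value $S(n,k)$. Once this is made explicit the result is immediate. It is worth remarking, as the forward link to Theorem~\ref{Sum}, that the resulting polynomial $\sum_{k=0}^{n}k!\,x^{k}$ is exactly the generating polynomial of the factorial sequence, whose evaluation at $x=1$ returns $\sum_{k=0}^{n}k!=\mathbf{!(n+1)}$, thereby tying $\mathbb{F}_{n}(x)$ directly back to the Kurepa factorial studied earlier.
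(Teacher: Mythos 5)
Your proposal is correct and follows essentially the same route as the paper's own proof: cite the boundary values $S(n,1)=S(n,n)=1$, substitute, and read off the terms $k!\,x^{k}$. The only difference is presentational — you make explicit that $S(n,1)$ is constant in the summation index $k$ (and you add the evaluation at $x=1$ linking to the Kurepa factorial), whereas the paper simply lists the resulting polynomials $\mathbb{F}_{0}(x),\dots,\mathbb{F}_{7}(x)$ as verification.
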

\begin{proof}
It is well known that $S(n,1)=S(n,n)=1$ for all $n\geq 1$, where the number of blocks $k$ is fixed at $1$. It is easy to compute some few examples of this polynomial;
\begin{align}\label{PF}
\mathbb{F}_{0}(x) &= 0\\ \nonumber
\mathbb{F}_{1}(x) &= 1 + x\\  \nonumber
\mathbb{F}_{2}(x) &= 1 + x + 2x^2\\  \nonumber
\mathbb{F}_{3}(x) &= 1 + x + 2x^2 + 6x^3\\  \nonumber
\mathbb{F}_{4}(x) &= 1 + x + 2x^2 + 6x^3 + 24x^4\\  \nonumber
\mathbb{F}_{5}(x) &= 1 + x + 2x^2 + 6x^3 + 24x^4 + 120x^5\\  \nonumber
\mathbb{F}_{6}(x) &= 1 + x + 2x^2 + 6x^3 + 24x^4 + 120x^5 + 720x^6\\  \nonumber
\mathbb{F}_{7}(x) &= 1 + x + 2x^2 + 6x^3 + 24x^4 + 120x^5 + 720x^6 + 5040x^7
\end{align}
\end{proof}

\begin{definition}
    The Kurepa polynomial $\mathbb{F}_n(x)$ is defined as follows:
    \begin{equation*}
\mathbb{F}_n(x)=\sum_{k=0}^n k! S(n,1)x^k=\sum_{k=0}^n k! S(n,n)x^k=\sum_{k=0}^n k!(1)x^k=\sum_{k=0}^n k!x^k
 \end{equation*}
 \begin{equation*}
\mathbb{F}_n(x)=
\begin{cases}
	\text{0}    & \text{$n=0$;}\\
	%\text{$1$} & \text{$K_1(x)$ see Table \ref{table 6}};\\ 
	\text{$\sum_{k=0}^n k!x^k$}   & \text{ positive integer $n\geq 2$ in the usual Kurepa factorial}.
\end{cases}
 \end{equation*}
\end{definition}

\begin{corollary}
    For $x=1$, the list of polynomials in equation \ref{PF} sums to the values of the Kurepa factorials (!n). The polynomial
    $$\mathbb{F}_{n\geq1}(x)= 1+x+2x^2+\cdots+n!x^n$$ and 
    $\mathbb{F}_{n\geq1}(1) = \sum_{k=0}^n k! S(n,1)$ yields; 
\begin{align*}\label{PF}
\mathbb{F}_{1}(x) &= 1 + x=1+1=2\\  \nonumber
\mathbb{F}_{2}(x) &= 1 + x + 2x^2=1+1+2=4\\  \nonumber
\mathbb{F}_{3}(x) &= 1 + x + 2x^2 + 6x^3=1+1+2+6=10\\  \nonumber
\mathbb{F}_{4}(x) &= 1 + x + 2x^2 + 6x^3 + 24x^4=1+1+2+6+24=34\\  \nonumber
\mathbb{F}_{5}(x) &= 1 + x + 2x^2 + 6x^3 + 24x^4 + 120x^5=1+1+2+6+24+120=154.\\  
\end{align*}  
\end{corollary}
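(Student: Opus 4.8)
The plan is to obtain this corollary as an immediate specialization of Theorem \ref{FK}, since all the substantive work has already been carried out there. First I would recall that Theorem \ref{FK} establishes the closed form
\[
\mathbb{F}_n(x) = \sum_{k=0}^n k!\, S(n,1)\, x^k = \sum_{k=0}^n k!\, x^k = 1 + x + 2x^2 + \cdots + n!\, x^n,
\]
where the simplification uses the fact, noted in Section \ref{sec}, that $S(n,1) = 1$ for every $n \geq 1$. The entire content of the corollary is then the evaluation of this polynomial at the distinguished point $x = 1$.

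Next I would carry out the substitution $x = 1$, which collapses every power $x^k$ to $1$ and produces
\[
\mathbb{F}_n(1) = \sum_{k=0}^n k! = 0! + 1! + 2! + \cdots + n!.
\]
Comparing this with the left factorial $\mathbf{!n} = \sum_{m=0}^{n-1} m!$ recorded in Theorem \ref{Add} and Definition \ref{Seq}, I would observe the index shift $\mathbb{F}_n(1) = \mathbf{!}(n+1)$; that is, evaluating the degree-$n$ Kurepa polynomial at $x = 1$ returns the $(n+1)$-th left factorial rather than the $n$-th. The tabulated identities $\mathbb{F}_1(1) = 2$, $\mathbb{F}_2(1) = 4$, $\mathbb{F}_3(1) = 10$, $\mathbb{F}_4(1) = 34$, and $\mathbb{F}_5(1) = 154$ would then be verified one at a time by summing the factorials $0!, 1!, \ldots, n!$, and each is seen to agree with the corresponding Kurepa value listed in Table \ref{table 1}.

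There is essentially no mathematical obstacle in this argument: once Theorem \ref{FK} supplies the polynomial expansion, the corollary reduces to a single substitution together with a bookkeeping check of the indexing. The one point genuinely requiring care is the off-by-one relationship between the running degree $n$ of the polynomial family and the argument of the left factorial, which I would state explicitly so that the reader does not take the displayed claim $\mathbb{F}_{n\geq 1}(1) = \mathbf{!n}$ literally; the numerical list makes the intended correspondence $\mathbb{F}_n(1) = \mathbf{!}(n+1)$ unambiguous.
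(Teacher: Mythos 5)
Your proposal is correct and takes the same route the paper intends: the paper's own proof is simply the declaration that the result is straightforward, and your argument makes that explicit by specializing Theorem \ref{FK}, substituting $x=1$, and checking the resulting sums of factorials against the tabulated Kurepa values. Your additional observation that the precise correspondence is $\mathbb{F}_n(1) = \mathbf{!}(n+1)$ (an index shift the paper leaves implicit in the alignment of Table \ref{table 5}) is a worthwhile clarification, but it does not change the substance of the argument.
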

\begin{proof}
    The proof of this is straightforward.
\end{proof}

\begin{definition}
    The Kurepa numbers $K_n$ is defined as follows
    \begin{equation*}
\mathbb{F}_n=
\begin{cases}
	\text{0}    & \text{$\mathbb{F}_{0}$;}\\
	%\text{$1$} & \text{$K_1$ see Table \ref{table 5}};\\ 
	\text{$\mathbb{F}_{n\geq1}$}   & \text{for all positive integer $n$}.
\end{cases}
 \end{equation*}
    
\end{definition}

\begin{table}[!ht]\label{T}
    \centering
    \begin{tabular}{|c|c|c|a|c|c|c|c|c|c|}\hline 
        $n$& 0 & 1 & 2 & 3 &4  &5  & 6 & 7 &8 \\\hline 
         $n!$& 1 & 1 & 2 & 6 & 24 &120  & 720 & 5040 &40320 \\\hline
        $\mathbf{!n}$ &  & 1 & 2 & 4 & 10 & 34 & 154 & 874 &5914 \\\hline
         $\mathbb{F}_{n}(x)$ & $\mathbb{F}_{0}$ &  & $\mathbb{F}_{1}$ & $\mathbb{F}_{2}$ &$\mathbb{F}_{3}$ &$\mathbb{F}_{4}$& $\mathbb{F}_{5}$ & $\mathbb{F}_{6}$ &$\mathbb{F}_{7}$ \\\hline
    \end{tabular}
    \caption{Relations between Kurepa and $\mathbb{F}_{n\geq1}$ \cite{OEIS:A003422, OEIS:A005165} }
    \label{table 5}
\end{table}

The generating function of 
%\begin{align}
%   \sum_n \mathbb{F}_{n\geq1}(x)\dfrac{t^n}{n!}
%\end{align}
 the Fubini numbers $F_n$ as given by Gross \cite{gross1962preferential} in his paper on preferential arrangement, $ \sum_n F_n(x)\dfrac{t^n}{n!}=\dfrac{1}{2-e^t}$.

Tanny \cite{tanny1975some} showed that $ \sum_n F_n(x)\dfrac{t^n}{n!}= \dfrac{1}{1-x(e^t -1)}$, he demonstrated that if $x=1$ the  $F_n$ yields an infinte series
$$F_n(x)= \dfrac{1}{2}\sum_{k=0}^{\infty}\dfrac{k^n}{2^k}. $$
The $\mathbb{F}_{n\geq1}(x)$ is different from the Fubini polynomial, this can be found in the following lemma;

\begin{lemma}\label{put}
For any integer $n=0,1,2,3,\ldots$ the $\mathbb{F}_{n\geq1}(x)\not\subset F_n(x)$, that is $$\sum_{k=0}^n k! S(n,n)x^k\not\subset \sum_{k=0}^n k! S(n,k)x^k$$ and 
the following recurrence easily holds;
$$ \mathbb{F}_{n}(x)=\mathbb{F}_{n-1}(x) +n!x^n.$$   
\end{lemma}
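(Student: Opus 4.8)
The plan is to handle the two assertions of the lemma separately, since each reduces to an elementary computation once the working definition $\mathbb{F}_n(x)=\sum_{k=0}^n k!\,x^k$ is in hand. This form follows from the preceding definition by using $S(n,1)=S(n,n)=1$, so that every Stirling factor collapses to $1$ and only the factorials survive.

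For the recurrence I would simply split off the top-degree term. Writing $\mathbb{F}_n(x)=\sum_{k=0}^n k!\,x^k$ and $\mathbb{F}_{n-1}(x)=\sum_{k=0}^{n-1} k!\,x^k$, every monomial $k!\,x^k$ with $0\le k\le n-1$ is common to both sums, so their difference collapses to the single surviving term $n!\,x^n$. This yields $\mathbb{F}_n(x)=\mathbb{F}_{n-1}(x)+n!\,x^n$ directly, with no induction required.

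For the non-containment $\mathbb{F}_{n\ge1}(x)\not\subset F_n(x)$ I read the symbol $\not\subset$ as asserting that the coefficient sequences of the two polynomials do not coincide, i.e. $\mathbb{F}_n(x)\ne F_n(x)$ as formal polynomials. The strategy is to exhibit a single index $k$ at which the coefficients disagree. The cleanest witness is the constant term: in $F_n(x)=\sum_{k=0}^n k!\,S(n,k)x^k$ the $k=0$ coefficient is $0!\,S(n,0)=S(n,0)=0$ for every $n\ge1$, whereas in $\mathbb{F}_n(x)$ it equals $0!=1$. Hence the constant terms differ and the polynomials cannot be equal for any $n\ge1$. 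For a sharper degree-by-degree separation one may instead compare the coefficient of $x^2$, which is $2!=2$ in $\mathbb{F}_n$ but $2!\,S(n,2)=2(2^{\,n-1}-1)$ in $F_n$; this exceeds $2$ as soon as $n\ge3$, confirming that the discrepancy is genuine and in fact grows with $n$.

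The only real obstacle here is interpretive rather than computational: deciding what $\not\subset$ should mean for two polynomials. I would fix the reading that the coefficient families $\{k!\,S(n,n)\}_k=\{k!\}_k$ and $\{k!\,S(n,k)\}_k$ are not identical, since this is precisely what the displayed inequality $\sum k!\,S(n,n)x^k\not\subset\sum k!\,S(n,k)x^k$ is meant to encode. Under this reading the proof closes with the single coefficient comparison above, and no deeper structural fact about Stirling numbers is needed beyond $S(n,0)=0$ for $n\ge1$ together with $S(n,k)>1$ whenever $1<k<n$.
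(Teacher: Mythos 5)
Your proof is correct, and it is in fact more complete than the paper's own argument. The paper disposes of this lemma by pointing to a table listing $F_n(x)$ and $\mathbb{F}_n(x)$ side by side for $n=0,\dots,5$ and declaring the difference ``trivial''; it never proves the recurrence at all, and the non-containment is only verified for finitely many small cases. You instead give a uniform argument valid for every $n\geq 1$: the constant term of $F_n(x)$ is $0!\,S(n,0)=0$ while that of $\mathbb{F}_n(x)$ is $0!=1$, so the polynomials can never coincide; and the recurrence $\mathbb{F}_n(x)=\mathbb{F}_{n-1}(x)+n!\,x^n$ follows by cancelling the common monomials $k!\,x^k$ for $0\leq k\leq n-1$. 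Your secondary witness (the $x^2$ coefficient, $2$ versus $2!\,S(n,2)=2(2^{n-1}-1)$ for $n\geq 3$) matches the paper's tabulated values and shows the discrepancy grows with $n$, which the table can only suggest. What the paper's table buys is concreteness and a quick visual check; what your argument buys is an actual proof for all $n$, including the recurrence that the paper asserts without justification. Your interpretive choice for the nonstandard symbol $\not\subset$ (inequality of coefficient sequences) is also the only reading consistent with the paper's table, so it is the right one to fix.
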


\begin{proof} From table \ref{table 6} below the difference between the two polynomials is trivial.
    \begin{table}[!ht]\label{6T}
    \centering
    \begin{tabular}{|c|c|c|}\hline 
        Fubini polynomials& $ \mathbb{F}_{n}(x)$ polynomials   \\\hline 
         $F_0(x) = 1$ & $\mathbb{F}_0(x) = 0$ \\\hline
        $F_1(x) = x$ & $\mathbb{F}_1(x) = 1 + x$  \\\hline
          $F_2(x) = x + 2x^2$ & $\mathbb{F}_2(x) = 1 + x + 2x^2$  \\\hline
            $F_3(x) = x + 6x^2 + 6x^3$ & $\mathbb{F}_3(x) = 1 + x + 2x^2 + 6x^3$ \\\hline
              $F_4(x) = x + 14x^2 + 36x^3 + 24x^4$ & $\mathbb{F}_4(x) = 1 + x + 2x^2 + 6x^3 + 24x^4$   \\\hline
                $F_5(x) = x + 30x^2 + 150x^3 + 240x^4 + 120x^5$ & $\mathbb{F}_5(x) = 1 + x + 2x^2 + 6x^3 + 24x^4 + 120x^5$   \\\hline
                  %$F_6(x) = x + 62x^2 + 540x^3 + 1560x^4 + 1800x^5 + 720x^6$ &$F_6(x) = 1 + x + 2x^2 + 6x^3 + 24x^4 + 120x^5 + 720x^6$  & \\\hline
        % $\mathbb{F}_{n}(x)$ &  & $\mathbb{F}_{0}$ & $\mathbb{F}_{1}$ & $\mathbb{F}_{2}$ &$\mathbb{F}_{3}$ &$\mathbb{F}_{4}$& $\mathbb{F}_{5}$ & $\mathbb{F}_{6}$ &$\mathbb{F}_{7}$ \\\hline
    \end{tabular}
    \caption{Relations between Kurepa and Fubini numbers(ordered Bell numbers) \cite{https://doi.org/10.1155/IJMMS.2005.3849, OEIS:A000670} }
    \label{table 6}
\end{table}

\end{proof}

\begin{lemma}\label{rt}
For any integer $n=0,1,2,3,\ldots$ the $\mathbb{F}_{n\geq1}\not\subset F_n$, that is $$\sum_{k=0}^n k! S(n,1)\not\subset \sum_{k=0}^n k! S(n,k)$$ and 
the following recurrence easily holds;
$$ \mathbb{F}_{n}=\mathbb{F}_{n-1}+n!$$   
\end{lemma}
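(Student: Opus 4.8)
The plan is to obtain Lemma \ref{rt} as the numerical specialization ($x=1$) of the polynomial Lemma \ref{put}, which has already been established. Since $\mathbb{F}_{n}=\mathbb{F}_{n}(1)$ and $F_{n}=F_{n}(1)$, both the non-equality of the two sequences and the recurrence descend directly from their polynomial counterparts, so no genuinely new argument is required; the work is just transcription and a short numerical check.

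First I would prove the recurrence. By definition $\mathbb{F}_{n}=\sum_{k=0}^{n}k!\,S(n,1)=\sum_{k=0}^{n}k!$, where the simplification uses $S(n,1)=1$ for all $n\geq 1$ as recorded in Section \ref{sec}. Splitting off the top summand gives $\mathbb{F}_{n}=\sum_{k=0}^{n-1}k!+n!=\mathbb{F}_{n-1}+n!$, which is exactly the claimed recurrence and is the $x=1$ evaluation of the polynomial identity $\mathbb{F}_{n}(x)=\mathbb{F}_{n-1}(x)+n!x^{n}$ from Lemma \ref{put}.

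Next I would establish the non-equality $\mathbb{F}_{n}\neq F_{n}$. Here one compares $\sum_{k=0}^{n}k!\,S(n,1)=\sum_{k=0}^{n}k!$ against the Fubini (ordered Bell) number $F_{n}=\sum_{k=0}^{n}k!\,S(n,k)$. The two weighted sums coincide term-by-term only at $k\in\{1,n\}$, where $S(n,1)=S(n,n)=1$; for every intermediate index $1<k<n$ one has $S(n,k)>1$ as soon as $n\geq 3$ (for instance $S(n,2)=2^{n-1}-1\geq 3$), so the sums separate. A direct check of the initial values then covers the remaining cases and confirms the sequences differ for all $n$: $(\mathbb{F}_{n})=0,2,4,10,34,\dots$ while $(F_{n})=1,1,3,13,75,\dots$. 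This is precisely the $x=1$ shadow of the coefficientwise non-containment $\sum_{k}k!\,S(n,n)x^{k}\not\subset\sum_{k}k!\,S(n,k)x^{k}$ recorded in Lemma \ref{put} and illustrated in Table \ref{table 6}.

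The only real subtlety is notational rather than mathematical: interpreting the symbol $\not\subset$ for numbers. I would read it as ``the sequence $(\mathbb{F}_{n})$ is not the sequence $(F_{n})$,'' equivalently that $\mathbb{F}_{n}$ is not recovered from $F_{n}$ upon restricting the Stirling weights to $S(n,1)$. Once that reading is fixed, the statement is an immediate transcription of Lemma \ref{put} at $x=1$, and the recurrence is a one-line telescoping of the factorial sum, so I expect no genuine obstacle.
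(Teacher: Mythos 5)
Your proposal is correct and takes essentially the same route as the paper: the paper's entire proof of this lemma is the single sentence that it ``follows immediately from Lemma~\ref{put},'' which is exactly the $x=1$ specialization you carry out. You in fact supply more detail than the paper does (the explicit telescoping $\sum_{k=0}^{n}k!=\sum_{k=0}^{n-1}k!+n!$ and the Stirling-weight comparison via $S(n,2)=2^{n-1}-1$ plus initial-value checks), so there is no gap to report beyond the edge case, already present in the paper's own statement, that the recurrence fails at $n=1$ under the convention $\mathbb{F}_0=0$.
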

\begin{proof}
    The proof of this lemma follows immediately from lemma \ref{put}.
\end{proof}

\begin{definition}\label{Kr}
     Let $r_n(x)$  be a polynomial defined as follows
    \begin{equation*}
r_{n}(x)=\sum_{k=0}^n \frac{k!}{2} S(n,1)x^k=\sum_{k=0}^n \frac{k!}{2} S(n,n)x^k=\sum_{k=0}^n \frac{k!}{2}x^k
 \end{equation*}
 for all non-negative integers $n.$ This polynomial satisfies the recurrence relation
 $$ r_{n}(x)=r_{n-1}(x)+ \frac{n!}{2}x^n.$$  
\end{definition}

\begin{theorem}
    Let $r_n(x)$  be the polynomial in definition \ref{Kr}, if we set $x=1$, then
    \begin{equation*}
r_{n}(1)=\sum_{k=0}^n \frac{k!}{2} S(n,1)=\sum_{k=0}^n \frac{k!}{2}
 \end{equation*}
 for all non-negative integers $n.$ This number satisfies the recurrence relation
 $$ r_{n}=r_{n-1}+ \frac{n!}{2}.$$  
\end{theorem}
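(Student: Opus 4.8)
The plan is to obtain both assertions by the single substitution $x=1$ into the data already recorded in Definition \ref{Kr}, so the argument is almost entirely bookkeeping. First I would recall the identity $S(n,1)=S(n,n)=1$ for all $n\geq 1$, which was established in the Stirling-number subsection \ref{sec} and reused in Theorem \ref{FK}. With this in hand, setting $x=1$ in the defining sum
\[
r_{n}(x)=\sum_{k=0}^{n}\frac{k!}{2}S(n,1)x^{k}
\]
collapses every factor $x^{k}$ to $1$ and replaces each $S(n,1)$ by $1$, yielding directly
\[
r_{n}(1)=\sum_{k=0}^{n}\frac{k!}{2}S(n,1)=\sum_{k=0}^{n}\frac{k!}{2},
\]
which is exactly the first displayed claim.

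For the recurrence I would proceed in two equivalent ways and present whichever reads more cleanly. The quick route is simply to take the polynomial recurrence already stated in Definition \ref{Kr}, namely $r_{n}(x)=r_{n-1}(x)+\frac{n!}{2}x^{n}$, and evaluate at $x=1$; since $x^{n}=1$ there, this becomes $r_{n}=r_{n-1}+\frac{n!}{2}$. For completeness I would also verify this intrinsically from the closed form just derived: writing $r_{n}=\sum_{k=0}^{n}\frac{k!}{2}$ and $r_{n-1}=\sum_{k=0}^{n-1}\frac{k!}{2}$, the two sums agree term by term except that the first carries the extra top index $k=n$, so their difference is precisely the single term $\frac{n!}{2}$, giving $r_{n}-r_{n-1}=\frac{n!}{2}$.

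There is no genuine obstacle here; the only point requiring a moment of care is the index bookkeeping in the second verification, where one must note that the upper limit of summation shifts from $n-1$ to $n$ so that exactly one term survives the subtraction. I would state explicitly that the substitution $x=1$ is legitimate because $r_{n}(x)$ is a polynomial (a finite sum), so no convergence issue arises, and that the whole statement is the $x=1$ specialization of Definition \ref{Kr}. This completes the proof.
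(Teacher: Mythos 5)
Your proposal is correct and matches the paper's approach: the paper's proof is simply the one-line remark that the result follows from Definition \ref{Kr} and Lemma \ref{rt}, which is exactly what you carry out explicitly (substituting $x=1$ using $S(n,1)=1$, then obtaining the recurrence either by evaluating the polynomial recurrence of Definition \ref{Kr} at $x=1$ or by cancelling the two closed-form sums term by term). Your version is just a fully written-out rendering of the same argument, so there is nothing to reconcile.
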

\begin{proof}
    From definition \ref{Kr} and lemma \ref{rt}, the proof of this is straightforward.
\end{proof}

\begin{theorem}\label{cpr}
For any integer $n$ the following results hold
\begin{enumerate}
    \item The rational function $\dfrac{\mathbb{F}_{n}(x)}{r_n(x)}=2$ for $r_n(x)\neq 0,$
    \item For $x=1$ the number $\mathbb{F}_{n}(1)=2r_n(1)$ as in conjecture \ref{main},
     \item $\gcd(\mathbb{F}_{n},2)=2, $
    \item For $n\geq 3$ the $r_n$ is always an odd number and the
\begin{equation*}
\begin{cases}
	\text{$\gcd(r_{n},2)=1$}    & \text{coprime}\\
	\text{$\gcd(\mathbb{F}_{n}, r_{n})=r_{n}$}.
\end{cases}
 \end{equation*}
\end{enumerate} 
\end{theorem}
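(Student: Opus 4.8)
The plan is to reduce every assertion to the single coefficientwise identity $\mathbb{F}_n(x)=2\,r_n(x)$. By Definition \ref{Kr} the coefficient of $x^k$ in $r_n(x)$ is $k!/2$, exactly half the coefficient $k!$ occurring in $\mathbb{F}_n(x)$; summing over $k$ gives $\mathbb{F}_n(x)=2\,r_n(x)$ as polynomials. Part (1) is then immediate: for $r_n(x)\neq 0$ we divide to obtain $\mathbb{F}_n(x)/r_n(x)=2$. Setting $x=1$ yields $\mathbb{F}_n(1)=2\,r_n(1)$, which is part (2); comparing with Conjecture \ref{main} and the row $\mathbf{Kurepa}=2r$ of Table \ref{table 1}, this is exactly the relation $\mathbf{!n}=2r$ written for the shifted index, so the identification with the Kurepa setting is direct.

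For part (3) I would first record that $\mathbb{F}_n=\mathbb{F}_n(1)=\sum_{k=0}^{n}k!$ is an even integer for every $n\geq 1$: the summands $0!$ and $1!$ contribute $2$, while every remaining term $k!$ with $k\geq 2$ is even. Since $\gcd(m,2)=2$ precisely when $m$ is even, this gives $\gcd(\mathbb{F}_n,2)=2$ at once.

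The only step carrying genuine content is the oddness claim in part (4). Here the key is a reduction modulo $4$. Using the recurrence $\mathbb{F}_n=\mathbb{F}_{n-1}+n!$ from Lemma \ref{rt}, observe that $4\mid k!$ for every $k\geq 4$, so the sum stabilizes modulo $4$ from $n=3$ onward; together with the base value $0!+1!+2!+3!=10$ this gives
\begin{equation*}
\sum_{k=0}^{n}k! \equiv 2 \pmod 4 \qquad (n\geq 3).
\end{equation*}
Consequently $r_n=\tfrac12\sum_{k=0}^{n}k!$ is an odd integer for all $n\geq 3$. Oddness gives $\gcd(r_n,2)=1$ immediately, and since $r_n\mid\mathbb{F}_n=2\,r_n$ we obtain $\gcd(\mathbb{F}_n,r_n)=r_n$, which completes part (4).

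The main obstacle, such as it is, lies entirely in the modular bookkeeping of part (4): once the congruence $\sum_{k=0}^{n}k!\equiv 2\pmod 4$ is secured, every other item in the theorem is a formal consequence of $\mathbb{F}_n(x)=2\,r_n(x)$. I would therefore organize the write-up so that this congruence is proved first by the short induction indicated above (base case $\mathbb{F}_3\equiv 2$, inductive step $\mathbb{F}_n\equiv\mathbb{F}_{n-1}\pmod 4$ since $4\mid n!$ for $n\geq 4$), and then deduce the four items in the order (1), (2), (3), (4).
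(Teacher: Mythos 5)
Your proof is correct and follows essentially the same route as the paper: both reduce everything to the coefficientwise identity $\mathbb{F}_n(x)=2r_n(x)$, and both obtain the oddness of $r_n$ for $n\geq 3$ from the base case at $n=3$ together with the fact that $k!/2$ is even (equivalently $4\mid k!$) for every $k\geq 4$. Your congruence $\sum_{k=0}^{n}k!\equiv 2\pmod{4}$ is just a repackaging of the paper's ``odd plus even is odd'' bookkeeping (namely $r_n = r_3 + \sum_{k\geq 4}k!/2$ with $r_3=5$), so the two arguments are the same in substance.
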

\begin{proof}
From definition \ref{Kr} and Theorem \ref{FK}, we know that 
 \begin{equation*}
r_{n}(x)=\sum_{k=0}^n \frac{k!}{2} S(n,1)x^k=\sum_{k=0}^n \frac{k!}{2}x^k
 \end{equation*}
 and 
 \begin{equation*}
\sum_{k=0}^n k! S(n,1)x^k=\sum_{k=0}^n k!x^k=\mathbb{F}_{n\geq 1}(x)
 \end{equation*}
the table \ref{table 6} below summarizes some few list of these polynomials;
\begin{table}[!ht]
    \centering
    \begin{tabular}{|c|c|c|c|c|a|c|c|c|c|}\hline 
        $r$ polynomials($r(x)$)& $r_n$&$\mathbb{F}_{n\geq 1}(x)$ polynomials & $\mathbb{F}_{n\geq 1}$  \\\hline 
         $r_0(x) = 0$ & $0$ &$F_0(x) = 0$& 0\\\hline
        $r_1(x) = \frac{1}{2}+ \frac{1}{2}x$ & 1&$F_1(x) = 1 + x$ & 2\\\hline
          $r_2(x) = \frac{1}{2}+ \frac{1}{2}x + x^2$ &2 & $F_2(x) = 1 + x + 2x^2$ & 4\\\hline
            $r_3(x) = \frac{1}{2}+ \frac{1}{2}x + 3x^2 + 3x^3$ &5 & $F_3(x) = 1 + x + 2x^2 + 6x^3$ & 10\\\hline
              $r_4(x) = \frac{1}{2}+ \frac{1}{2}x + 3x^2 + 3x^3 + 12x^4$ &17 & $F_4(x) = 1 + x + 2x^2 + 6x^3 + 24x^4$ & 34 \\\hline
                $r_5(x) = \frac{1}{2}+ \frac{1}{2}x + 3x^2 + 3x^3 + 12x^4+ 60x^5$ &77 & $F_5(x) = 1 + x + 2x^2 + 6x^3 + 24x^4 + 120x^5$ & 154 \\\hline
                  %$F_6(x) = x + 62x^2 + 540x^3 + 1560x^4 + 1800x^5 + 720x^6$ &$F_6(x) = 1 + x + 2x^2 + 6x^3 + 24x^4 + 120x^5 + 720x^6$  & \\\hline
        % $\mathbb{F}_{n}(x)$ &  & $\mathbb{F}_{0}$ & $\mathbb{F}_{1}$ & $\mathbb{F}_{2}$ &$\mathbb{F}_{3}$ &$\mathbb{F}_{4}$& $\mathbb{F}_{5}$ & $\mathbb{F}_{6}$ &$\mathbb{F}_{7}$ \\\hline
    \end{tabular}
    \caption{Relations between $r_n$ and $\mathbb{F}_{n}$ numbers \cite{OEIS:A003422} }
    \label{table 7}
\end{table}
To proof (1) we shall explicitly write repectively $r_n(x)$ and $\mathbb{F}_{n\geq 1}(x)$;
 \begin{equation*}
r_{n}(x)=\sum_{k=0}^n \frac{k!}{2}x^k=\dfrac{1}{2}\left(1+x+2x^2+\cdots+n!x^n \right)
 \end{equation*}
 and 
 \begin{equation*}
    \mathbb{F}_{n\geq1}(x)= \sum_{k=0}^n k! x^k=1+x+2x^2+\cdots+n!x^n=K_n(x)
 \end{equation*}
Let us check for some few polynomials $n\geq1$
\begin{align*}
    \dfrac{\mathbb{F}_{1}(x)}{r_1(x)}&=\dfrac{1 + x}{\frac{1}{2}+ \frac{1}{2}x}
    =\dfrac{ 1+x}{\dfrac{1}{2}\left(1+x \right)}=2, \quad \mbox{where $r_1(x) \neq 0$},
\end{align*}

\begin{align*}
    \dfrac{\mathbb{F}_{2}(x)}{r_2(x)}&=\dfrac{1+x+2x^2}{\frac{1}{2}+ \frac{1}{2}x + x^2}
    =\dfrac{ 1+x+2x^2}{\dfrac{1}{2}\left(1+x+2x^2 \right)}=2 \quad \mbox{where $r_2(x) \neq 0$}
\end{align*}
it is easy to see that for all $n\geq 1$ the rational function
\begin{align*}
    \dfrac{\mathbb{F}_{n}(x)}{r_n(x)}&=\dfrac{\sum_{k=0}^n k! x^k}{\sum_{k=0}^n \frac{k!}{2}x^k}=\dfrac{ 1+x+2x^2+\cdots+n!x^n}{\dfrac{1}{2}\left(1+x+2x^2+\cdots+n!x^n \right)}=2
\end{align*}
if the root of the polynomial $r_n(x) \neq 0,$ this completes the proof of (1).\\

From the proof (1), it is easy to check that for $x=1$ the number $\mathbb{F}_{n}(1)=2r_n(1)$, the few list of these polynomial $F_n(1)=2r_n(1)$ can be observed in table \ref{table 6} and this finishes the proof of (2). \\

Next we proof that $\gcd(\mathbb{F}_n, 2)$ is $2$, this is straight forward since we know that $\mathbb{F}_{n}=2r_n$ so we can put $\gcd(\mathbb{F}_n, 2)=\gcd(2r_n, 2)$ which is clearly $2$, thus
$$\gcd(\mathbb{F}_n, 2)=\gcd(K_n, 2)=\gcd(2\cdotp r_n, 2)=2$$
this completes the proof of ($3$).

The proof of (4): We observe from the table $6$, that $r_3=5$ which is odd(say $2t+1$), this makes the statement true for $n=3.$ Now it is easy to see that for any $k\geq 4$ the $k!$ contains at least two factors of $2$, making it even integer, so $ k!/2$ is divisible by $2$.
The sum of any number of even integers is always an even integer, thus
  \begin{equation*}
\sum_{k=4}^n \frac{k!}{2} S(n,1)x^k=\sum_{k=4}^n \frac{k!}{2}x^k=2t\quad \mbox{even number},
 \end{equation*}\\
  finally, we observe that $(2t+1=\mbox{odd})+ (2t=\mbox{even})=4t +1 (\mbox{odd number})$ \\
  explicitly $ 5+ \sum_{k=4}^n \frac{k!}{2}x^k=r_n$ thus for all $n\geq 3$ the $r_n$ is always an odd number.\\
 Finally one can realize that $\gcd(r_{n},2)=1$ since we now know that $r_n$ is odd number for all $n\geq 3$, then it is relative prime with $2.$\\
 Also, it is known that the greatest common divisor of even number and odd number is always odd
 number thus $\gcd(\mathbb{F}_{n}, r_{n})=\gcd(2\cdotp r_n, r_{n})=r_{n}$. We have proved that
\begin{equation*}
\begin{cases}
	\text{$\gcd(r_{n},2)=1$,}    & \text{coprime},\\
	\text{$\gcd(\mathbb{F}_{n}, r_{n})=r_{n}$}.
\end{cases}
 \end{equation*}
 
\end{proof}

\begin{corollary}\label{cor}
    For all nonnegative integers $n>2$, the factorial $n!$ is even.
\end{corollary}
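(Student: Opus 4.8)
The plan is to establish that $2 \mid n!$ directly from the multiplicative structure of the factorial. First I would write $n! = 1 \cdot 2 \cdot 3 \cdots n$ and observe that, whenever $n > 2$ (indeed $n \geq 2$ already suffices), the integer $2$ appears explicitly as one of the factors in this product. Hence $2$ divides $n!$, which is precisely the assertion that $n!$ is even. This is the entire content of the corollary.

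An equally short alternative uses the recurrence $n! = n \cdot (n-1)!$ together with the observation already recorded inside the proof of Theorem \ref{cpr}, where it is noted that for $k \geq 4$ the factorial $k!$ contains at least two factors of $2$. From this one may argue by induction: the base case $n = 3$ gives $3! = 6 = 2 \cdot 3$, which is even, and if $(n-1)!$ is even then $n! = n \cdot (n-1)!$ is a multiple of an even number and therefore even as well. Either route closes the argument in a single line.

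There is essentially no obstacle to overcome here; the result is an immediate consequence of the presence of the factor $2$ in every factorial with $n \geq 2$. The only subtlety worth flagging is that the hypothesis $n > 2$ is not sharp, since the statement already holds at $n = 2$ because $2! = 2$. The restriction is presumably imposed only to match the range appearing in Conjecture \ref{main} and the surrounding analysis of $\gcd(\mathbf{!n}, n!)$, where the parity of $n!$ is exactly the property needed to conclude that $2$ divides the common divisor.
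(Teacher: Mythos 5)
Your proposal is correct and matches the paper's argument in substance: the paper's proof is framed as an induction with base case $P(3)$ and steps $P(k)$, $P(k+1)$, but its $P(k)$ step is exactly your first route (factoring the explicit $2$ out of the product $k! = k \times (k-1) \times \cdots \times 2 \times 1$), and its $P(k+1)$ step is exactly your second route via $(k+1)! = (k+1)\,k!$. Your side remark that the hypothesis $n>2$ is not sharp (since $2!=2$ is already even) is also accurate and does not conflict with anything in the paper.
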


\begin{proof}
    We shall proof the following statement  $P(n)$: $n!=n\times(n-1)\times(n-2)\times \cdots \times 3\times 2 \times1$ is even for $n>2.$
    \begin{enumerate}
        \item  $P(3)$: for any non-negative integer $n>2$,
\item $P(k)$: when $n=k$ and $k>2$ we show that $k!=2t$ for all $t\in \BZ^+,$ 
\item $P(k+1)$: when $n=k+1$ and $k>2$ we show that $(k+1)!=2T$ for all $T\in \BZ^+,$ 
 \end{enumerate}
P(3): for any non-negative integer $n>2$, let $n=3$, $3!=3\times 2 \times 1=2(3\times 1)=2(3)=6$ where $6$ is an even number.\\
P(k): Next we show for any $t$, where $t\in \BZ^+,$ that
\begin{align*}
    k!&=k\times(k-1)\times(k-2)\times \cdots \times 3\times 2 \times1\\
    &= 2\times ( k\times(k-1)\times(k-2)\times \cdots \times 3 \times1)\\
    &=2 \times t= 2t,
\end{align*}
For P(k+1): let $k$ be an integer and $t$ be as defined above,
\begin{align*}
    (k+1)!&=(k+1)k!=(k+1)[k\times(k-1)\times(k-2)\times \cdots \times 3\times 2 \times1]\\
    &= 2\times [ (k+1)k\times(k-1)\times(k-2)\times \cdots \times 3 \times1]\\
    &=2 \times t(k+1)= 2t(k+1)=2T
\end{align*}
since for all $n>2$ the statement $P(n)$ is true for $P(1), P(k)$ and $P(k+1)$ then for all  $n>2$, the factorial $n!$ is even.
\end{proof}

\begin{lemma}\label{cp}
   For all $n\geq3$ the $\gcd(r_{n},\frac{(n+1)!}{2})=1$, that is, $r_n$ and $\frac{(n+1)!}{2}$ are coprime.
\end{lemma}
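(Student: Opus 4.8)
The plan is to reduce the coprimality of $r_n$ and $\tfrac{(n+1)!}{2}$ to a prime-by-prime nonvanishing statement, and then to identify that statement with the Kurepa conjecture for the primes up to $n+1$. First I would collect the facts already available: by Definition \ref{Kr} and Theorem \ref{cpr} we have $\mathbb{F}_n = 2r_n$, while by definition of the Kurepa factorial $\mathbb{F}_n(1) = \sum_{k=0}^{n} k! = \,!(n+1)$, so that $r_n = \tfrac{1}{2}\,!(n+1)$; moreover $r_n$ is odd for every $n \geq 3$ by part (4) of Theorem \ref{cpr}. Since every prime factor of $\tfrac{(n+1)!}{2}$ is at most $n+1$, it suffices to prove that no prime $p \leq n+1$ divides $r_n$.

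The prime $p=2$ is disposed of immediately, since $r_n$ is odd for $n \geq 3$. For an odd prime $p \leq n+1$, coprimality with $2$ gives $p \mid r_n$ if and only if $p \mid \,!(n+1)$. Here I would invoke the elementary vanishing $k! \equiv 0 \pmod{p}$ for all $k \geq p$: writing $!(n+1) = \sum_{k=0}^{n} k!$ and deleting the terms with $k \geq p$, which are identically zero modulo $p$, yields
\begin{equation*}
!(n+1) \equiv \sum_{k=0}^{p-1} k! = !p \pmod{p},
\end{equation*}
valid precisely when $n \geq p-1$, i.e. throughout the range $p \leq n+1$. Consequently $p \mid r_n$ if and only if $p \mid \,!p$.

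Putting the two cases together, $\gcd\!\left(r_n,\tfrac{(n+1)!}{2}\right) = 1$ holds if and only if $!p \not\equiv 0 \pmod{p}$ for every odd prime $p \leq n+1$. This is exactly Conjecture \ref{main}, in the form $\gcd(p,\,!p)=1$, restricted to the primes not exceeding $n+1$, which is the equivalence this section is built around.

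The main obstacle is therefore structural rather than technical: the chain of reductions terminates exactly at the Kurepa conjecture, an open problem, so an unconditional proof of the lemma would settle Kurepa itself. The honest conclusion is that the lemma is \emph{equivalent} to Kurepa's conjecture and can be asserted only conditionally on it, or unconditionally for those $n$ with $n+1$ inside the computationally verified ranges discussed in Section \ref{Kall}. In writing this up I would make that conditional dependence explicit and flag the congruence $!(n+1) \equiv \,!p \pmod{p}$ as the pivotal step, since it is what collapses a statement about the growing sum $r_n$ into the single fixed nonvanishing $p \nmid \,!p$ that Kurepa predicts.
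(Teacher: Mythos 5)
Your analysis is correct, and it takes a genuinely different route from the paper --- one that, unlike the paper's, accurately settles what the status of Lemma \ref{cp} actually is. Your chain of reductions --- $r_n = \tfrac{1}{2}\,!(n+1)$, odd for $n\ge 3$; every prime factor of $\tfrac{(n+1)!}{2}$ is at most $n+1$; and, for each odd prime $p \le n+1$, the collapse
\begin{equation*}
!(n+1) \;=\; \sum_{k=0}^{n} k! \;\equiv\; \sum_{k=0}^{p-1} k! \;=\; {!p} \pmod{p}
\end{equation*}
--- is exactly right, and it yields the sharp statement that $\gcd\bigl(r_n, \tfrac{(n+1)!}{2}\bigr)=1$ holds for a given $n$ if and only if $p \nmid {!p}$ for every odd prime $p \le n+1$. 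So the lemma, taken over all $n \ge 3$, is not merely related to Conjecture \ref{main}: it is equivalent to it, and your conclusion that it can only be asserted conditionally on Kurepa, or unconditionally inside the computationally verified ranges, is the honest and correct one.

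The paper, by contrast, offers an unconditional ``proof,'' and your reduction pinpoints why that proof cannot be valid. Its argument has two ingredients: a parity observation ($r_n$ is odd by Theorem \ref{cpr}, $T=\tfrac{(n+1)!}{2}$ is an integer, and ``the gcd of an odd and an even number is odd''), which can only ever exclude the prime $2$; and a Bezout-style computation that opens with ``$\gcd(r_n,T)=1$ implies there exist $x,y$ with $r_n x + T y=1$'' and then uses that identity to conclude $\gcd(r_n,T)=1$ --- assuming precisely what is to be proved, with the three numerical checks $\gcd(5,12)=\gcd(17,60)=\gcd(77,120)=1$ standing in for the general case. Parity alone can never rule out a common odd prime factor, and your congruence shows that ruling out odd primes is exactly the content of Kurepa's conjecture. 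Consequently Lemma \ref{cp} is unproven as it stands in the paper, and the gap propagates to Theorem \ref{Equ} and Theorem \ref{final}, which together would otherwise amount to a proof of Conjecture \ref{main}. Your prime-by-prime reduction is the precise diagnosis of why no elementary argument of the paper's type can close that gap.
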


\begin{proof}
In Theorem \ref{cpr} it is shown that the $\gcd(r_{n},2)=1$ for which $r_n$ and $2$ are coprime, in this lemma we want to check for all $n\geq 3$ whether $\gcd(r_{n},\frac{(n+1)!}{2})=1$. 
Let $n!=n(n-1)(n-2)\cdots 3\cdotp 2\cdotp 1$, it is known from corollary \ref{cor} and Theorem \ref{cpr} that for any $n\geq 2$ the $n!$ contains at least one factor of $2$, making it even integer, so $ (n+1)!$ is divisible by $2$, specifically, $(n+1)!=2T$ by the induction from the corollary \ref{cor}, this makes $\frac{(n+1)!}{2}=\frac{2T}{2}=T$.
Next we check for all $n\geq 3$ whether $\gcd(r_{n},\frac{n!}{2})=\gcd(r_{n},T)=1$, that is, if $r_n$ and $\frac{(n+1)!}{2}=T$ are coprime. Since it is well known that the greatest common divisor for odd number and even number is odd number, we can readily check for $\gcd(r_{n},\frac{(n+1)!}{2})=\gcd(r_{n},T)=1$, since $r_n$ is always odd number by Theorem \ref{cpr}, we can compute the greatest common divisor explicitly as below;
\begin{align*}
    \gcd \left(r_n, \frac{(n+1)!}{2}\right)&=\gcd \left(r_n, T\right)=\mbox{odd number}\\
    &=\gcd(5,12)=1\\
    &=\gcd(17,60)=1\\
    &=\gcd(77, 120)=1
\end{align*}
%also, we can use the Euclidean algorithm to check that the greatest common divisor say, $\gcd(17, 12)=1$ as below;
%\begin{align*}
  %  17&=1\times12+5\\
  %  12&=2\times5+2\\
  %  5&=2\times 2+1\\
%    2&=2\times 1+0
%\end{align*}
specifically, we known that $r_n$ is odd and $\frac{(n+1)!}{2}=T$ is even for some integer $s$, the $\gcd(r_{n},T)=1$ implies there exist some set $V=\lbrace{r_n x + T y=1|x, y\in \mbox{integers}  \rbrace}$ such that $1|r_n$ for $ r_n=q\cdotp1+\mathbb{R}$ for $\mathbb{R}=0$ and $0\leq \mathbb{R}<1.$
\begin{align*}
    r_n=q\cdotp1+\mathbb{R}\\
    \mathbb{R}&=r_n - q\cdotp1= r_n - q(r_n x_0 + Ty_0  )\\
    &=r_n -qr_n x_0 - qT y_0\\
    &=r_n(1-qx_0) + (-qT y_0),
\end{align*}
and $$V=\lbrace{\mathbb{R}=r_n x + T y=1|x=1-qx_0, y=-qy_0  \rbrace}$$ since  $0\leq \mathbb{R}<1$, with $\mathbb{R}=0$ and $1$ is the least positive integer in the set $V$, it follows that $1|r_n.$ Similarly we show $1|(\frac{n!}{2})$ and this completes the proof.
\end{proof}

\begin{theorem}\label{Equ}
For all non-zero integer $n\geq 1$, the $\gcd(\mathbb{F}_{n},(n+1)!)=2$. This is equivalent to the following; 
    \begin{enumerate}
        \item[(a)] $\gcd(2\cdotp r_{n},2T)=2\cdotp \gcd(r_{n},T)$ from lemma \ref{cp},\\ $\gcd(\mathbb{F}_{n},(n+1)!)=\gcd(2\cdotp r_{n},2T)=2\cdotp 1= 2$;
        \item[(b)]  $2=\gcd(\mathbb{F}_{n},(n+1)!)$, if and only if $\gcd\left (\frac{\mathbb{F}_{n}}{2},\frac{(n+1)!}{2}\right)=1$;
        \item[(c)] If we let $u=\mathbb{F}$ and $v=(n+1)!$, then by the results of J. Stein \cite{stein1967computational} the Binary GCD algorithm, that is,  $\gcd(u,v)=2\gcd(\frac{u}{2}, \frac{v}{2})$ where $$\gcd\left(\frac{u}{2}, \frac{v}{2}\right)=\gcd\left(\frac{\mathbb{F}_{n}}{2}, \frac{(n+1)!}{2}\right)=\gcd \left(r_n, \frac{(n+1)!}{2}\right)=1$$
        it immediately follows that $\gcd(\mathbb{F}_{n},(n+1)!)=2$.
    \end{enumerate}
\end{theorem}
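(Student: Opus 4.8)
The plan is to reduce the computation of $\gcd(\mathbb{F}_{n},(n+1)!)$ to the coprimality statement already secured in Lemma \ref{cp} by extracting the factor of $2$ that both arguments manifestly carry. First I would record the two structural facts that drive everything. By Definition \ref{Kr} and Theorem \ref{cpr} we have $\mathbb{F}_{n}=2r_{n}$, while Corollary \ref{cor} guarantees that $(n+1)!$ is even for $n\geq 2$, so we may write $(n+1)!=2T$ with $T=\tfrac{(n+1)!}{2}$ an integer. Thus both $\mathbb{F}_{n}$ and $(n+1)!$ are even, and the whole problem is to show that after removing one factor of $2$ from each, no further common divisor survives.

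The key algebraic tool is the elementary identity $\gcd(2a,2b)=2\gcd(a,b)$, which is exactly the halving step of Stein's binary GCD algorithm \cite{stein1967computational} applied to two even inputs. Taking $a=r_{n}$ and $b=T$ gives
\begin{align*}
\gcd(\mathbb{F}_{n},(n+1)!)=\gcd(2r_{n},2T)=2\gcd\left(r_{n},\frac{(n+1)!}{2}\right).
\end{align*}
At this point Lemma \ref{cp}, which asserts $\gcd\left(r_{n},\tfrac{(n+1)!}{2}\right)=1$ for all $n\geq 3$, finishes the calculation: the bracketed gcd collapses to $1$ and the expression equals $2$. The residual small cases I would dispatch by direct inspection, since $\gcd(\mathbb{F}_{1},2!)=\gcd(2,2)=2$ and $\gcd(\mathbb{F}_{2},3!)=\gcd(4,6)=2$.

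I would then present the three clauses (a), (b), (c) not as independent arguments but as three faces of this single reduction. Clause (a) is the displayed identity read as $\gcd(2r_{n},2T)=2\cdot\gcd(r_{n},T)=2\cdot 1$. Clause (b) is the logical equivalence obtained by dividing through by $2$: since $\gcd(\mathbb{F}_{n},(n+1)!)=2\gcd\left(\tfrac{\mathbb{F}_{n}}{2},\tfrac{(n+1)!}{2}\right)$, the left side equals $2$ if and only if the inner gcd equals $1$. Clause (c) is the identical statement phrased through Stein's halving rule with $u=\mathbb{F}_{n}$ and $v=(n+1)!$, using $\tfrac{u}{2}=r_{n}$ to identify $\gcd\left(\tfrac{u}{2},\tfrac{v}{2}\right)$ with the quantity controlled by Lemma \ref{cp}.

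I expect the only genuine obstacle to lie upstream, in Lemma \ref{cp} itself: everything here is a mechanical factor-of-$2$ extraction, and the entire force of the theorem is inherited from the coprimality $\gcd\left(r_{n},\tfrac{(n+1)!}{2}\right)=1$. Granting that lemma the proof is immediate, whereas without it one can only extract the easy lower bound $2\mid\gcd(\mathbb{F}_{n},(n+1)!)$ and nothing more.
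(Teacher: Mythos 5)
Your proposal is correct and follows essentially the same route as the paper: both proofs extract the common factor of $2$ via the identity $\gcd(2a,2b)=2\gcd(a,b)$ (Stein's halving step, with $\mathbb{F}_n=2r_n$ and $(n+1)!=2T$) and then invoke Lemma \ref{cp} to get $\gcd\left(r_n,\frac{(n+1)!}{2}\right)=1$, with clauses (a)--(c) read as restatements of this single reduction. Your explicit handling of the small cases $n=1,2$, which lie outside the stated range $n\geq 3$ of Lemma \ref{cp}, is in fact slightly more careful than the paper, which only treats $n=2$ and $n=3$ as worked examples inside part (c).
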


\begin{proof}
The statement of the Theorem for all non-zero integer $n\geq 1$, the $\gcd(\mathbb{F}_{n},\frac{(n+1)!}{2})=2$ has many equivalence and to show the detailed proof it is prudent to work out the equivalence for clarity. 
To prove $(a)$, Let 
\begin{align*}
    \gcd(\mathbb{F}_{n},(n+1)!)&=\gcd(2\cdotp r_{n},2T)\\
    &=2\cdotp \gcd(r_n, T) \\
    &=2 \cdotp 1= 2 \quad \mbox{since $\gcd(r_n, T)=1$ from lemma \ref{cp}}
\end{align*}
thus $\mathbb{F}_{n} x + (n+1)! y=2$ is obvious using an important and well known property, that is, if $d>0,$ then $$\gcd(d\cdotp a,d\cdotp b)=d\cdotp \gcd(a, b)$$ for details on this see \cite{knuth2014art}.

To prove $(b)$ If $\mathbb{F}_{n} x + (n+1)! y=2$, then $$ \dfrac{\mathbb{F}_{n}}{2} x + \dfrac{(n+1)!}{2} y=1$$ this implies that
\begin{align*}
    &\dfrac{\mathbb{F}_{n}}{2} x + \dfrac{(n+1)!}{2} y=1\\
    &\dfrac{2\cdotp r_{n}}{2} x + \dfrac{2T}{2}y =\dfrac{2}{2} \quad \mbox{but}\\
    &r_n x + T y=1 \implies \gcd(r_{n},\frac{(n+1)!}{2})=1 \quad \mbox{is coprime as in lemma \ref{cp}}
\end{align*}
specifically,
\begin{align*}
    &\gcd\left(\frac{\mathbb{F}_{n}}{2}, \frac{(n+1)!}{2}\right)=1 \iff\\
    &2\gcd\left(\frac{\mathbb{F}_{n}}{2}, \frac{(n+1)!}{2}\right)=2 \iff \\
    &\gcd\left(\frac{2\cdotp \mathbb{F}_{n}}{2}, \frac{2 \cdotp (n+1)!}{2}\right)=2 \iff \\
    &\gcd(\mathbb{F}_{n},(n+1)!)=2 \\
    %&=\dfrac{1}{2} \gcd(2\cdotp r_{n},2T)\quad\mbox{from corollary \ref{cor}}\\
    %&=\dfrac{2}{2}\cdotp \gcd(r_n, T)\quad\mbox{from the table below we observe $\gcd(r_n, T)=1$ for $n\geq 2$ }\\
   % &=\gcd \left(r_n, T\right)=1
\end{align*}
this completes the proof of $(b).$
The proof of $(c)$ relies heavily on the following properties \cite{stein1967computational, knuth2014art} for integers $u$ and $v$;
\begin{enumerate}
    \item  If $u$ and $v$ are both even, then $\gcd(u,v)=2\gcd\left(\frac{u}{2}, \frac{v}{2}\right)$
    \item  If $u$ is even and $v$ is odd, then $\gcd(u,v)=\gcd\left(\frac{u}{2}, v\right)$
    \item  If $u$ and $v$ are both odd, then $\gcd(u,v)=\gcd\left(\frac{|u-v|}{2}, v\right)$
    \item  $\gcd(0,v)=v$ and $\gcd(u,0)=u$.
\end{enumerate}
Now to compute the $\gcd(\mathbb{F}_{n},(n+1)!)$ using the binary GCD algorithm for $n\geq 1;$ \\
the first step of the binary GCD algorithm is to extract common factors of $2$,\\
after extracting the initial factor of $2$, then we proceeds with finding the $\gcd\left(\frac{\mathbb{F}_{n}}{2} , \frac{(n+1)!}{2}\right)$;
Consider $n=2$ and let $u=\mathbb{F}_2$ and $v=3!$, then by the properties of Binary GCD algorithm above, we observe that both $ \mathbb{F}_2$ and $3!$ are even so 
\begin{align*}
    2\gcd\left(\frac{u}{2}, \frac{v}{2}\right)&= 2\gcd\left(\frac{\mathbb{F}_{2}}{2}, \frac{3!}{2}\right)=2 \gcd\left(\frac{4}{2}, \frac{6}{2}\right)\\
    &=2\gcd \left(r_2, \frac{6}{2}\right)=2\gcd(2,3)
\end{align*}
now $u=2$ and $v=1$, from property (2) we make $u$ odd, thus the $\gcd(\frac{2}{2},3)=\gcd(1,3)$. We proceed with the algorithm by using property $(3)$ until we obtain $\gcd(0,1)=1.$ Thus 
$$\gcd \left(r_n, \frac{(n+1)!}{2}\right)=2\gcd(2,3)=2\cdotp 1=2.$$
Next we consider $n=3$ and note that both $ \mathbb{F}_3$ and $4!$ are even so 
\begin{align*}
     2\gcd\left(\frac{\mathbb{F}_{3}}{2}, \frac{4!}{2}\right)&=2 \gcd\left(\frac{10}{2}, \frac{24}{2}\right)\\
    &=2\gcd \left(r_3, \frac{24}{2}\right)=2\gcd(5,12)
\end{align*}
we divide $12$ by $2$ until we have 
$u=5$ and $v=3$, from property (3) we see both are odd, thus the $\gcd\left(\frac{|5-3|}{2}, 3\right)=\gcd(1,3)$. We proceed with the algorithm until we obtain $\gcd(1,1)=1.$
this holds for all integers $k\geq 1$, that is
\begin{align*}
    2\gcd\left(\frac{u}{2}, \frac{v}{2}\right)&= 2\gcd\left(\frac{\mathbb{F}_{k}}{2}, \frac{(k+1)!}{2}\right)\\
    &=2\gcd \left(r_k, \frac{(k+1)!}{2}\right) \quad \mbox{proceeding with the algorithm}\\
    &=2\cdotp 1=2 \quad \mbox{from lemma $\ref{cpr}$}
\end{align*}
this completes the proof for $(c).$
One observes that the condition $\gcd(u,v) = 2 \cdotp \gcd\left(\frac{u}{2}, \frac{v}{2}\right)$ is the initial and most crucial step in elucidating why the GCD of the $\mathbb{F}_{n}$ and the factorial $(n+1)!$ equals $2$. The problem is established for the subsequent phases of the binary GCD, where the oddity of $r_n$ and the parity of $\frac{(n+1)!}{2}$ for $n\geq1$ ultimately results in a GCD of $1$, the table below gives a clear view, thus for all non-zero integer $n\geq 1$, the $\gcd(\mathbb{F}_{n},(n+1)!)=2$. There are other extensions and accelerated forms of the Binary GCD algorithm proposed by J.Sorenson and many others \cite{brent1985systolic, sorenson1990k, sorenson1994two}

\begin{tabular}{ccccccc}
\toprule
$n$ & $n!$ & $(n+1)!/2=T$ & $\mathbb{F}_{n}$ &$r_n$ & $\gcd(r_{n},T)$& Binary $\gcd(u=\mathbb{F}, v=(n+1)!)$\\
\midrule
0 & 1 & $\frac{1}{2}$ &  &  & & \\
1 & 1 & $1$ & 2 &1 & 1& \\
2 & 2 & 3 & 4 &2 & 1&2\\
3 & 6 & 12 & 10 &5  & 1&2\\
4 & 24 & 60 & 34 &17  & 1& 2\\
5 & 120 & 360 &154  &77 & 1& 2\\
6 & 720 & 2,520 & 874 &437& 1& 2\\
7 & 5,040 & 20,160 & 5,914 &2,957& 1& 2\\
8 & 40,320 & 181,440 & 46,234 &23,117& 1& 2\\
9 & 362,880 & 1,814,400 & 409,114 &204,557& 1& 2\\
10 & 3,628,800 & $1.99584
\times10^7$ & 4,037,914 &2,018,957& 1 & 2\\
\bottomrule
\end{tabular}
%this completes the proof. 
\end{proof}

\begin{theorem}\label{final}
Let $K_n=\mathbf{!n}$ be the Kurepa factorial defined by
     $$K_n=\mathbf{!n}=\sum_{m=0}^{n-1}m!= 0! + 1!+ 2! + 3! + 4!+ 5!  + \cdots +(n-1)!=S_0(n).$$ with $m<n$,
and let $\mathbb{F}_n=\sum_{k=0}^n k!S(n,1)$ as already defined, 
then the Kurepa Conjecture \ref{main} is equivalent to Theorem \ref{Equ}.
\end{theorem}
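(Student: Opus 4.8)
The plan is to reduce the claimed equivalence to a single elementary observation: the number $\mathbb{F}_n$ is nothing other than the Kurepa factorial shifted by one index. First I would record the identity
\begin{equation*}
\mathbb{F}_n = \sum_{k=0}^{n} k!\, S(n,1) = \sum_{k=0}^{n} k! = 0! + 1! + \cdots + n! = \mathbf{!(n+1)} = K_{n+1},
\end{equation*}
which follows at once because $S(n,1)=1$ (as used in Theorem \ref{FK}) and because the defining sum $\mathbf{!(n+1)} = \sum_{m=0}^{n} m!$ ranges over exactly the same factorials. This can also be read off from the shared recurrence $\mathbb{F}_n = \mathbb{F}_{n-1} + n!$ of Lemma \ref{rt} together with $!(n+1) = \mathbf{!n} + n!$ from Lemma \ref{induct}, and the table entries ($\mathbb{F}_1 = 2 = \mathbf{!2}$, $\mathbb{F}_2 = 4 = \mathbf{!3}$, and so on) confirm the shift.

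Next I would substitute this identity into the statement of Theorem \ref{Equ}. Writing $N = n+1$, the greatest common divisor appearing there becomes
\begin{equation*}
\gcd\bigl(\mathbb{F}_n,(n+1)!\bigr) = \gcd\bigl(\mathbf{!(n+1)},(n+1)!\bigr) = \gcd\bigl(\mathbf{!N}, N!\bigr).
\end{equation*}
As $n$ runs over the integers $n \geq 1$, the shifted index $N = n+1$ runs over exactly the integers $N \geq 2$, and the map $n \mapsto n+1$ is a bijection between these two index sets. Consequently the assertion of Theorem \ref{Equ}, namely that $\gcd(\mathbb{F}_n,(n+1)!) = 2$ for every $n \geq 1$, is verbatim the assertion that $\gcd(\mathbf{!N}, N!) = 2$ for every $N \geq 2$, which is precisely Conjecture \ref{main}.

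Finally I would make explicit that this correspondence is an equivalence in both directions: since each instance of one universally quantified statement is matched to exactly one instance of the other under the reindexing, a counterexample to either statement immediately yields a counterexample to the other, and likewise a proof of either is a proof of the other. I do not expect a genuine obstacle in this argument, because the entire arithmetic substance has already been isolated in Theorem \ref{Equ} (resting on the coprimality computation of Lemma \ref{cp}); the only care required here is to verify the identity $\mathbb{F}_n = \mathbf{!(n+1)}$ exactly and to confirm that the ranges $n \geq 1$ and $N \geq 2$ correspond without an off-by-one gap. Once those bookkeeping points are checked, the equivalence of Conjecture \ref{main} and Theorem \ref{Equ} is immediate.
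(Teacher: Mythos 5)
Your proposal is correct and follows essentially the same route as the paper: identify the $\mathbb{F}_n$ sequence with the Kurepa factorials and then translate the gcd statement of Theorem \ref{Equ} into that of Conjecture \ref{main}. You are in fact more careful than the paper's own proof, which asserts $\mathbb{F}_n = K_n$ (contradicting its own Table \ref{table 5}), whereas the correct identification is the shifted one $\mathbb{F}_n = K_{n+1}$ that you establish; your explicit reindexing $N = n+1$, with the ranges $n \geq 1$ and $N \geq 2$ matched bijectively, closes the off-by-one gap that the paper glosses over.
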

\begin{proof}
Let $K_n=\mathbf{!n}=\sum_{m=1}^{n-1}m!= 0! + 1!+ 2! + 3! + 4!+ 5!  + \cdots +(n-1)!$, and let 
 $K_{n+1}=\sum_{m=0}^{n-1}m!= 0! + 1!+ 2! + 3! + 4!+ 5!  + \cdots +(n-1)!+ n!$, then
 \begin{align*}
     K_{n+1}-K_n&= (0! + 1!+ 2! + 3! + 4!+ 5!  + \cdots +(n-1)!+n!)\\
     &= -(0! + 1!+ 2! + 3! + 4!+ 5!  + \cdots +(n-1)!)=n!
 \end{align*}
thus $ K_{n+1}-K_n=n!$ and the Kurepa conjecture \ref{main}, that is,
$$\gcd(!n, n!)=\gcd(K_n, n!)=\gcd(K_n, (K_{n+1}-K_n)) $$
for $n\geq 2,$ the $\gcd(K_n, K_{n+1})=2$, also the $\gcd(K_n, K_{n})=2$. 
Now since $\mathbb{F}_n= K_n $ (see table \ref{table 5}) we have that;
\begin{align*}
    &\gcd(\mathbb{F}_{n},(n+1)!)\sim \gcd(!n, n!)=\gcd(K_n, n!),\\
    &\gcd(\mathbb{F}_{n}, (\mathbb{F}_{n+1}-\mathbb{F}_n))\sim\gcd(K_n, (K_{n+1}-K_n)). 
\end{align*}
From Theorem \ref{Equ} the greatest common divisor between $\mathbb{F}_{n}$ and $(n+1)!$, that is,\\ $\gcd(\mathbb{F}_{n},(n+1)!)=2$ thus $\gcd(!n, n!)=\gcd(K_n, n!)=\gcd(K_n, (K_{n+1}-K_n))=2.$
This completes the proof.
\end{proof}

\begin{theorem}\label{induction}
   If $\gcd(!n, n!)=2$ then by induction the greatest common divisor of $(n+1)$ for the left factorial and the right factorial is also 2, that is, $$\gcd ( K_{n+1}, (K_{n+2}- K_{n+1}))=\gcd (!(n+1), (n+1)!)=2.$$
\end{theorem}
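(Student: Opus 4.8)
The plan is to read this statement as the single inductive step in the recurrence for the Kurepa factorial and to discharge it by reducing $\gcd(!(n+1),(n+1)!)$ to the coprimality already secured in Lemma \ref{cp} and Theorem \ref{Equ}. First I would record the two identities that drive everything: the Kurepa recurrence $!(n+1)=\,!n+n!$ from Lemma \ref{induct} (equivalently $K_{n+1}-K_n=n!$, as used in Theorem \ref{final}), and the trivial factorization $(n+1)!=(n+1)\cdot n!$. Together these turn the target into
\begin{equation*}
\gcd\big(!(n+1),(n+1)!\big)=\gcd\big(!n+n!,\,(n+1)\,n!\big).
\end{equation*}

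Next I would peel off the common factor $2$. The elementary identity $\gcd(x+y,y)=\gcd(x,y)$ applied with $x=\,!n$ and $y=n!$ turns the inductive hypothesis $\gcd(!n,n!)=2$ into $\gcd(!(n+1),n!)=2$. Writing $!n=2s$ and $n!=2t$ with $\gcd(s,t)=1$, so that $r_n=\,!(n+1)/2=s+t$ satisfies $\gcd(r_n,t)=\gcd(s+t,t)=\gcd(s,t)=1$, and recalling from Theorem \ref{cpr} that $!(n+1)=2r_n$ and from Corollary \ref{cor} that $(n+1)!$ is even, the both-even case of the binary GCD reduction in Theorem \ref{Equ}(c) gives
\begin{align*}
\gcd\big(!(n+1),(n+1)!\big)&=2\,\gcd\!\left(r_n,\frac{(n+1)!}{2}\right).
\end{align*}
Because $\gcd(r_n,t)=1$, the factor $t=n!/2$ drops out of $\gcd(r_n,(n+1)t)$, so the surviving content is exactly $\gcd\!\big(r_n,(n+1)!/2\big)$, equivalently $\gcd(r_n,n+1)$ since $n+1\mid (n+1)!/2$.

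I would then close the argument by invoking Lemma \ref{cp}, which states $\gcd\!\big(r_n,(n+1)!/2\big)=1$ for all $n\geq 3$; hence $\gcd(!(n+1),(n+1)!)=2\cdot 1=2$. Finally, rewriting $(n+1)!$ as the Kurepa difference $K_{n+2}-K_{n+1}$ (again by the recurrence) restates this as $\gcd\big(K_{n+1},K_{n+2}-K_{n+1}\big)=\gcd\big(!(n+1),(n+1)!\big)=2$, which is precisely the claimed form, with the base case $\gcd(!2,2!)=\gcd(2,2)=2$ seeding the induction.

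The genuine obstacle is the coprimality $\gcd(r_n,n+1)=1$, i.e.\ that $n+1$ shares no odd prime factor with $\frac{!(n+1)}{2}$; every other step is formal manipulation of the recurrence and extraction of the factor $2$. In this write-up that obstacle is delegated to Lemma \ref{cp} and Theorem \ref{Equ}, but I would flag that it is exactly the substantive core of the Kurepa conjecture, so the induction's honest dependence rests entirely on that coprimality statement rather than on any new elementary input appearing at stage $n+1$.
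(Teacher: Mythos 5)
Your proposal is correct in the same conditional sense as the paper's own proof, and it shares the paper's skeleton: both extract the common factor $2$ via the both-even case of the binary GCD rule, reducing the claim to $\gcd\bigl(!(n+1)/2,\,(n+1)!/2\bigr)=1$, and both ultimately discharge that coprimality by appeal to Lemma \ref{cp}. Where you genuinely differ is in what the induction hypothesis is made to do. The paper's proof never actually exploits $\gcd(!n,n!)=2$: after writing $!(n+1)=2r$ it asserts $\gcd(r,T)=1$ on the strength of parity remarks ($r$ odd, $T$ even --- which by itself proves nothing, cf. $\gcd(3,6)=3$) together with a re-citation of Theorem \ref{Equ}. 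You, by contrast, extract real content from the hypothesis: writing $!n=2s$, $n!=2t$ with $\gcd(s,t)=1$, you obtain $\gcd(r_n,t)=\gcd(s+t,t)=\gcd(s,t)=1$ for free, and since $(n+1)!/2=(n+1)t$ this collapses the entire obstacle to the single statement $\gcd(r_n,n+1)=1$. That is a sharper reduction than anything in the paper, and your closing flag is exactly right: the residual coprimality is not supplied by the induction itself but is the substantive core of the Kurepa conjecture, carried in the paper entirely by Lemma \ref{cp} (whose own proof checks only a few numerical cases and then argues in a circle). So your write-up is, if anything, more honest than the paper's: it makes visible that the ``inductive step'' is only as strong as Lemma \ref{cp}, whereas the paper's phrasing suggests the step follows from the hypothesis plus parity, which it does not. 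One small housekeeping point: Lemma \ref{cp} is stated only for $n\geq 3$, so the case $n=2$, i.e. $\gcd(!3,3!)=\gcd(4,6)=2$, should be verified directly, as your base-case remark implicitly does.
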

\begin{proof}
    Given that $\gcd(!n,n!)=2$ for $n\geq2$, we can verify that $\gcd (!(n+1), (n+1)!)=2$.
    It is known that the Kurepa factorial $\mathbf{!n}$ satisfies the recurrence formula $!(n+1)=\mathbf{!n}+n!$, then there exists an integer $r\geq 2$ such that $\mathbf{!n}+n!=2r$, and from Lemma \ref{induct}, the $2|!(n+1)$ and also $2|\mathbf{!n}+n!$
    \begin{align*}
        \gcd (!(n+1), (n+1)!)&=2\gcd \left(\frac{!(n+1)}{2}, \frac{ (n+1)!}{2}\right)\\
        &=2\gcd \left(\frac{\mathbf{!n}+n!}{2}, T\right)\\
        &=2\gcd \left(r, T\right)= 2\cdotp 1=2 
    \end{align*}
from table $1$, it can be observed that all $r>2$ is odd(also check table 6 for $r_n$ values which is same as the $r$ values) and from corollary \ref{cor}, $T$ is always even.
Also, from Theorem \ref{Equ} and Theorem \ref{final}, we can show that $2=\gcd (!(n+1), (n+1)!)$, if and only if $\gcd \left(\frac{!(n+1)}{2}, \frac{ (n+1)!}{2}\right)=1$, 
\begin{align*}
    \gcd \left(\frac{!(n+1)}{2}, \frac{ (n+1)!}{2}\right)&=1 \iff\\
    2\gcd \left(\frac{!(n+1)}{2}, \frac{ (n+1)!}{2}\right)&=2 \iff \\
    \gcd \left(\frac{2\cdotp !(n+1)}{2}, \frac{2 \cdotp (n+1)!}{2}\right)&=2 \iff \\
    \gcd (!(n+1), (n+1)!)&=2. \\
    %&=\dfrac{1}{2} \gcd(2\cdotp r_{n},2T)\quad\mbox{from corollary \ref{cor}}\\
    %&=\dfrac{2}{2}\cdotp \gcd(r_n, T)\quad\mbox{from the table below we observe $\gcd(r_n, T)=1$ for $n\geq 2$ }\\
   % &=\gcd \left(r_n, T\right)=1
\end{align*}

\end{proof}

\begin{corollary}\label{gcd}
Given that $G_n=\gcd(!n, n!)=2$ and $G_{n+1}=\gcd [!(n+1), (n+1)!]=2$, the following results hold:
\begin{enumerate}
    \item The greatest common divisor of $G_n$ and $G_{n+1}$ is always 2, that is, $$\mathbb{M}_n=\gcd(G_n, G_{n+1})=2;$$
    \item the inequalities $G_1\leq G_2\leq G_3\leq \cdots \leq G_n\cdots\leq G_{n+1}\leq 2$ is an increasing sequence and $G_{n+1}\leq 2$ bounded above by $2$.
\end{enumerate}
\end{corollary}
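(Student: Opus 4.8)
The plan is to recognize that both parts of the corollary are immediate consequences of Theorem \ref{induction}, which has already established by induction that $G_n = \gcd(\mathbf{!n}, n!) = 2$ for every $n \geq 2$. The entire substance of the statement therefore rests on that prior result; what remains is purely substitution for part (1) and reading off the monotonicity from known values for part (2). I will treat the two parts separately.

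First I would dispatch part (1) by direct evaluation. The hypothesis supplies $G_n = 2$ and $G_{n+1} = 2$ (both guaranteed by Theorem \ref{induction} once $n \geq 2$), so the combined divisor is simply
\[
\mathbb{M}_n = \gcd(G_n, G_{n+1}) = \gcd(2, 2) = 2,
\]
which settles the first claim with no further work.

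Next I would handle part (2) by assembling the sequence $(G_n)$ explicitly. From Table \ref{table 1} one reads $G_1 = \gcd(\mathbf{!1}, 1!) = \gcd(1,1) = 1$, while Theorem \ref{induction} forces $G_n = 2$ for all $n \geq 2$. Hence the sequence reads $1 \leq 2 \leq 2 \leq \cdots$: it increases by one at the single step from $n=1$ to $n=2$ and is constant thereafter. A sequence that is constant after one jump is weakly increasing, so $G_1 \leq G_2 \leq \cdots \leq G_n \leq G_{n+1}$, and since every term equals either $1$ or $2$ we obtain $G_{n+1} \leq 2$, which is the stated upper bound.

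I expect no genuine obstacle here. The only real mathematical input is the equality $G_n = 2$ for $n \geq 2$, and that is exactly what Theorem \ref{induction} delivers (with the base values recorded in Table \ref{table 1}); once it is in hand, the corollary follows by inspection. If anything, the delicate point is merely presentational: the chain $G_1 \leq G_2 \leq \cdots$ should be read as \emph{weakly} increasing rather than strictly increasing, since the sequence is eventually constant at $2$.
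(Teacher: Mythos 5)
Your proposal is correct and matches the paper's intent: the paper itself dismisses this corollary as trivial, offering no written argument, and your proof simply spells out the substitution $\gcd(2,2)=2$ for part (1) and the sequence $1 \leq 2 \leq 2 \leq \cdots$ for part (2) that the author evidently had in mind. Your remark that the chain must be read as \emph{weakly} increasing (since $G_1=1$ and $G_n=2$ thereafter) is a useful clarification the paper leaves implicit.
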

\begin{proof}
    The proof of this corollary is trivial.
\end{proof}

\subsection{Shifted Alternating $\mathbb{F}_n$ number}
\begin{theorem}\label{FK}
Let $\sum_{k=0}^n k! S(n,1)(-x)^k$ be the reciprocal of equation \ref{Fub}, this yields the polynomial
$$\sum_{k=0}^n k! S(n,1)(-x)^k=1-x+2x^2-6x^3+\cdots+(-1)^nn!x^n=\mathbb{F}_{n}(-x).$$   
\end{theorem}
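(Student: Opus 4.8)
The plan is to obtain the alternating polynomial by a direct substitution $x\mapsto -x$ in the polynomial $\mathbb{F}_n(x)$ established in the preceding theorem, interpreting the word ``reciprocal'' in the sense of sign-reversal of the argument rather than multiplicative inverse. First I would invoke the standard boundary values $S(n,1)=S(n,n)=1$, valid for every $n\geq 1$, so that each coefficient $k!\,S(n,1)$ collapses to $k!$. This reduces the left-hand side to $\sum_{k=0}^n k!\,(-x)^k$, which is exactly the object whose closed form we must exhibit.

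Next I would separate the sign from each monomial using $(-x)^k=(-1)^k x^k$, which gives
\begin{align*}
\sum_{k=0}^n k!\,S(n,1)(-x)^k=\sum_{k=0}^n (-1)^k k!\,x^k=1-x+2x^2-6x^3+\cdots+(-1)^n n!\,x^n.
\end{align*}
Because the preceding theorem already furnishes $\mathbb{F}_n(x)=\sum_{k=0}^n k!\,x^k$ (the case $x=1$ of equation \ref{Fub} with $S(n,1)$ in place of $S(n,k)$), substituting $-x$ into that expression reproduces precisely the same alternating series; hence the concluding equality with $\mathbb{F}_n(-x)$ follows immediately.

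Finally, to make the pattern concrete I would tabulate the low-order instances $\mathbb{F}_1(-x)=1-x$, $\mathbb{F}_2(-x)=1-x+2x^2$, $\mathbb{F}_3(-x)=1-x+2x^2-6x^3$, and $\mathbb{F}_4(-x)=1-x+2x^2-6x^3+24x^4$, mirroring the earlier list of $\mathbb{F}_n(x)$ but with the sign $(-1)^k$ attached to each term according to the parity of $k$. I do not expect a genuine obstacle here: the result is a formal substitution into an already-established polynomial identity rather than a new combinatorial fact, and the only bookkeeping required is tracking the parity of $(-1)^k$ against each factorial coefficient $k!$.
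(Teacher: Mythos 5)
Your proposal is correct and follows essentially the same route as the paper: interpret the ``reciprocal'' as the substitution $x\mapsto -x$, use $S(n,1)=S(n,n)=1$ to reduce each coefficient to $k!$, and confirm the alternating pattern $\sum_{k=0}^n(-1)^k k!\,x^k$ via the low-order instances $\mathbb{F}_1(-x)=1-x$, $\mathbb{F}_2(-x)=1-x+2x^2$, and so on. Your version is in fact slightly tidier than the paper's, since you make the step $(-x)^k=(-1)^k x^k$ explicit rather than only listing examples.
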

\begin{proof}
For $S(n,1)=S(n,n)=1$ for all $n\geq 1$, where the number of blocks $k$ is fixed at $1$. One can compute some few examples of this polynomial;
\begin{align}\label{FT}
\mathbb{F}_{0}(-x) &= 0\\ \nonumber
\mathbb{F}_{1}(-x) &= 1 - x\\  \nonumber
\mathbb{F}_{2}(-x) &= 1 - x + 2x^2\\  \nonumber
\mathbb{F}_{3}(-x) &= 1 - x + 2x^2 - 6x^3\\  \nonumber
\mathbb{F}_{4}(-x) &= 1 - x + 2x^2 - 6x^3 + 24x^4\\  \nonumber
\mathbb{F}_{5}(-x) &= 1 - x + 2x^2 - 6x^3 + 24x^4 - 120x^5\\  \nonumber
%\mathbb{F}_{6}(-x) &= 1 + x + 2x^2 + 6x^3 + 24x^4 + 120x^5 + 720x^6\\  \nonumber
%\mathbb{F}_{7}(-x) &= 1 + x + 2x^2 + 6x^3 + 24x^4 + 120x^5 + 720x^6 + 5040x^7
\end{align}
\end{proof}

\begin{definition}
    The shifted alternating $\mathbb{F}_n(-x)$ is defined as follows:
    \begin{equation*}
\mathbb{F}_n(-x)=\sum_{k=0}^n k! S(n,1)(-x)^k=\sum_{k=0}^n k!(-x)^k
 \end{equation*}
 as shown in the table below:
 \begin{equation*}
\mathbb{F}_n(-x)=
\begin{cases}
	\text{0}    & \text{$n=0$;}\\
	%\text{$1$} & \text{$K_1(x)$ see Table \ref{table 6}};\\ 
	\text{$\sum_{k=0}^n k!(-x)^k$}   & \text{ positive integer $n\geq 1$ is the usual  factorial}.
\end{cases}
 \end{equation*}
\end{definition}

\begin{corollary}
    For $x=1$, the list of polynomials in equation \ref{FT} sums to the values of the factorials $A_n^s$. The polynomial
    $$\mathbb{F}_{n}(-x)= 1-x+2x^2-6x^3+\cdots+(-1)^nn!x^n$$ and 
    $\mathbb{F}_{n\geq1}(-1) = \sum_{k=0}^n (-1)^k k! $ yields; 
\begin{align*}\label{PF}
\mathbb{F}_{1}(-x) &= 1 - x=1-1=0\\  \nonumber
\mathbb{F}_{2}(-x) &= 1 - x + 2x^2=1-1+2=2\\  \nonumber
\mathbb{F}_{3}(-x) &= 1 - x + 2x^2 - 6x^3=1-1+2-6=4\\  \nonumber
\mathbb{F}_{4}(-x) &= 1 - x + 2x^2 - 6x^3 + 24x^4=1-1+2-6+24=20\\  \nonumber
\mathbb{F}_{5}(-x) &= 1 - x + 2x^2 - 6x^3 + 24x^4 - 120x^5=1-1+2-6+24-120=-100.\\  
\end{align*}  
\end{corollary}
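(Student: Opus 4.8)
The plan is to obtain the corollary as a direct specialization of Theorem \ref{FK} by evaluating the polynomial $\mathbb{F}_{n}(-x)$ at the single point $x=1$ and then identifying the resulting number with the shifted alternating factorial sum of Definition \ref{Alt}. Since the whole statement concerns one numerical substitution into an already-established polynomial identity, I expect the work to be almost entirely bookkeeping rather than a new argument.

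First I would invoke the identity $S(n,1)=S(n,n)=1$ recalled in Section \ref{sec}, which collapses the Fubini-type coefficient appearing in equation \ref{Fub} and reduces $\mathbb{F}_{n}(-x)$ to its clean closed form
\begin{equation*}
\mathbb{F}_{n}(-x)=\sum_{k=0}^n k!\,S(n,1)\,(-x)^k=\sum_{k=0}^n (-1)^k k!\,x^k.
\end{equation*}
Substituting $x=1$ then yields at once $\mathbb{F}_{n}(-1)=\sum_{k=0}^n (-1)^k k!$, which is exactly the formula asserted in the statement. This handles the first half of the claim with no computation beyond the specialization itself.

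Next I would connect this value to the shifted alternating Kurepa sequence $A_n^s=\sum_{m=0}^{n-1}(-1)^m m!$ of Definition \ref{Alt}. Comparing the upper limit $n$ of the sum defining $\mathbb{F}_{n}(-1)$ with the limit $n-1$ in $A_n^s$ shows that $\mathbb{F}_{n}(-1)=A_{n+1}^s$, so evaluating the polynomial recovers the tabulated alternating factorial values shifted by one index. I would then verify the individual entries in equation \ref{FT} by direct arithmetic, reading off $\mathbb{F}_1(-1)=0!-1!=0$, $\mathbb{F}_2(-1)=0!-1!+2!=2$, and continuing term by term.

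The only point requiring genuine care is the sign and index bookkeeping: one must track the shift between the upper limit $n$ in $\mathbb{F}_{n}$ and the limit $n-1$ in $A_n^s$, and keep the alternating signs consistent so that, for instance, the $n=3$ entry is correctly read as $0!-1!+2!-3!$. This is the main place an error can creep in, but it is purely mechanical; there is no structural obstacle, and the corollary follows immediately from Theorem \ref{FK} once the substitution $x=1$ is made.
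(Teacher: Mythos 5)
Your proposal is correct, and it is essentially the argument the paper intends but never writes down: the paper's entire proof of this corollary is the sentence ``The proof of this is straightforward,'' so the direct specialization you carry out --- collapse $\mathbb{F}_n(-x)$ to $\sum_{k=0}^n(-1)^k k!\,x^k$ via $S(n,1)=S(n,n)=1$, set $x=1$, and compare with Definition \ref{Alt} --- is exactly the omitted content. Two of your observations go beyond what the paper records and are worth keeping. First, your index bookkeeping $\mathbb{F}_n(-1)=\sum_{k=0}^n(-1)^k k!=A_{n+1}^s$ is the correct identification; the corollary's loose phrase ``sums to the values of $A_n^s$'' and Table \ref{table 8} (whose row for $\mathbb{F}_n(-1)$ actually lists the values of $A_n^s$, i.e., is shifted by one index relative to the corollary's own displayed evaluations) blur precisely this shift. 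Second, your arithmetic exposes a sign slip in the statement as printed: $\mathbb{F}_3(-1)=1-1+2-6=-4$, not $4$, and your value $-4$ is the one consistent with $A_4^s=-4$ in Table \ref{mykure}. So your proof is not only complete where the paper's is absent; it also corrects the one displayed entry of the corollary that is wrong.
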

\begin{proof}
    The proof of this is straightforward.
\end{proof}

\begin{definition}
    The Kurepa numbers $K_n$ is defined as follows
    \begin{equation*}
\mathbb{F}_n(-1)=
\begin{cases}
	 \text{$\mathbb{F}_{0}$,}  & $\text{n=0}$ \\
	%\text{$1$} & \text{$K_1$ see Table \ref{table 5}};\\ 
	\text{$\mathbb{F}_{n\geq1}(-1)$}   & \text{for all positive integer $n$}.
\end{cases}
 \end{equation*}
    
\end{definition}

\begin{table}[!ht]\label{T}
    \centering
    \begin{tabular}{|c|c|c|a|c|c|c|c|c|c|}\hline 
        $n$& 0 & 1 & 2 & 3 &4  &5  & 6 & 7 &8 \\\hline 
        % $n!$& 1 & 1 & 2 & 6 & 24 &120  & 720 & 5040 &40320 \\\hline
        $\mathbb{F}_{n}(1)$ & 1 & 2 & 4 & 10 & 34 & 154 & 874 & 5914 &46234 \\\hline
         $\mathbb{F}_{n}(-1)$ & 0 &1  & 0 & 2 &-4 &20& -100 & 620 &-4420 \\\hline
    \end{tabular}
    \caption{Relations between $\mathbb{F}_{n}(-1)$ and $\mathbb{F}_{n}(1)$ numbers  }
    \label{table 8}
\end{table}

\begin{theorem}
For all nonnegative integer $n$, the following results hold;
  \begin{enumerate}
      \item [(i)] The greatest common divisor, $\mathbb{H}_n=\gcd( |A_{n}^s|, K_n)=2$ for all $n>2.$
      \item[(ii)] The greatest common divisor, $\mathbb{H}_{n+1}=\gcd( |A_{n+1}^s|, K_{n+1})=2$ for all $n>2.$
      \item [(iii)] The greatest common divisor of $\mathbb{H}_n$ and $\mathbb{H}_{n+1}$ is always 2, that is, $$\mathbb{W}_n=\gcd(\mathbb{H}_{n}, \mathbb{H}_{n+1})=2.$$
    \item [(iv)] the inequalities $\mathbb{H}_1\leq \mathbb{H}_2\leq \mathbb{H}_3\leq \cdots \leq \mathbb{H}_n\cdots\leq \mathbb{H}_{n+1}\leq 2$ is an increasing sequence and $\mathbb{H}_{n+1}\leq 2$ bounded above by $2$.
  \end{enumerate}  
\end{theorem}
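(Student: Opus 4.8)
The plan is to notice that only item~(i) carries genuine content, and that (ii)--(iv) then follow formally. Item~(ii) is literally (i) with $n$ replaced by $n+1$, so it needs no separate argument. Once (i) and (ii) give $\mathbb{H}_n=\mathbb{H}_{n+1}=2$ for all $n>2$, item~(iii) collapses to the trivial evaluation $\mathbb{W}_n=\gcd(2,2)=2$. For (iv), having established $\mathbb{H}_n=2$ for every $n>2$ and checking directly that $\mathbb{H}_1=\gcd(1,1)=1$ and $\mathbb{H}_2=\gcd(0,2)=2$, the sequence reads $1,2,2,2,\dots$, which is weakly increasing and bounded above by $2$. So I would dispatch (ii)--(iv) as one-line corollaries and spend all the effort on (i).

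For (i) the first step is to extract the common factor $2$. By Corollary~\ref{cor} every $m!$ with $m\geq 2$ is even, so in $K_n=0!+1!+\sum_{m\geq 2}m!$ the leading pair contributes $0!+1!=2$ while the tail is even, giving $2\mid K_n$; likewise $A_n^s=0!-1!+\sum_{m\geq 2}(-1)^m m!$ has $0!-1!=0$ with an even tail, so $2\mid A_n^s$. Hence $2\mid \mathbb{H}_n$ for all $n\geq 2$. What remains is the reverse bound $\mathbb{H}_n\leq 2$, i.e. $\gcd\!\left(|A_n^s|/2,\ K_n/2\right)=1$.

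The second step is a parity-and-descent argument modelled on Theorem~\ref{Equ}. I would record the factorisation $K_n/2=\sum_{m=0}^{n-1}\tfrac{m!}{2}=r_{n-1}$, where $r_{n-1}$ is odd for $n\geq 4$ by Theorem~\ref{cpr}, while $|A_n^s|/2$ is even for $n\geq 4$. Using the elementary identity $\gcd(K_n,A_n^s)=\gcd\!\left(K_n,\ K_n-A_n^s\right)$ together with $K_n-A_n^s=2\sum_{m\ \mathrm{odd},\,m<n}m!=:2O_n$, one gets $\gcd(K_n,A_n^s)=\gcd(2r_{n-1},2O_n)=2\,\gcd(r_{n-1},O_n)$, so the entire claim reduces to $\gcd(r_{n-1},O_n)=1$ with $r_{n-1}$ odd. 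From here I would run Stein's binary algorithm exactly as in Theorem~\ref{Equ}: strip the powers of $2$ from the even argument via $\gcd(u,v)=\gcd(u/2,v)$, then apply the both-odd rule $\gcd(u,v)=\gcd(|u-v|/2,v)$, and drive the pair down to $\gcd(0,1)=1$. The small case $n=3$ is handled by hand, since $\gcd(|A_3^s|,K_3)=\gcd(2,4)=2$.

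The main obstacle is precisely the coprimality $\gcd(r_{n-1},O_n)=1$, equivalently the termination of the binary descent at $1$: peeling off the single factor of $2$ is routine, but excluding a shared odd prime of $K_n/2$ and $|A_n^s|/2$ is the real arithmetic content and is of the same depth as the Kurepa problem itself. I would attack it by tying $A_n^s$ back to the Kurepa factorial through the recurrences $A_{n+1}^s=A_n^s+(-1)^n n!$ and $K_{n+1}=K_n+n!$, and then reusing the coprimality machinery of Lemma~\ref{cp} and the equivalence of Theorem~\ref{final}, together with the induction step of Theorem~\ref{induction}, so as to transfer the already-established $\gcd(\mathbb{F}_n,(n+1)!)=2$ to its alternating companion through the common odd quotient $r_{n-1}$. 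Verifying that this transfer is valid for \emph{every} $n$, rather than merely on the tabulated range in Table~\ref{table 8}, is the step that would have to be argued with genuine care.
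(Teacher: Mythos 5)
Your reduction is sound and considerably more explicit than the paper's own treatment: the paper's entire proof consists of the remark that $K_n$ is divisible by $2$ and $A_n^s$ is even, a pointer to Table \ref{table 9}, and the sentence ``following the techniques from theorem \ref{Equ} the result is immediate.'' Your dispatch of (ii)--(iv) as formal consequences of (i), the parity facts (for $n\geq 4$ one has $K_n\equiv 2$ and $A_n^s\equiv 0 \pmod 4$, so $r_{n-1}=K_n/2$ is odd while $|A_n^s|/2$ is even), and the identity
\[
\gcd(K_n,A_n^s)=\gcd\bigl(K_n,\,K_n-A_n^s\bigr)=\gcd\bigl(2r_{n-1},\,2O_n\bigr)=2\gcd(r_{n-1},O_n),
\qquad O_n=\sum_{\substack{m<n\\ m\ \mathrm{odd}}} m!,
\]
are all correct and go beyond anything actually written in the paper's proof.

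However, the gap you flag at the end is genuine, and it cannot be closed by the route you propose. The coprimality $\gcd(r_{n-1},O_n)=1$ does not follow from $\gcd(\mathbb{F}_n,(n+1)!)=2$ (Theorem \ref{Equ}), nor from any recurrence manipulation with $K_{n+1}=K_n+n!$ and $A_{n+1}^s=A_n^s+(-1)^n n!$: a common odd prime $p$ of $r_{n-1}$ and $O_n$ need not divide any factorial up to $(n+1)!$ --- in particular any such $p$ larger than $n+1$ is invisible to every statement of the form $\gcd(\cdot,\,\text{factorial})=2$, so no ``transfer'' through Lemma \ref{cp}, Theorem \ref{final}, or Theorem \ref{induction} can exclude it. Likewise, ``running Stein's binary algorithm until it reaches $\gcd(0,1)=1$'' is not an argument: the algorithm terminates at whatever the gcd actually is, and nothing forces the terminal value to be $1$ for all $n$; this is exactly the circularity already present in the paper's proof of Theorem \ref{Equ} (which checks $n=2,3$ and then asserts the general case), and your proposal inherits that defect rather than repairs it. The honest summary is that part (i) is an open, Kurepa-type assertion verified only on the tabulated range: both you and the paper establish the factor of $2$ and the reduction to a coprimality of two odd integers, and both leave that coprimality unproven.
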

\begin{proof}
    The proof of this is trivial when one sees that $K_n$ is divisible by $2$ and $A_{n}^s$ can be divisible by $2$ or a higher even number. specifically, $$|A_{n}^s|=|\mathbb{F}_{n}(-1)|= \sum_{k=0}^n k!(-x)^{n-k}.$$ A few sketches can be seen in the table \ref{table 9} below and following the techniques from theorem \ref{Equ} the result is immediate.
\end{proof}

\begin{table}[!ht]\label{other}
\centering
\begin{tabular}{|c|c|c|c|c|}\hline
$n$ & $A_n^s = \sum_{m=0}^{n-1} (-1)^m m!$ & $K_n =!n= \sum_{m=0}^{n-1} m!$ & $\gcd( |A_{n}^s|, K_n)$ &$\gcd( |A_{n+1}^s|, K_{n+1})$\\
\hline
0 & 0 & 0 & N/A& N/A\\
1 & 1 & 1 & 1 & 1\\ \rowcolor{LightCyan}
2 & 0 & 2 & 2 & 2\\ 
3 & 2 & 4 & 2&2 \\
4 & -4 & 10 & 2& 2\\
5 & 20 & 34 & 2& 2\\
6 & -100 & 154& 2& 2\\
7 & 620 & 874& 2& 2 \\
8 & -4420 & 5914 & 2& 2 \\
9 & 35900 & 46234& 2 & 2\\
10 & -326980 & 409114& 2 & 2\\
\hline
\end{tabular}
  \caption{The $\gcd$ of Kurepa and shifted alternating sum of factorials}
    \label{table 9}
\end{table}

\begin{theorem}
    The greatest common divisor, $\gcd(\mathbb{M}_{n}, \mathbb{W}_{n})=2$ for all nonnegative integer $n> 2.$
\end{theorem}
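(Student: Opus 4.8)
The plan is to notice that the outer gcd here has both of its arguments already pinned down to a single numerical value by the two immediately preceding results, so the whole statement collapses to the elementary identity $\gcd(2,2)=2$. Concretely, I would first invoke Corollary~\ref{gcd}, whose part~(1) asserts that for the derangement-free Kurepa gcd's one has $\mathbb{M}_n=\gcd(G_n,G_{n+1})=2$, valid precisely on the range $n>2$ where $G_n=\gcd(!n,n!)=2$ and $G_{n+1}=\gcd(!(n+1),(n+1)!)=2$ are both guaranteed by Theorem~\ref{induction}.

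Next I would invoke the theorem immediately preceding this one, which introduces $\mathbb{H}_n=\gcd(|A_n^s|,K_n)$ and shows in its parts~(i)--(ii) that $\mathbb{H}_n=\mathbb{H}_{n+1}=2$ for $n>2$, and in its part~(iii) that consequently $\mathbb{W}_n=\gcd(\mathbb{H}_n,\mathbb{H}_{n+1})=2$. With both $\mathbb{M}_n$ and $\mathbb{W}_n$ thus identified as the constant $2$ on the common range $n>2$, the conclusion follows by direct substitution:
\begin{align*}
\gcd(\mathbb{M}_n,\mathbb{W}_n)=\gcd(2,2)=2.
\end{align*}

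The only genuine care required is bookkeeping on the domain: each of the feeder results is stated for $n>2$ (with small-index exceptions visible in Tables~\ref{table 1} and~\ref{table 9}, e.g.\ the entries at $n=0,1$ where $\gcd(!n,n!)$ or the alternating gcd's drop to $1$ or are undefined), so I would restrict to $n>2$ exactly as the statement does and check that both $\mathbb{M}_n=2$ and $\mathbb{W}_n=2$ hold simultaneously there. There is no substantive obstacle in this argument; the entire content has been discharged by Corollary~\ref{gcd} and the preceding theorem, and what remains is only the trivial observation that the greatest common divisor of two copies of $2$ is $2$.
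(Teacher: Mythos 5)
Your proposal is correct and matches the paper's intent: the paper dismisses this result with ``the proof of this is trivial,'' which is precisely the substitution you carry out, namely that Corollary~\ref{gcd} gives $\mathbb{M}_n=2$, the preceding theorem gives $\mathbb{W}_n=2$, and hence $\gcd(\mathbb{M}_n,\mathbb{W}_n)=\gcd(2,2)=2$ on the common range $n>2$. Your attention to the domain bookkeeping is a modest improvement over the paper, which leaves all of this unstated.
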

\begin{proof}
    The proof of this is trivial.
\end{proof}

%\newpage

\subsection{Altered $\mathbb{F}_{n}$ Sequence(Altered Kurepa Sequence)}
%It is well known that for $n\geq1$, the values of the sequence $\mathbb{F}_{n}$ are, respectively, the Kurepa factorials $K_n(\mathbf{!n})$ for $n>2$. This new realization is not just a coincidence as it helps tackle one of the most fundamental questions in mathematics.
%Theorem \ref{Equ} is equivalent to the Kurepa Conjecture \ref{main}, and Theorem \ref{induction} elaborates on the inductive step of the Kurepa Conjecture \ref{main}. In this subsection we deal with some altered  $\mathbb{F}_{n}$ sequences and check the behaviour of its greatest common divisors.  

The values of the series $\mathbb{F}_{n}$ are known to be the Kurepa factorials $K_n(\mathbf{!n})$ for $n\geq1$ and $n>2$, respectively. Since it addresses one of the most important mathematical problems, this new insight is not only a coincidence.
Kurepa Conjecture \ref{main} is identical to Theorem \ref{Equ}, whereas Theorem \ref{induction} explains their inductive step. This subsection deals with various altered(shifted) $\mathbb{F}_{n}$ sequences and examines how their greatest common divisors behave. We shall give some Theorems, lemma, and then propose some open problem and conjecture.

\begin{table}[!ht]\label{T}
    \centering
    \begin{tabular}{|c|c|c|a|c|c|c|c|c|c|}\hline 
        $n$& 0 & 1 & 2 & 3 &4  &5  & 6 & 7 &8 \\\hline 
         $\mathbb{F}_n$& 0 & 2 & 4 & 10 & 34 &154  & 874 & 5914 & 46234 \\\hline
          $\mathbb{F}_{2n}$& 0 & 4 & 34 & 874 & 46234 &4037914  & 522956314 & 93928268314 & 22324392524314 \\\hline   
        $\mathbb{F}_{n+1}$ & 2 & 4 & 10 & 34 & 154 & 874 & 5914 & 46234 &409114 \\\hline
         $\mathbb{F}_{n+2}$ & 4 & 10 & 34 & 154 & 874 & 5914 & 46234 & 409114 &4037914 \\\hline
          $(n+2)!=\mathbb{F}_{n+2}-\mathbb{F}_{n+1}$ & 2 & 6 & 24 & 120 & 720 & 5040 & 40320 & 362880 &3628800 \\\hline
           $(n+1)!=\mathbb{F}_{n+1}-\mathbb{F}_{n}$ & 1 & 2 & 6 & 24 & 120 & 720 & 5040 & 40320 &362880 \\\hline
         $\mathbb{F}_{n}-1$ & NA & 1 & 3 & 9 & 33 & 153 & 873 & 5913 &46233 \\\hline
         $\mathbb{F}_{n}+1$ &1  & 3 & 5 & 11 & 35 & 155 & 875 & 5915 &46235 \\\hline
         $\mathbb{A}_n=\mathbb{F}_{n}+(-1)^n$ & 1 & 1 & 5 & 9 & 35 & 153 & 875 & 5913 & 46235\\\hline
         $\mathbb{B}_n=\mathbb{F}_{n}-(-1)^n$ & NA & 3 & 3 & 11 & 33 & 155 & 873 & 5915 &46233 \\\hline
         $\mathbb{F}_{n}+2$ & 2 & 4 & 6 & 12 & 36 & 156 & 876 & 5916 &46236 \\\hline
        $\mathbb{F}_{n}-2$ & -2 & 0 & 2 & 8 & 32 & 152 & 872 & 5912 &46232 \\\hline
        % $\mathbb{F}_{n}+a$ &  & 1 & 2 & 4 & 10 & 34 & 154 & 874 &5914 \\\hline
         $\mathbb{F}_{n+1}+1$ & 2 & 3 & 5 & 11 & 35 & 155 & 875 & 5915 & 46235 \\\hline
        $\mathbb{F}_{n+1}-1$ & 1 & 3 & 9 & 33 & 153 & 873 & 5913 & 46233 &409113 \\\hline
        $\mathbb{A}_{n+1}=\mathbb{F}_{n+1}+(-1)^{n+1}$ & 1 & 5 & 9 & 35 & 153 & 875 & 5913 & 46235 &409113 \\\hline
        $\mathbb{B}_{n+1}=\mathbb{F}_{n+1}-(-1)^{n+1}$ & 3 & 3 & 11 & 33 & 155 & 873 & 5915 & 46233 &409115 \\\hline
        $\mathbb{F}_{n+1}+2$ & 4 & 6 & 12 & 36 & 156 & 876 & 5916 & 46236 &409116 \\\hline
        $\mathbb{F}_{n+1}-2$ & 0 & 2 & 8 & 32 & 152 & 872 & 5912 & 46232 &409112 \\\hline\rowcolor{LightCyan}
        $\mathbb{F}_{n}+a$ & * & * & * & * & * & * & * & * &* \\\hline  \rowcolor{LightCyan}
        $\mathbb{F}_{n+1}+a$ & * & * & * & * & * & * & * & * & * \\\hline

    \end{tabular}
    \caption{Some altered $\mathbb{F}_{n}$ sequences}
    \label{table 10}
\end{table}

\begin{theorem}
Let $\mathbb{F}_n=\sum_{k=0}^n k!S(n,1)$, then for all nonegative integers $n$ and $r$ the following results hold:
\begin{enumerate}
    \item  $\gcd(\mathbb{F}_{n+1}, \mathbb{F}_n)=2,$
    \item $\gcd(\mathbb{F}_{n+2}, \mathbb{F}_{n+1})=2,$
    \item $\gcd(\mathbb{F}_{n}, (\mathbb{F}_{n+1}-\mathbb{F}_n))=\gcd(\mathbb{F}_{n}, (n+1)!)=2,$
    \item $\gcd(\mathbb{F}_{n}, \mathbb{F}_{n+1},\cdots, \mathbb{F}_{n+r}, \cdots )=2, \quad r>0.$
\end{enumerate}
\end{theorem}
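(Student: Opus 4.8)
The plan is to derive all four identities from the single nontrivial input supplied by Theorem~\ref{Equ}, namely $\gcd(\mathbb{F}_n,(n+1)!)=2$, together with the additive recurrence $\mathbb{F}_{n+1}=\mathbb{F}_n+(n+1)!$ furnished by Lemma~\ref{rt} (the number version evaluated implicitly at $x=1$) and the elementary Euclidean identity $\gcd(a+b,b)=\gcd(a,b)$. None of the four claims demands a fresh arithmetic argument; each is a repackaging of Theorem~\ref{Equ}, so the proof is structurally a corollary.

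First I would record the two facts to be used repeatedly. The telescoping difference
\[
\mathbb{F}_{n+1}-\mathbb{F}_n=(n+1)!
\]
is immediate from Lemma~\ref{rt}. Second, $\mathbb{F}_m=\sum_{k=0}^{m}k!$ is even for every $m\geq 1$: indeed $0!+1!=2$ and, by Corollary~\ref{cor}, $k!$ is even for all $k\geq 2$, so $2\mid\mathbb{F}_m$ for all $m\geq 1$. These two observations carry the entire argument.

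For claim (1) I would write $\gcd(\mathbb{F}_{n+1},\mathbb{F}_n)=\gcd(\mathbb{F}_n+(n+1)!,\mathbb{F}_n)=\gcd((n+1)!,\mathbb{F}_n)$ and invoke Theorem~\ref{Equ} to conclude the value is $2$. Claim (2) is literally claim (1) with $n$ replaced by $n+1$, so it follows at once. For claim (3), the first equality is the telescoping identity above and the second is again Theorem~\ref{Equ}, so both sides equal $2$.

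The only step needing more than a citation of Theorem~\ref{Equ} is claim (4), and this is where I would be most careful, though the obstacle is bookkeeping rather than depth. Using associativity of the gcd, $\gcd(a_1,\dots,a_m)=\gcd(\gcd(a_1,a_2),a_3,\dots,a_m)$, the gcd of the whole family divides $\gcd(\mathbb{F}_n,\mathbb{F}_{n+1})=2$ by claim~(1); conversely $2$ divides every $\mathbb{F}_{n+j}$ by the parity remark, so $2$ divides the gcd. These two divisibilities squeeze the value to exactly $2$. If the indicated list is read as genuinely infinite, I would add that any common divisor of the infinite family is in particular a common divisor of the first two terms $\mathbb{F}_n,\mathbb{F}_{n+1}$, so the same squeeze applies and the descending chain of partial gcds stabilises at $2$. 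Throughout I restrict to $n\geq 1$, the range in which Theorem~\ref{Equ} and the parity statement hold; the degenerate case $n=0$ (where $\mathbb{F}_0=0$ and $\gcd(\mathbb{F}_1,\mathbb{F}_0)=\gcd(2,0)=2$) I would dispose of by direct inspection.
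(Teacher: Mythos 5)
Your proposal is correct and follows the same route as the paper: the paper's own proof is a one-line appeal to Theorem~\ref{Equ}, and your argument simply fills in the details that the paper declares ``straightforward'' --- the recurrence $\mathbb{F}_{n+1}-\mathbb{F}_n=(n+1)!$, the Euclidean reduction $\gcd(\mathbb{F}_n+(n+1)!,\mathbb{F}_n)=\gcd((n+1)!,\mathbb{F}_n)$, and the parity squeeze for the multi-term gcd in claim (4). Nothing in your write-up deviates from or exceeds what the paper intends; it is a faithful (and more complete) rendering of the same proof.
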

\begin{proof}
    Using Theorem \ref{Equ}, the prove of this Theorem is straightforward.
\end{proof}

\begin{theorem}
    For any integer $n>0$ the $\gcd(\mathbb{F}_n +a,  \mathbb{F}_{n+1}+a)=\mathcal{F}_n(a)$ where $a$ is any constant.
\end{theorem}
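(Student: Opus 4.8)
The plan is to reduce the two-argument gcd to a single Euclidean step and then record the resulting quantity as a function of $n$ and $a$. First I would invoke the recurrence established in Lemma \ref{put} (equivalently Lemma \ref{rt}), namely $\mathbb{F}_{n+1}=\mathbb{F}_{n}+(n+1)!$, so that the two inputs differ by exactly $(n+1)!$. Since $a$ enters as the same additive constant in both slots, subtracting $\mathbb{F}_{n}+a$ from $\mathbb{F}_{n+1}+a$ cancels $a$ entirely and isolates the factorial difference.

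Next I would apply the elementary identity $\gcd(u,v)=\gcd(u,\,v-u)$ for integers (see \cite{knuth2014art}) with $u=\mathbb{F}_{n}+a$ and $v=\mathbb{F}_{n+1}+a$, obtaining
\begin{align*}
\gcd(\mathbb{F}_{n}+a,\ \mathbb{F}_{n+1}+a)
&=\gcd\bigl(\mathbb{F}_{n}+a,\ (\mathbb{F}_{n+1}+a)-(\mathbb{F}_{n}+a)\bigr)\\
&=\gcd\bigl(\mathbb{F}_{n}+a,\ (n+1)!\bigr).
\end{align*}
The right-hand side depends only on the fixed data $n$ and $a$, so it defines a function of those two inputs; I would then set $\mathcal{F}_{n}(a):=\gcd(\mathbb{F}_{n}+a,(n+1)!)$, which is precisely the asserted value. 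The only point requiring care is well-definedness: the gcd is the nonnegative generator of the ideal $(\mathbb{F}_{n}+a,(n+1)!)\subseteq\mathbb{Z}$, which is unambiguous provided $a$ is an integer and the two entries are not both zero, and this holds for every $n>0$ because $(n+1)!\neq 0$.

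Finally I would anchor the definition with the special cases visible in the data: taking $a=0$ recovers $\gcd(\mathbb{F}_{n},(n+1)!)=2$ from Theorem \ref{Equ}, while the shifts $a=\pm 1,\pm 2$ are exactly the altered sequences tabulated in Table \ref{table 10}, confirming that $\mathcal{F}_{n}(a)$ genuinely varies with $a$.

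The main obstacle is not the reduction itself---that is a single Euclidean step---but rather giving $\mathcal{F}_{n}(a)$ explicit content beyond its definition. To pin down a closed form one would have to analyze $\gcd(\mathbb{F}_{n}+a,(n+1)!)$ prime by prime, tracking how the $p$-adic valuations of $\mathbb{F}_{n}+a$ interact with those of $(n+1)!$; the parity and binary-GCD techniques of Theorem \ref{Equ} supply the template for the case $a=0$, and extending them uniformly to arbitrary $a$ is where the genuine work lies.
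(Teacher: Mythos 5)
Your proposal is correct, and it actually supplies more than the paper does: the paper states this theorem with no proof at all, so the statement functions there purely as a naming convention for $\mathcal{F}_n(a)$, with the content deferred to the (also unproved) lemmas on $\mathcal{F}_n(2),\mathcal{F}_n(3),\mathcal{F}_n(4),\mathcal{F}_n(5)$ and to Conjecture \ref{ATTA}. Your single Euclidean step, combining the recurrence $\mathbb{F}_{n+1}=\mathbb{F}_{n}+(n+1)!$ of Lemma \ref{rt} with $\gcd(u,v)=\gcd(u,v-u)$ to obtain
\[
\gcd\bigl(\mathbb{F}_{n}+a,\ \mathbb{F}_{n+1}+a\bigr)=\gcd\bigl(\mathbb{F}_{n}+a,\ (n+1)!\bigr),
\]
is precisely the observation the paper leaves implicit; it makes the statement well-posed and explains the shape of the later lemmas, which amount to tracking how the prime factors of $(n+1)!$ interact with $\mathbb{F}_n+a$. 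Two caveats are worth recording. First, the theorem's phrase ``any constant'' must be read as ``any integer'': for non-integral $a$ the gcd is not defined, as you correctly flag. Second, your reduction genuinely requires $n\geq 1$, since $\mathbb{F}_0=0$ breaks the recurrence (indeed the row $(n+1)!=\mathbb{F}_{n+1}-\mathbb{F}_{n}$ in Table \ref{table 10} is itself inconsistent at $n=0$, where the difference is $2$, not $1$); the hypothesis $n>0$ in the statement covers this. Your closing remark is also the right diagnosis: as stated, the theorem is unfalsifiable until $\mathcal{F}_n(a)$ is given explicit content for particular $a$, and that prime-by-prime analysis, extending the binary-GCD argument of Theorem \ref{Equ} beyond $a=0$, is where the genuine mathematical work of this subsection lies.
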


\begin{lemma}
    For any nonnegative integer $n$ the $\gcd(\mathbb{F}_n +2,  \mathbb{F}_{n+1}+2)=\mathcal{F}_n(2)$ where;
\begin{equation*}\mathcal{F}_n(2)=
\begin{cases}
	\text{$1$},    & \text{if $n=0$, }\\
	\text{$2$},  & \text{if $n=1$, }\\
    \text{$6$},  & \text{if $n=6$, }\\
     \text{$12$},  & \text{otherwise. }
\end{cases}
 \end{equation*}
\end{lemma}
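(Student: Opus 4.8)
The plan is to collapse the two-argument gcd to a gcd with a single factorial and then analyse its prime content. Put $u_n=\mathbb{F}_n+2$ and $w_n=\mathbb{F}_{n+1}+2$. By the recurrence of Lemma~\ref{rt} we have $w_n-u_n=\mathbb{F}_{n+1}-\mathbb{F}_n=(n+1)!$, so the elementary identity $\gcd(a,b)=\gcd(a,b-a)$ gives
\begin{equation*}
\gcd\bigl(\mathbb{F}_n+2,\ \mathbb{F}_{n+1}+2\bigr)=\gcd\bigl(\mathbb{F}_n+2,\ (n+1)!\bigr).
\end{equation*}
This single reduction is the backbone of the argument; everything afterwards is the evaluation of $\gcd(\mathbb{F}_n+2,(n+1)!)$, and it already shows that $\mathcal{F}_n(2)$ always divides $(n+1)!$.

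For the finitely many small indices the value is obtained by direct substitution into the reduced gcd, and these are exactly the exceptional entries $1,2,6$ recorded in the statement. For the generic range I would prove the value $12$ by establishing a lower and an upper bound. The lower bound is an easy induction: $\mathbb{F}_3+2=12$, and since $(k+1)!$ is a multiple of $12$ for every $k\geq 3$ (already $4!=24$), the recurrence $\mathbb{F}_{n+1}+2=(\mathbb{F}_n+2)+(n+1)!$ forces $12\mid \mathbb{F}_n+2$ for all $n\geq 3$; as $12\mid(n+1)!$ as well, we obtain $12\mid\gcd(\mathbb{F}_n+2,(n+1)!)$.

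The upper bound is where the real work lies, and I would treat it prime by prime, writing $v_p$ for the $p$-adic valuation. For $p=2$: since $0!+1!+2!+3!=10$ and $8\mid k!$ for $k\geq 4$, one gets $\mathbb{F}_n\equiv 2\pmod 8$, hence $v_2(\mathbb{F}_n+2)=2$ for $n\geq 3$, which caps the power of two at $4$. For $p=3$: if $n+1<6$ then already $v_3((n+1)!)=1$, while for $n\geq 5$ one checks $\mathbb{F}_n\equiv 1\pmod 9$ (using $v_3(k!)\geq 2$ for $k\geq 6$), so $v_3(\mathbb{F}_n+2)=1$; either way the minimum of the two valuations is $1$. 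These two computations pin the $\{2,3\}$-part of the gcd to exactly $12$.

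The genuinely hard step is to rule out every prime $p\geq 5$. Since $p\mid k!$ for $k\geq p$, one has $\mathbb{F}_n\equiv\ !p\pmod p$ whenever $n+1\geq p$, so no prime $5\leq p\leq n+1$ divides $\mathbb{F}_n+2$ precisely when
\begin{equation*}
!p\not\equiv -2\pmod p\qquad\text{for every prime } p\geq 5 .
\end{equation*}
I expect this to be the main obstacle: it is a shifted Kurepa-type congruence, an analogue of the hypothesis $!p\not\equiv 0\pmod p$ underlying Conjecture~\ref{main}, and proving it for all $p$ appears to be of the same order of difficulty as the Kurepa conjecture itself. At the level of this paper the honest route is therefore to verify $!p\not\equiv -2\pmod p$ over the same computational range already used for Kurepa, which together with the two valuation computations above yields the stabilised value $\gcd(\mathbb{F}_n+2,(n+1)!)=12$ throughout the generic range.
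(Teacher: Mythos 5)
The paper offers no proof of this lemma at all --- it is one of a block of statements in the ``Altered $\mathbb{F}_n$ Sequence'' subsection that are stated bare --- so your proposal has to be judged on its own merits rather than against an existing argument. Your skeleton is the right one: the reduction $\gcd(\mathbb{F}_n+2,\mathbb{F}_{n+1}+2)=\gcd(\mathbb{F}_n+2,(n+1)!)$ via the recurrence of Lemma~\ref{rt}, the induction giving $12\mid\mathbb{F}_n+2$ for $n\geq 3$, and the $2$-adic and $3$-adic computations capping the $\{2,3\}$-part at $12$ are all correct, and your identification of the residual obstruction as the congruence $!p\not\equiv-2\pmod p$ for primes $p\geq 5$ is exactly the right way to see what this lemma really asserts.

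However, there are two genuine problems. First, your claim that direct substitution at small indices yields ``exactly the exceptional entries $1,2,6$ recorded in the statement'' is false, and had you carried out the substitution, your own method would have refuted the lemma rather than confirmed it: with the paper's values (Table~\ref{table 10}, where $\mathbb{F}_n+2=2,4,6,12,36,156,876,5916,\ldots$) the gcd sequence is $2,2,6,12,12,12,12,12,\ldots$, so the value $6$ occurs at $n=2$, not at $n=6$ --- at $n=6$ one has $\gcd(876,5916)=\gcd(2^2\cdot 3\cdot 73,\ 2^2\cdot 3\cdot 17\cdot 29)=12$ --- and at $n=0$ the paper's convention $\mathbb{F}_0=0$ gives $\gcd(2,4)=2$, not $1$. (That convention also breaks the recurrence you invoke at the first step, since $\mathbb{F}_1-\mathbb{F}_0=2\neq 1!$; the stated values $1,2,6$ at $n=0,1,2$ only emerge if one replaces $\mathbb{F}_0=0$ by $\mathbb{F}_0=0!=1$.) A proof of this lemma must either flag the misindexing or fail, and yours silently asserts agreement where there is none. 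Second, as you yourself concede, the exclusion of every prime $p\geq 5$ is not proved: it is a shifted Kurepa-type hypothesis, so what you actually obtain is the conditional statement that the gcd equals $12$ for $n\geq 3$ precisely as long as no prime $5\leq p\leq n+1$ satisfies $!p\equiv-2\pmod p$. Ending with the admission that this step ``appears to be of the same order of difficulty as the Kurepa conjecture'' makes your text an accurate analysis of the lemma, but not a proof of it; the unconditional ``$12$ otherwise'' remains unestablished, and by equidistribution heuristics on $!p\bmod p$ it is plausibly false for sufficiently large $n$.
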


\begin{lemma}
    For any nonnegative integer $n$ the $\gcd(\mathbb{F}_n +3,  \mathbb{F}_{n+1}+3)=\mathcal{F}_n(3)$ where;
\begin{equation*}\mathcal{F}_n(3)=
\begin{cases}
	\text{$1$},    & \text{if $0\leq n<11$, }\\
	\text{$13$},  & \text{if $n\geq11 $. }
\end{cases}
 \end{equation*}
    
\end{lemma}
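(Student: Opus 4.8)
The plan is to collapse the two-term gcd into a gcd with a single factorial, and then decide divisibility prime by prime using the fact that $\mathbb{F}_n=\sum_{k=0}^{n}k!$ stabilizes modulo every prime. First I would set $d_n=\gcd(\mathbb{F}_n+3,\ \mathbb{F}_{n+1}+3)$ and invoke the recurrence $\mathbb{F}_{n+1}=\mathbb{F}_n+(n+1)!$ from Lemma \ref{rt}. Since $(\mathbb{F}_{n+1}+3)-(\mathbb{F}_n+3)=(n+1)!$, every common divisor of the two arguments divides $(n+1)!$, and conversely every common divisor of $\mathbb{F}_n+3$ and $(n+1)!$ divides $\mathbb{F}_{n+1}+3$; hence
\[
 d_n=\gcd\bigl(\mathbb{F}_n+3,\ (n+1)!\bigr),
\]
which already forces every prime factor of $d_n$ to be at most $n+1$.

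Next I would exploit that for a prime $p$ and $k\ge p$ one has $p\mid k!$, so for $n\ge p-1$,
\[
 \mathbb{F}_n\equiv\sum_{k=0}^{p-1}k!\pmod{p}.
\]
Writing $c_p=\sum_{k=0}^{p-1}k!\bmod p$ (this is exactly the left factorial $\mathbf{!}p$ reduced mod $p$), the criterion becomes clean: for a prime $p\le n+1$ one automatically has $n\ge p-1$, and then $p\mid d_n$ if and only if $c_p\equiv-3\pmod p$. Thus $d_n$ is controlled entirely by the exceptional set $B=\{p:\ c_p\equiv-3\pmod p\}$ intersected with the cutoff $p\le n+1$.

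Then I would evaluate $c_p$ for the small primes directly: $c_2\equiv0$, $c_3\equiv1$, $c_5\equiv4$, $c_7\equiv6$, $c_{11}\equiv1$, none equal to $-3$ modulo the respective prime, while $c_{13}=\sum_{k=0}^{12}k!\equiv10\equiv-3\pmod{13}$. Hence $13\in B$ but $2,3,5,7,11\notin B$. Consequently, as long as $(n+1)!$ carries no prime factor in $B$, i.e. while $n+1<13$, we get $d_n=1$; and since the $13$-adic valuation of $(n+1)!$ equals $1$ at the point where $13$ first divides $(n+1)!$, the power of $13$ in $d_n$ is exactly one there, giving $d_n=13$. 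This produces precisely the two-valued behaviour of $\mathcal{F}_n(3)$, with the jump located at the first $n$ for which $13\mid(n+1)!$.

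The hard part is proving that $13$ is the \emph{only} prime that can ever enter, i.e. that no prime $p$ with $13<p\le n+1$ satisfies $\sum_{k=0}^{p-1}k!\equiv-3\pmod p$, and, for large $n$, that $v_{13}(\mathbb{F}_n+3)=1$ persists once $(n+1)!$ acquires a second factor of $13$ (which needs $\mathbb{F}_n\not\equiv-3\pmod{13^2}$ for $n\ge25$, via a stabilization argument modulo $13^2$). The first condition is a congruence of exactly Kurepa type, since $\sum_{k=0}^{p-1}k!=\mathbf{!}p$ is the left factorial whose residues mod $p$ are the object of the Kurepa conjecture, and there is no elementary reason for it to fail for all large $p$. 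I would therefore present the reduction above as the structural heart of the argument and then certify the claim over the range of $p$ for which $\mathbf{!}p\bmod p$ (and $\bmod\,13^2$) has been computed, stating the residue conditions on $\mathbf{!}p$ explicitly as the hypotheses that keep $\mathcal{F}_n(3)$ constant.
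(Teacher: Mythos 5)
The paper states this lemma with no proof at all (it sits in a run of bare assertions in the subsection on altered $\mathbb{F}_n$ sequences), so your proposal can only be judged on its own merits. Its skeleton is the right one: from $\mathbb{F}_{n+1}=\mathbb{F}_n+(n+1)!$ you correctly get $\gcd(\mathbb{F}_n+3,\mathbb{F}_{n+1}+3)=\gcd(\mathbb{F}_n+3,(n+1)!)$, and since $k!\equiv 0\pmod p$ for $k\ge p$, a prime $p\le n+1$ divides this gcd exactly when $!p=\sum_{k=0}^{p-1}k!\equiv-3\pmod p$; your residues $c_2=0$, $c_3=1$, $c_5=4$, $c_7=6$, $c_{11}=1$, $c_{13}\equiv-3$ are all correct.

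However, the argument has two genuine defects. First, an off-by-one that you gloss over: your own criterion puts the jump at the first $n$ with $13\mid(n+1)!$, which is $n=12$, not $n=11$; concretely $\mathbb{F}_{11}=\sum_{k=0}^{11}k!\equiv 11\pmod{13}$ (the residue only stabilizes at $-3$ from $n=12$ onward), so $13\nmid\mathbb{F}_{11}+3$ and in fact $\gcd(\mathbb{F}_{11}+3,\mathbb{F}_{12}+3)=\gcd(\mathbb{F}_{11}+3,12!)=1$. So your reduction, carried out honestly, \emph{disproves} the lemma at $n=11$ rather than proving it; the sentence claiming it ``produces precisely the two-valued behaviour of $\mathcal{F}_n(3)$'' hides this contradiction. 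Under the paper's own definition of $\mathbb{F}_n$ the threshold should read $n\ge 12$ (the neighbouring $a=2$ lemma contains the same kind of slip: the value $6$ there occurs at $n=2$, not $n=6$), and you should have flagged the discrepancy explicitly instead of asserting agreement. Second, as you candidly admit, the statement for all large $n$ needs (i) that no prime $p>13$ satisfies $!p\equiv-3\pmod p$, and (ii) that $13^2\nmid\mathbb{F}_n+3$ once $13^2\mid(n+1)!$; both are Kurepa-type congruence conditions that your method (or any known method) can only certify computationally up to a finite bound, and heuristically (i) is expected to fail at some large prime. So what you actually have is a correct reduction, a correct finite-range verification with a corrected threshold, and an explicitly conditional statement beyond that --- not a proof of the lemma as stated, which is false at $n=11$ and not provable in its unbounded generality.
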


\begin{lemma}
    For any nonnegative integer $n$ the $\gcd(\mathbb{F}_n +4,  \mathbb{F}_{n+1}+4)=\mathcal{F}_n(4)$ where;
\begin{equation*}\mathcal{F}_n(4)=
\begin{cases}
	\text{$1$},    & \text{if $n=0$, }\\
	\text{$2$},  & \text{otherwise. }
\end{cases}
 \end{equation*}
\end{lemma}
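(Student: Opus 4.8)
The plan is to reuse the Euclidean reduction behind Theorem \ref{Equ}, driven by the recurrence $\mathbb{F}_{n+1}-\mathbb{F}_n=(n+1)!$ of Lemma \ref{rt}. Applying $\gcd(a,b)=\gcd(a,b-a)$ removes the shift:
\begin{align*}
\gcd(\mathbb{F}_n+4,\ \mathbb{F}_{n+1}+4)
&=\gcd\bigl(\mathbb{F}_n+4,\ (\mathbb{F}_{n+1}+4)-(\mathbb{F}_n+4)\bigr)\\
&=\gcd\bigl(\mathbb{F}_n+4,\ \mathbb{F}_{n+1}-\mathbb{F}_n\bigr)
=\gcd\bigl(\mathbb{F}_n+4,\ (n+1)!\bigr),
\end{align*}
valid for every $n\ge 1$. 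The base case $n=0$ is degenerate, since the recurrence does not feed $(n+1)!$ there; it is settled directly from $\mathbb{F}_0=\sum_{k=0}^{0}k!=1$, giving $\gcd(5,6)=1=\mathcal F_0(4)$. Thus the whole task reduces to proving $\gcd(\mathbb{F}_n+4,(n+1)!)=2$ for $n\ge 1$.

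First I would pin down the prime $2$. By Corollary \ref{cor} the factorial $(n+1)!$ is even for $n\ge 1$, and $\mathbb{F}_n$ is even for $n\ge 1$ (as $0!+1!=2$ and every later term is even), so $\mathbb{F}_n+4$ is even and $2\mid\gcd$. To show that exactly one factor of $2$ survives, I would reduce modulo $4$: since $k!\equiv 0\pmod 4$ for $k\ge 4$, we get $\mathbb{F}_n\equiv 0!+1!+2!+3!\equiv 2\pmod 4$ for all $n\ge 3$, whence $\mathbb{F}_n+4\equiv 2\pmod 4$ is divisible by $2$ but not by $4$; the cases $n=1,2$ are checked by hand. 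This is the same parity mechanism that forces $r_n$ to be odd in Theorem \ref{cpr}.

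The remaining step, and the genuine obstacle, is to exclude every odd prime $p\le n+1$ dividing $(n+1)!$. Since $k!\equiv 0\pmod p$ for $k\ge p$, the sum collapses to $\mathbb{F}_n\equiv\sum_{k=0}^{p-1}k!=\mathbf{!p}\pmod p$ whenever $p\mid (n+1)!$, so what must be verified is the Kurepa-type non-divisibility
\[
\mathbf{!p}\not\equiv -4 \pmod p \qquad\text{for every odd prime }p .
\]
I would confirm this for the primes that actually occur ($p=3$ yields $\mathbb{F}_n+4\equiv 2$, $p=5$ yields $\equiv 3$, $p=7$ yields $\equiv 3$, and so on), which covers the tabulated range and explains why, unlike the $+2$ and $+3$ shifts with their exceptional divisors $12$ and $13$, the value here remains $2$. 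Establishing the displayed congruence uniformly in $p$ is as intractable as the Kurepa conjecture itself; accordingly I would either state the lemma over the verified range, or make the general case conditional on the same non-divisibility hypothesis already underlying Lemma \ref{cp} and Theorem \ref{Equ}, from which $\gcd(\mathbb{F}_n+4,(n+1)!)=2$ then follows by the binary-GCD step $\gcd(u,v)=2\gcd(u/2,v/2)$.
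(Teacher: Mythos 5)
You should know at the outset that the paper contains no proof of this lemma: it is one of several bare assertions in the ``Altered $\mathbb{F}_n$ Sequence'' subsection, evidently read off from Table 10, so there is no argument in the paper to compare yours against. Judged on its own, your reduction chain is correct: $\gcd(\mathbb{F}_n+4,\mathbb{F}_{n+1}+4)=\gcd(\mathbb{F}_n+4,(n+1)!)$ for $n\ge 1$ via $\mathbb{F}_{n+1}-\mathbb{F}_n=(n+1)!$; the $2$-adic step ($\mathbb{F}_n\equiv 2\pmod 4$ for $n\ge 3$, with $n=1,2$ checked directly) correctly pins the power of $2$ in the gcd at exactly one factor; and the collapse $\mathbb{F}_n\equiv \mathbf{!p}\pmod p$ for odd primes $p\le n+1$ is right. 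Most importantly, your diagnosis of where the real content lies is accurate: excluding every odd prime divisor of $(n+1)!$ is precisely the assertion $\mathbf{!p}\not\equiv -4\pmod p$ for all odd primes $p$, a Kurepa-type congruence that is as open as Kurepa's conjecture itself, and indeed heuristically expected to fail for some large prime (the ``probability'' $\sum_p 1/p$ of a hit diverges). Your small-prime checks ($p=3,5,7$ giving residues $2,3,3$) are correct but only cover the tabulated range, and your explanation of why the $+2$ and $+3$ shifts acquire the exceptional divisors $12$ and $13$ (namely $\mathbf{!3}\equiv -2\pmod 3$ and $\mathbf{!13}\equiv -3\pmod{13}$) is the right mechanism. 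So the ``gap'' is in the lemma as stated, not in your reasoning: it is not provable for all $n$ by any method in the paper, and your proposal to state it over a verified range or conditionally on the non-divisibility hypothesis is the honest repair. Note also that Theorem \ref{Equ} and Lemma \ref{cp}, which you cite as the unconditional backbone, suffer from the same defect — their arguments only verify finitely many instances of $\gcd(r_n,(n+1)!/2)=1$ — so making your lemma ``follow'' from them inherits, rather than removes, the unproved core.

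One further caveat concerns $n=0$. You take $\mathbb{F}_0=\sum_{k=0}^{0}k!=1$, giving $\gcd(5,6)=1=\mathcal F_0(4)$. The paper's own convention, however, sets $\mathbb{F}_0=0$ (both in the definition and in Table 10), under which $\gcd(\mathbb{F}_0+4,\mathbb{F}_1+4)=\gcd(4,6)=2\neq\mathcal F_0(4)$. Your charitable reading is the one that makes the lemma's base value correct, but it silently contradicts the paper's definition — worth flagging explicitly, since it exposes an internal inconsistency of the paper rather than a defect of your argument.
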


\begin{lemma}
    For any nonnegative integer $n$ the $\gcd(\mathbb{F}_n +5,  \mathbb{F}_{n+1}+5)=\mathcal{F}_n(5)$ where;
\begin{equation*}\mathcal{F}_n(5)=
\begin{cases}
	\text{$1$},    & \text{if $n=0$ or $n=1$, }\\
	\text{$3$},  & \text{otherwise. }
\end{cases}
 \end{equation*}
    
\end{lemma}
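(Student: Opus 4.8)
The plan is to collapse the two-argument gcd to a single factorial gcd by one Euclidean step, and then determine its prime content prime by prime. The recurrence $\mathbb{F}_{n}=\mathbb{F}_{n-1}+n!$ of Lemma \ref{rt} gives $\mathbb{F}_{n+1}-\mathbb{F}_n=(n+1)!$, so the identity $\gcd(a,b)=\gcd(a,b-a)$ produces
\[
\gcd\bigl(\mathbb{F}_n+5,\ \mathbb{F}_{n+1}+5\bigr)=\gcd\bigl(\mathbb{F}_n+5,\ (n+1)!\bigr),
\]
and the whole problem reduces to analysing $\gcd(\mathbb{F}_n+5,(n+1)!)$. For the two exceptional values I would evaluate directly: $n=0$ gives $\gcd(5,1!)=\gcd(5,1)=1$ and $n=1$ gives $\gcd(7,2!)=\gcd(7,2)=1$, matching $\mathcal{F}_0(5)=\mathcal{F}_1(5)=1$.

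For $n\geq 2$ I would first produce the factor $3$ and show it is the exact power. Modulo $3$ every $k!$ with $k\geq 3$ vanishes, so $\mathbb{F}_n\equiv 0!+1!+2!=4\equiv 1\pmod 3$ for all $n\geq 2$; hence $\mathbb{F}_n+5\equiv 6\equiv 0\pmod 3$, while $(n+1)!$ is divisible by $3$ once $n\geq 2$, so $3$ divides the gcd. That exactly one factor of $3$ survives is seen by splitting into ranges: for $2\leq n\leq 4$ one has $(n+1)!\in\{3!,4!,5!\}$, each carrying a single factor of $3$, which caps the gcd; for $n\geq 5$, since $k!\equiv 0\pmod 9$ when $k\geq 6$, the partial sums stabilise to $\mathbb{F}_n\equiv 1\pmod 9$, whence $\mathbb{F}_n+5\equiv 6\pmod 9$ carries exactly one factor of $3$. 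Finally $2$ never divides the gcd, because $\mathbb{F}_n$ is even for every $n$ (Theorem \ref{cpr} gives $\mathbb{F}_n=2r_n$, with $\mathbb{F}_0=0$), so $\mathbb{F}_n+5$ is odd.

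The hard part is excluding every prime $p\geq 5$. If such a $p$ divided the gcd it would divide $(n+1)!$, forcing $p\leq n+1$, and then $k!\equiv 0\pmod p$ for $k\geq p$ collapses the sum to $\mathbb{F}_n\equiv\sum_{k=0}^{p-1}k!=K_p\pmod p$; the requirement $p\mid\mathbb{F}_n+5$ becomes the congruence $K_p\equiv -5\pmod p$. Thus establishing the lemma for all $n$ is equivalent to proving $K_p\not\equiv -5\pmod p$ for every prime $p\geq 5$, a shifted analogue of the Kurepa condition $K_p\not\equiv 0\pmod p$. I would mirror the scheme of Theorem \ref{Equ}, writing $\mathbb{F}_n+5=3\,r'$ and $(n+1)!=3\,T'$ and reducing the claim to $\gcd(r',T')=1$ via the binary GCD step, but the genuine obstruction is that, unlike the parity argument underlying the original conjecture, the non-vanishing of $K_p+5$ modulo odd primes is of Kurepa type and is not forced by elementary congruences; absent such an argument I would verify the statement up to the computational range of Table \ref{table 10} and isolate $K_p\not\equiv -5\pmod p$ as the precise arithmetic input the lemma needs.
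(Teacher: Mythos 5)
Your reduction and prime-by-prime analysis are sound, but you should know that the paper offers \emph{no} proof of this lemma at all: it is stated bare in the subsection on altered $\mathbb{F}_n$ sequences, alongside the analogous unproved lemmas for the shifts $a=2,3,4$, with the values evidently read off numerically from Table \ref{table 10}, and the section closes by elevating the whole family to Conjecture \ref{ATTA}. Measured against that, your proposal does strictly more than the paper. The Euclidean step $\gcd(\mathbb{F}_n+5,\mathbb{F}_{n+1}+5)=\gcd(\mathbb{F}_n+5,(n+1)!)$ is valid for $n\geq 1$; the $3$-adic argument is correct ($\mathbb{F}_n\equiv 1\pmod 3$ for $n\geq 2$, the cap $v_3((n+1)!)=1$ for $2\leq n\leq 4$, and $\mathbb{F}_n\equiv 154\equiv 1\pmod 9$ for $n\geq 5$, so $\mathbb{F}_n+5\equiv 6\pmod 9$); and oddness of $\mathbb{F}_n+5$ disposes of $p=2$. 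One small repair: under the paper's convention $\mathbb{F}_0=0$ the recurrence fails at $n=0$ (indeed $\mathbb{F}_1-\mathbb{F}_0=2\neq 1!$), so the case $n=0$ should be settled by the direct evaluation $\gcd(\mathbb{F}_0+5,\mathbb{F}_1+5)=\gcd(5,7)=1$ rather than via $\gcd(5,1!)$.

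The decisive point is the one you isolate at the end, and it is exactly right: the lemma holding for \emph{all} $n$ is equivalent to $K_p\not\equiv -5\pmod p$ for every prime $p\geq 5$, since if such a $p$ existed then taking $n=p-1$ gives $p\mid (n+1)!$ and $\mathbb{F}_{p-1}\equiv K_p\equiv -5\pmod p$, forcing a prime $p>3$ into the gcd. This shifted Kurepa condition is not forced by any elementary congruence and is of the same depth as Kurepa's conjecture $K_p\not\equiv 0\pmod p$ itself. So the statement labelled a lemma is, beyond any finite computational range, really a conjecture; the gap you flag is a gap in the paper's claim, not in your reasoning.
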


\begin{theorem}
    For any integer $n>0$ the $\gcd(\mathbb{F}_n +(\mathbf{a})^n,  \mathbb{F}_{n+1}+(\mathbf{a})^{n+1})=\mathcal{F}_n(\mathbf{a})$ where $\mathbf{a}$ is any constant.
\end{theorem}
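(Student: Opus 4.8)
The plan is to collapse the two-term greatest common divisor into a single Euclidean step that eliminates the Kurepa-type sum $\mathbb{F}_n$ and leaves only a factorial together with a power of $\mathbf{a}$. Writing $u=\mathbb{F}_n+\mathbf{a}^n$ and $v=\mathbb{F}_{n+1}+\mathbf{a}^{n+1}$, I would first invoke the recurrence $\mathbb{F}_{n+1}=\mathbb{F}_n+(n+1)!$ established in Lemma \ref{rt}. Subtracting gives
\[
v-u=\big(\mathbb{F}_{n+1}-\mathbb{F}_n\big)+\big(\mathbf{a}^{n+1}-\mathbf{a}^n\big)=(n+1)!+(\mathbf{a}-1)\mathbf{a}^n,
\]
so that, by the elementary identity $\gcd(u,v)=\gcd(u,v-u)$,
\[
\mathcal{F}_n(\mathbf{a})=\gcd\!\big(\mathbb{F}_n+\mathbf{a}^n,\;(n+1)!+(\mathbf{a}-1)\mathbf{a}^n\big).
\]
This already exhibits the left-hand quantity as a well-defined arithmetic function of the two parameters $n$ and $\mathbf{a}$, which is the content of the statement.

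To read off explicit values (as in the preceding lemmas for $\mathbf{a}=2,3,4,5$), I would next analyze which primes can divide both arguments. Any common prime $p$ must divide $(n+1)!+(\mathbf{a}-1)\mathbf{a}^n$; once $n+1\ge p$ the factorial term vanishes modulo $p$, so the condition collapses to $p\mid(\mathbf{a}-1)\mathbf{a}^n$ together with $p\mid\mathbb{F}_n+\mathbf{a}^n$. The first requires $p\mid\mathbf{a}-1$ or $p\mid\mathbf{a}$, restricting the candidate primes to those dividing $\mathbf{a}(\mathbf{a}-1)$, a finite set depending only on $\mathbf{a}$; the second is then decided by the residue of $\mathbb{F}_n+\mathbf{a}^n$ modulo these primes. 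Crucially, $\mathbb{F}_n\bmod p$ stabilises once $n\ge p$ (since $k!\equiv 0\pmod p$ for $k\ge p$, leaving the fixed sum $\sum_{k=0}^{p-1}k!$, the very Kurepa residue of Theorem \ref{Equ}), and $\mathbf{a}^n\bmod p$ is purely periodic. Hence $\mathbb{F}_n+\mathbf{a}^n\bmod p$ is eventually periodic in $n$, which is exactly the mechanism that forces each $\mathcal{F}_n(\mathbf{a})$ to be eventually constant, with the induction on $n$ running parallel to the binary-GCD argument of Theorem \ref{Equ}.

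The main obstacle is that there is no single closed form for $\mathcal{F}_n(\mathbf{a})$ valid uniformly in $\mathbf{a}$: the surviving prime powers depend on the multiplicative order of $\mathbf{a}$ modulo each candidate prime and on the $p$-adic valuations of $\mathbb{F}_n+\mathbf{a}^n$, both of which behave irregularly as $\mathbf{a}$ varies. Consequently the theorem can assert only the existence and the Euclidean characterization of the function $\mathcal{F}_n(\mathbf{a})$, while its explicit piecewise evaluation must be carried out case by case, precisely as in the lemmas preceding the additive version $\gcd(\mathbb{F}_n+a,\mathbb{F}_{n+1}+a)$.
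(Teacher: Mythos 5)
The first thing to say is that the paper offers no proof of this theorem at all: it is stated bare, like the neighbouring lemmas on $\gcd(\mathbb{F}_n+2,\mathbb{F}_{n+1}+2)$, etc., and the quantity $\mathcal{F}_n(\mathbf{a})$ is never defined anywhere except as a name for the very gcd appearing on the left-hand side. So there is no paper argument to compare yours against, and the statement itself is definitional rather than substantive — a point your final paragraph correctly identifies. Given that, your opening reduction supplies the only genuine content available: using the recurrence $\mathbb{F}_{n+1}=\mathbb{F}_n+(n+1)!$ of Lemma \ref{rt}, the step $\gcd(u,v)=\gcd(u,v-u)$ gives
\[
\gcd\bigl(\mathbb{F}_n+\mathbf{a}^n,\ \mathbb{F}_{n+1}+\mathbf{a}^{n+1}\bigr)
=\gcd\bigl(\mathbb{F}_n+\mathbf{a}^n,\ (n+1)!+(\mathbf{a}-1)\mathbf{a}^n\bigr),
\]
which is correct and is the natural analogue of the paper's own use of $\mathbb{F}_{n+1}-\mathbb{F}_n=(n+1)!$ in Theorem \ref{final}. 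One hypothesis should be made explicit: the theorem's phrase ``any constant'' must mean integer $\mathbf{a}$, since otherwise the gcd is undefined.

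The one genuine overreach is in your second paragraph, where you claim the mechanism ``forces each $\mathcal{F}_n(\mathbf{a})$ to be eventually constant.'' Your argument controls a \emph{fixed} prime $p$ for $n$ large: once $n+1\ge p$ the factorial terms vanish modulo $p$, $\mathbb{F}_n\bmod p$ freezes at the Kurepa residue $\sum_{k=0}^{p-1}k!$, and $\mathbf{a}^n\bmod p$ is eventually periodic, so the indicator of ``$p$ divides both entries'' is eventually periodic (not constant) in $n$. But for a given $n$, nothing excludes a prime $p>n+1$ from dividing both $\mathbb{F}_n+\mathbf{a}^n$ and $(n+1)!+(\mathbf{a}-1)\mathbf{a}^n$; such sporadic large prime factors, as well as varying $p$-adic valuations at the small primes dividing $\mathbf{a}(\mathbf{a}-1)$, are left uncontrolled, so no eventual constancy (or even eventual periodicity) of the gcd itself follows from what you wrote. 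This does not damage your proof of the theorem as stated — which asserts nothing beyond well-definedness — but it is exactly the obstruction you would need to overcome to derive piecewise evaluations like the paper's lemmas for $a=2,3,4,5$, or to decide the boundedness question raised in Conjecture \ref{ATTA}.
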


\begin{theorem}
    For all nonnegative integers $n$ the $\gcd(\mathbb{A}_n,  \mathbb{A}_{n+1})=1. $
\end{theorem}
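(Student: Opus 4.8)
The plan is to reduce the whole statement, via a single Euclidean step, to a coprimality between $\mathbb{A}_n$ and the one integer $(n+1)!-2(-1)^n$, and then to dispose of the possible common prime factors according to their size. First I would record a first--order recurrence for $\mathbb{A}_n=\mathbb{F}_n+(-1)^n$. For $n\ge 1$, Lemma \ref{rt} gives $\mathbb{F}_{n+1}=\mathbb{F}_n+(n+1)!$; substituting $\mathbb{F}_n=\mathbb{A}_n-(-1)^n$ yields $\mathbb{A}_{n+1}=\mathbb{A}_n+(n+1)!-2(-1)^n$. Consequently $\gcd(\mathbb{A}_n,\mathbb{A}_{n+1})=\gcd\!\left(\mathbb{A}_n,\,(n+1)!-2(-1)^n\right)$, while the base case $n=0$ is immediate since $\mathbb{A}_0=\mathbb{A}_1=1$.

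Next comes the parity step. By Theorem \ref{cpr} we have $\mathbb{F}_n=2r_n$, so $\mathbb{F}_n$ is even and $\mathbb{A}_n=\mathbb{F}_n+(-1)^n$ is odd for every $n$; hence the gcd is odd and $2$ is excluded as a common divisor. I would then eliminate the small primes: let $p$ be an odd prime dividing both $\mathbb{A}_n$ and $\mathbb{A}_{n+1}$. If $p\le n+1$ then $p\mid(n+1)!$, and from $p\mid (n+1)!-2(-1)^n$ it follows that $p\mid 2$, contradicting the oddness of $p$. Therefore every common prime factor must satisfy $p>n+1$, in which case $(n+1)!$ is a unit modulo $p$ and the two divisibilities translate into the pair of congruences $(n+1)!\equiv 2(-1)^n\pmod p$ and $\sum_{k=0}^{n}k!\equiv -(-1)^n\pmod p$, the latter coming from $p\mid\mathbb{A}_n$.

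The \textbf{main obstacle} is precisely this large--prime case: ruling out that some prime $p>n+1$ satisfies both congruences while dividing the fixed integer $\mathbb{A}_n$. A naive descent on $n$ fails, since a short computation assuming $p\mid\mathbb{A}_n$ and $p\mid\mathbb{A}_{n+1}$ gives $\mathbb{A}_{n+2}\equiv 2(n+3)(-1)^n\pmod p$, which is generically nonzero (it vanishes only when $p\mid n+3$), so divisibility by $p$ does not propagate between three consecutive terms. My intended route is therefore to treat this residual case with the binary GCD machinery already used in Theorem \ref{Equ}: because $\mathbb{A}_n$ and $\mathbb{A}_{n+1}$ are both odd, I would repeatedly apply the odd--odd reduction $\gcd(u,v)=\gcd\!\left(\tfrac{|u-v|}{2},v\right)$, starting from $(n+1)!-2(-1)^n=2\bigl(\tfrac{(n+1)!}{2}-(-1)^n\bigr)$, and argue that the rapid growth of $(n+1)!$ collapses the algorithm to $\gcd(\cdot,1)=1$. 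I expect the genuinely hard point to be making this final collapse \emph{uniform} in $n$ rather than verifying it value by value; until such a uniform estimate is available, the claim is best supported, in the computational spirit of this paper, by the factorizations tabulated above.
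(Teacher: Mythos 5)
Your Euclidean reduction is sound: from $\mathbb{F}_{n+1}=\mathbb{F}_n+(n+1)!$ (valid for $n\ge 1$; you rightly treat $n=0$ separately since the paper sets $\mathbb{F}_0=0$, so $\mathbb{A}_0=\mathbb{A}_1=1$) you correctly obtain $\gcd(\mathbb{A}_n,\mathbb{A}_{n+1})=\gcd\!\left(\mathbb{A}_n,(n+1)!-2(-1)^n\right)$, the parity step via $\mathbb{F}_n=2r_n$ is right, and the elimination of primes $p\le n+1$ is correct. Be aware, however, that the paper offers no proof of this theorem at all: it is one of several statements in the altered-sequence subsection asserted on the strength of Table \ref{table 10}, so there is no argument of the paper to compare yours against, and your proposal must stand on its own. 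It does not yet, and you say as much.

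The genuine gap is exactly the one you flag: a common prime divisor must satisfy $p>n+1$ together with $p\mid \sum_{k=0}^{n}k!+(-1)^n$ and $(n+1)!\equiv 2(-1)^n\pmod{p}$, and nothing in the proposal rules this out. The binary-GCD idea cannot close it. Stein's reductions (halving even arguments, replacing an odd--odd pair $(u,v)$ by $\bigl(\tfrac{|u-v|}{2},v\bigr)$) preserve the gcd exactly, so ``running the algorithm until it collapses to $1$'' is logically equivalent to the statement being proved; it is a finite computation one can perform for each fixed $n$, never a uniform argument, and the growth of $(n+1)!$ is irrelevant because a gcd is insensitive to the size of its arguments. (Theorem \ref{Equ} of the paper has the same character: the binary GCD steps are exhibited only for specific $n$, with the general case resting on Lemma \ref{cp}, so appealing to that ``machinery'' imports rather than repairs the weakness.) The residual large-prime statement is a Kurepa-type divisibility question, of the same flavour as asserting that no odd prime divides $!p$, and there is no reason to expect it to be easier than the open Conjecture \ref{ATTA} the paper itself poses. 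Your closing admission that the claim ultimately rests on the tabulated factorizations is the accurate assessment: what you have is a correct reduction plus numerical evidence, not a proof.
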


\begin{theorem}
    Given that $\mathbb{B}_n=\mathbb{F}_{n}-(-1)^n $ then for nonnegative integers $n$ the
\begin{equation*}\gcd(\mathbb{B}_n,  \mathbb{B}_{n+1})=
\begin{cases}
	\text{$1$},    & \text{if $n=2$ and $n\geq4$, }\\
	\text{$3$},  & \text{if $n=0$ and $n=1$, }\\
    \text{$11$},  & \text{otherwise. }
\end{cases}
 \end{equation*}

\end{theorem}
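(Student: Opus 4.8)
The plan is to collapse the two-term gcd to a single congruence by means of the factorial recurrence, then to separate the finitely many exceptional indices from the generic tail. First I would invoke the recurrence $\mathbb{F}_{n+1}=\mathbb{F}_n+(n+1)!$ of Lemma~\ref{rt} to evaluate the consecutive difference
\begin{align*}
\mathbb{B}_{n+1}-\mathbb{B}_n &= \bigl(\mathbb{F}_{n+1}-(-1)^{n+1}\bigr)-\bigl(\mathbb{F}_n-(-1)^n\bigr)\\
&= (n+1)! + 2(-1)^n,
\end{align*}
so that the Euclidean step gives $\gcd(\mathbb{B}_n,\mathbb{B}_{n+1})=\gcd\bigl(\mathbb{B}_n,\,(n+1)!+2(-1)^n\bigr)$. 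Because $\mathbb{F}_n$ is even for $n\ge1$ (from Theorem~\ref{cpr}, where $\gcd(\mathbb{F}_n,2)=2$) while $(-1)^n$ is odd, each $\mathbb{B}_n$ with $n\ge1$ is odd, so any common divisor $d$ is odd.

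The decisive step is a prime bound. If an odd prime $p$ divides both $\mathbb{B}_n$ and $\mathbb{B}_{n+1}$, then $\mathbb{F}_n\equiv(-1)^n$ and $\mathbb{F}_{n+1}\equiv(-1)^{n+1}\pmod p$, and subtracting yields
$$(n+1)!\equiv 2(-1)^{n+1}\pmod p.$$
Were $p\le n+1$, then $p\mid(n+1)!$ would force $p\mid 2$, which is impossible for odd $p$; hence every common prime factor satisfies $p>n+1$. This alone disposes of the low indices. For $n=2$ we have $\mathbb{B}_2=3$, whose single prime factor $3$ is excluded by $p>3$, so the gcd is $1$; for $n=1$ the coincident values $\mathbb{B}_1=\mathbb{B}_2=3$ give gcd $3$, consistent with $2!\equiv2\equiv2(-1)^{2}\pmod3$; and for $n=3$ one reads $\mathbb{B}_3=11$, $\mathbb{B}_4=33=3\cdot11$ from Table~\ref{table 10}, the prime $11>4$ being admissible precisely because $4!=24\equiv2\equiv2(-1)^{4}\pmod{11}$. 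The degenerate index $n=0$, where $\mathbb{B}_0$ is recorded as N/A in Table~\ref{table 10}, is taken by that tabular convention.

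It remains to handle the generic range $n\ge4$ and show the gcd is always $1$. By the reduction above this is equivalent to proving that no odd prime $p>n+1$ can simultaneously satisfy $\mathbb{F}_n\equiv(-1)^n\pmod p$ and $(n+1)!\equiv2(-1)^{n+1}\pmod p$. I would combine the parity/bound argument (which already confines any putative $p$ to $p>n+1$) with a congruence analysis of $\mathbb{F}_n=\sum_{k=0}^{n}k!$ modulo such $p$, none of whose factorials vanish, in order to contradict the forced residue of $(n+1)!$. This final elimination is the main obstacle: it has exactly the arithmetic character of the divisibility statement treated in Theorem~\ref{Equ}, and an unconditional proof for all $n$ by elementary means seems out of reach. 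In practice I would close the infinite tail by the same large-scale computational verification underlying the Kurepa searches cited in the introduction, so that the theorem is secured for all $n$ below the verified bound while the structural constraints above pin the exact values $1$, $3$, and $11$ to their three respective regimes.
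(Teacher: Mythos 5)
Your reduction is sound as far as it goes: the recurrence $\mathbb{F}_{n+1}=\mathbb{F}_n+(n+1)!$ of Lemma~\ref{rt} does give $\mathbb{B}_{n+1}-\mathbb{B}_n=(n+1)!+2(-1)^n$, the Euclidean step is legitimate, oddness of $\mathbb{B}_n$ for $n\ge1$ follows from $\gcd(\mathbb{F}_n,2)=2$ in Theorem~\ref{cpr}, the bound $p>n+1$ on any common odd prime divisor is correctly derived (if $p\le n+1$ then $p\mid(n+1)!$ forces $p\mid2$), and your handling of $n=1,2,3$ correctly pins the values $3$, $1$, $11$. But the proposal does not prove the theorem, and the gap is exactly where you flag it: for every $n\ge4$ you must exclude the existence of a prime $p>n+1$ satisfying simultaneously $\sum_{k=0}^{n}k!\equiv(-1)^n\pmod p$ and $(n+1)!\equiv2(-1)^{n+1}\pmod p$, and neither the parity/bound argument nor the congruence analysis you only sketch accomplishes this; falling back on large-scale computational verification settles finitely many $n$ and cannot close a claim quantified over all $n\ge4$. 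A simultaneous factorial-sum congruence of this type has the same arithmetic character as the Kurepa conjecture itself, so the failure is a genuine obstruction, not a technical oversight.

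For comparison, the paper offers no proof whatsoever of this theorem: it sits in a run of unproved lemmas and theorems in the subsection on altered $\mathbb{F}_n$ sequences, and its content is evidently read off from the values in Table~\ref{table 10} ($\mathbb{B}_1=\mathbb{B}_2=3$, $\mathbb{B}_3=11$, $\mathbb{B}_4=33$, $\mathbb{B}_5=155$, and so on), with the infinite range $n\ge4$ asserted without justification, much like the author's own Conjecture~\ref{ATTA}. So your attempt actually supplies strictly more than the paper does (a correct reduction plus a rigorous settlement of $n\le3$), and it is more candid about what remains: on the evidence of the paper, the tail $n\ge4$ is an open claim rather than a theorem. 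One further discrepancy worth recording lies in the statement itself, not in your argument: at $n=0$ Table~\ref{table 10} lists $\mathbb{B}_0$ as ``NA,'' while taking $\mathbb{F}_0=0$ literally gives $\mathbb{B}_0=-1$ and hence $\gcd(\mathbb{B}_0,\mathbb{B}_1)=1$, not the asserted $3$; your deferral to the tabular convention is as good a reading as any.
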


\begin{lemma}
 For all nonnegative integers $n$, the following results holds:

\begin{equation*}\gcd((\mathbb{F}_{n}+(-1)^n),(\mathbb{F}_{n}-(-1)^n))=
\begin{cases}
	\text{$2$},    & \text{if $n=0$,}\\
	\text{$1$},  & \text{if $n\geq 1$, }
\end{cases}
 \end{equation*}
and 
 \begin{equation*}\gcd((\mathbb{F}_{n}+1)),(\mathbb{F}_{n}-1)))=
\begin{cases}
	\text{$2$},    & \text{if $n=0$,}\\
	\text{$1$},  & \text{if $n\geq 1$. }
\end{cases}
 \end{equation*}
 
\end{lemma}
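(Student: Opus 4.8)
The plan is to reduce both displayed identities to a single parity question about $\mathbb{F}_n$, using only the elementary fact that any common divisor of two integers divides their difference (and their sum). This collapses all the casework into one statement about when $\mathbb{F}_n$ is odd.

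First I would handle the upper assertion. Set $a=\mathbb{F}_n+(-1)^n$ and $b=\mathbb{F}_n-(-1)^n$, and let $d=\gcd(a,b)$. Since $d$ divides $a-b=2(-1)^n$ and $d>0$, we get $d\mid 2$, so $d\in\{1,2\}$. Because $a$ and $b$ differ by $2(-1)^n$ they have the same parity, and as $(-1)^n$ is odd, $a=\mathbb{F}_n+(-1)^n$ is even exactly when $\mathbb{F}_n$ is odd. Hence $d=2$ precisely when $\mathbb{F}_n$ is odd, and $d=1$ otherwise. The second assertion runs identically with $a=\mathbb{F}_n+1$, $b=\mathbb{F}_n-1$: here $a-b=2$ forces $d\mid 2$, and since $1$ is odd, $a$ is even iff $\mathbb{F}_n$ is odd. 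Indeed $\mathbb{F}_n+(-1)^n$ and $\mathbb{F}_n+1$ always shift $\mathbb{F}_n$ by an odd number, so the two computations agree case by case and reduce to the same criterion.

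Next I would settle the parity of $\mathbb{F}_n$, which is the only substantive step. Recall $\mathbb{F}_n=\sum_{k=0}^{n}k!$. For $n\geq 1$ we write $\mathbb{F}_n=0!+1!+\sum_{k=2}^{n}k!=2+\sum_{k=2}^{n}k!$; since $2!=2$ and, by Corollary \ref{cor}, $k!$ is even for every $k>2$, the tail sum is even and therefore $\mathbb{F}_n$ is even. By the criterion above this yields $d=1$ for all $n\geq 1$, matching the stated value. The boundary value $n=0$ is then checked directly: taking $\mathbb{F}_0=0!=1$ (the constant term of $\sum_{k=0}^{0}k!\,x^k$) is odd, so the criterion returns $d=2$, as claimed.

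The main point requiring care is the $n=0$ boundary: the gcd equals $2$ there only under the reading $\mathbb{F}_0=1$ coming from the defining sum, whereas the tabulated special value $\mathbb{F}_0=0$ in Table \ref{table 10} would instead give $\gcd(1,-1)=1$. I would therefore state the parity criterion cleanly as ``$d=2\iff\mathbb{F}_n$ is odd'', prove the uniform evenness of $\mathbb{F}_n$ for $n\geq 1$ as above, and record the $n=0$ case explicitly so that the convention used for $\mathbb{F}_0$ is unambiguous; every other step is the routine ``gcd divides the difference'' reduction and needs no further computation.
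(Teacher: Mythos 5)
Your proof is correct, and it supplies an argument where the paper has none: this lemma is stated bare, among the unproved assertions immediately preceding Conjecture \ref{ATTA}, so there is no proof of the paper's to compare against. Your reduction is exactly the right one. Any common divisor of $\mathbb{F}_n+(-1)^n$ and $\mathbb{F}_n-(-1)^n$ divides their difference $2(-1)^n$, so the gcd is $1$ or $2$, and it equals $2$ precisely when $\mathbb{F}_n$ is odd (since shifting by the odd number $(-1)^n$ flips parity); the same criterion governs $\gcd(\mathbb{F}_n+1,\mathbb{F}_n-1)$, so both displayed identities collapse to one parity question. Your parity computation, $\mathbb{F}_n=0!+1!+\sum_{k=2}^{n}k!=2+(\text{even})$ for $n\ge 1$, using Corollary \ref{cor} for the evenness of $k!$ with $k>2$, correctly yields gcd $=1$ for all $n\ge 1$. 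Your remark on the $n=0$ boundary is also a genuine catch and worth preserving: under the paper's own convention $\mathbb{F}_0=0$ (see the definition of $\mathbb{F}_n$ and Table \ref{table 10}) one gets $\gcd(1,-1)=1$, which contradicts the stated value $2$; the $n=0$ case of the lemma is true only under the literal reading $\mathbb{F}_0=\sum_{k=0}^{0}k!=1$, which gives $\gcd(2,0)=2$. So your write-up not only proves the claim for $n\ge 1$ unconditionally but also exposes an inconsistency in the paper's conventions at $n=0$ that the paper, having offered no proof, never confronts; the clean fix is to state the criterion ``gcd $=2$ iff $\mathbb{F}_n$ is odd'' and then fix the convention for $\mathbb{F}_0$ explicitly, exactly as you propose.
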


\begin{conjecture}\label{ATTA}
 For every Kurepa factorial$(K_n)$ or $\mathbb{F}_{n}$ sequence, the greatest common divisor between all successive members of the of $\mathbb{F}_{n}$ or $K_n$, that is,
 $$\gcd(K_n +a,  K_{n+1}+a)=\gcd(\mathbb{F}_n +a,  \mathbb{F}_{n+1}+a)=\mathcal{F}_n(a) $$
 is bounded above by $2$, specifically;
 \begin{equation*}\mathcal{F}_n(a)=
\begin{cases}
	\text{$\mathcal{F}_n(0)$},    & \text{if $n>1$, }\\
	\text{$\mathcal{F}_n(4)$},  & \text{if $n\geq 1$, }
\end{cases}
 \end{equation*}
 the values $\mathcal{F}_n(0)$, $\mathcal{F}_n(4)$ as defined above are bounded above by $2.$
 The question is to find all values for which $\mathcal{F}_n(a)$ is bounded above by $2.$
\end{conjecture}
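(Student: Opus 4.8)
The plan is to reduce the two-term gcd to a single gcd against a factorial and then analyse it one prime at a time. First I would use the recurrence $\mathbb{F}_{n+1}-\mathbb{F}_n=(n+1)!$ (Lemma~\ref{put} evaluated at $x=1$) together with the elementary identity $\gcd(u,v)=\gcd(u,v-u)$ to write
\begin{equation*}
\mathcal{F}_n(a)=\gcd(\mathbb{F}_n+a,\mathbb{F}_{n+1}+a)=\gcd\bigl(\mathbb{F}_n+a,(n+1)!\bigr).
\end{equation*}
This is the decisive simplification: a prime $p$ can divide $\mathcal{F}_n(a)$ only when $p\mid(n+1)!$, i.e. $p\le n+1$, so the whole problem reduces to detecting which small primes divide $\mathbb{F}_n+a$.

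Next I would exploit the stabilisation of $\mathbb{F}_n$ modulo a fixed prime. For $n\ge p$ every term $k!$ with $k\ge p$ is divisible by $p$, hence
\begin{equation*}
\mathbb{F}_n=\sum_{k=0}^{n}k!\equiv\sum_{k=0}^{p-1}k!=\,{!p}\pmod{p},
\end{equation*}
the Kurepa factorial at $p$. Consequently, for every $n\ge p$ an odd prime $p$ divides $\mathbb{F}_n+a$---and therefore divides $\mathcal{F}_n(a)$ for those $n$---exactly when $a\equiv-{!p}\pmod{p}$. I would record this as a family of forbidden residues $b_p:=(-{!p})\bmod p$, one per prime, so that the odd part of $\mathcal{F}_n(a)$ is controlled entirely by which of the congruences $a\equiv b_p\pmod{p}$ the constant $a$ happens to satisfy.

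The prime $2$ I would handle separately through the structural fact $\mathbb{F}_n=2r_n$ with $r_n$ odd for $n\ge 3$ (Theorem~\ref{cpr}). This produces a clean trichotomy on $a\bmod 4$: if $a\equiv0\pmod 4$ then $\mathbb{F}_n+a=2(r_n+a/2)$ has $2$-adic valuation exactly $1$; if $a\equiv2\pmod 4$ the valuation is at least $2$, so---since $v_2((n+1)!)\to\infty$---the factor $4$ alone forces $\mathcal{F}_n(a)>2$ for large $n$; and if $a$ is odd then $\mathbb{F}_n+a$ is odd, so $\mathcal{F}_n(a)$ is odd and is $\le 2$ only when it equals $1$. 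Combining the two analyses yields the criterion I would state as the theorem: $\mathcal{F}_n(a)\le 2$ for all $n$ if and only if $a\not\equiv2\pmod 4$ and $a\not\equiv-{!p}\pmod{p}$ for every odd prime $p$. Specialising, $a=0$ meets this because the avoidance clause is then the assertion $p\nmid{!p}$ for all odd $p$, and $a=4$ is the next class with $4\mid a$ for which the same computations show no odd prime intrudes.

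The hard part will be the congruence-avoidance clause itself. Deciding, for a prescribed $a$, whether some odd prime satisfies $a\equiv-{!p}\pmod{p}$ is a Kurepa-type question: the case $a=0$ is the Kurepa conjecture verbatim, and the naive equidistribution heuristic (each $p$ contributing probability $\approx 1/p$, with $\sum 1/p$ divergent) would predict that every $a$ is eventually caught by some prime, in tension with the computational evidence. No effective description of the residues $\{(-{!p})\bmod p\}$ is available. Thus the structural reduction above is unconditional and complete, but a full determination of all admissible $a$ is not currently within reach; the realistic deliverable is the exact criterion together with a conditional and computational verification for individual $a$ (as already carried out for $a=0,2,3,4,5$ in the preceding lemmas), which is precisely what the conjecture invites.
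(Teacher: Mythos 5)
Be aware that the paper contains no proof for you to match: Conjecture \ref{ATTA} is posed as an open question, supported only by unproved lemmas tabulating $\mathcal{F}_n(a)$ for $a=2,3,4,5$. Your reduction is correct and goes strictly beyond anything in the paper. The chain $\gcd(\mathbb{F}_n+a,\mathbb{F}_{n+1}+a)=\gcd\bigl(\mathbb{F}_n+a,(n+1)!\bigr)$, the stabilisation $\mathbb{F}_n\equiv{!p}\pmod{p}$ for $n\ge p-1$, and the $2$-adic trichotomy from $\mathbb{F}_n=2r_n$ with $r_n$ odd (Theorem \ref{cpr}) are all valid, and they combine into the exact criterion
\begin{equation*}
\mathcal{F}_n(a)\le 2\ \text{for all }n
\quad\Longleftrightarrow\quad
a\not\equiv 2\!\!\pmod 4\ \text{ and }\ a\not\equiv -{!p}\!\!\pmod p\ \text{for every odd prime }p,
\end{equation*}
which is the precise content the loosely worded conjecture is reaching for. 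Your criterion even corrects the paper's unproved data: since $13\mid\mathcal{F}_n(3)$ forces $13\mid(n+1)!$, the threshold in the $a=3$ lemma must be $n\ge 12$ (indeed $\gcd(\mathbb{F}_{11}+3,\,12!)=1$), and $\mathcal{F}_6(2)=\gcd(876,5916)=12$, not $6$. Equally important, your closing caveat is exactly right and should be regarded as part of the result: the avoidance condition at $a=0$ is precisely the assertion $p\nmid{!p}$ for all odd primes $p$, i.e.\ Kurepa's Conjecture \ref{main}, so no argument of this kind (or any currently known kind) can turn the reduction into an unconditional proof that $\mathcal{F}_n(0)\le 2$. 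What you deliver --- an exact criterion plus prime-by-prime verification for any individual $a$ --- is the most that can honestly be claimed for this statement.
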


%\newpage

\section{Logarithm, Natural logs of Kurepa Sequence}\label{Entrolog}
%Sriram Vajapeyam \cite{vajapeyam2014understanding} At a conceptual level, Shannon's Entropy \cite{shannon1948mathematical} is essentially the amount of information in a variable. In a more practical sense, that means that the quantity of storage (for example, bits) needed to hold the variable is equivalent to the quantity of information contained in that variable. 
%N. Cerf and C. Adami \cite{cerf1997negative, cerf1997entropic, brandenbergerthermodynamic} presented a paradigm for quantum mechanical information theory that relies exclusively on density operators, providing a cohesive characterization of classical correlation and quantum entanglement. In contrast to classical (Shannon) information theory, quantum (von Neumann) conditional entropies may have negative values in the context of quantum entangled systems, a phenomenon associated with quantum non-separability. The potential for negative information to be sent by entangled particles indicates a coherent interpretation of quantum informational processes. \\
In this section we consider the general properties of the logarithm of Bell numbers, Dobinski numbers and then extend it to that of Kurepa sequence. We shall also investigate the natural logarithm and the log of base $2$ as well as base $10$ of these numbers.
%It is well known that the generating function of the Bell number is given by 
From definitions \ref{Dobi} and \ref{Bell} it is observed that the identity
$$e\mathbf{Bell_n}=\mathbf{Dob_n}$$ means $$\mathbf{Bell_n}=\dfrac{\mathbf{Dob_n}}{e},$$
%$$e^{(\exp{x}-1)}=\dfrac{e^{e^x}}{e}=\dfrac{\mathbf{Dob_n}}{e}, $$
now taking log base on both sides yield the following:
\begin{align*}
    \log_{base}\mathbf{Bell_n}&= \log_{base}\left[\dfrac{\mathbf{Dob_n}}{e}\right]\\ \nonumber
    &=\log_{base}(\mathbf{Dob_n})-\log_{base}(e)
\end{align*}
If we set $base=\exp{(1)}=e$, we have the natural Log

\begin{align}\label{entropy}
    \log_{e}\mathbf{Bell_n}&= \log_{e}\left[\dfrac{\mathbf{Dob_n}}{e}\right]\\ \nonumber
   \ln{\mathbf{Bell_n}} &=\ln(\mathbf{Dob_n})-\ln(e)\\ \nonumber
   &=\ln(\mathbf{Dob_n})-1.
\end{align}

\begin{lemma}\label{lnkure}
    The sum of natural $\log$ of $\mathbf{Dob_n}$ is the natural log of the Kurepa sequence plus non-negative integer $n$, that is,
\begin{align*}
   % \ln (\lbrace K_n\rbrace_{n\geq 1}\cdotp e^n)&=\sum_{i=1}^{n}\mathbf{\Phi}_i \ln\mathbf{Dob}_i\\
   % \ln \lbrace K_n\rbrace_{n\geq 1} + \ln(e^n)&=\sum_{i=1}^{n}\mathbf{\Phi}_i \ln\mathbf{Dob}_i\\
    \ln \lbrace K_n\rbrace_{n\geq 1}+ n&=\sum_{i=1}^{n}\mathbf{\Phi}_i \ln\mathbf{Dob}_i.
\end{align*}
\end{lemma}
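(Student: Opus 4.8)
The plan is to reduce the statement to the per-term logarithmic identity already recorded in equation \ref{entropy} and then assemble the pieces using the Bell-number decomposition of the Kurepa sequence from Theorem \ref{Belo}. Concretely, I would first rewrite each Dobinski factor: since Definition \ref{Bell} gives $\mathbf{Dob}_i = e\,\mathbf{Bell}_i$, taking natural logarithms yields
\begin{equation*}
\ln \mathbf{Dob}_i = 1 + \ln \mathbf{Bell}_i,
\end{equation*}
which is exactly the content of \eqref{entropy} read in the reverse direction. This is the only analytic input needed, and it is already established, so I would treat it as known.

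Next I would multiply this identity by the coefficient $\mathbf{\Phi}_i$ and sum over $i$ from $1$ to $n$, giving
\begin{equation*}
\sum_{i=1}^{n}\mathbf{\Phi}_i \ln \mathbf{Dob}_i = \sum_{i=1}^{n}\mathbf{\Phi}_i + \sum_{i=1}^{n}\mathbf{\Phi}_i \ln \mathbf{Bell}_i .
\end{equation*}
The target identity then follows provided two facts hold: that the weighted count $\sum_{i=1}^{n}\mathbf{\Phi}_i$ collapses to $n$, and that $\sum_{i=1}^{n}\mathbf{\Phi}_i \ln \mathbf{Bell}_i$ reproduces $\ln \lbrace K_n\rbrace_{n\geq 1}$. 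The second of these I would try to extract from Theorem \ref{Belo}, where $\lbrace K_n\rbrace_{n\geq 1}=\sum_{r=1}^{n}\mathbf{\Phi}_r \mathbf{Bell}_r$ is the decomposition of the Kurepa sequence into Bell numbers; the normalisation $\sum_{i}\mathbf{\Phi}_i = n$ I would attempt to read off from the $n$-term expansion $\lbrace K_n\rbrace_{n\geq 1}\cdot e = K_1 e + \cdots + K_n e$ of Theorem \ref{dobi}.

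The hard part, and the step I expect to be the genuine obstacle, is the passage from $\ln$ of the sum $\sum_r \mathbf{\Phi}_r \mathbf{Bell}_r$ to the sum $\sum_r \mathbf{\Phi}_r \ln \mathbf{Bell}_r$, since the logarithm does not commute with addition. To make the claim literally correct one must reinterpret the Kurepa--Dobinski decomposition multiplicatively, i.e. establish the product form $\lbrace K_n\rbrace_{n\geq 1}\, e^{\,n} = \prod_{i=1}^{n}\mathbf{Dob}_i^{\mathbf{\Phi}_i}$ and then take logarithms of both sides; under this reading the identity collapses to exactly $\ln \lbrace K_n\rbrace_{n\geq 1} + n = \sum_{i=1}^{n}\mathbf{\Phi}_i \ln \mathbf{Dob}_i$. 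I would therefore devote the bulk of the effort to justifying this product representation (or, failing an exact equality, to framing the statement as the formal logarithmic relation attached to the decomposition), since the per-term algebra in the preceding paragraphs is otherwise entirely routine.
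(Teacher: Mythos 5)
Your reduction is sound as far as it goes, and you have put your finger on exactly the right spot: the entire content of the lemma is the exchange of $\ln$ with the weighted sum, i.e.\ the passage from the additive decomposition $\lbrace K_n\rbrace_{n\geq 1}\cdot e=\sum_{i}\Phi_i\,\mathbf{Dob}_i$ of Theorem \ref{Dk} to the multiplicative statement $\lbrace K_n\rbrace_{n\geq 1}\,e^{n}=\prod_{i}\mathbf{Dob}_i^{\Phi_i}$. This is also precisely where the paper's own proof is silent: its first line simply asserts $\sum_{i=1}^{n}\Phi_i\ln\mathbf{Dob}_i=\ln\bigl(\lbrace K_n\rbrace_{n\geq 1}\cdot e^{n}\bigr)$ and then applies $\ln(ab)=\ln a+\ln b$, so the paper assumes without comment the very product form you flag as the genuine obstacle. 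In that sense your analysis is more candid than the proof it is being compared against.

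However, your proposal does not close the gap, and it cannot be closed along the route you sketch, because the product representation is false with the coefficients the paper actually uses. Take $n=2$: Theorem \ref{Dk} gives $K_1e=\mathbf{Dob}_1$ and $K_2e=\mathbf{Dob}_2$, so $\Phi_1=\Phi_2=1$ and $\lbrace K_2\rbrace_{n\geq 1}=K_1+K_2=3$; the product form would then read $3e^{2}=\mathbf{Dob}_1\cdot\mathbf{Dob}_2=e\cdot 2e=2e^{2}$, which is wrong, and correspondingly the lemma itself fails numerically ($\ln 3+2\neq 2+\ln 2$). Your auxiliary normalisation fails as well: for $n=8$ the paper's own expansion has $\Phi$-coefficients $1,8,2,56,1,4$, whose sum is $72\neq 8$, so $\sum_i\Phi_i=n$ cannot be read off from Theorem \ref{dobi}. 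The honest conclusion is that, read literally (with $\lbrace K_n\rbrace_{n\geq 1}$ the number $\sum_i K_i$ and $\Phi_i$ the coefficients of Theorem \ref{Dk}), the statement is not provable because it is not true; any repair requires reinterpreting the symbols, e.g.\ reading $\ln\lbrace K_n\rbrace_{n\geq 1}$ term-by-term as $\sum_i\ln K_i$, which is what the proof of Theorem \ref{lin} implicitly does --- and even that reading still conflates $\ln(a+b)$ with $\ln a+\ln b$ whenever a single $K_ie$ decomposes into more than one Dobinski term, as happens already for $K_5e=2\mathbf{Dob}_4+2\mathbf{Dob}_2$. You should therefore not spend effort trying to establish the product form; the correct move is to flag the lemma as stated, which your last paragraph almost, but not quite, commits to doing.
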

\begin{proof}The proof of this is straight forward,
\begin{align*}
   \sum_{i=1}^{n}\mathbf{\Phi}_i \ln\mathbf{Dob}_i&= \ln (\lbrace K_n\rbrace_{n\geq 1}\cdotp e^n)\\
   &= \ln \lbrace K_n\rbrace_{n\geq 1} + \ln(e^n)\\
    &=\ln \lbrace K_n\rbrace_{n\geq 1}+ n.
\end{align*}

\end{proof}

\begin{theorem}\label{lin}
    The natural log of the Kurepa sequence $$ \ln \lbrace K_n\rbrace_{n\geq 1}=\mathcal{Q}+\sum_{i=1}^{n}\mathbf{\Phi}_i \ln\mathbf{Bell}_i=\mathcal{Q}+\sum_{i=1}^{n}\mathbf{\Phi}_i\left[ \ln \sum_{k\geq 1}^{n} S(n,k)\right],$$ where $S(n,k)$ is Stirling numbers of the second kind, $\mathbf{\Phi}_i$ are constant coefficients and $\mathcal{Q}=\ln(Constant).$
\end{theorem}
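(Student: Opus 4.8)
The plan is to derive Theorem \ref{lin} directly from Lemma \ref{lnkure} together with the logarithmic identity \ref{entropy} relating the Bell and Dobinski numbers. First I would rearrange Lemma \ref{lnkure} so as to isolate the quantity of interest, writing
$$\ln \lbrace K_n\rbrace_{n\geq 1} = \sum_{i=1}^{n}\mathbf{\Phi}_i \ln\mathbf{Dob}_i - n,$$
which already expresses the logarithm of the Kurepa sequence as a weighted sum of the logarithms of the Dobinski numbers, up to the additive term $-n$.

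Next I would convert each Dobinski term into a Bell term. From Definition \ref{Bell} we have $\mathbf{Dob}_i = e\,\mathbf{Bell}_i$, equivalently the relation recorded in \ref{entropy}, namely $\ln\mathbf{Dob}_i = \ln\mathbf{Bell}_i + 1$. Substituting this termwise gives
$$\ln \lbrace K_n\rbrace_{n\geq 1} = \sum_{i=1}^{n}\mathbf{\Phi}_i\bigl(\ln\mathbf{Bell}_i + 1\bigr) - n = \left(\sum_{i=1}^{n}\mathbf{\Phi}_i - n\right) + \sum_{i=1}^{n}\mathbf{\Phi}_i \ln\mathbf{Bell}_i.$$
I would then set $\mathcal{Q} = \sum_{i=1}^{n}\mathbf{\Phi}_i - n$; since for each fixed $n$ this is a determined real number, it can be written as $\mathcal{Q} = \ln\bigl(e^{\mathcal{Q}}\bigr) = \ln(\mathrm{Constant})$, which establishes the first equality in the statement.

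Finally, to obtain the second equality I would invoke the expansion of the Bell numbers in terms of the Stirling numbers of the second kind from Subsection \ref{sec}, namely $\mathbf{Bell}_i = \sum_{k\geq 1}^{i} S(i,k)$, and substitute it inside each logarithm, replacing $\ln\mathbf{Bell}_i$ by $\ln \sum_{k\geq 1}^{i} S(i,k)$. This yields the closed chain of equalities asserted in the theorem.

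The main obstacle is justifying that $\mathcal{Q}$ genuinely deserves to be called a constant: the coefficients $\mathbf{\Phi}_i$ are themselves dependent on $n$, as seen in the earlier decompositions (Theorems \ref{Dk} and \ref{Belo}), so I would need to emphasise that for each fixed $n$ the combination $\sum_{i=1}^{n}\mathbf{\Phi}_i - n$ collapses to a single fixed value, whose exponential is precisely the $\mathrm{Constant}$ appearing in $\mathcal{Q}=\ln(\mathrm{Constant})$. The remaining steps are purely algebraic manipulations of logarithms and present no difficulty.
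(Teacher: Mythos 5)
Your proposal is correct within the paper's framework and rests on the same three ingredients the paper itself uses: Lemma \ref{lnkure}, the relation $\ln\mathbf{Dob}_i = \ln\mathbf{Bell}_i + 1$ from equation \ref{entropy}, and the expansion $\mathbf{Bell}_i = \sum_{k\geq 1}^{i} S(i,k)$. The difference is one of execution rather than of substance. The paper does not argue formally from the lemma: it re-expands the case $n=5$, writing $\ln\lbrace K_5\rbrace_{n\geq 1} + 5$ as a sum of terms $\ln(K_i \cdot e)$, decomposing each $K_i e$ into Dobinski numbers with integer coefficients as in Theorem \ref{Dk}, and peeling off one $-1$ per Kurepa term; in that computation the constant comes out as $\mathcal{Q} = 3\ln 2 = \ln 8$, i.e.\ it is the accumulated logarithm of the \emph{multiplicative} coefficients (from steps like $\ln[2\mathbf{Dob}_2] = \ln 2 + \ln\mathbf{Dob}_2$), while the additive $-n$ is entirely consumed by the termwise Dobinski-to-Bell conversion. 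Your derivation instead treats Lemma \ref{lnkure} as a formal identity with numerical coefficients $\mathbf{\Phi}_i$ and produces $\mathcal{Q} = \sum_{i=1}^{n}\mathbf{\Phi}_i - n$. These two identifications of $\mathcal{Q}$ do not numerically coincide (for $n=5$ the paper's bookkeeping gives $\ln 8$, yours gives $\sum_{i}\mathbf{\Phi}_i - 5$), precisely because the lemma's literal statement suppresses the multiplicative constants that the paper's worked example later extracts; however, since the theorem only asserts that $\mathcal{Q}$ is the logarithm of \emph{some} constant, either identification establishes the statement as written. What your route buys is generality and brevity: it proves the claim for every $n$ at once, instead of exhibiting $n=5$ and declaring that "the proof follows immediately," and your closing caveat that the $\mathbf{\Phi}_i$ themselves depend on $n$ makes explicit a point the paper leaves unstated.
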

\begin{proof}
\begin{align*}
 \ln[\lbrace K_n\rbrace_{n\geq 1}\cdotp e^n]&=\ln(K_1\cdotp e) + \ln(K_2 \cdotp e) + \ln(K_3 \cdotp e )+ \ln(K_4 \cdotp e) +\cdots + \ln (K_n \cdotp e)\\ 
&= !1\cdotp e +!2\cdotp e + !3\cdotp e + !4\cdotp e + \cdots + \mathbf{!n}\cdotp e\\ \mbox{where} \quad
&\ln{ (K_1e)}= \ln{(!1e)} = \ln{(1\cdotp e)} = \ln \mathbf{Dob_1}=\ln \mathbf{Dob_0}\\
&\ln{ (K_1)}+ \ln{ e}=\ln \mathbf{Dob_1}\\
&\ln{ (K_1)}= \ln \mathbf{Dob_1}-\ln{ e}= \ln \mathbf{Dob_1}-1= \ln{\mathbf{Bell_1}}\\
&\mbox{similarly we can compute: }\\
&\ln{ (K_2)}= \ln \mathbf{Dob_2}-1= \ln{\mathbf{Bell_2}}\\
&\ln{ (K_3)}= \ln{(2 \mathbf{Dob_2})}-1= \ln{2} + \ln{\mathbf{Bell_2}}\\
&\ln{ (K_4)}= \ln{(2 \mathbf{Dob_3})}-1= \ln{2} + \ln{\mathbf{Bell_3}}\\
&\ln{ (K_5)}= \ln{2 (\mathbf{Dob_4}+ \mathbf{Dob_2}) }-1= \ln{2} + \ln{\mathbf{Bell_4}}+ \ln{\mathbf{Bell_2}} \\
&\vdots \quad \quad \quad \quad \vdots \quad \quad \quad\vdots \quad \quad \quad \quad \quad \quad \quad\vdots \quad \quad
\end{align*}
If $n=5$ and uisng lemma \ref{lnkure} the sequence becomes
\begin{align*}
     \ln[\lbrace K_5\rbrace_{n\geq 1}\cdotp e^5]&=\ln(K_1\cdotp e) + \ln(K_2 \cdotp e) + \ln(K_3 \cdotp e )+ \ln(K_4 \cdotp e)\\ 
&= !1\cdotp e +!2\cdotp e + !3\cdotp e + !4\cdotp e + \cdots + \mathbf{!n}\cdotp e\\ 
 \ln \lbrace K_5\rbrace_{n\geq 1} + \ln e^5 &= 1 e + 2e + 4e + 10e+34e+ \cdots\\
 \ln \lbrace K_5\rbrace_{n\geq 1} + 5\ln e &= \ln \mathbf{Dob_1}+ \ln \mathbf{Dob_2}+  \ln[2\mathbf{Dob_2}] + \ln[2\mathbf{Dob_3}] \\
 &+  \ln[2(\mathbf{Dob_4}+ \mathbf{Dob_2})]\\
 \ln \lbrace K_5\rbrace_{n\geq 1} + 5 &= \ln \mathbf{Dob_1}+ \ln \mathbf{Dob_2}+  \ln[2\mathbf{Dob_2}] + \ln[2\mathbf{Dob_3}] \\
 &+  \ln[2(\mathbf{Dob_4}+ \mathbf{Dob_2})]\\
 \ln \lbrace K_5\rbrace_{n\geq 1} &= \ln \mathbf{Dob_1}+ \ln \mathbf{Dob_2}+  \ln[2\mathbf{Dob_2}] + \ln[2\mathbf{Dob_3}] \\
 &+  \ln[2(\mathbf{Dob_4}+ \mathbf{Dob_2})]-5\\
 \ln \lbrace K_5\rbrace_{n\geq 1}&= \ln \mathbf{Dob_1}-1+ \ln \mathbf{Dob_2}-1+  \ln[2\mathbf{Dob_2}]-1 + \ln[2\mathbf{Dob_3}]-1 \\
 &+  \ln[2(\mathbf{Dob_4}+ \mathbf{Dob_2})]-1\\
 \ln \lbrace K_5\rbrace_{n\geq 1}&= \ln{\mathbf{Bell_1}}+ \ln{\mathbf{Bell_2}} +  \ln{2} + \ln{\mathbf{Bell_2}} + \ln{2} + \ln{\mathbf{Bell_3}} \\
 &+  \ln{2} + \ln{\mathbf{Bell_4}}+ \ln{\mathbf{Bell_2}} \\
  \ln \lbrace K_5\rbrace_{n\geq 1}&= \ln{\mathbf{Bell_1}}+ 3\ln{\mathbf{Bell_2}} + \ln{\mathbf{Bell_3}} + \ln{\mathbf{Bell_4}}+3\ln{2}\\
  \ln \lbrace K_5\rbrace_{n\geq 1}&= 3\ln{2}+ \sum_{i=1}^{4} \mathbf{\Phi}_i \ln{\mathbf{Bell}_i}
\end{align*}
the proof follows immediatetly.
\end{proof}

\begin{theorem}
    In general, it is possible to compute the Kurepa sequence associated with logarithm as in equation \ref{entropy};
    \begin{align}
          \log_{base} \lbrace K_n\rbrace_{n\geq 1}=\mathcal{Q}+\sum_{i=1}^{n}\mathbf{\Phi}_i \log_{base}\mathbf{Bell}_i=\mathcal{Q}+\sum_{i=1}^{n}\mathbf{\Phi}_i\left[ \log_{base} \sum_{k\geq 1}^{n} S(n,k)\right]
    %    \item 
%  $$\prod_{i=1}^n\dfrac{\log_{base} \lbrace K_n\rbrace_{n\geq 1}}{n_i}= \mathcal{Q}+ \sum_{i=1}^{n} \mathbf{\Phi}_i \dfrac{\log_{base}{\mathbf{Bell}_i}}{n_i},$$
%\item 
%$$\log_{base} \lbrace K_n\rbrace_{n\geq 1}\cdotp %\mathcal{C}_{Bolt}=\mathcal{Q}\mathcal{C}_{Bolt}+ \sum_{i=1}^{n} \mathbf{\Phi}_i \mathcal{S}_{B_i}.$$       
    \end{align}   
\end{theorem}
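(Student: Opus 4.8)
The plan is to run the argument of Theorem \ref{lin} essentially verbatim, replacing $\ln$ by $\log_{base}$ throughout, while keeping careful track of the fact that $\log_{base}(e)$ is now a nonzero constant rather than the value $1$ that made the natural-log computation collapse so cleanly. Concretely, I would begin from the multiplicative identity underlying Lemma \ref{lnkure}, namely the decomposition of $\lbrace K_n\rbrace_{n\geq 1}\cdot e^{n}$ into Dobinski numbers established in Theorem \ref{Dk}, and apply $\log_{base}$ to both sides. By the product rule the left-hand side becomes $\log_{base}\lbrace K_n\rbrace_{n\geq 1}+n\log_{base}(e)$, which isolates the quantity we want on the left once the $n\log_{base}(e)$ term is moved to the right.

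Next I would rewrite each Dobinski contribution using the identity $\mathbf{Dob}_i=e\,\mathbf{Bell}_i$ from Definitions \ref{Dobi} and \ref{Bell}, so that $\log_{base}\mathbf{Dob}_i=\log_{base}(e)+\log_{base}\mathbf{Bell}_i$. Substituting this into the decomposition and rearranging, the term $n\log_{base}(e)$, every stray $\log_{base}(e)$ produced by the conversion, and the $\log_{base}$ of the integer coefficients appearing in the Dobinski expansion (exactly the role played by the $3\ln 2$ term in Theorem \ref{lin}) all collect into a single base-dependent constant, which I name $\mathcal{Q}=\log_{base}(\text{Constant})$. This produces the first claimed equality $\log_{base}\lbrace K_n\rbrace_{n\geq 1}=\mathcal{Q}+\sum_{i=1}^{n}\mathbf{\Phi}_i\log_{base}\mathbf{Bell}_i$. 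The second equality then follows immediately by inserting the Stirling-number expansion $\mathbf{Bell}_i=\sum_{k\geq 1}^{i}S(i,k)$ recorded in Section \ref{sec} inside each logarithm.

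The main obstacle is purely the bookkeeping for $\mathcal{Q}$. In the natural-log case of Theorem \ref{lin} the equality $\log_e(e)=1$ forces each correction term to vanish individually, so no care is needed; here one must instead verify that all of the base-dependent pieces genuinely amalgamate into one constant that is independent of $n$ up to the explicit coefficients $\mathbf{\Phi}_i$, so that writing a single $\mathcal{Q}$ on the right is legitimate rather than hiding an $n$-dependence. Once this consolidation is justified, the result is a routine change-of-base restatement of Theorem \ref{lin}, and the proof is complete.
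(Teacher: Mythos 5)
Your proposal is correct and follows the paper's intended route: the paper states this theorem without giving a proof, treating it as the change-of-base generalization of Theorem \ref{lin}, and your argument is exactly that generalization --- rerunning the Dobinski decomposition of Theorem \ref{Dk}/Lemma \ref{lnkure} with $\log_{base}$ in place of $\ln$, converting via $\log_{base}\mathbf{Dob}_i=\log_{base}(e)+\log_{base}\mathbf{Bell}_i$, and absorbing the $n\log_{base}(e)$ term together with the logs of the integer coefficients into $\mathcal{Q}=\log_{base}(\mathrm{Constant})$. Your attention to the one genuine subtlety (that $\log_{base}(e)\neq 1$, so one must check the base-dependent corrections cancel the $n$-dependent term exactly as the $n$ copies of $-1$ do in the natural-log case) is precisely the bookkeeping the paper leaves implicit.
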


\subsection{Logarithm of the shifted alternating Kurepa sequence}
According Bread \cite{beard1950coefficients}, $e^{e^x} \cdotp e^{-e^x}=I,$ and also from definitions \ref{id} and \ref{invBelo}, the complementary Bell number is given by 
$$\mathbf{invBell_n}=\mathbf{invDob_n}\cdotp{e} $$ and also we know that
$$e^{e^x} \cdotp e^{-e^x} = \mathbf{invDob_n}\cdotp\mathbf{Dob_n}=I$$ 
we can observe that $$\mathbf{invBell_n}\cdotp\mathbf{Bell_n}=\dfrac{1}{\mathbf{Bell_n}} \cdotp \mathbf{Bell_n}=I$$ 
where 
$$\dfrac{1}{\mathbf{Bell_n}}=\dfrac{e}{\mathbf{Dob_n}}$$
Now taking log base on both sides yield the following:
\begin{align}
\log_{base}\mathbf{|invBell_n|}&= \log_{base}\left[\mathbf{|invDob_n|}\cdotp e\right]\\ \nonumber
\log_{base}\mathbf{|invBell_n|}&= \log_{base}\left[\mathbf{|invDob_n|}\right] + \log_{base}(e)\\ \nonumber
    \log_{base}\dfrac{1}{\mathbf{Bell_n}}&= \log_{base}\left[\dfrac{e}{\mathbf{Dob_n}}\right]\\ \nonumber
   \log_{base}(1)-\log_{base}(\mathbf{Bell_n}) &=\log_{base}(e)-\log_{base}(\mathbf{Dob_n})
\end{align}
If we set $base=\exp{(1)}=e$ we have the natural Log \cite{beard1950coefficients}

\begin{align}\label{intropy}
    \ln\dfrac{1}{\mathbf{Bell_n}}&= \ln\left[\dfrac{e}{\mathbf{Dob_n}}\right]\\ \nonumber
   \ln(1)-\ln(\mathbf{Bell_n}) &=\ln(e)-\ln(\mathbf{Dob_n})\\ \nonumber
   -\ln(\mathbf{Bell_n}) &=1-\ln(\mathbf{Dob_n})\\\nonumber 
  \ln(\mathbf{|invBell_n|}) &=\ln\mathbf{|invDob_n|} + \ln(e) \\\nonumber
   -\ln(\mathbf{Bell_n}) &=\ln|\mathbf{invDob_n}| + 1 
\end{align}

\begin{theorem}
The natural log of the shifted alternating Kurepa sequence is
$$\ln\lbrace |A_n^s|\rbrace_{n\geq 1}=-\sum_{i=0}^{n}\ln\mathbf{Bell}_i +\mathcal{Q}$$
where $\mathcal{Q}=\ln(Constant).$

\end{theorem}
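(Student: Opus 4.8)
The plan is to mirror the argument of Theorem \ref{lin}, transporting it from the Kurepa sequence to its shifted alternating counterpart by replacing the Dobinski numbers throughout with the inverse Dobinski numbers. First I would invoke Theorem \ref{Aliter}, which already expresses $\lbrace A_n^s\rbrace_{n\geq 1}$ as a coefficient-weighted sum of complementary Bell numbers $\mathbf{invBell}_r$, or equivalently, after multiplying by $e^{-1}$, as a sum of inverse Dobinski numbers $\mathbf{invDob}_r$. This is exactly the inverse-exponential analogue of the decomposition underlying Lemma \ref{lnkure}, so it provides the starting identity on which all the logarithmic manipulation will rest.

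Next, following the pattern of Lemma \ref{lnkure}, I would multiply through by $e^{-n}$ and take natural logarithms, using the linearity of $\ln$ to split the factored expression:
\begin{align*}
\ln\left(\lbrace |A_n^s|\rbrace_{n\geq 1}\cdot e^{-n}\right) &= \sum_{i=0}^{n}\mathbf{\Phi}_i \ln|\mathbf{invDob}_i|, \\
\ln\lbrace |A_n^s|\rbrace_{n\geq 1} - n &= \sum_{i=0}^{n}\mathbf{\Phi}_i \ln|\mathbf{invDob}_i|.
\end{align*}
I would then substitute the logarithmic identity derived in equation \ref{intropy}, namely $-\ln(\mathbf{Bell}_n)=\ln|\mathbf{invDob}_n|+1$, rewritten as $\ln|\mathbf{invDob}_i|=-\ln\mathbf{Bell}_i-1$. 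This converts the right-hand side into
\begin{align*}
\ln\lbrace |A_n^s|\rbrace_{n\geq 1} - n &= -\sum_{i=0}^{n}\mathbf{\Phi}_i \ln\mathbf{Bell}_i - \sum_{i=0}^{n}\mathbf{\Phi}_i.
\end{align*}
Collecting the purely numerical terms $n-\sum_{i}\mathbf{\Phi}_i$ into a single constant $\mathcal{Q}=\ln(\text{Constant})$ then yields the claimed identity $\ln\lbrace |A_n^s|\rbrace_{n\geq 1}=-\sum_{i=0}^{n}\ln\mathbf{Bell}_i+\mathcal{Q}$, and expressing each $\mathbf{Bell}_i$ through the Stirling numbers of the second kind, exactly as in Theorem \ref{lin} and Theorem \ref{Aliter}, gives the fully expanded form.

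The hard part will be the sign and degeneracy bookkeeping. Unlike the Kurepa case, the shifted alternating terms $A_n^s$ and the complementary Bell and inverse Dobinski numbers alternate in sign and even vanish (for instance $\mathbf{invDob}_2=\mathbf{invBell}_2=0$), so the logarithms are only well defined after passing to absolute values and excluding the vanishing indices. I would therefore state the identity over the non-degenerate terms, absorb the sign factors $(-1)^i$ into the coefficients $\mathbf{\Phi}_i$, and rely on equation \ref{intropy} holding in the $|\cdot|$ form. Making this step careful rather than merely formal is the only genuine obstacle; once the absolute values are in place, every remaining manipulation reduces to the linearity of $\ln$ and the single substitution above.
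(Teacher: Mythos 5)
Your proposal follows essentially the same route as the paper's own proof: both rest on the Theorem \ref{Aliter} decomposition of $\lbrace A_n^s\rbrace_{n\geq 1}$ into inverse Dobinski (complementary Bell) numbers, multiply by the appropriate power of $e$, take logarithms term by term, invoke the identity of equation \ref{intropy} ($-\ln\mathbf{Bell}_n=\ln|\mathbf{invDob}_n|+1$) to flip the sign, and absorb all numerical contributions into $\mathcal{Q}=\ln(\mathrm{Constant})$ — the paper simply carries this out concretely for $n=5$ while you state it generally. Your explicit attention to the absolute values and to the vanishing terms (such as $\mathbf{invBell}_2=0$, where the logarithm is undefined) is in fact more careful than the paper, which passes over this degeneracy silently.
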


\begin{proof}
    \begin{align*}
   \lbrace A_n^s\rbrace_{n\geq 1}&=\sum_{i=1}^{n}A_i^s=A_1^s + A_2^s + A_3^s +A_4^s +\cdots + A_n^s\\
\lbrace A_n^s\rbrace_{n\geq 1}\cdotp e^{-x}&= A_1^s\cdotp e^{-x} + A_2^s \cdotp e^{-x}+ A_3^s \cdotp e^{-x}+A_4^s \cdotp e^{-x}+\cdots + A_n^s \cdotp e^{-x}\\ 
\mbox{if x=1 then:}\\
\lbrace A_n^s\rbrace_{n\geq 1}\cdotp e^{-1}&= (-1)^0 !1\cdotp e^{-1} + (-1)^1 !2\cdotp e^{-1} + (-1)^2 !3\cdotp e^{-1} \\
&+ (-1)^3 !4\cdotp e^{-1} +\cdots + A_n^s e^{-1}\\
&= 1\cdotp e^{-1} -  0\cdotp e^{-1} + 2\cdotp e^{-1} - 4\cdotp e^{-1} + 20\cdotp e^{-1} -100\cdotp e^{-1}+ 620\cdotp e^{-1}\\
&\quad -\cdots+(-1)^n(n-1)!e^{-1}
\end{align*}
We now compute for $n=5$,  we have  
\begin{align*}
    \lbrace A_n^s\rbrace_{n\geq 1}\cdotp e^{-1}&= (-1)^0 !1\cdotp e^{-1} + (-1)^1 !2\cdotp e^{-1} + (-1)^2 !3\cdotp e^{-1} + (-1)^3 !4\cdotp e^{-1} +(-1)^4 !5\cdotp e^{-1}\\
&= 1\cdotp e^{-1} -  0\cdotp e^{-1} + 2\cdotp e^{-1} - 4\cdotp e^{-1} + 20\cdotp e^{-1} -100\cdotp e^{-1}
\end{align*}
where 
\begin{align*}
A_i^s=&(-1)^n\mathbf{!i}=\sum_{i=1}^{n}(-1)^{i-1}(i-1)!= !0 - 1!+ !2 - 3! + 4!- 5!  + \cdots \\ \mbox{we can now compute}\\
 A_1^se^{-1}=(-1)^1\mathbf{!1}=\sum_{1}(-1)^{1-1}0!&= 1e^{-1}\\
 A_1^se^{-1}&=1e^{-1}=\mathbf{invDob_0}\\
 \ln{( |A_1^s|e^{-1})}&=\ln |\mathbf{invDob_0}|\\
  \ln{ |A_1^s|}-\ln{e}&=\ln |\mathbf{invDob_0}|\\
  \ln{| A_1^s|}&=\ln |\mathbf{invDob_0}|+\ln{e}=\ln |\mathbf{invBell_0}|\\
  \mbox{Similarly;}\\
\ln |A_2^s|&=\ln |\mathbf{invBell_2}|\\
\ln |A_3^s|&=\ln |\mathbf{invBell_3}|+ \ln2\\
\ln |A_4^s|&=\ln |\mathbf{invBell_5}| + \ln2\\
\ln |A_5^s|&=\ln |\mathbf{invBell_4}| + \ln20
\end{align*}
Now we take natural log of the shifted alternating Kurepa sequence
\begin{align*}
    \ln (\lbrace A_5^s\rbrace_{n\geq 1}\cdotp e^{-5})&= \ln(A_1^s\cdotp e^{-1}) + \ln(|A_2^s \cdotp e^{-1}) + \ln(A_3^s \cdotp e^{-1})\\
    &+ \ln(A_4^s \cdotp e^{-1})+ \ln(A_5^s \cdotp e^{-1})\\
\ln\lbrace A_n^s\rbrace_{n\geq 1}-\ln{e^{5}}&= \ln (1\cdotp e^{-1}) -  \ln (0\cdotp e^{-1}) + \ln(2\cdotp e^{-1}) - \ln(4\cdotp e^{-1})\\
&+ \ln(20\cdotp e^{-1}) - \ln(100\cdotp e^{-1})\\
\ln\lbrace |A_5^s|\rbrace_{n\geq 1}-5\ln{e} &=\ln |\mathbf{invDob_0}|+ \ln |\mathbf{invDob_2}|+ \ln |2\mathbf{invDob_3}|\\
&+\ln |2\mathbf{invDob_5}|+\ln |20\mathbf{invDob_4}|\\
\ln\lbrace |A_n^s|\rbrace_{n\geq 1}-5 &=\ln |\mathbf{invDob_0}|+ \ln |\mathbf{invDob_2}|+ \ln |2\mathbf{invDob_3}|\\
&+\ln |2\mathbf{invDob_5}|+\ln |20\mathbf{invDob_4}|\\
\ln\lbrace |A_5^s|\rbrace_{n\geq 1} &=\ln |\mathbf{invDob_0}+1|+ \ln |\mathbf{invDob_2}+1|+ \ln |2\mathbf{invDob_3}+1|\\
&+\ln |2\mathbf{invDob_5}+1|+\ln |20\mathbf{invDob_4}+1|\\
\ln\lbrace |A_5^s|\rbrace_{n\geq 1} &=\ln (\mathbf{invBell_0})+ \ln |\mathbf{invBell_2}|+\ln |2\mathbf{invBell_3}|\\
&+ \ln |2\mathbf{invBell_5}| + \ln |20\mathbf{invBell_4}|\\
&=\ln |\mathbf{invBell_0}|+ \ln |\mathbf{invBell_2}|+\ln |\mathbf{invBell_3}|\\
&+ \ln |\mathbf{invBell_5}| + \ln (\mathbf{invBell_4})+ \ln2+\ln2+\ln20\\
&=\ln |\mathbf{invBell_0}|+ \ln |\mathbf{invBell_2}|+\ln |\mathbf{invBell_3}|\\
&+ \ln |\mathbf{invBell_5}| + \ln |\mathbf{invBell_4}|+ \ln80\\
&=\sum_{i=0}^{5}\ln|\mathbf{invBell}_i| + \ln80\\
&\mbox{from equation \ref{intropy} we have }\\
\ln\lbrace |A_5^s|\rbrace_{n\geq 1}&=-\ln (\mathbf{Bell_0})- \ln (\mathbf{Bell_2})-\ln (\mathbf{Bell_3})
- \ln (\mathbf{Bell_5})\\ &- \ln (\mathbf{Bell_4})+ \ln80\\
\ln\lbrace |A_5^s|\rbrace_{n\geq 1}&=\ln(80)-\ln (\mathbf{Bell_0})- \ln (\mathbf{Bell_2})-\ln (\mathbf{Bell_3})
- \ln \mathbf{Bell_5}\\ &- \ln (\mathbf{Bell_4})\\
\ln\lbrace |A_5^s|\rbrace_{n\geq 1}&=-\sum_{i=0}^{5}\ln\mathbf{Bell}_i +\ln(80)
\end{align*}
the proof follows immediately.
\end{proof}

\begin{theorem}
    In general $$\log_{base}\lbrace |A_n^s|\rbrace_{n\geq 1}=-\sum_{i=0}^{n}\log_{base}\mathbf{Bell}_i +\mathcal{Q}$$
where $\mathcal{Q}=\log_{base}(Constant).$
\end{theorem}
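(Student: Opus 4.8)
The plan is to reduce the general-base identity to the natural-logarithm version already established in the preceding theorem by invoking the change-of-base relation. Writing $b$ for the chosen base, one has $\log_{base} x = \dfrac{\ln x}{\ln b}$ for every positive $x$; applying this to $x = \lbrace |A_n^s|\rbrace_{n\geq 1}$ turns the left-hand side into $\dfrac{1}{\ln b}\ln\lbrace |A_n^s|\rbrace_{n\geq 1}$, which the previous theorem already expresses as $\dfrac{1}{\ln b}\left(-\sum_{i=0}^{n}\ln\mathbf{Bell}_i +\mathcal{Q}\right)$. Distributing the factor $\dfrac{1}{\ln b}$ over the sum and recognizing $\dfrac{\ln\mathbf{Bell}_i}{\ln b}=\log_{base}\mathbf{Bell}_i$ then yields the claimed form, with the additive constant replaced by $\dfrac{\mathcal{Q}}{\ln b}=\log_{base}(\mathrm{Constant})$.

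Before carrying this out I would first record the two base-$b$ identities that appear just above equation \ref{intropy}, namely $\log_{base}|\mathbf{invBell_n}| = \log_{base}|\mathbf{invDob_n}| + \log_{base}(e)$ together with $-\log_{base}(\mathbf{Bell_n}) = \log_{base}(e) - \log_{base}(\mathbf{Dob_n})$, and combine them with the involution $\mathbf{invBell_n}\cdotp\mathbf{Bell_n} = I$ to get $\log_{base}|\mathbf{invBell_n}| = -\log_{base}(\mathbf{Bell_n})$. These are precisely the base-$b$ analogues of the ingredients used in the natural-log proof, so the entire earlier derivation can alternatively be replayed verbatim with $\ln$ replaced throughout by $\log_{base}$; I would present the change-of-base argument as the primary route and note this direct replication as a check.

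Next I would take $\log_{base}$ of the complementary-Bell decomposition of $\lbrace |A_n^s|\rbrace_{n\geq 1}$ supplied by Theorem \ref{Aliter}. Writing the sequence scaled by $e^{-n}$ as a weighted sum of $\mathbf{invDob}_i$ with coefficients $\Phi_i$, the logarithm of a product splits into a sum of logarithms: the $n$ copies of $\log_{base}(e^{-1})$ and the logarithms of the numerical coefficients $\Phi_i$ collect into one additive constant, while each $\log_{base}|\mathbf{invDob}_i|$ becomes $\log_{base}|\mathbf{invBell}_i| = -\log_{base}(\mathbf{Bell}_i)$ by the identities above. This reproduces the formula with $\mathcal{Q}=\log_{base}(\mathrm{Constant})$, the same numerical Constant (for instance $80$ at $n=5$) that occurred in the natural-log statement.

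The computation is essentially bookkeeping, so the one point genuinely requiring care — and the step I expect to be the main obstacle — is tracking the constant $\mathcal{Q}$. One must verify that the accumulated contributions from the $\log_{base}(e)$ terms and from the $\log_{base}\Phi_i$ combine into $\log_{base}$ of a single \emph{base-independent} numerical Constant, so that passing from base $e$ to base $b$ merely replaces $\ln(\mathrm{Constant})$ by $\log_{base}(\mathrm{Constant})$ while leaving the Constant itself unchanged. Once this invariance of the Constant is confirmed, the result follows immediately.
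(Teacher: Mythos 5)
Your proposal is correct and matches the paper's intent: the paper dismisses this result as "straightforward," and the change-of-base reduction $\log_{base}x=\ln x/\ln b$ applied to the already-proven natural-log theorem (with the observation that the numerical Constant, e.g.\ $80$ for $n=5$, is base-independent) is exactly the argument being left implicit, consistent with the general-base identities the paper records before specializing to base $e$.
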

\begin{proof}
    The proof of this is straightforward.
\end{proof}

\begin{corollary}
For base $2, 10$, and $\exp{(1)}=e$, then the $\log_{base}\lbrace A_n^s\rbrace_{n\geq 1}$ and $\log_{base}\lbrace K_n\rbrace_{n\geq 1}$ is given as:
\begin{enumerate}

\item $$ \log_{10} \lbrace K_n\rbrace_{n\geq 1}=\mathcal{Q}+\sum_{i=1}^{n}\mathbf{\Phi}_i \log_{10}\mathbf{Bell}_i$$

\item $$ \log_{2} \lbrace K_n\rbrace_{n\geq 1}=\mathcal{Q}+\sum_{i=1}^{n}\mathbf{\Phi}_i \log_{2}\mathbf{Bell}_i$$

    \item $$\log_{2}\lbrace| A_n^s|\rbrace_{n\geq 1}=-\sum_{i=0}^{n}\log_{2}\mathbf{Bell}_i +\mathcal{Q}$$
    \item $$\log_{10}\lbrace |A_n^s|\rbrace_{n\geq 1}=-\sum_{i=0}^{n}\log_{10}\mathbf{Bell}_i +\mathcal{Q}$$
    
\end{enumerate}
\end{corollary}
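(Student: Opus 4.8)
The plan is to obtain all four identities as immediate specializations of the two general base-$\log$ master theorems established just above, namely
\[
\log_{base} \lbrace K_n\rbrace_{n\geq 1}=\mathcal{Q}+\sum_{i=1}^{n}\mathbf{\Phi}_i \log_{base}\mathbf{Bell}_i
\]
for the Kurepa sequence and
\[
\log_{base}\lbrace |A_n^s|\rbrace_{n\geq 1}=-\sum_{i=0}^{n}\log_{base}\mathbf{Bell}_i +\mathcal{Q}
\]
for the shifted alternating Kurepa sequence. Since both hold for an arbitrary admissible base, the only work left is to select $base=10$ and $base=2$ and record the resulting expressions.

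First I would dispatch items (1) and (2): setting $base=10$ and then $base=2$ in the first master identity reproduces the two stated formulas verbatim. Here I would emphasize that the coefficients $\mathbf{\Phi}_i$ are exactly the combinatorial multiplicities appearing in the Bell-number decomposition of $\lbrace K_n\rbrace_{n\geq 1}$ from Theorem \ref{Belo}; they are therefore independent of the chosen base, and only the additive constant $\mathcal{Q}$ changes, being the logarithm (in the current base) of the numerical constant collected during that decomposition.

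Next I would treat items (3) and (4) identically, substituting $base=2$ and $base=10$ into the second master identity. To make the base change explicit—and to confirm consistency with the natural-log versions already proved—I would invoke the change-of-base rule $\log_{base} x = \ln x / \ln(base)$: dividing each previously established natural-log identity by $\ln(base)$ rescales every summand while leaving the structural form intact, replacing $\ln$ by $\log_{base}$ and sending the natural-log constant to $\mathcal{Q}=\log_{base}(\text{Constant})$.

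The hard part is essentially nonexistent: there is no new combinatorial content, and the sole point demanding care is bookkeeping—holding the base-independent coefficients $\mathbf{\Phi}_i$ fixed while correctly rescaling the additive constant $\mathcal{Q}$ under the base change. This is precisely why the statement can, and I would, record it as an immediate corollary of the two general-base theorems.
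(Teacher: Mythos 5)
Your proposal is correct and matches the paper's approach: the paper also treats this corollary as an immediate specialization of the two general-base theorems proved just before it (its own proof is simply the one-line remark that the result is straightforward). Your added bookkeeping about the base-independence of the $\mathbf{\Phi}_i$ and the rescaling of $\mathcal{Q}$ via $\log_{base} x = \ln x / \ln(base)$ is consistent with, and slightly more explicit than, what the paper records.
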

\begin{proof}
    The proof of this is straightforward.
\end{proof}

%\newpage

 %%%%%%%%%%%%%%%%%%%%%%%%%%%%%%%%%%%%%%%

\section{Occupation number, canonical ensemble and normal ordering}\label{Physics}
In this section we investigate into some physical applications of Kurepa sequence. The problem of normal ordering has algebraic connection according to Schwinger, Katriel and many others to the exponential series, Stirling numbers of the second kind and Bell numbers \cite{osti_4389568, katriel1974combinatorial, katriel2000bell, mehta2009sudarshan, PhysRevLett.18.752, osti_4310789, blasiak2005combinatorics, blasiak2007combinatorics, mansour2016commutation}. We shall extend this results to the Kurepa sequence to check the Kurepa normal ordering and as well as Kurepa anti-normal ordering. We also consider the problem of occupation number and the canonical ensemble, we end this section with some investigation into some algebraic properties of Fermi-Dirac statistics \cite{planck1900theory, greiner2012thermodynamics, srivatsan2006gentile, dai2004gentile, gentile1940itosservazioni}.

\subsection{Bose normal ordering anti-normal ordering}
Blasiak and Horzela \cite{blasiak2007combinatorics} presented a comprehensive combinatorial approach for addressing operator ordering issues, specifically applied to the normal ordering of the powers and exponential of the boson number operator. The problem's solution was expressed by Bell and Stirling numbers that enumerate set partitions. This approach elucidated the intrinsic connections between ordering issues and combinatorial entities, while also demonstrating the analytical foundation of Wick’s theorem.
Interpreting $a$ and $a^{\mathbb{+}}$ as operators that create and annihilate a particle in a system leads to the occupancy number representation.
Let consider the boson creation and annihilation operators $a$ and $a^{\mathbb{+}}$ satisfying the commutator relation
$$[a, a^{\mathbb{+}}]=1.$$
The number operator $\mathcal{N}$ determines how many particles are present in a system. For a Hilbert space $\mathcal{H}$ generated by the number states $|n\rangle$, where $n = 0, 1, 2,\ldots$ counts the number of particles. Specifically,
$\mathcal{N}=a a^{\mathbb{+}}$ satisfying the relation $[a, \mathcal{N}]=a$ and $[a^{\mathbb{+}}, \mathcal{N}]=-a^{\mathbb{+}}$. In a Fock space, the creations and annihilation operators may be realized as
\begin{align*}
    a|n\rangle= \sqrt{n}|n-1\rangle, \quad  a^{\mathbb{+}}|n\rangle= \sqrt{n+1}|n+1\rangle \quad \mbox{and} \quad \mathcal{N}|n\rangle=n|n\rangle.
\end{align*}
Several authors including, J. Katriel, Mansour, Blasiak, Vagas et al., Louisell, and many more \cite{vargas2006normal, penson2002combinatorics, louisell1973quantum} \cite{blasiak2005combinatorics,blasiak2007combinatorics, mansour2016commutation, MANSOUR2007214, katriel2000bell, blasiak2006dobinski} worked on the exponential series of the number operator and used this method to express normal order of a particle system such as boson.
We can express $\mathcal{N}^k=(a a^{\mathbb{+}})^k$ in a normal order, for $k=2, 3,$
and $4$ we have: 
\begin{align*}
    \mathcal{N}^2&=(a^{\mathbb{+}})^2 a^2+a^{\mathbb{+}}a;\\
    \mathcal{N}^3&=(a^{\mathbb{+}})^3 a^3+3(a^{\mathbb{+}})^2 a^2+a^{\mathbb{+}}a;\\
    \mathcal{N}^4&=(a^{\mathbb{+}})^4 a^4+6(a^{\mathbb{+}})^3 a^3+7(a^{\mathbb{+}})^2 a^2+a^{\mathbb{+}}a
\end{align*}
The coefficients of $(a a^{\mathbb{+}})^k$ are the numbers of Stirling numbers of the second kind, $S(n,k)$, that is, 
$$ \mathcal{N}=(a a^{\mathbb{+}})^k=\sum_{k = 1}^{n} S(n, k) (a^{\mathbb{+}})^k a^k$$

\begin{definition}\label{odt}\cite{osti_4389568, mansour2016commutation}
    Let $x\in \BC,$ then $$e^{x(a^{\mathbb{+}}a)}= e^{(e^x -1)(a^{\mathbb{+}}a)}= \mathbf{Bell_n}(a^{\mathbb{+}}a)$$ is the normal ordering.
\end{definition}

\begin{definition}\label{anti}\cite{mehta2009sudarshan, PhysRevLett.18.752, osti_4310789}
    Let $x\in \BC,$ then $$e^{x(aa^{\mathbb{+}})}= e^{(1-e^{-x})(a^{\mathbb{+}}a)}= (-1)^n\mathbf{invBell_n}(a^{\mathbb{+}}a)$$ is the antinormal ordering.
\end{definition}

\subsection{Kurepa normal ordering and antinormal ordering}
The Bell polynomial is given by:
\[
\mathbf{Bell_n}(y) =\sum_{k=1}^{n} S(n, k) y^k=\mathbf{Tchd_n}(y)
\]
with exponential generating function $\sum_{n=0}^{\infty}\mathbf{Bell_n}(y)\dfrac{y^n}{n!}=e^{y(e^x - 1)}$
and the complementary Bell polynomial is given by:
\[
\mathbf{invBell_n}(y) =\sum_{k=1}^{n} S(n, k) (-y)^k=\mathbf{invTchd_n}(y)
\]
with exponential generating function $\sum_{n=0}^{\infty}\mathbf{invBell_n}(y)\dfrac{y^n}{n!}=e^{y(1-e^x)}$
\begin{equation}\label{Kad}
    (-1)^n\mathbf{invBell_n}(y) =(-1)^n\sum_{k=1}^{n} S(n, k) (-y)^k
\end{equation}
with exponential generating function $(-1)^n\sum_{n=0}^{\infty}\mathbf{invBell_n}(y)\dfrac{y^n}{n!}=e^{y(1-e^{-x})}$\\
more on this equation \ref{Kad} is given by the author and some others in a paper to appear.\\

\begin{definition} 
    The Kurepa polynomial $K_n(x)$ is defined as follows:
    \begin{equation*}
K_{n}(x)=\sum_{m=0}^{n-1} m!x^m,
 \end{equation*}
 from Theorem \ref{Sum} we can list some few examples(see table \ref{table 11}):
\begin{align}\label{PF}
K_{0}(x) &= 1\\ \nonumber
K_{1}(x) &= 1 + x\\  \nonumber
K_{2}(x) &= 1 + x + 2x^2\\  \nonumber
K_{3}(x) &= 1 + x + 2x^2 + 6x^3\\  \nonumber
K_{4}(x) &= 1 + x + 2x^2 + 6x^3 + 24x^4\\  \nonumber
K_{5}(x) &= 1 + x + 2x^2 + 6x^3 + 24x^4 + 120x^5\\  \nonumber
K_{6}(x) &= 1 + x + 2x^2 + 6x^3 + 24x^4 + 120x^5 + 720x^6\\  \nonumber
K_{7}(x) &= 1 + x + 2x^2 + 6x^3 + 24x^4 + 120x^5 + 720x^6 + 5040x^7
\end{align}
\

If $x=1$ one can obtain the Kurepa numbers $K_n$ is defined as follows
    \begin{equation*}
K_n=
\begin{cases}
	\text{0}    & \text{$\mathbb{F}_{0}$;}\\
	\text{$1$} & \text{$K_1$ see Table \ref{table 9}};\\ 
	\text{$K_{n\geq 2}=\mathbb{F}_{n\geq1}$}   & \text{for all positive integer $n$}.
\end{cases}
 \end{equation*} 
\end{definition}

    \begin{table}[!ht]\label{6T}
    \centering
    \begin{tabular}{|c|c|c|}\hline 
        $K_n(x)$ polynomial& $ \mathbb{F}_{n}(x)$ polynomial   \\\hline 
          $K_0(x) = N/A$ & $\mathbb{F}_0(x) = 0$ \\\hline \rowcolor{LightCyan}
        $K_1(x) = 1 $ & N/A  \\\hline
         $K_2(x) = 1 + x $ & $\mathbb{F}_1(x) = 1 + x$  \\\hline
          $K_3(x) = 1 + x + 2x^2$ & $\mathbb{F}_2(x) = 1 + x + 2x^2$  \\\hline
            $K_4(x) = 1 + x + 2x^2 + 6x^3$ & $\mathbb{F}_3(x) = 1 + x + 2x^2 + 6x^3$ \\\hline
              $K_5(x) = 1 + x + 2x^2 + 6x^3 + 24x^4$ & $\mathbb{F}_4(x) = 1 + x + 2x^2 + 6x^3 + 24x^4$   \\\hline
                $K_6(x) = 1 + x + 2x^2 + 6x^3 + 24x^4 + 120x^5$ & $\mathbb{F}_5(x) = 1 + x + 2x^2 + 6x^3 + 24x^4 + 120x^5$   \\\hline
                  %$K_6(x) = x + 62x^2 + 540x^3 + 1560x^4 + 1800x^5 + 720x^6$ &$F_6(x) = 1 + x + 2x^2 + 6x^3 + 24x^4 + 120x^5 + 720x^6$  & \\\hline
        % $\mathbb{F}_{n}(x)$ &  & $\mathbb{F}_{0}$ & $\mathbb{F}_{1}$ & $\mathbb{F}_{2}$ &$\mathbb{F}_{3}$ &$\mathbb{F}_{4}$& $\mathbb{F}_{5}$ & $\mathbb{F}_{6}$ &$\mathbb{F}_{7}$ \\\hline
    \end{tabular}
    \caption{Relations between Kurepa polynomial and $\mathbb{F}_n(x)$ polynomial \cite{OEIS:A003422} }
    \label{table 11}
\end{table}

\begin{theorem}
The Kurepa polynomial is the sum of Bell polynomials given by
$$\lbrace K_n\rbrace_{n\geq 1}(y)=\sum_{r=1}^{n} \Phi_r \mathbf{Bell_r}(y).$$    
\end{theorem}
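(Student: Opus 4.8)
The plan is to view this statement as the Touchard-polynomial refinement of Theorem~\ref{Belo}, in which the numerical Bell numbers are replaced by the Bell (Touchard) polynomials $\mathbf{Bell_r}(y)=\mathbf{Tchd_r}(y)=\sum_{k=1}^{r}S(r,k)y^k$. The single structural fact I would lean on is that each $\mathbf{Bell_r}(y)$ has degree $r$ with leading coefficient $S(r,r)=1$ and lowest term $S(r,1)y=y$, so that $\{\mathbf{Bell_1}(y),\dots,\mathbf{Bell_n}(y)\}$ is a unitriangular re-expression of the monomials $\{y,y^2,\dots,y^n\}$. Consequently every polynomial of degree at most $n$ with vanishing constant term admits a \emph{unique} expansion $\sum_{r=1}^{n}\Phi_r\mathbf{Bell_r}(y)$ in which the coefficients $\Phi_r$ are genuine constants, and it is exactly these $\Phi_r$ that the theorem asserts.

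First I would write the cumulative Kurepa polynomial $\lbrace K_n\rbrace_{n\geq 1}(y)=\sum_{i=1}^{n}K_i(y)$ in closed form. Since the term $m!\,y^m$ survives in $K_i(y)$ precisely when $i\geq m+1$, the sum collapses to $\sum_{m=0}^{n-1}(n-m)\,m!\,y^m$, a concrete polynomial of degree $n-1$. Next, mirroring the $\times e$ device of Theorems~\ref{dobi} and~\ref{Dk}, I would multiply by $e^{y}$ and pass to the generalized Dobinski series $\mathbf{Dob_r}(y)=\sum_{k\geq 0}y^k k^r/k!=e^{y}\mathbf{Tchd_r}(y)$; expanding $\lbrace K_n\rbrace_{n\geq 1}(y)\,e^{y}$ against $\{\mathbf{Dob_r}(y)\}$ and then dividing by $e^{y}$ converts every $\mathbf{Dob_r}(y)$ back into $\mathbf{Bell_r}(y)$, so the whole routing only serves to align the argument with the earlier presentation. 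The coefficients $\Phi_r$ are then read off by back-substitution from the top degree downward, and a final check that $y=1$ returns Theorem~\ref{Belo} confirms consistency.

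The main obstacle is the constant term. The closed form above shows that $\lbrace K_n\rbrace_{n\geq 1}(y)$ carries a nonzero constant term (the coefficient of $y^0$ is $n$), whereas every $\mathbf{Bell_r}(y)$ with $r\geq 1$ vanishes at $y=0$; hence the unitriangular basis argument applies verbatim only to the positive-degree part, and the constant must be placed separately. I would resolve this exactly as in Theorem~\ref{Kprove}, absorbing the constant into $\mathbf{Bell_0}(y)=\mathbf{Tchd_0}(y)=1$ and invoking the standing remark that $\mathbf{Bell_0}=\mathbf{Bell_1}=1$ allows the summation index to begin at $0$ or $1$ without altering the stated identity. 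Once the $y^0$ bookkeeping is fixed in this way, triangularity forces the remaining $\Phi_r$ uniquely, and the proof is complete.
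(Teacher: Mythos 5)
Your closed form $\lbrace K_n\rbrace_{n\geq 1}(y)=\sum_{m=0}^{n-1}(n-m)\,m!\,y^m$ is correct, and the unitriangularity of the Touchard basis does give existence and uniqueness of constant coefficients for any polynomial with vanishing constant term. But your treatment of the constant term --- the one obstruction you yourself flag --- is a genuine gap, not bookkeeping. The remark you invoke from Theorem~\ref{Kprove} (``$\mathbf{Bell_0}=\mathbf{Bell_1}=1$, so the sum may start at $0$ or $1$'') is a statement about the \emph{numbers}; at the level of polynomials $\mathbf{Bell_0}(y)=1$ while $\mathbf{Bell_1}(y)=y$, and they are not interchangeable. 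Since every $\mathbf{Bell_r}(y)$ with $r\geq 1$ vanishes at $y=0$, the constant term $n$ of $\lbrace K_n\rbrace_{n\geq 1}(y)$ lies outside the span of $\lbrace \mathbf{Bell_r}(y): r\geq 1\rbrace$, so the identity as stated, with the sum starting at $r=1$, is false for every $n\geq 1$ (the same obstruction appears, with constant term $1$, if one reads the left side as the single polynomial $K_n(y)$). What your argument actually establishes is the corrected identity $\lbrace K_n\rbrace_{n\geq 1}(y)=n\,\mathbf{Bell_0}(y)+\sum_{r\geq 1}\Phi_r\,\mathbf{Bell_r}(y)$; no choice of $\Phi_r$ with $r\geq 1$ can remove the $r=0$ term.

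There is a second mismatch worth naming: the coefficients your top-down back-substitution produces are \emph{forced} (for instance $\Phi_{n-1}=(n-1)!$ from the leading coefficient, with lower ones typically negative), and they are not the $\Phi_r$ of Theorems~\ref{Kprove} and~\ref{Belo}, which arise from ad hoc, non-unique numerical splittings of $!k$ into Bell numbers (e.g.\ $!6\,e=10\mathbf{Dob_4}+2\mathbf{Dob_2}$). So your closing ``check that $y=1$ returns Theorem~\ref{Belo}'' does not go through with the paper's coefficients. This is also where your route genuinely diverges from the paper's: the paper's proof simply transplants the numerical coefficients of Theorem~\ref{Kprove} into the polynomial setting and asserts the $n=8$ case by citing tables~\ref{table 11} and~\ref{table 12}, an identity that in fact fails as polynomials (its left side has degree $7$ and linear coefficient $7$, its right side degree $6$ and linear coefficient $72$). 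Your basis argument is the sounder strategy, but to be a correct proof it must state and prove the $r=0$-corrected identity with its own uniquely determined coefficients, rather than the theorem as written.
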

\begin{proof}
    Using theorem \ref{Kprove} and also from tables \ref{table 11} and \ref{table 12}, it is easy to see that for $n=8$;
    \begin{align*}
     \lbrace K_8\rbrace_{n\geq 1}(y) &= \mathbf{Bell_1}(y) + 8 \mathbf{Bell_2}(y)  + 2 \mathbf{Bell_3}(y) + 56\mathbf{Bell_4}(y) + \mathbf{Bell_5}(y) + 4\mathbf{Bell_6}(y) 
\end{align*}
\end{proof}

\begin{theorem}
    Let $\mathbf{Bell_n}(a^{\mathbb{+}}a)=\sum_{k = 1}^{n} S(n, k) (a^{\mathbb{+}})^k a^k$ be the normal ordering of the boson number operator, then the Kurepa normal ordering(KOD) is given by 
    $$\lbrace K_n\rbrace_{n\geq 1}(a^{\mathbb{+}}a)=\sum_{r=1}^{n} \Phi_r \mathbf{Bell_r}(a^{\mathbb{+}}a) $$
   where $\Phi_r$ is coefficient of the  $\mathbf{Bell_n}(a^{\mathbb{+}}a).$
\end{theorem}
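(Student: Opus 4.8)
The plan is to lift the scalar polynomial identity of the preceding theorem to the level of boson operators by invoking the normal-ordering prescription recorded in Definition \ref{odt}. First I would recall that the preceding theorem establishes, as an identity of ordinary polynomials in the formal variable $y$,
$$\lbrace K_n\rbrace_{n\geq 1}(y)=\sum_{r=1}^{n} \Phi_r \mathbf{Bell_r}(y),$$
with the $\Phi_r$ fixed scalar coefficients determined by $n$ (for $n=8$ these are $\Phi_1=1,\ \Phi_2=8,\ \Phi_3=2,\ \Phi_4=56,\ \Phi_5=1,\ \Phi_6=4$, read off from Theorem \ref{Kprove}). The entire content of the present statement is that this identity survives the passage $y\mapsto a^{\mathbb{+}}a$ once each monomial $y^k$ is reinterpreted as the normal-ordered string $(a^{\mathbb{+}})^k a^k$.

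Next I would make the substitution precise. Let $\mathcal{O}$ denote the normal-ordering map sending a polynomial $p(y)=\sum_k c_k y^k$ to $\sum_k c_k (a^{\mathbb{+}})^k a^k$. By the normal-ordering result underlying Definition \ref{odt}, applying $\mathcal{O}$ to a monomial reproduces the number operator raised to a power in normal form, whose coefficients are the Stirling numbers $S(m,k)$, so that $\mathcal{O}\bigl(\mathbf{Bell_m}(y)\bigr)=\mathbf{Bell_m}(a^{\mathbb{+}}a)=\sum_{k=1}^{m} S(m,k)(a^{\mathbb{+}})^k a^k$. The crucial structural feature is that $\mathcal{O}$ is linear over the scalars: since every $\Phi_r$ is a c-number commuting with both $a$ and $a^{\mathbb{+}}$, one has $\mathcal{O}\bigl(\Phi_r\,\mathbf{Bell_r}(y)\bigr)=\Phi_r\,\mathcal{O}\bigl(\mathbf{Bell_r}(y)\bigr)$.

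Applying $\mathcal{O}$ termwise to the polynomial identity then yields
$$\lbrace K_n\rbrace_{n\geq 1}(a^{\mathbb{+}}a)=\mathcal{O}\!\left(\lbrace K_n\rbrace_{n\geq 1}(y)\right)=\sum_{r=1}^{n} \Phi_r\,\mathcal{O}\!\left(\mathbf{Bell_r}(y)\right)=\sum_{r=1}^{n} \Phi_r\,\mathbf{Bell_r}(a^{\mathbb{+}}a),$$
which is the asserted Kurepa normal ordering. Specializing to $n=8$ recovers the operator decomposition displayed in the preceding theorem, now with each $\mathbf{Bell_r}$ carrying its operator argument $a^{\mathbb{+}}a$.

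The main obstacle is conceptual rather than computational: one must check that the two natural ways of evaluating the left-hand side agree, namely (i) first normal-ordering the raw Kurepa polynomial $\sum_m m!\,(a^{\mathbb{+}}a)^m$ monomial by monomial, and (ii) first decomposing into Bell polynomials and only then normal-ordering each summand. These coincide precisely because $\mathcal{O}$ is a single well-defined linear map on the polynomial algebra and the Bell decomposition is an identity there; the non-commutativity of $a$ and $a^{\mathbb{+}}$ never enters the argument, as it has been absorbed entirely into the definition of $\mathcal{O}$ on monomials. Verifying this independence from the order of operations is the one point I would justify explicitly, after which the result follows immediately from the preceding theorem.
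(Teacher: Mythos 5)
Your proposal is correct and takes essentially the same route as the paper: both transport the Bell-polynomial decomposition $\lbrace K_n\rbrace_{n\geq 1}(y)=\sum_{r=1}^{n}\Phi_r\,\mathbf{Bell_r}(y)$ of the preceding theorem to the operator level by replacing each monomial $y^k$ with the normal-ordered string $(a^{\mathbb{+}})^k a^k$. The only difference is one of presentation and rigor: the paper verifies the single case $n=4$ against its table of normal-ordered Bell operator polynomials and then declares the general case immediate from the normal-ordering definition, whereas you package the substitution as a scalar-linear map $\mathcal{O}$ and apply it termwise for arbitrary $n$, which is a cleaner and fully general formulation of the same argument.
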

\begin{proof}
    For $n=4$, we have 
    \begin{align*}
        \lbrace K_4\rbrace_{n\geq 1}(a^{\mathbb{+}}a)&=\mathbf{Bell_1}(a^{\mathbb{+}}a) + \mathbf{Bell_2}(a^{\mathbb{+}}a)+2 \mathbf{Bell_2}(a^{\mathbb{+}}a)  + 2 (\mathbf{Bell_3}(a^{\mathbb{+}}a) + \mathbf{Bell_2}(a^{\mathbb{+}}a))    \\
        &= \mathbf{Bell_1}(a^{\mathbb{+}}a) + 5\mathbf{Bell_2}(a^{\mathbb{+}}a) + 2 \mathbf{Bell_3}(a^{\mathbb{+}}a)  \quad (\mbox{See table \ref{table 12}})      \\
        &=\sum_{k = 1}^{n} S(n, k) (a^{\mathbb{+}})a +5\sum_{k = 1}^{2} S(2, k) (a^{\mathbb{+}})^2 a^2 + 2\sum_{k = 1}^{3} S(3, k) (a^{\mathbb{+}})^3 a^3\\
        &=1 + a^{\mathbb{+}}a + 5((a^{\mathbb{+}})^2 a^2+a^{\mathbb{+}}a )+ 2((a^{\mathbb{+}})^3 a^3+3(a^{\mathbb{+}})^2 a^2+a^{\mathbb{+}}a )  \\
        &=\sum_{r=1}^{4} \Phi_r \mathbf{Bell_r}(a^{\mathbb{+}}a)
    \end{align*}

\begin{table}[!ht]
    \centering
    \begin{tabular}{|c|c|c|c|c|a|c|c|c|c|}\hline 
        Bell polynomials($\mathbf{Bell_n}(x)$)& $\mathbf{Bell}_n(a^{\mathbb{+}}a)$ polynomials  \\\hline 
         $\mathbf{Bell}_0(x) = 1$ &$\mathbf{Bell}_0(a^{\mathbb{+}}a) = 1$\\\hline
        $\mathbf{Bell}_1(x) = x$ & $\mathbf{Bell}_1(a^{\mathbb{+}}a) = 1 + a^{\mathbb{+}}a$ \\\hline
          $\mathbf{Bell}_2(x) = x+x^2$  & $\mathbf{Bell}_2(a^{\mathbb{+}}a)  = (a^{\mathbb{+}})^2 a^2+a^{\mathbb{+}}a$ \\\hline
            $\mathbf{Bell}_3(x) = x+3x^2+x^3$  & $\mathbf{Bell}_3(a^{\mathbb{+}}a) = (a^{\mathbb{+}})^3 a^3+3(a^{\mathbb{+}})^2 a^2+a^{\mathbb{+}}a$ \\\hline
              $\mathbf{Bell}_4(x) = x+7x^2+6x^3+x^4$ & $\mathbf{Bell}_4(a^{\mathbb{+}}a) = (a^{\mathbb{+}})^4 a^4+6(a^{\mathbb{+}})^3 a^3+7(a^{\mathbb{+}})^2 a^2+a^{\mathbb{+}}a$ \\\hline
    \end{tabular}
    \caption{Relations between Bell numbers and Bose normal ordering \cite{OEIS:A003422} }
    \label{table 12}
\end{table}
    
   from the table below we can easily from definition \ref{odt} the proof is immediate.
\end{proof}

\begin{theorem}\label{altk}
The shifted alternating Kurepa polynomial is the sum of complementary Bell polynomials \ref{Aliter} given by
$$\lbrace A_n^s\rbrace_{n\geq 1}(y)=\sum_{r=1}^{n} \Phi_r \mathbf{invBell_r}(y).$$    
\end{theorem}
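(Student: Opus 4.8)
The plan is to transcribe the route already used for the Kurepa polynomial identity $\lbrace K_n\rbrace_{n\geq 1}(y)=\sum_{r=1}^{n}\Phi_r\mathbf{Bell_r}(y)$, replacing the Bell polynomials by complementary Bell polynomials and the Dobinski grouping by its alternating version. First I would record the Dobinski-type representation of the complementary Bell polynomial: since $\mathbf{Bell_n}(y)=\mathbf{Tchd_n}(y)=e^{-y}\sum_{k\geq 0}y^k k^n/k!$, substituting $y\mapsto -y$ gives $\mathbf{invBell_n}(y)=\sum_{k=1}^{n}S(n,k)(-y)^k=e^{y}\,\mathbf{invDob_n}(y)$, where $\mathbf{invDob_n}(y)=\sum_{k\geq 0}(-y)^k k^n/k!$ specialises to the inverse Dobinski number $\mathbf{invDob_n}$ at $y=1$. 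This is the exact $y$-parametrised analogue of the identity $\mathbf{invBell_n}=\mathbf{invDob_n}\cdot e$ on which Theorem \ref{Aliter} rests.

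Next I would import the numerical decomposition of Theorem \ref{Aliter}, namely $\lbrace A_n^s\rbrace_{n\geq 1}=\sum_{r=1}^{n}\Phi_r\mathbf{invBell_r}$ with its explicit integer coefficients $\Phi_r$, and rerun that proof verbatim with every factor $e^{-1}$ replaced by $e^{-y}$. Each summand $(-1)^{m}m!\,e^{-y}$ then regroups into an inverse Dobinski polynomial $\mathbf{invDob_r}(y)$, and multiplying through by $e^{y}$ converts these into the complementary Bell polynomials $\mathbf{invBell_r}(y)$, with the very same $\Phi_r$. For concreteness I would display the case $n=5$, where the coefficients $1,1,2,2,20,50$ reappear unchanged:
\begin{align*}
\lbrace A_5^s\rbrace_{n\geq 1}(y)&=\mathbf{invBell_0}(y)+\mathbf{invBell_2}(y)+2\,\mathbf{invBell_3}(y)\\
&\quad+2\,\mathbf{invBell_5}(y)+20\,\mathbf{invBell_4}(y)+50\,\mathbf{invBell_5}(y).
\end{align*}
Setting $y=1$ collapses this to the identity of Theorem \ref{Aliter}, confirming that the coefficients have been transcribed correctly.

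The main obstacle is to certify that this is a genuine identity of polynomials in $y$ and not merely the numerical identity evaluated at $y=1$, since agreement at one point never forces two polynomials to coincide. The resolution is that the coefficients $\Phi_r$ are produced purely by the inverse-Dobinski regrouping, and no step of that regrouping invokes the special value $y=1$; hence the same $\Phi_r$ validate the expansion uniformly in $y$. Equivalently, I would prove the statement coefficientwise: matching the coefficient of each monomial $(-y)^k$ on both sides reduces to checking that the $k$-th coefficient of the left-hand polynomial equals $\sum_{r}\Phi_r S(r,k)$, which is exactly what Theorem \ref{Aliter} supplies through the relation $\mathbf{invBell_r}=\sum_{k\geq 1}^{r}(-1)^kS(r,k)$. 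This monomial-by-monomial comparison, rather than the single evaluation at $y=1$, is what upgrades the numerical statement to the polynomial statement and completes the argument.
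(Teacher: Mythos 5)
Your proposal founders at exactly the obstacle you yourself identify, and the resolution you offer for it is not valid. The coefficients $\Phi_r$ in Theorem \ref{Aliter} are \emph{not} produced by a regrouping that is independent of $y=1$: they are produced by matching integer values. For instance, the step $A_4^s\, e^{-1}=-4e^{-1}=2\,\mathbf{invDob_5}$ in the proof of Theorem \ref{Aliter} rests on the numerical fact $\mathbf{invBell_5}=-2$; its polynomial analogue would require $A_4^s(y)=2\,\mathbf{invBell_5}(y)$, i.e.
\begin{equation*}
1-y+2y^2-6y^3 \;=\; 2\left(-y+15y^2-25y^3+10y^4-y^5\right),
\end{equation*}
which is false: the two sides agree only at $y=1$. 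Every matching in that proof (e.g.\ $20=20\,\mathbf{invBell_4}$, $-100=50\,\mathbf{invBell_5}$) is of this purely numerical kind, so "rerunning the proof verbatim with $e^{-1}$ replaced by $e^{-y}$" is impossible — the regrouping steps simply stop being true as polynomial identities. Concretely, with the coefficients $1,1,2,2,20,50$ you display, the left side $\lbrace A_5^s\rbrace_{n\geq 1}(y)=\sum_{i=1}^{5}\sum_{m=0}^{i-1}(-1)^m m!\,y^m=5-4y+6y^2-12y^3+24y^4$ has constant term $5$ and degree $4$, while your right side has constant term $1$ (only $\mathbf{invBell_0}(y)=1$ contributes a constant) and degree $5$ (from the $\mathbf{invBell_5}(y)$ terms). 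The claimed identity fails already in the constant term for every $n\geq 2$.

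The fallback "coefficientwise" argument has the same flaw. Matching the coefficient of $y^m$ requires the $n$ separate identities $(n-m)\,m!=\sum_r\Phi_r S(r,m)$, one for each $m$, whereas Theorem \ref{Aliter} supplies only their single alternating sum over $m$ — precisely the evaluation at $y=1$. A one-point evaluation cannot be upgraded to $n$ coefficient identities, and with the imported $\Phi_r$ those coefficient identities are in fact false. For what it is worth, the paper's own proof of this statement is the single sentence "the proof is straightforward," and its proof of the companion Bell-polynomial theorem merely rewrites the numerical $n=8$ decomposition with "$(y)$" appended, so the paper does not close this gap either; but that does not rescue your argument. If a polynomial identity of this shape is to hold at all, the coefficients must be recomputed by expanding $\lbrace A_n^s\rbrace_{n\geq 1}(y)$ in the basis $\{\mathbf{invBell_r}(y)\}_{r\geq 0}$ (which is legitimate, since $\mathbf{invBell_r}(y)$ has degree $r$), and the resulting coefficients will not coincide with the $\Phi_r$ of Theorem \ref{Aliter}.
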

\begin{proof}
    the proof is straightforward.
\end{proof}

\begin{corollary} From equation \ref{Kad} and Theorem \ref{altk}, it is easy to see that;
    $$(-1)^n\lbrace A_n^s\rbrace_{n\geq 1}(y)=(-1)^n\sum_{r=0}^{n} \Phi_r \mathbf{invBell_r}(y)$$   
\end{corollary}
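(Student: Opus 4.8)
The plan is to obtain the identity directly from Theorem~\ref{altk} by attaching the global sign factor $(-1)^n$. Theorem~\ref{altk} already supplies the decomposition
\[
\lbrace A_n^s\rbrace_{n\geq 1}(y)=\sum_{r=1}^{n}\Phi_r\,\mathbf{invBell_r}(y),
\]
so the first step is simply to multiply both sides of this expansion by $(-1)^n$. Since $(-1)^n$ is a scalar independent of the summation index $r$, it factors through the finite sum and leaves the coefficients $\Phi_r$ unchanged, yielding $(-1)^n\lbrace A_n^s\rbrace_{n\geq 1}(y)=(-1)^n\sum_{r=1}^{n}\Phi_r\,\mathbf{invBell_r}(y)$.

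Next I would reconcile the summation range. The corollary writes the sum from $r=0$, whereas Theorem~\ref{altk} starts at $r=1$; exactly as in the remarks accompanying Theorems~\ref{Dk} and~\ref{Kprove}, the $r=0$ term contributes only the trivial boundary value $\mathbf{invBell_0}(y)$, so extending the lower limit to $0$ does not alter the identity once $\Phi_0$ is fixed by the convention already used for $A_n^s$. With this cosmetic adjustment the stated form is reached. To make the invocation of equation~\ref{Kad} substantive rather than decorative, I would then expand each complementary Bell polynomial through its Stirling representation $\mathbf{invBell_r}(y)=\sum_{k=1}^{r}S(r,k)(-y)^k$, so that the right-hand side becomes $(-1)^n\sum_{r=0}^{n}\Phi_r\sum_{k=1}^{r}S(r,k)(-y)^k$, exhibiting the object as the signed partition-function expansion consistent with the antinormal-ordering quantity of Definition~\ref{anti}.

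The genuinely delicate point is the sign bookkeeping. The outer factor $(-1)^n$ is governed by the global index $n$ (the top of the sum and the degree of $A_n^s$), whereas each summand $\mathbf{invBell_r}(y)$ already carries its own internal alternation $(-y)^k$ indexed by $k$ together with an implicit $r$-dependence. The hard part will be to keep these distinct: one must not conflate $(-1)^n\mathbf{invBell_r}(y)$ with $(-1)^r\mathbf{invBell_r}(y)$. The corollary deliberately retains the single outer sign $(-1)^n$ common to every term, mirroring precisely the placement in equation~\ref{Kad}, and it is this fixed global placement---rather than any per-term sign---that I would verify matches on both sides, after which the result follows immediately from Theorem~\ref{altk}.
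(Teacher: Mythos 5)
Your proposal is correct and follows essentially the same route as the paper, which simply declares the corollary trivial from Theorem~\ref{altk} and equation~\ref{Kad}: one multiplies the decomposition of Theorem~\ref{altk} by the global scalar $(-1)^n$, with the shift of the lower summation index to $r=0$ being harmless since the $r=0$ term is absorbed into the coefficient convention. Your additional care about the index range and about not conflating the outer sign $(-1)^n$ with any per-term sign $(-1)^r$ only makes explicit what the paper leaves unsaid.
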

\begin{proof}
    The proof of this is trivial using \ref{Kad} and Theorem \ref{altk}
\end{proof}

\begin{theorem}
    Let $\mathbf{invBell_n}(a^{\mathbb{+}}a)=\sum_{k = 1}^{n} (-1)^k S(n, k) (a^{\mathbb{+}})^k a^k$ be the anti-normal ordering of the boson number operator from definition \ref{anti}, then the Kurepa anti-normal ordering(KAD) is given by 
    $$\lbrace (-1)^n A_n^s\rbrace_{n\geq 1}(a^{\mathbb{+}}a)=(-1)^n\sum_{r=0}^{n} \Phi_r \mathbf{invBell_r}(a^{\mathbb{+}}a). $$
   where $\Phi_r$ is coefficient of $\mathbf{invBell_n}(a^{\mathbb{+}}a).$
\end{theorem}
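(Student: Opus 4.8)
The plan is to follow exactly the template of the Kurepa normal-ordering theorem, but with complementary Bell polynomials and the alternating sign carried along. The starting point is the purely polynomial identity supplied by Theorem \ref{altk} and the corollary immediately following it, namely
\[
(-1)^n\lbrace A_n^s\rbrace_{n\geq 1}(y)=(-1)^n\sum_{r=0}^{n}\Phi_r\,\mathbf{invBell_r}(y),
\]
where the scalars $\Phi_r$ are precisely the coefficients produced by the shifted alternating Kurepa decomposition of Theorem \ref{Aliter}. The crucial structural remark is that these $\Phi_r$ are numerical and do not involve the indeterminate $y$, so the whole identity lives in the polynomial ring and may be manipulated independently of any operator interpretation.

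The second step is to pass from this scalar identity to operators by applying the boson normal-ordering substitution $\Omega\colon y^k\mapsto (a^{\mathbb{+}})^k a^k$, extended by $\mathbb{C}$-linearity to all polynomials. Under $\Omega$ each term $(-1)^k S(r,k)y^k$ of $\mathbf{invBell_r}(y)=\sum_{k=1}^{r}(-1)^k S(r,k)y^k$ is sent to $(-1)^k S(r,k)(a^{\mathbb{+}})^k a^k$, so that $\Omega\bigl(\mathbf{invBell_r}(y)\bigr)=\mathbf{invBell_r}(a^{\mathbb{+}}a)$, which is exactly the anti-normal ordering appearing in Definition \ref{anti} and equation \ref{Kad}. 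Applying $\Omega$ to both sides of the displayed identity therefore yields
\[
\lbrace (-1)^n A_n^s\rbrace_{n\geq 1}(a^{\mathbb{+}}a)=(-1)^n\sum_{r=0}^{n}\Phi_r\,\mathbf{invBell_r}(a^{\mathbb{+}}a),
\]
the asserted formula. To make the bookkeeping transparent I would then record the case $n=5$, reading off the coefficients $\Phi_r$ from the $\mathbf{invBell}$ decomposition in Theorem \ref{Aliter} and replacing each $\mathbf{invBell_r}(a^{\mathbb{+}}a)$ by its explicit anti-normally ordered monomials, in direct parallel with the $n=4$ verification given for the normal-ordering case.

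The step I expect to be the main obstacle is justifying that the scalar identity genuinely lifts to an operator identity, since the substitution $\Omega$ sends $y^k$ to $(a^{\mathbb{+}})^k a^k$ rather than to $(a^{\mathbb{+}}a)^k$ and is therefore \emph{not} a ring homomorphism. The correct way around this is to keep both sides written as linear combinations of the monomials $y^k$ throughout, and only then apply $\Omega$: because $\Omega$ is a well-defined $\mathbb{C}$-linear map on the polynomial vector space and the coefficients $\Phi_r$ commute with every operator, equal polynomials are sent to equal operators term by term. Once this is made precise, no operator products need to be expanded or reordered, and the proof closes exactly as in the normal-ordering theorem, the alternating sign being tracked through equation \ref{Kad} and the corollary to Theorem \ref{altk}.
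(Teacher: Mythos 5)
Your proposal is correct and follows essentially the same route as the paper's own proof: both start from the scalar decomposition $(-1)^n\lbrace A_n^s\rbrace_{n\geq 1}(y)=(-1)^n\sum_{r}\Phi_r\,\mathbf{invBell_r}(y)$ furnished by Theorem \ref{altk} and its corollary, with the $\Phi_r$ read off from Theorem \ref{Aliter}, and then pass to operators by replacing $y^k$ with $(a^{\mathbb{+}})^k a^k$ (the paper does this via Table \ref{table 13}, equation \ref{Kad} and Definition \ref{anti}, illustrated at $n=5$). Your explicit observation that this substitution is only $\mathbb{C}$-linear rather than multiplicative, so the identity must be transferred term by term in the monomials $y^k$, is left implicit in the paper but does not alter the argument.
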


\begin{proof}
    For $n=5$, we have 
    \begin{align*}
        \lbrace(-1)^n A_5^s\rbrace_{n\geq 1}(a^{\mathbb{+}}a)&=\mathbf{invBell_0}(a^{\mathbb{+}}a) + \mathbf{invBell_2}(a^{\mathbb{+}}a)-2 \mathbf{invBell_3}(a^{\mathbb{+}}a)  - 52 (\mathbf{invBell_5}(a^{\mathbb{+}}a) \\
        &+ 20\mathbf{invBell_4}(a^{\mathbb{+}}a))   \quad (\mbox{See table \ref{table 13}})  \\
       % &= \mathbf{Bell_1}(a^{\mathbb{+}}a) + 5\mathbf{Bell_2}(a^{\mathbb{+}}a) + 2 \mathbf{Bell_3}(a^{\mathbb{+}}a)  \quad (\mbox{See table \ref{table 13}})      \\
       % &=\sum_{k = 1}^{n} S(n, k) (a^{\mathbb{+}})a +5\sum_{k = 1}^{2} S(2, k) (a^{\mathbb{+}})^2 a^2 + 2\sum_{k = 1}^{3} S(3, k) (a^{\mathbb{+}})^3 a^3\\
       % &=1 + a^{\mathbb{+}}a + 5((a^{\mathbb{+}})^2 a^2+a^{\mathbb{+}}a )+ 2((a^{\mathbb{+}})^3 a^3+3(a^{\mathbb{+}})^2 a^2+a^{\mathbb{+}}a )  \\
        &=(-1)^n\sum_{r=0}^{5} \Phi_r \mathbf{invBell_r}(a^{\mathbb{+}}a)
    \end{align*}
Then from the table below, equation \ref{Kad} and definition \ref{anti} the proof is immediate. 
\begin{table}[!ht]
    \centering
    \begin{tabular}{|c|c|c|c|c|a|c|c|c|c|}\hline 
        Bell polynomials($\mathbf{invBell_n}(x)$)&$\mathbf{invBell}_n(a^{\mathbb{+}}a)$ polynomials  \\\hline 
         $\mathbf{invBell}_0(x) = 1$ &$\mathbf{Bell}_0(a^{\mathbb{+}}a) = 1$\\\hline
        $\mathbf{invBell}_1(x) = -x$ &$\mathbf{invBell}_1(a^{\mathbb{+}}a) =  -a^{\mathbb{+}}a$ \\\hline
          $\mathbf{invBell}_2(x) = -x+x^2$ & $\mathbf{invBell}_2(a^{\mathbb{+}}a)  = -a^{\mathbb{+}}a+(a^{\mathbb{+}})^2 a^2$ \\\hline
            $\mathbf{invBell}_3(x) = -x+3x^2-x^3$ & $\mathbf{invBell}_3(a^{\mathbb{+}}a) =-a^{\mathbb{+}}a+3(a^{\mathbb{+}})^2 a^2 -(a^{\mathbb{+}})^3 a^3$ \\\hline
              $\mathbf{invBell}_4(x) = -x+7x^2-6x^3+x^4$  & $\mathbf{invBell}_4(a^{\mathbb{+}}a) = -a^{\mathbb{+}}a+7(a^{\mathbb{+}})^2 a^2-6(a^{\mathbb{+}})^3 a^3+ (a^{\mathbb{+}})^4 a^4$ \\\hline
    \end{tabular}
    \caption{Relations between Bell numbers and Bose normal ordering \cite{OEIS:A003422} }
    \label{table 13}
\end{table}
\end{proof}

%\begin{theorem}
 %   Let $\mathbf{Gas_n}(a^{\mathbb{+}}a)=e^{a^{\mathbb{+}}a(e^{x-\sigma_i})}$ be the ordering of the boson number operator, then the Kurepa normal and anti-normal ordering(KNA) of the boson number operator is given by 
  %&  $$(\pm1)^n\lbrace K_n\rbrace_{n\geq 1}^{Gas}(a^{\mathbb{+}}a)=(\pm1)^n\sum_{r=1}^{n} \Phi_r \mathbf{Gas_r}(a^{\mathbb{+}}a). $$
   %where $\Phi_r$ is the coefficient of $\mathbf{Gas_r}(a^{\mathbb{+}}a).$
%\end{theorem}

%\begin{proof}
 %   The proof of this generalization is based on the immediate application of lemma \ref{Gas} and Theorem \ref{Gasn} to be treated in the following section.
%\end{proof}

\subsection{Planck's distribution and Bell numbers}
The occupation number in statistical mechanics has been one of the fundamental problem in knowning the number of particles residing in the specific quantum state(energy level) and sum of numbers gives the total number of particles in the system \cite{zhou2018canonical, shinde2025large, dai2004gentile, gentile1940itosservazioni, katsura1970intermediate}. In this section we shall consider partition function and investigate its connection with Bell numbers by assuming $x=\beta\mathcal{E}_{states}. $

Let the total energy $\mathbb{E}_a=\sum_a n_a \varepsilon_a$ and $\mathcal{N}=\sum_a n_a$ be 
 a gas of $\mathcal{N}$ identical particles then the partition function, $Z$ is given by 
\begin{equation}\label{Part}
    Z=\sum_a \exp{(-\beta \mathcal{N} \sum_a \varepsilon_a)}=\sum_a \exp{(-\beta \mathbb{E}_a )}=\sum_a e^{(-\beta \mathcal{N} \mathcal{E})}
\end{equation}
where $a=1,2,3\ldots$ is the state, $\beta=\dfrac{1}{TK}$ with temperature $T$ and $\mathbb{E}_a=\sum_a \varepsilon_a$.
The mean number of particles is given by
\begin{equation}\label{mean}
    \bar{\mathbf{n}}_{gas}=-\dfrac{1}{\beta}\frac{\partial \ln{Z}}{\partial \varepsilon_{gas}},
\end{equation}
one can also express the Maxwell-Boltzmann distribution \cite{planck1901law} as
\begin{equation}
    \bar{\mathbf{n}}_{gas}=\dfrac{1}{\beta}\frac{\partial \ln{Z}}{\partial \varepsilon_{gas}}=\mathcal{N}\dfrac{e^{(-\beta \mathcal{E}_{i^{th}state})}}{\sum_{states} e^{(-\beta \mathcal{N} \mathcal{E}_{states})}}
\end{equation}
now it is possible to rewrite the partition function equation \ref{Part}, as a geometric series
\begin{align*}
    Z&=\left(\sum_{n_1=0}^{\infty} e^{(-\beta n_1 \mathcal{E}_1)}\right)\left(\sum_{n_2=0}^{\infty} e^{(-\beta n_2 \mathcal{E}_2)}\right)\left(\sum_{n_3=0}^{\infty} e^{(-\beta n_3 \mathcal{E}_3)}\right)\left(\sum_{n_4=0}^{\infty} e^{(-\beta n_4 \mathcal{E}_4)}\right)\cdots\\
    &=\left(\dfrac{1}{1- e^{-\beta \mathcal{E}_1}}\right)\left(\dfrac{1}{1- e^{-\beta \mathcal{E}_2}}\right)\left(\dfrac{1}{1- e^{-\beta \mathcal{E}_3}}\right)\cdots\\
    &=\prod_{states}\dfrac{1}{1- e^{-\beta \mathcal{E}_{states}}}
\end{align*}
taking natural $\log$ on both sides we have:
\begin{equation}
    \ln{Z}= -\sum_{states} \ln{(1-e^{-\beta \mathcal{E}_{states}})}
\end{equation}
substituting this into the mean number, equation \ref{mean} gives:

\begin{align}
    \bar{\mathbf{n}}_{gas}=-\dfrac{1}{\beta}\frac{\partial \ln{Z}}{\partial \varepsilon_{gas}}&=\dfrac{1}{\beta}\frac{\partial}{\partial \varepsilon_{gas}} \ln{(1-e^{-\beta \mathcal{E}_{states}})}\\\nonumber
    &= \dfrac{e^{-\beta \mathcal{E}_{states}}}{1- e^{-\beta \mathcal{E}_{states}}}=\dfrac{1}{e^{\beta \mathcal{E}_{states}}-1}
\end{align}
This distribution is called the Planck's distribution \cite{planck1901law, planck1900theory, shinde2025large, greiner2012thermodynamics}.

\begin{proposition}\label{planck}
Let $x=\beta\mathcal{E}_{states}$, then Planck's distribution $$ \bar{\mathbf{n}}_{gas}=\dfrac{1}{e^{\beta \mathcal{E}_{states}}-1}$$ can be written as
\begin{align*}
\bar{\mathbf{n}}_{gas}=\dfrac{\ln{e^1}}{\ln{e^{(e^{\beta\mathcal{E}_{state}}-1)}}}&=\dfrac{1}{e^{x}-1}=\dfrac{\ln{e^1}}{\ln{e^{(e^{x}-1)}}}\\
&=\dfrac{\ln{e^1}}{\ln{\frac{e^{e^x}}{e}}}=\dfrac{\ln{e^1}}{\ln(\mathbf{Dob_n})-1}      
\end{align*}
From definition \ref{Bell}, we can rewrite the distribution as 
\begin{align*}
    \bar{\mathbf{n}}_{gas}\sim\dfrac{\ln{e^1}}{\ln(\mathbf{Dob_n})-1}=\dfrac{\ln{e^1}}{\ln{\mathbf{Bell_n}}}.
\end{align*}
\end{proposition}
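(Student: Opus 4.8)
The plan is to verify the displayed chain of equalities by elementary manipulation of the exponential and logarithm, and then to invoke the Dobinski--Bell correspondence already recorded in the paper. First I would simplify the proposed numerator and denominator using the identity $\ln e^{y}=y$. The numerator is $\ln e^{1}=1$, and with the substitution $x=\beta\mathcal{E}_{states}$ the denominator is $\ln e^{(e^{x}-1)}=e^{x}-1=e^{\beta\mathcal{E}_{states}}-1$. Hence
\begin{align*}
\frac{\ln e^{1}}{\ln e^{(e^{x}-1)}}=\frac{1}{e^{x}-1}=\frac{1}{e^{\beta\mathcal{E}_{states}}-1},
\end{align*}
which is precisely Planck's distribution $\bar{\mathbf{n}}_{gas}$ obtained above. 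This settles the first two equalities using nothing beyond $\ln e^{y}=y$.

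Next I would re-express the inner exponential. Applying $e^{a-b}=e^{a}/e^{b}$ with $a=e^{x}$ and $b=1$ gives $e^{(e^{x}-1)}=e^{e^{x}}/e$, so that
\begin{align*}
\ln e^{(e^{x}-1)}=\ln\frac{e^{e^{x}}}{e}=\ln e^{e^{x}}-\ln e=\ln e^{e^{x}}-1.
\end{align*}
The denominator is now written as $\ln(e^{e^{x}})-1$, and the only remaining task is to identify $e^{e^{x}}$ with the Dobinski number $\mathbf{Dob}_n$.

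The key step, and the one requiring the most care, is the identification $e^{e^{x}}\sim\mathbf{Dob}_n$, which is the reason the symbol $\sim$ appears rather than strict equality. This is not a pointwise equality of functions but the generating-function correspondence following from Definition \ref{Bell}: since $e^{e^{x}-1}=\sum_{n\geq0}\mathbf{Bell}_n\,x^{n}/n!$, multiplying by $e$ and using $\mathbf{Dob}_n=e\,\mathbf{Bell}_n$ yields $e^{e^{x}}=\sum_{n\geq0}\mathbf{Dob}_n\,x^{n}/n!$. Thus $e^{e^{x}}$ is exactly the exponential generating function whose $n$-th coefficient is $\mathbf{Dob}_n$, which justifies replacing $\ln e^{e^{x}}-1$ by $\ln\mathbf{Dob}_n-1$ in the coefficientwise sense.

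Finally I would substitute the logarithmic identity \ref{entropy}, namely $\ln\mathbf{Bell}_n=\ln\mathbf{Dob}_n-1$, into the denominator to obtain
\begin{align*}
\bar{\mathbf{n}}_{gas}\sim\frac{\ln e^{1}}{\ln\mathbf{Dob}_n-1}=\frac{\ln e^{1}}{\ln\mathbf{Bell}_n},
\end{align*}
which is the claimed form. The main obstacle is interpretive rather than computational: one must be explicit that the passage from the analytic function $e^{e^{x}}$ to the number $\mathbf{Dob}_n$ is a generating-function identification, so the final expression should be read as a formal rewriting of Planck's distribution in terms of Bell numbers rather than a literal equality of real numbers.
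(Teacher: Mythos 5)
Your proof is correct and follows essentially the same route as the paper, whose entire proof is a one-line appeal to Section~\ref{Entrolog}: elementary logarithm manipulations for the first equalities, followed by the identity $\ln\mathbf{Bell}_n=\ln(\mathbf{Dob}_n)-1$ from equation~\ref{entropy}. You additionally make explicit the point the paper glosses over—that replacing $e^{e^{x}}$ by $\mathbf{Dob}_n$ is a generating-function (coefficientwise) identification rather than a pointwise equality, which is why the $\sim$ appears—and this added care strengthens rather than departs from the paper's argument.
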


\begin{proof}
    The proof of this is straightforward from section \ref{Entrolog}.
\end{proof}

  \begin{theorem}\label{growth}
  From proposition \ref{planck}, the average mean number
      \begin{align*}
    \bar{\mathbf{n}}_{gas}\sim\dfrac{\ln{e^1}}{\ln{\mathbf{Bell}_n}}= \dfrac{1}{\ln{\mathbf{Bell}_n}},
\end{align*}
then
  %\end{theorem}  
%Now for any integer $i$ with $i\neq0$, $ n(\ln n - \ln\ln n - 1)$
\begin{align*}
\ln\mathbf{Bell_n}&=\dfrac{1}{\bar{\mathbf{n}}_{gas}}\\
n(\ln n - \ln\ln n - 1)&\sim\dfrac{1}{\bar{\mathbf{n}}_{gas}}\\
\dfrac{1}{n(\ln n - \ln\ln n - 1)}&\sim \bar{\mathbf{n}}_{gas}
\end{align*}
where 
\begin{align*}
\dfrac{\ln \mathbf{Bell}_n}{n}&=\ln n-\ln\ln n-1+\dfrac{\ln\ln n}{\ln n}+\frac1{\ln n}\\
&+\dfrac12\left(\dfrac{\ln\ln n}{\ln n}\right)^2+O\left(\dfrac{\ln\ln n}{(\ln n)^2}\right).
\end{align*}
is the Brujin`s Bell growth bound \cite{lovasz2007combinatorial, de2014asymptotic}.
 \end{theorem}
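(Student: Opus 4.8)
The plan is to convert the physical statement from Proposition \ref{planck} into a purely asymptotic claim about Bell numbers and then feed in the classical de Bruijn expansion. First I would take the equivalence $\bar{\mathbf{n}}_{gas} \sim 1/\ln \mathbf{Bell}_n$ coming from Proposition \ref{planck} (using $\ln e^1 = 1$) and invert it to read $\ln \mathbf{Bell}_n \sim 1/\bar{\mathbf{n}}_{gas}$. This inversion is immediate because $\bar{\mathbf{n}}_{gas}$ is strictly positive for every physical temperature, so the two quantities are asymptotically reciprocal. At this stage the theorem is reduced to pinning down the growth rate of $\ln \mathbf{Bell}_n$.

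The substantive step is to establish the leading-order asymptotics of $\ln \mathbf{Bell}_n$. Here I would apply the saddle-point (Laplace) method to Dobinski's representation $\mathbf{Bell}_n = e^{-1}\sum_{k\geq 0} k^n/k!$ from Definition \ref{Bell}, or equivalently to the exponential generating function $e^{e^x-1}$ recorded in Section \ref{sec}. Writing $r = r(n)$ for the positive saddle determined by $r e^r = n$, i.e. $r = W(n)$ with $W$ the Lambert function, the standard estimate together with Stirling's formula for $n!$ gives $\ln \mathbf{Bell}_n = n\ln n - n\ln r - n + n/r + O(\ln n)$. Expanding $r = \ln n - \ln\ln n + (\ln\ln n)/\ln n + \cdots$ and substituting reproduces precisely the de Bruijn series displayed in the statement; in particular its three dominant terms yield $\ln \mathbf{Bell}_n / n = \ln n - \ln\ln n - 1 + o(1)$, hence $\ln \mathbf{Bell}_n \sim n(\ln n - \ln\ln n - 1)$.

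Finally I would combine the two asymptotics: inserting $\ln \mathbf{Bell}_n \sim n(\ln n - \ln\ln n - 1)$ into $\ln \mathbf{Bell}_n \sim 1/\bar{\mathbf{n}}_{gas}$ produces $n(\ln n - \ln\ln n - 1) \sim 1/\bar{\mathbf{n}}_{gas}$, and a last inversion delivers the claimed $\bar{\mathbf{n}}_{gas} \sim 1/[n(\ln n - \ln\ln n - 1)]$. I expect the main obstacle to be the careful bookkeeping in the de Bruijn expansion, namely tracking which error terms from the saddle-point analysis and from the asymptotics of $W(n)$ genuinely survive down to the order $O((\ln\ln n)/(\ln n)^2)$ quoted in the statement. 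Since the full expansion is a classical but delicate result, in practice I would cite \cite{de2014asymptotic} for the precise series and verify only the leading-order reduction $\ln \mathbf{Bell}_n \sim n(\ln n - \ln\ln n - 1)$ directly, as that is all the asymptotic equivalences in the theorem actually require.
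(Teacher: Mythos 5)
Your proposal is correct and follows essentially the same route as the paper: the paper's own argument for Theorem \ref{growth} is exactly the inversion of the relation $\bar{\mathbf{n}}_{gas}\sim 1/\ln\mathbf{Bell}_n$ from Proposition \ref{planck}, followed by substitution of the cited de Bruijn expansion for $\ln\mathbf{Bell}_n/n$, with no independent derivation of that expansion. Your additional saddle-point sketch (saddle $re^r=n$, $r=W(n)$, Stirling) is a sound way to justify the cited asymptotics, but since you ultimately defer to \cite{de2014asymptotic} just as the paper does, the two arguments coincide in substance.
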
  

\begin{theorem}
In a bosonic system, if $\sum_{states}(\bar{\mathbf{n}}_{gas})=\mathcal{N}$ which is a constant, then $\sum_{states}(\bar{\mathbf{n}}_{gas})^{-1}$ is also a constant say $\mathcal{P}.$ The Kurepa sequence 
\begin{equation*}
    \lbrace K_n\rbrace_{n\geq 1}=\exp{\mathcal{P}} 
\end{equation*}
if the number of bosons is conserved. 
\end{theorem}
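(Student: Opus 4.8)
The plan is to chain the asymptotic occupation-number identity of Theorem~\ref{growth} together with the logarithmic decomposition of the Kurepa sequence in Theorem~\ref{lin}. The starting point is that number conservation fixes $\mathcal{N}=\sum_{states}\bar{\mathbf{n}}_{gas}$; with a conserved boson count the accessible states and their mean occupations are determined, so the reciprocal sum $\mathcal{P}=\sum_{states}(\bar{\mathbf{n}}_{gas})^{-1}$ is a finite, well-defined constant. This already accounts for the first assertion of the theorem, that $\mathcal{P}$ is constant whenever $\mathcal{N}$ is.

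First I would substitute the Planck-distribution growth result. By Theorem~\ref{growth}, $\bar{\mathbf{n}}_{gas}\sim 1/\ln\mathbf{Bell}_n$, and therefore $(\bar{\mathbf{n}}_{gas})^{-1}\sim\ln\mathbf{Bell}_n$. Summing over states gives
\begin{align*}
\mathcal{P}=\sum_{states}(\bar{\mathbf{n}}_{gas})^{-1}\sim\sum_{states}\ln\mathbf{Bell}_n,
\end{align*}
so that $\mathcal{P}$ is recast as a weighted sum of logarithms of Bell numbers.

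Next I would invoke Theorem~\ref{lin}, namely $\ln\lbrace K_n\rbrace_{n\geq 1}=\mathcal{Q}+\sum_{i=1}^{n}\mathbf{\Phi}_i\ln\mathbf{Bell}_i$ with $\mathcal{Q}=\ln(Constant)$. Matching the physical reciprocal sum against this combinatorial sum, with the additive constant $\mathcal{Q}$ absorbed into the normalization of $\mathcal{P}$ that is fixed by the conserved number, yields $\mathcal{P}=\ln\lbrace K_n\rbrace_{n\geq 1}$. Exponentiating both sides then gives $\lbrace K_n\rbrace_{n\geq 1}=\exp{\mathcal{P}}$, which is the desired conclusion.

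The hard part will be the identification step. One must argue that the state-indexed weights produced by the physical reciprocal sum coincide with the combinatorial multiplicities $\mathbf{\Phi}_i$ appearing in Theorem~\ref{lin}, and that the asymptotic relation $\sim$ of Theorem~\ref{growth} can be promoted to the exact equality the statement demands. The crux is therefore to show that conservation of boson number is precisely the condition selecting the weighting that reproduces the Kurepa logarithmic decomposition, so that $\sum_{states}(\bar{\mathbf{n}}_{gas})^{-1}$ equals $\ln\lbrace K_n\rbrace_{n\geq 1}$ exactly rather than merely asymptotically or up to proportionality.
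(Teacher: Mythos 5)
Your proposal follows essentially the same route as the paper's own proof: invoke Theorem~\ref{growth} to identify $(\bar{\mathbf{n}}_{gas})^{-1}$ with $\ln\mathbf{Bell}_n$, sum over states, match the result with $\ln\lbrace K_n\rbrace_{n\geq 1}$ via Theorem~\ref{lin} by absorbing $\mathcal{Q}$ and the $\mathbf{\Phi}_i$, then exponentiate. The ``hard part'' you flag---equating the state-indexed weights with the $\mathbf{\Phi}_i$ and promoting the asymptotic $\sim$ to exact equality---is precisely the step the paper itself performs by fiat (``the constants can be absorbed''), so your account is faithful to the published argument.
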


\begin{proof}
    From theorem \ref{growth},  $ \ln\mathbf{Bell_n}=(\bar{\mathbf{n}}_{gas})^{-1}$ and also from theorem \ref{lin}
$$ \ln \lbrace K_n\rbrace_{n\geq 1}=\mathcal{Q}+\sum_{i=1}^{n}\mathbf{\Phi}_i \ln\mathbf{Bell}_i=\mathcal{Q}+\sum_{i=1}^{n}\mathbf{\Phi}_i\left[ \ln \sum_{k\geq 1}^{n} S(n,k)\right].$$
The constants $\mathcal{Q}$ and $\mathbf{\Phi}_i $ can be absorbed in the $\ln\mathbf{Bell_n}$, thus we have 
\begin{align}
\sum_{i=1}^{n}  \ln\mathbf{Bell_i}=\sum_{states}(\bar{\mathbf{n}}_{gas})^{-1}= \ln \lbrace K_n\rbrace_{n\geq 1}.
\end{align}
Since a bosonic system must satisfy the condition $\sum_{states}(\bar{\mathbf{n}}_{gas})=\mathcal{N}$, \\where $\mathcal{N}$ is the total number of bosons in the system. We easily see that 
\begin{equation*}
    \ln \lbrace K_n\rbrace_{n\geq 1}=\mathcal{P},
\end{equation*}
and the Kurepa sequence 
\begin{equation}
    \lbrace K_n\rbrace_{n\geq 1}=\exp{\mathcal{P}} =e^{\mathcal{P}}
\end{equation}
this completes the proof.
\end{proof}

\subsection{Particle numbers( Fermi numbers)}
The exponential generating function $e^{(e^{x}+1)}$ yields an important phenomenon in elementary particle with spin half. We shall simply call these observations the Fermi numbers. Let
\begin{align}\label{eqnfermi}
     e^{(\exp{(x)}+1)}&= e \cdotp e^{\exp{(x)}}=\exp(1)\cdotp  \left(\sum_{k=0}^{\infty}\dfrac{k^n}{k!}\sum_{n=0}^{\infty}\dfrac{x^n}{n!} \right)\\ \nonumber
     &=\sum_{k=0}^{n}\mathbf{Fermi_n}\frac{x^k}{k!}=  e^{(\exp{(x)}+1)}
     %&=(-1)^k\mathbf{Dob}_k \exp{(x+1)}=\mathbf{C_k}\exp(x).
\end{align}
One immediately realizes the role Dobinski numbers play in these Fermi numbers, and the definition easily follows. 

\begin{definition}[Fermi numbers]
The Fermi numbers are given by 
\begin{align*}
     \mathbf{Fermi_{n}}&=e\sum_{k=0}^{\infty}\dfrac{k^n}{k!}= e\cdotp\mathbf{Dob_n }
\end{align*}
    for all nonnegative integer $n.$
\end{definition}
Also, 
\begin{align}
   \mathbf{Dob_n }=\dfrac{\mathbf{Fermi_{n}}}{\exp(1)}=\dfrac{\mathbf{Fermi_{n}}}{e}.
\end{align}
Below are few list of Fermi numbers:
\begin{align*}
    e^2&= e\sum_{n=1}\dfrac{k}{k!}=e\mathbf{Dob}_1= \mathbf{Fermi_{1}}\\ 
    2e^2&= e\sum_{n=2}\dfrac{k^2}{k!}=e\mathbf{Dob}_2= \mathbf{Fermi_{2}}\\ 
    5e^2&=e\sum_{n=3}\dfrac{k^3}{k!}=e\mathbf{Dob}_3= \mathbf{Fermi_{3}}\\ 
    15 e^2&= e\sum_{n=4}\dfrac{k^4}{k!}=e\mathbf{Dob}_4= \mathbf{Fermi_{4}}\\
    52e^2&= e\sum_{n=5}\dfrac{k^5}{k!}=e\mathbf{Dob}_5= \mathbf{Fermi_{5}}\\ 
    203e^2&= e\sum_{n=6}\dfrac{k^6}{k!}=e\mathbf{Dob}_6= \mathbf{Fermi_{6}}\\
    877e^2&= e\sum_{n=7}\dfrac{k^7}{k!}=e\mathbf{Dob}_7= \mathbf{Fermi_{7}} \\
     &\vdots \quad \quad \quad \quad \vdots \quad \quad \quad\vdots \quad \quad \quad \quad \quad \quad \quad\\
    e^2\mathbf{Bell_{n}}&=e\sum_{k=1}^{\infty}\dfrac{k^n}{k!}=e\mathbf{Dob}_{n}= \mathbf{Fermi_{n}}
\end{align*}

\begin{theorem}
The sum of Fermi numbers is the product of $(\exp{(1)})^2$ and the Kurepa sequence $ \lbrace K_n\rbrace_{n\geq 1}$, that is,
$$\sum_{r=0}^{n}\Phi_r\mathbf{Fermi_r}=e^2 \cdotp\lbrace K_n\rbrace_{n\geq 1}.$$
\end{theorem}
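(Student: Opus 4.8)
The plan is to reduce the claim to the Bell-number decomposition of the Kurepa sequence established earlier, since the Fermi numbers are nothing more than a constant rescaling of the Bell numbers. First I would recall the two facts that make this immediate: from the definition of the Fermi numbers we have $\mathbf{Fermi_r}=e\cdotp\mathbf{Dob_r}$, and since $\mathbf{Dob_r}=e\,\mathbf{Bell_r}$ by Definition \ref{Bell}, this collapses to the single identity $\mathbf{Fermi_r}=e^2\,\mathbf{Bell_r}$ valid for every nonnegative $r$. This is the only structural input needed on the Fermi side.

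Next I would invoke Theorem \ref{Kprove}, which furnishes the decomposition $\lbrace K_n\rbrace_{n\geq 1}=\sum_{r=1}^{n}\Phi_r\,\mathbf{Bell_r}$ with the very same coefficients $\Phi_r$ that appear in the statement. The substitution is then purely mechanical: starting from the left-hand side, replace each $\mathbf{Fermi_r}$ by $e^2\,\mathbf{Bell_r}$, factor the constant $e^2$ out of the finite sum, and recognize the remaining sum as the Kurepa sequence. Concretely,
\begin{align*}
\sum_{r=0}^{n}\Phi_r\,\mathbf{Fermi_r}
&=\sum_{r=0}^{n}\Phi_r\,(e^2\,\mathbf{Bell_r})
=e^2\sum_{r=0}^{n}\Phi_r\,\mathbf{Bell_r}
=e^2\cdotp\lbrace K_n\rbrace_{n\geq 1},
\end{align*}
where the index shift between $r=0$ and $r=1$ is harmless because $\mathbf{Bell_0}=\mathbf{Bell_1}$, exactly as remarked in Theorem \ref{Kprove}.

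The only point requiring a line of justification, rather than being automatic, is that the coefficients $\Phi_r$ attached to $\mathbf{Fermi_r}$ are genuinely the \emph{same} constants as those attached to $\mathbf{Bell_r}$ in the Kurepa decomposition. This holds by construction: both decompositions descend from the common Dobinski expansion $\lbrace K_n\rbrace_{n\geq 1}\cdotp e=\sum_r\Phi_r\,\mathbf{Dob_r}$ of Theorem \ref{Dk}, so the $\Phi_r$ are fixed once and for all by the value of $n$ and do not change when one passes between the $\mathbf{Dob}$, $\mathbf{Bell}$, and $\mathbf{Fermi}$ scalings. I therefore expect no genuine obstacle here; the result is essentially a corollary of Theorem \ref{Kprove} together with the proportionality $\mathbf{Fermi_r}=e^2\,\mathbf{Bell_r}$, and the proof can be presented in a few lines.
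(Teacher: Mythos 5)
Your proposal is correct and is essentially the paper's own argument: the paper likewise reduces the claim to the Kurepa decomposition of Theorem \ref{Dk} (working with $\mathbf{Fermi_r}=e\cdot\mathbf{Dob_r}$ and multiplying the Dobinski expansion by $e$, illustrated on the $n=8$ case), which is the same constant-rescaling step you perform via $\mathbf{Fermi_r}=e^2\,\mathbf{Bell_r}$ and Theorem \ref{Kprove}. Your write-up is slightly cleaner in that it states the substitution for general $n$ and makes explicit why the coefficients $\Phi_r$ carry over unchanged, but the underlying route is identical.
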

\begin{proof}
We proof for $n=8$
\begin{align*} 
  \lbrace K_n\rbrace_{n\geq 1}\cdotp e^2 &= \mathbf{Dob_1}e + 8\mathbf{Dob_2}e + 2\mathbf{Dob_3}e +56\mathbf{Dob_4}e + \mathbf{Dob_5}e \\& \quad + 4\mathbf{Dob_6}e\\ \nonumber
       e^2\lbrace K_n\rbrace_{n\geq 1} & =  \mathbf{Fermi_1} + 8\mathbf{Fermi_2} + 2\mathbf{Fermi_3} +56\mathbf{Fermi_4} + \mathbf{Fermi_5}\\ & \quad + 4\mathbf{Fermi_6}\\
     \mbox{the results is straight forward.} 
\end{align*}
\end{proof}

\begin{theorem}
    The product of the Kurepa sequence $ e\lbrace K_n\rbrace_{n\geq 1}$ and the ordinary factorial numbers $r!$ is the sum of the product of the derangement numbers with the Dobinski numbers $\mathbf{Der_n \cdotp Fermi_n}$, that is, 
    $$ e\lbrace K_n\rbrace_{n\geq 1} \cdotp r! = \sum_{k=1}^{n}\Phi_k (\mathbf{Der_k \cdotp Fermi_k})$$
\end{theorem}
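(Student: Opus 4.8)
The plan is to reduce the statement to the Dobinski decomposition of the Kurepa sequence established in Theorem \ref{Dk}, combined with the two defining identities for the Fermi and derangement numbers. First I would recall from Theorem \ref{Dk} that
\begin{align*}
e\lbrace K_n\rbrace_{n\geq 1}=\sum_{r=1}^{n}\Phi_r\,\mathbf{Dob_r},
\end{align*}
where the coefficients $\Phi_r$ are the ($n$-dependent) constants tabulated there. Multiplying both sides by the ordinary factorial then yields
\begin{align*}
e\lbrace K_n\rbrace_{n\geq 1}\cdotp r!=r!\sum_{r=1}^{n}\Phi_r\,\mathbf{Dob_r}=\sum_{r=1}^{n}\Phi_r\left(r!\cdotp\mathbf{Dob_r}\right),
\end{align*}
which isolates the products $r!\cdotp\mathbf{Dob_r}$ as the only quantities requiring rewriting.

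The key step is the termwise identity $r!\cdotp\mathbf{Dob_r}=\mathbf{Der_r}\cdotp\mathbf{Fermi_r}$. This follows directly from the definitions: since $\mathbf{Der_r}=r!/e$ we have $r!=e\cdotp\mathbf{Der_r}$, and since $\mathbf{Fermi_r}=e\cdotp\mathbf{Dob_r}$ the two factors of $e$ cancel,
\begin{align*}
r!\cdotp\mathbf{Dob_r}=\left(e\cdotp\mathbf{Der_r}\right)\mathbf{Dob_r}=\mathbf{Der_r}\left(e\cdotp\mathbf{Dob_r}\right)=\mathbf{Der_r}\cdotp\mathbf{Fermi_r}.
\end{align*}
Substituting this back gives $e\lbrace K_n\rbrace_{n\geq 1}\cdotp r!=\sum_{k=1}^{n}\Phi_k\left(\mathbf{Der_k}\cdotp\mathbf{Fermi_k}\right)$, as claimed. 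Exactly as in the proofs of Theorem \ref{ok} and Theorem \ref{nine}, I would confirm the coefficient bookkeeping on the explicit case $n=8$, where $e\lbrace K_8\rbrace_{n\geq 1}=\mathbf{Dob_1}+8\mathbf{Dob_2}+2\mathbf{Dob_3}+56\mathbf{Dob_4}+\mathbf{Dob_5}+4\mathbf{Dob_6}$ and the identity converts each Dobinski term into the corresponding $\mathbf{Der_k}\cdotp\mathbf{Fermi_k}$ product.

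The one delicate point, and the main thing to make explicit, is the notational convention already used in Theorem \ref{ok}: the single factorial $r!$ appearing on the left is to be distributed across the sum so that the $k$-th summand carries the running factorial $k!$, giving $k!\cdotp\mathbf{Dob_k}=\mathbf{Der_k}\cdotp\mathbf{Fermi_k}$ index by index. Once this reading is fixed, no further computation is needed, since the result is simply $e$ times the decomposition of Theorem \ref{ok} with $\mathbf{Fermi_k}=e\cdotp\mathbf{Dob_k}$ substituted for the Dobinski factor. I therefore expect the only genuine work to lie in stating this convention clearly rather than in any analytic difficulty.
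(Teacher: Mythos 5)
Your proposal is correct and takes essentially the same route as the paper's own proof: the paper likewise starts from the decomposition of Theorem \ref{Dk}, writing $e\lbrace K_8\rbrace_{n\geq 1}$ as $\sum_k \Phi_k\,\mathbf{Fermi_k}/e$ (i.e.\ $\sum_k \Phi_k\,\mathbf{Dob_k}$), multiplies through by $r!$, and converts $r!\,e^{-1}$ into the derangement numbers term by term, with the same $n=8$ coefficient bookkeeping and the same index convention you flag as the delicate point. The only cosmetic difference is where the factor of $e$ sits: the paper carries $\mathbf{Fermi_k}/e$ from the outset, whereas you invoke the termwise identity $k!\,\mathbf{Dob_k}=\mathbf{Der_k}\cdot\mathbf{Fermi_k}$, which is the same algebra.
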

\begin{proof}Let for $n=8$ and  $$e\lbrace K_8\rbrace_{n\geq 1} = \dfrac{\mathbf{Fermi_1} }{e} + 8\dfrac{\mathbf{Fermi_2}}{e} +2\dfrac{\mathbf{Fermi_3}}{e}+ 56\dfrac{\mathbf{Fermi_4}}{e}+ \dfrac{\mathbf{Fermi_5}}{e}+ 4\dfrac{\mathbf{Fermi_6}}{e}$$
\begin{align*}
      \textbf{Kurepa sequence} \cdotp r!e&=  e\lbrace K_n\rbrace_{n\geq 1} \cdotp r! \\
      &=r! \left(  \dfrac{\mathbf{Fermi_1} }{e} + 8\dfrac{\mathbf{Fermi_2}}{e} +2\dfrac{\mathbf{Fermi_3}}{e}+ 56\dfrac{\mathbf{Fermi_4}}{e}+ \dfrac{\mathbf{Fermi_5}}{e} \right. \\
     &\left.  \quad + 4\dfrac{\mathbf{Fermi_6}}{e}\right) \\
   %\lbrace K_n\rbrace_{n\geq 1} \cdotp r! 
   &=  r!\dfrac{\mathbf{Fermi_1} }{e} + 8\cdotp r!\dfrac{\mathbf{Fermi_2}}{e} +2\cdotp r!\dfrac{\mathbf{Fermi_3}}{e}+ 56\cdotp r!\dfrac{\mathbf{Fermi_4}}{e}\\
   &+r!\dfrac{\mathbf{Fermi_5}}{e}+ 4\cdotp r!\dfrac{\mathbf{Fermi_6}}{e}\\
   %\lbrace K_n\rbrace_{n\geq 1}\cdotp r! 
   &= r!e^{-1}\mathbf{Fermi_1} + 8(r!e^{-1})\mathbf{Fermi_2} + 2(r!e^{-1})\mathbf{Fermi_3} + 56(r!e^{-1})\mathbf{Fermi_4} \\
  & \quad + (r!e^{-1})\mathbf{Fermi_5}+ 4(r!e^{-1})\mathbf{Fermi_6}\\
   %\lbrace K_n\rbrace_{n\geq 1} \cdotp r! 
   e\lbrace K_n\rbrace_{n\geq 1} \cdotp r! &= \mathbf{Der_1} \mathbf{Fermi_1} + 8\mathbf{Der_2} \mathbf{Fermi_2} + 2 \mathbf{Der_3} \mathbf{Fermi_3} +56 \mathbf{Der_4} \mathbf{Fermi_4} \\ 
  &\quad+ \mathbf{Der_5} \mathbf{Fermi_5}+ 4 \mathbf{Der_6} \mathbf{Fermi_6}
\end{align*}
\end{proof}

\begin{theorem}
    The product of ordinary factorial numbers $k!$ with the sum of Dobinski numbers $\mathbf{Dob_n}$ is the sum of the product of the derangement numbers with Dobinski numbers $\mathbf{Der_n \cdotp Fermi_n}$, that is,
    $$ k!\sum_{r=0}^n   \mathbf{Dob_r}=  \sum_{k=0}^n \Phi_k (\mathbf{Der_k \cdotp Fermi_k})$$
    where $\Phi_k>0$ is a constant.
\end{theorem}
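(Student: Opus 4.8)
The plan is to collapse each summand on the right-hand side to an ordinary factorial times a Dobinski number, and then read the external factorial on the left into the sum termwise. First I would recall the two defining relations already established: from the Fermi-number definition, $\mathbf{Fermi_r}=e\cdot\mathbf{Dob_r}$, and from Clarke's formula for the derangement (subfactorial) numbers, $\mathbf{Der_r}=r!/e=r!\,e^{-1}$. Multiplying these gives the single termwise identity
\begin{align*}
\mathbf{Der_r}\cdot\mathbf{Fermi_r}=\left(r!\,e^{-1}\right)\left(e\cdot\mathbf{Dob_r}\right)=r!\,\mathbf{Dob_r},
\end{align*}
so every product of a derangement number with a Fermi number reduces to a plain factorial multiple of a Dobinski number.

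Next I would substitute this identity into the right-hand sum, which immediately yields
\begin{align*}
\sum_{k=0}^n \Phi_k\left(\mathbf{Der_k}\cdot\mathbf{Fermi_k}\right)=\sum_{k=0}^n \Phi_k\,k!\,\mathbf{Dob_k}.
\end{align*}
It then remains to match this against the left-hand side $k!\sum_{r=0}^n\mathbf{Dob_r}$. Here I would invoke the same index-reading convention used in the proof of Theorem~\ref{nine} and in the preceding Fermi theorem: the factorial attached to the sum is tied to the running index of each Dobinski number, so that the $r$-th term carries the factor $r!$ rather than a fixed external $k!$. Under this reading the left side becomes $\sum_{r=0}^n \Phi_r\,r!\,\mathbf{Dob_r}$, which is exactly the expression obtained above once each coefficient $\Phi_r$ is identified with the constant produced when $r!\,\mathbf{Dob_r}$ is rewritten as $\mathbf{Der_r}\cdot\mathbf{Fermi_r}$.

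The main obstacle, and the only genuinely delicate point, is precisely this index-matching step. Taken literally, $k!\sum_{r}\mathbf{Dob_r}$ carries one fixed factorial outside the sum, whereas the right-hand side requires an index-dependent factorial inside; the rigorous justification is that the factorial must be read as the index-matched $r!$ accompanying each $\mathbf{Dob_r}$, equivalently that one verifies the identity $\mathbf{Der_r}\cdot\mathbf{Fermi_r}=r!\,\mathbf{Dob_r}$ separately for each $r$ and then sums. Once this normalization is fixed the equality is immediate, and the positivity $\Phi_k>0$ follows since $\mathbf{Der_k}$, $\mathbf{Fermi_k}$, and $\mathbf{Dob_k}$ are all strictly positive for the relevant indices.
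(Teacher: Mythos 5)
Your proof takes essentially the same route as the paper's: both collapse each summand via the termwise identity $\mathbf{Der_r}\cdot \mathbf{Fermi_r}=(r!\,e^{-1})(e\cdot \mathbf{Dob_r})=r!\,\mathbf{Dob_r}$ and then sum against the coefficients $\Phi_r$, the paper carrying this out on the explicit $n=8$ expansion with coefficients $1,8,2,56,\dots$ inherited from the Kurepa decomposition. The only difference is that you explicitly flag and repair the index conflation between the fixed external $k!$ and the index-matched $r!$ --- a step the paper performs silently when it writes $k!\,e^{-1}\mathbf{Fermi_n}=\mathbf{Der_n}\cdot \mathbf{Fermi_n}$ --- so your argument is, if anything, more careful on the one genuinely delicate point while following the same approach.
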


\begin{proof}
From Theorem \ref{Sum} and Theorem \ref{ok}, we can see that,
\begin{align*}
k! \cdotp e\mathbf{Bell_n}&= k!(e^{-1})\mathbf{Dob_n}e\\
k! \mathbf{Dob_n}&= k! e^{-1}\mathbf{Fermi_n} = \mathbf{Der_n}\cdotp \mathbf{Fermi_n}\\
 k!\sum_{r=0}^n \mathbf{Dob_r} &=k!\left(e\mathbf{Bell_1}+ 8 e\mathbf{Bell_2}  + 2 e\mathbf{Bell_3} + 56\mathbf{Bell_4}+\cdots\right)\\
&= k!e^{-1}\mathbf{Fermi_1} + 8(k!e^{-1})\mathbf{Fermi_2} + 2(k!e^{-1})\mathbf{Fermi_3} + 56(k!e^{-1})\mathbf{Fermi_4}\\
& \quad +\cdots \\ 
&= \mathbf{Der_1} \mathbf{Fermi_1} + 8\mathbf{Der_2} \mathbf{Fermi_2} + 2 \mathbf{Der_3} \mathbf{Fermi_3} +56 \mathbf{Der_4} \mathbf{Fermi_4}+  \cdots
    \end{align*}
%this finishes the proof.    
\end{proof}
\bigskip

Now we look at the complementary Fermi numbers(inverse Fermi numbers);
\begin{align}
     e^{-(\exp{(x)}+1)}&= \dfrac{e^{-\exp{(x)}}}{e}=\dfrac{1}{e}  \left(\sum_{k=0}^{\infty}(-1)^k \dfrac{k^n}{k!}\sum_{n=0}^{\infty}\dfrac{x^n}{n!} \right)\\ \nonumber
     &=\mathbf{invFermi_k}\exp(x)= \dfrac{\mathbf{invDob_k}}{e}\exp{(x)}
    % &=(-1)^k\mathbf{Dob}_k \exp{(x+1)}=\mathbf{C_k}\exp(x).
\end{align}

\begin{definition}[inverse Fermi numbers]
The inverse Fermi numbers are given by the series
\begin{align*}
     \mathbf{invFermi_{n}}&=\dfrac{1}{e}\sum_{n=1}^{\infty}(-1)^k \dfrac{k^n}{k!}= \dfrac{\mathbf{invDob_n }}{e}
\end{align*}
    for all for nonnegative integer $n$
\end{definition}
Also, 
\begin{align}
   \mathbf{invDob_n }=\mathbf{invFermi_{n}}\cdotp\exp(1)=\exp(1) \cdotp \mathbf{invFermi_{n}}
\end{align}

Below are some few examples for all $k\geq 0$;
\begin{align*}
    -1&=\sum_{n=1}(-1)^k\dfrac{k}{k!}=\mathbf{invFermi}_1\cdotp e\\ \nonumber
   0&=\sum_{n=2}(-1)^k\dfrac{k^2}{k!}=\mathbf{invFermi}_2\cdotp e\\ \nonumber
    1&=\sum_{n=3}(-1)^k\dfrac{k^3}{k!}=\mathbf{invFermi}_3\cdotp e\\ \nonumber
    1&=\sum_{n=4}(-1)^k\dfrac{k^4}{k!}=\mathbf{invFermi}_4\cdotp e\\ \nonumber
   -2&= \sum_{n=5}(-1)^k\dfrac{k^5}{k!}=\mathbf{invFermi}_5\cdotp e\\ \nonumber
  -9&= \sum_{n=6}(-1)^k\dfrac{k^6}{k!}=\mathbf{invFermi}_6\cdotp e\\ \nonumber
   -9&= \sum_{n=7}(-1)^k\dfrac{k^7}{k!}=\mathbf{invFermi}_7\cdotp e \\ \nonumber
     &\vdots \quad \quad \quad \quad \vdots \quad \quad \quad\vdots \quad \quad \quad \quad \quad \quad \quad\\
    \mathbf{invDob_{n}}&=\sum_{n=1}^{\infty}(-1)^k\dfrac{k^n}{k!}=\mathbf{invFermi}_{n}\cdotp e
\end{align*}

\begin{theorem}
The sum of $\mathbf{invFermi_r}$ is given by
$$\sum_{r=0}^{n} \Phi_r \mathbf{invFermi_r}=\dfrac{\lbrace A_n^s\rbrace_{n\geq 1}}{ e^{2}}$$
\end{theorem}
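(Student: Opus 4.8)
The plan is to reduce this statement to the inverse-Dobinski expansion already obtained for the shifted alternating Kurepa sequence, exactly paralleling how the positive-case Fermi theorem reduced to the Dobinski expansion of $\lbrace K_n\rbrace_{n\geq 1}$. First I would recall the defining relation of the inverse Fermi numbers, namely $\mathbf{invFermi_r}=\dfrac{\mathbf{invDob_r}}{e}$, which is the single structural fact that drives everything. Substituting this into the left-hand side and pulling the constant factor out of the finite sum gives
\begin{align*}
\sum_{r=0}^{n} \Phi_r \mathbf{invFermi_r}
&= \sum_{r=0}^{n} \Phi_r \frac{\mathbf{invDob_r}}{e}
= \frac{1}{e}\sum_{r=0}^{n} \Phi_r \mathbf{invDob_r}.
\end{align*}

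Next I would invoke Theorem \ref{Aliter}, whose proof established the inverse-Dobinski decomposition of the shifted alternating Kurepa sequence, that is,
\begin{align*}
\lbrace A_n^s\rbrace_{n\geq 1}\cdot e^{-1}=\sum_{r=0}^{n}\Phi_r \mathbf{invDob_r},
\end{align*}
with the same coefficients $\Phi_r$ appearing in the statement to be proved (the $n=5$ computation in that proof, with coefficients $1,1,2,2,20,50$, serves as the concrete template). Solving this for the inner sum yields $\sum_{r=0}^{n}\Phi_r \mathbf{invDob_r}=\dfrac{\lbrace A_n^s\rbrace_{n\geq 1}}{e}$, and inserting it into the previous display gives
\begin{align*}
\sum_{r=0}^{n} \Phi_r \mathbf{invFermi_r}
= \frac{1}{e}\cdot \frac{\lbrace A_n^s\rbrace_{n\geq 1}}{e}
= \frac{\lbrace A_n^s\rbrace_{n\geq 1}}{e^{2}},
\end{align*}
which is the claim.

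I would present this as a short computation rather than an induction, since both defining identities are already available and the coefficients $\Phi_r$ match term by term. There is no genuine obstacle here beyond bookkeeping: the only point demanding care is tracking the two independent factors of $e^{-1}$ — one coming from the definition of $\mathbf{invFermi_r}$ relative to $\mathbf{invDob_r}$, and the other from the $e^{-1}$ already carried by the inverse-Dobinski expansion of $\lbrace A_n^s\rbrace_{n\geq 1}$ — so that their product correctly produces the $e^{-2}$ in the conclusion. This is the mirror image of the earlier Fermi theorem, where the two factors of $e$ combined to give $e^{2}\cdot\lbrace K_n\rbrace_{n\geq 1}$; here the alternating sign structure replaces $\mathbf{Dob_r}$ by $\mathbf{invDob_r}$ and the positive factors of $e$ by negative ones, leaving the algebraic skeleton of the argument unchanged.
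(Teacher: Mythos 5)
Your proposal is correct and follows essentially the same route as the paper's own proof: both combine the definition $\mathbf{invFermi_r}=\mathbf{invDob_r}/e$ with the inverse-Dobinski decomposition $\lbrace A_n^s\rbrace_{n\geq 1}\cdot e^{-1}=\sum_{r}\Phi_r\,\mathbf{invDob_r}$ from Theorem \ref{Aliter}, the only difference being that the paper multiplies that decomposition through by $e^{-1}$ (illustrated concretely for $n=5$) while you run the same substitution in the opposite direction starting from the sum. The bookkeeping of the two factors of $e^{-1}$ in your write-up is exactly what the paper's displayed computation carries out.
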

\begin{proof} we shall prove this with just an example by considering $n=5,$
    \begin{align*}
\lbrace A_5^s\rbrace_{n\geq 1}\cdotp e^{-1}&=\mathbf{invDob_0} + \mathbf{invDob_2} + 2\cdotp \mathbf{invDob_3} +2\cdotp \mathbf{invDob_5} + 20\cdotp \mathbf{invDob_4} \\
&\quad + 50\cdotp \mathbf{invDob_{5}}\\
\lbrace A_5^s\rbrace_{n\geq 1}\cdotp e^{-2}&=\mathbf{invDob_0}\cdotp e^{-1} + \mathbf{invDob_2}\cdotp e^{-1} + 2\cdotp \mathbf{invDob_3}\cdotp e^{-1} +2\cdotp \mathbf{invDob_5}\cdotp e^{-1} \\
&+ 20\cdotp \mathbf{invDob_4}\cdotp e^{-1} 
\quad + 50\cdotp \mathbf{invDob_{5}}\cdotp e^{-1}\\
\lbrace A_5^s\rbrace_{n\geq 1}\cdotp e^{-2}&=\mathbf{invFermi_0} + \mathbf{invFermi_2} + 2\cdotp \mathbf{invFermi_3} +2\cdotp \mathbf{invFermi_5} + 20\cdotp \mathbf{invFermi_4}\\
&\quad   + 50\cdotp \mathbf{invFermi_{5}}e\\
%\lbrace A_n^s\rbrace_{n\geq 1}&=\mathbf{invBell_0} + \mathbf{invBell_2} + 2\cdotp \mathbf{invBell_3} +2\cdotp \mathbf{invBell_5} + 20\cdotp \mathbf{invBell_4} \\
%&\quad   + 50\cdotp \mathbf{invBell_{5}}+ \cdots
\end{align*}
\end{proof}

%%%%%%%%%%%%%%%%%%%%%%%%%%%%%%%sara%%%%%%%%%%%%%%
\begin{lemma}\label{Gas}
    Let $\mathbf{Fermi_n}$ and $\mathbf{Bose_n}(\mathbf{Bell_n)}$ be exponential generating functions (see \ref{eqnfermi} and \ref{Bell}) given by:
    \begin{align*}
        \mathbf{Fermi_n} = e^{e^x + 1} \quad; \quad
        \mathbf{Bose_n}  = e^{e^x - 1},
    \end{align*}
    then for any ideal gas, there exists $\mathbf{Gas_n} =e^{e^x - \sigma_i}$, the exponential function of both Fermions and Bosons where \\
$\sigma_i  = \begin{cases}
+1 = Boson \\\\
-1 = Fermion
\end{cases}$  and  $x = \beta\varepsilon$, where $\beta = \frac{1}{TK}$.
\end{lemma}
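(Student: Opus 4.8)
The plan is to treat this as a direct verification that the single-parameter family $\mathbf{Gas_n} = e^{e^x - \sigma_i}$ interpolates between the two generating functions already established, namely the bosonic (Bell) generating function $e^{e^x - 1}$ from Definition \ref{Bell} and the fermionic generating function $e^{e^x + 1}$ from equation \ref{eqnfermi}. The key observation is that the only difference between these two expressions is the sign of the constant appended to $e^x$ in the exponent, so introducing a discrete statistics parameter $\sigma_i \in \{+1, -1\}$ that flips this sign suffices to package both cases into one formula. The claim of the lemma is thus an \emph{existence} statement, which is witnessed explicitly by the stated closed form.

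First I would substitute $\sigma_i = +1$ into $\mathbf{Gas_n} = e^{e^x - \sigma_i}$, which gives $e^{e^x - 1}$; by Definition \ref{Bell} this is precisely the exponential generating function of the Bell numbers, i.e. the bosonic case, so that $\mathbf{Gas_n}$ evaluated at $\sigma_i = +1$ equals $\mathbf{Bose_n}$. Next I would substitute $\sigma_i = -1$, obtaining $e^{e^x - (-1)} = e^{e^x + 1}$, which by equation \ref{eqnfermi} is exactly the generating function defining the Fermi numbers, so that $\mathbf{Gas_n}$ evaluated at $\sigma_i = -1$ equals $\mathbf{Fermi_n}$. These two substitutions together establish that the single function $\mathbf{Gas_n}$ reproduces both species, proving existence.

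Finally I would record the physical identification $x = \beta\varepsilon$ with $\beta = 1/(TK)$, which is merely a change of variable tying the formal series variable to the Boltzmann factor; no analytic content is added beyond fixing notation for the subsequent thermodynamic discussion. The main (and essentially only) obstacle is cosmetic rather than computational: one must state the sign convention carefully so that $\sigma_i = +1$ consistently labels bosons and $\sigma_i = -1$ labels fermions, matching the cases listed in the statement. Since both specializations reduce to identities already proved earlier in the paper, the lemma follows immediately once the two substitutions are carried out.
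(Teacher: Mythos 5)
Your proof is correct, but it takes a genuinely more elementary route than the paper's. You treat the lemma as the bare existence statement it is and verify it by the two substitutions: $\sigma_i = +1$ gives $e^{e^x-1}=\mathbf{Bose_n}$ (Definition \ref{Bell}) and $\sigma_i = -1$ gives $e^{e^x+1}=\mathbf{Fermi_n}$ (equation \ref{eqnfermi}), which is all the statement literally requires. The paper instead \emph{derives} the interpolating form from physics: it starts from Weldon's thermal occupation distribution $n_i = 1/(e^x-\sigma_i)$, rewrites it as $n_i = \ln e^1/\ln(\mathbf{Gas_n})$, and then identifies the coefficients of $\mathbf{Gas_n}$ with scaled Dobinski numbers, $\mathbf{Gas_n}=\mathbf{Dob_n}/e^{\sigma_i}$, tabulating the boson column ($\mathbf{Dob_n}/e=\mathbf{Bell_n}$) against the fermion column ($e\,\mathbf{Dob_n}=\mathbf{Fermi_n}$). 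What the paper's route buys is precisely this coefficient-level identity, which is what gets used downstream (in the theorem expressing the decay distribution as $n_i=\ln e^1/\ln\mathbf{Gas_n}$ and in Theorem \ref{Gasn} decomposing the Kurepa sequence into the $\mathbf{Gas_r}$); your route buys brevity and, incidentally, avoids an algebraic slip in the paper's manipulation, which equates $\ln\bigl(e^{e^x}-\sigma_i\bigr)$ with $\ln e^{(e^x-\sigma_i)}$ even though $e^{e^x-\sigma_i}=e^{e^x}/e^{\sigma_i}$ is a quotient, not a difference. If you want your version to feed the later results, append the one-line coefficient observation $\mathbf{Gas_n}=e^{1-\sigma_i}\mathbf{Bell_n}=\mathbf{Dob_n}/e^{\sigma_i}$, which follows immediately from $e^{e^x-\sigma_i}=e^{1-\sigma_i}\,e^{e^x-1}$ and your two specializations.
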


\begin{proof}
Arthur Weldon \cite{weldon1992thermal} gave the distribution $n_i $ for the decay of a particle. Now we let
\begin{align}
n_i = \frac{1}{e^x - \sigma_i}, \quad \quad
\sigma_i = \begin{cases}
+1 = Boson \\\\
-1 = Fermion
\end{cases}
\end{align}
rewrite:\\
\begin{align*}
 n_i = \frac{\ln e^1}{\ln(e^{e^x}- \sigma_i)} =  \frac{\ln e^1}{\ln e^{(e^x - \sigma_i)}} = \frac{\ln e^1}{\ln (\mathbf{Gas_n})}; \quad \quad  x = \beta \varepsilon,\quad  \beta = \frac{1}{TK}\\
\end{align*}
Now we observe that the $$\sum_{n=0}^{\infty}\mathbf{Gas_n}\dfrac{x^n}{n!}:= e^{e^x - \sigma_i} $$
\begin{align*}
    \mathbf{Gas_n} &:= \dfrac{1}{e^{\sigma_i}}\sum_{k=0}^{\infty}(-1)^k\dfrac{k^n}{k!} = \frac{e^{e^x}}{e^{\sigma_i}} = \dfrac{\mathbf{Dob_n}}{e^{\sigma_i}}
\end{align*}
$\quad \quad \quad \quad \quad \quad \quad \quad \quad \quad \quad \quad \quad \quad \quad \quad \quad \quad \quad \quad \quad \quad for\quad  i = +1, \quad i = -1$\\
\begin{center}
\begin{tabular}{ c c c c c}
%$\mathbf{Dob_n}$ & $Gas[n] e^{\sigma_i}$ & & & \\ 
$\mathbf{Dob_0}$ &:=$ \mathbf{Gas_0} e^{\sigma_i}$ & $:= e^{\sigma_i}$ & = $e$ & $e^2$\\  
$\mathbf{Dob_1}$ &:= $\mathbf{Gas_1}e^{\sigma_i}$ & $:= 1e^{\sigma_i}$ & = $1e$ & $1e^2$\\ 
$\mathbf{Dob_2}$ &:= $\mathbf{Gas_2} e^{\sigma_i}$ & := $2e^{\sigma_i}$ & = $2e$ & $2e^2$\\ 
$\mathbf{Dob_3}$ & := $\mathbf{Gas_3} e^{\sigma_i}$ & := $5e^{\sigma_i}$ & = $5e$ & $5e^2$\\
\vdots &&\vdots&&\\
$\mathbf{Dob_n} $&:= $\mathbf{Gas_n} e^{\sigma_i}$ & $:= \mathbf{Bell_n}e^{\sigma_i}$ & $\mathbf{Dob_n}$ & $\mathbf{Fermi_n}$\\ 
\end{tabular}
\end{center}
\end{proof}

\begin{theorem}
    For any particle in an ideal gas state, the decay of particles obeys the following distribution
    \begin{align*}
    n_i = \frac{\ln e^1}{\ln (\mathbf{Gas_n})}
    \end{align*}
     where $x = \beta \varepsilon $ and $\mathbf{Gas_n} = e^{e^x-\sigma_i}$ as defined previously.
\end{theorem}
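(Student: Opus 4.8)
The plan is to obtain the stated distribution as a direct consequence of Lemma \ref{Gas}, where the unifying generating function $\mathbf{Gas_n} = e^{e^x - \sigma_i}$ was introduced precisely to interpolate between the bosonic ($\sigma_i = +1$) and fermionic ($\sigma_i = -1$) cases. First I would recall Weldon's single-particle occupation law, already reproduced inside the proof of Lemma \ref{Gas}, namely
$$
n_i = \frac{1}{e^x - \sigma_i}, \qquad x = \beta\varepsilon, \quad \beta = \frac{1}{TK},
$$
with $\sigma_i \in \{+1, -1\}$ selecting the statistics of the particle. This is the physical input; everything after it is an algebraic rewriting.

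The key step is to recognize the denominator $e^x - \sigma_i$ as the natural logarithm of the unifying generating function. Since $\mathbf{Gas_n} = e^{e^x - \sigma_i}$, taking logarithms gives $\ln \mathbf{Gas_n} = e^x - \sigma_i$. Combining this with the trivial numerator identity $\ln e^1 = 1$ yields
$$
n_i = \frac{1}{e^x - \sigma_i} = \frac{\ln e^1}{\ln \mathbf{Gas_n}},
$$
which is exactly the asserted formula. I would then remark that specializing $\sigma_i = +1$ recovers the Planck/Bose distribution of Proposition \ref{planck}, while $\sigma_i = -1$ gives the fermionic case, so the single expression $\ln e^1 / \ln \mathbf{Gas_n}$ captures both regimes uniformly.

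The only delicate point—and it is a matter of convention rather than a genuine obstacle—is ensuring that the symbol $\ln \mathbf{Gas_n}$ is read as the logarithm of the exponential generating function $e^{e^x - \sigma_i}$, consistent with the logarithmic manipulations of Section \ref{Entrolog} relating $\ln \mathbf{Dob_n}$ and $\ln \mathbf{Bell_n}$, and not as the logarithm of an individual coefficient of the series. Once this reading is fixed, the equality $\ln \mathbf{Gas_n} = e^x - \sigma_i$ holds by definition and the theorem follows immediately from Lemma \ref{Gas} with no further computation.
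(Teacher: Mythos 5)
Your proposal is correct and follows essentially the same route as the paper: the paper treats this theorem as an immediate consequence of the rewriting already carried out inside the proof of Lemma \ref{Gas}, namely $n_i = \frac{1}{e^x-\sigma_i} = \frac{\ln e^1}{\ln e^{(e^x-\sigma_i)}} = \frac{\ln e^1}{\ln(\mathbf{Gas_n})}$, which is exactly your chain of identities starting from Weldon's occupation law. Your explicit caveat that $\ln\mathbf{Gas_n}$ must be read as the logarithm of the generating function $e^{e^x-\sigma_i}$ (not of an individual coefficient) is a point the paper leaves implicit, and your version even sidesteps a notational slip in the paper's intermediate expression $\ln(e^{e^x}-\sigma_i)$.
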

Just like the nature of the Gentile statistics \cite{gentile1940itosservazioni, dai2004gentile, katsura1970intermediate}, we generalize the kurepa sequence for the decay of gas in a Fermi-Dirac and Bose-Einstein statistics. 
\begin{theorem}\label{Gasn}
Let $n$ be a nonnegative integer, the
\begin{align*}
    \lbrace K_n \rbrace_{n\geq 1}^{Gas} & = \sum^n_{r=1} \Phi_r \mathbf{Gas_r}
    \end{align*}
where $\Phi_r$ is the coefficient of $\mathbf{Gas_r},$ with $\mathbf{Gas_r} = e^{e^{x-\sigma_i}},$ and  $K_n$ is  kurepa factorial.
\end{theorem}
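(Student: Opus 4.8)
The plan is to reduce the claim to the Dobinski decomposition of the Kurepa sequence already obtained in Theorem \ref{Dk}, and then transport that decomposition through the single algebraic identity relating $\mathbf{Gas_n}$ to $\mathbf{Dob_n}$ supplied by Lemma \ref{Gas}. First I would recall from Theorem \ref{Dk} that the Kurepa sequence admits the expansion
\begin{align*}
\lbrace K_n\rbrace_{n\geq 1}\cdotp e = \sum_{r=0}^{n}\Phi_r\,\mathbf{Dob_r},
\end{align*}
where the coefficients $\Phi_r$ depend only on the value of $n$ and are exactly the constants exhibited in Table \ref{table 2}.

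Next I would invoke the normalization established at the end of the proof of Lemma \ref{Gas}, namely $\mathbf{Dob_n} = e^{\sigma_i}\,\mathbf{Gas_n}$, equivalently $\mathbf{Gas_n} = \mathbf{Dob_n}/e^{\sigma_i}$, with $\sigma_i = +1$ on the bosonic branch and $\sigma_i = -1$ on the fermionic branch. Substituting this into the Dobinski expansion and factoring the common weight $e^{\sigma_i}$ out of the sum gives
\begin{align*}
\lbrace K_n\rbrace_{n\geq 1}\cdotp e = e^{\sigma_i}\sum_{r=0}^{n}\Phi_r\,\mathbf{Gas_r},
\end{align*}
so that after dividing by the scalar $e^{\sigma_i}$ one isolates the weighted sum $\sum_{r}\Phi_r\,\mathbf{Gas_r}$ as precisely the gas-adapted Kurepa object. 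Declaring this sum to be $\lbrace K_n\rbrace_{n\geq 1}^{Gas}$ then yields the stated identity, with the index shift from $r=0$ to $r=1$ being harmless because $\mathbf{Gas_0}$ and $\mathbf{Gas_1}$ carry the same value, exactly as the remark accompanying Theorem \ref{Dk} records for $\mathbf{Dob_0}=\mathbf{Dob_1}$.

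The care needed here is bookkeeping rather than analysis. I would verify the two specializations separately: for $\sigma_i = +1$ the weight $e^{\sigma_i}=e$ recovers the ordinary Dobinski/Bell decomposition of Theorem \ref{Dk}, while for $\sigma_i = -1$ the same coefficients $\Phi_r$ attach instead to the Fermi numbers $\mathbf{Fermi_r}=e\,\mathbf{Dob_r}$, matching the Fermi-number results already proved in this section. The main obstacle I anticipate is purely notational: reconciling the display $\mathbf{Gas_r}=e^{e^{x-\sigma_i}}$ written in the statement with the definition $\mathbf{Gas_n}=e^{e^x-\sigma_i}$ fixed in Lemma \ref{Gas}. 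I would treat the latter as the operative definition and confirm that the coefficient sequence $\Phi_r$ is \emph{independent} of the choice of $\sigma_i$, so that one and the same decomposition serves both statistics simultaneously and no branch-dependent recomputation of the $\Phi_r$ is required.
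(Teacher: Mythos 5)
Your proposal is correct and follows essentially the same route as the paper: both rest on the Dobinski decomposition of the Kurepa sequence from Theorem \ref{Dk} together with the relation $\mathbf{Dob_n}=e^{\sigma_i}\,\mathbf{Gas_n}$ established in Lemma \ref{Gas}, with the common factor $e^{\sigma_i}$ pulled out and cancelled. The only difference is presentational --- the paper verifies the identity on the worked example $n=5$ while you substitute into the general formula, and your observation that the coefficients $\Phi_r$ are independent of the branch $\sigma_i$ (plus the flag on the typo $e^{e^{x-\sigma_i}}$ versus $e^{e^x-\sigma_i}$) makes the argument tighter than the paper's own write-up.
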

\begin{proof}Let 
    \begin{align*}
\lbrace K_n \rbrace_{n\geq 1}\cdotp e^{\sigma_i}& = (K_1 + K_2 + K_3 + \cdots+ K_n )e^{\sigma_i}\\
\text{if} \quad n=5,\\
\lbrace K_5 \rbrace_{n\geq 1} e^{\sigma_i}& = (K_1e^{\sigma_i} + K_2e^{\sigma_i} + K_3e^{\sigma_i} + \cdots )\\
& = 1e^{\sigma_i} + 2 e^{\sigma_i}+ 4e^{\sigma_i} + 10e^{\sigma_i} + 34e^{\sigma_i}\\
& =\mathbf{Dob_1}  +  \mathbf{Dob_2} + 2 \mathbf{Dob_2} + 2 \mathbf{Dob_3} + 2 \mathbf{Dob_4}\\
\lbrace K_5 \rbrace_{n\geq 1} e^{\sigma_i}& = \mathbf{Dob_1} + 3 \mathbf{Dob_2} + 2 \mathbf{Dob_3} + 2 \mathbf{Dob_4}\\
\lbrace K_5 \rbrace_{n\geq 1} & = \frac{1}{e^{\sigma_i}}( \mathbf{Dob_1} + 3 \mathbf{Dob_2} + 2 \mathbf{Dob_3} + 2 \mathbf{Dob_4})\\
\lbrace K_5 \rbrace_{n\geq 1}^{Decay} & = \mathbf{Gas_1} + 3 \mathbf{Gas_2} + 2 \mathbf{Gas_3} + 2 \mathbf{Gas_4}.
\end{align*}
\end{proof}
%%%%%%%%%%%%%%%%%%%%%%%%%%%%%%%%%%%%%%%%%%%%%%%%%%%%%%
\section{Conclusion}\label{Conclus}
This article demonstrates the relationship between the Kurepa factorial, the Dobinski numbers, Bell numbers, and several others. We demonstrated that the summation of Bell numbers constitutes a Kurepa sequence; moreover, we partitioned the shifted alternating Kurepa sequence into the summation of complementary Bell numbers. We also examined the natural logarithm of the Kurepa sequence as well as the shifted alternating Kurepa sequence. Ultimately, we extended the findings of the Kurepa Decomposition to the normal ordering of certain elementary particle operators. Additionally, as an application, in statistical mechanics, we investigated the relationship between the Kurepa decomposition and the occupation number problem in the context of Bose-Einstein and Fermi-Dirac distributions. As an open question, we conjecture whether, for every Kurepa factorial or the $\mathbb{F}_n$ sequence, the greatest common divisor $\mathcal{F}(a)$ between successive elements of the $\mathbb{F}_n$ sequence is bounded above by $2$. The results is known for $\mathcal{F}(0)$ and $\mathcal{F}(4)$, the problem is to find all the other $\mathcal{F}(a)$ for which conjecture \ref{ATTA} is holds.
%\\
%\cite{castro1952note}, \cite{pitman1997some}, \cite{planck1900theory}, \cite{stam1983generation}, \cite{planck1901law},\cite{shinde2025large}, \cite{shannon1948mathematical},\cite{greiner2012thermodynamics}, \cite{wang2017r},\cite{mane2023moments},\cite{kurepa1971left},\cite{gertsch1996some}\cite{skellam1946frequency}, \cite{broder1984r},\cite{comtet2012advanced},\cite{riordan2014introduction}, \cite{ping2002group}, \cite{fulton1997young} \cite{williams1945numbers}, \cite{bell1934exponential}, \cite{bell1938iterated},\cite{lunnon1979arithmetic},\cite{clarke1993derangements},\cite{sun2011curious}, \cite{touchard1933proprietes},\cite{sorenson1990k}, \cite{sorenson1994two}, \cite{donald1999art},\cite{knuth2014art}, \cite{stein1967computational},\cite{zhou2018canonical},\cite{mcintosh1999some},\cite{zuber1995suites},\cite{boe2000partitions},\cite{de1986imprimatur},\cite{amdeberhan2013complementary},\cite{chirskii2015arithmetic},\cite{berend2010improved},\cite{kellner2004some} ,
%\cite{epstein1939function, de1986imprimatur, klazar2003bell, klazar2003counting, unknown, mil}

\bmhead{Acknowledgements}
I extend my gratitude to Akira Yoshioka and A. Sako for their hospitality during my stay at TUS, and also, I express my sincere appreciation to the participants of the "Noncommutative Geometry Seminar Kagurazaka" for their attentiveness to my presentation and the ensuing discussions. Also, I would like to thank Naruhiko Aizawa, Khalef Yaddaden and Simon Rutard for their suggestions and discussions in the final stage of this work.

\bibliography{sn-bibliography}% common bib file
%% if required, the content of .bbl file can be included here once bbl is generated\input sn-article.bbl

%\input sn-article.bbl
\end{document}